\theoremstyle{plain}
\newtheorem{thm}{\textbf{Theorem}}[section]
\newtheorem{prop}[thm]{\textbf{Proposition}}
\newtheorem{lem}[thm]{\textbf{Lemma}}
\newtheorem{corollary}[thm]{\text{Corollary}}
\newtheorem{rem}[thm]{\textbf{Remark}}
\theoremstyle{definition}
\newtheorem{defn}[thm]{\textbf{Definition}}
\newtheorem{exer}[thm]{\textbf{Exercise}}
\newtheorem{ex}[thm]{\textbf{Example}}
\let\thm@indent\indent}{\let\thm@indent\noindent}%
  {}{}
\renewenvironment{proof}[1][\proofname]{\par
  \pushQED{\qed}%
  \normalfont \topsep6\p@\@plus6\p@\relax
  \trivlist
  \itemindent\z@ 
  \item[\hskip\labelsep
        \scshape
    #1\@addpunct{.}]\ignorespaces
}{%
  \popQED\endtrivlist\@endpefalse
}
\def\Q{\mathbb{Q}}
\def\bZ{\mathbb{Z}}
\def\Z{\mathbb{Z}}
\def\A{\mathbb{A}}
\def\bA{\mathbb{A}}
\def\R{\mathbb{R}}
\def\ra{\rightarrow}
\def\Max{\mathrm{Max}}
\def\sub{\subseteq}
\def\Acirc{A^{\circ}}
\def\ok{\mathcal{O}_K}
\newcommand{\mc}{\mathcal}
\def\cO{\mathcal{O}}
\def\spa{\mathrm{Spa}}
\def\Spa{\mathrm{Spa}}
\def\T{\mathbf{T}}
\def\cZ{\mathcal{Z}}
\def\cE{\mathcal{E}}
\newcommand{\End}{\operatorname{End}} 
\newcommand{\Hom}{\operatorname{Hom}}
\newcommand{\Spec}{\operatorname{Spec}} 
\newcommand{\Ker}{\operatorname{Ker}}
\newcommand{\mf}{\mathfrak}
\newcommand{\ol}{\overline}
\def\vp{\varpi}
\def\mc{\mathcal}
\newcommand{\ms}{\mathscr}
\definecolor{tree}{HTML}{50925a}
\author{Judith Ludwig}
\begin{document}
\begin{center}
{\LARGE \textbf{ \textsc{Spectral theory and the Eigenvariety machine}}}\\
\vspace{1cm}
{\Large Notes for a mini-course at the Spring School on}\\[5pt]
{\Large \textit{Non-archimedean geometry and Eigenvarieties}}\\[5pt]
{\Large Heidelberg, March 6$^{th}$-17$^{th}$, 2023}\\[5pt]
\vspace{1cm}
{\large \textsc{Judith Ludwig}}\\[5pt]
\textsc{\date}\\
\end{center}
\thispagestyle{empty}
\vspace{8cm}

\begin{abstract}
These are extended lecture notes for a mini course at the Spring School on Non-Archimedean Geometry and Eigenvarieties held at  Heidelberg University in March 2023. The goal of the course is to explain a modern take on the eigenvariety machine in the language of adic spaces. For this we build  on the theory developed in the first week of the school. We also explain some basic selective non-archimedean functional analysis. 
\end{abstract}

\newpage
\section{Introduction}

The goal of these notes is to explain the eigenvariety machine, an abstract framework in which many eigenvariety constructions fit. 

Eigenvarieties are adic spaces that $p$-adically interpolate systems of Hecke eigenvalues of (certain) automorphic forms. At the same time an eigenvariety serves as a parameter space for systems of Hecke eigenvalues attached to $p$-adic automorphic forms. The first and probably still most important example of an eigenvariety is the so called Coleman--Mazur eigencurve. It interpolates modular forms $p$-adically and has been an important object for number theory and the $p$-adic Langlands program. It was first constructed by Coleman and Mazur in \cite{cm} for modular forms of tame level one\footnote{i.e. for modular forms of level dividing $p$} as a rigid space over $\mathbb{Q}_p$. Buzzard then axiomatized the construction in \cite{buz} and built the eigenvariety machine. This machine is constructed independently of any theory of $p$-adic automorphic forms. As we will see below it requires some functional analysis and some geometry. In practice one then needs to feed the machine with spaces of $p$-adic forms. E.g. in \cite{buz}, Buzzard applied his machine to construct the Coleman-Mazur eigencurve for modular forms of arbitrary tame level and to construct eigenvarieties for Hilbert modular forms. Since then, the eigenvariety machine (or parts of it) have been used in the construction of many more eigenvarieties. We refer to the course of Newton \cite{newton} for details and in particular to Section 3 for an overview of eigenvarieties.  

The eigenvariety machine was originally developed in the language of rigid analytic geometry. Here we present the eigenvariety machine in the language of adic spaces. One reason for this is the following: 
The Coleman--Mazur eigencurve lives over a base called \emph{weight space} which is a finite disjoint union of copies of the rigid analytic generic fibre of $\mathrm{Spf}(\Z_p\llbracket T\rrbracket)$.
In the adic language, this is 
\[\mathcal{W}^{rig}:=\spa(\Z_p\llbracket T\rrbracket, \Z_p\llbracket T\rrbracket)\times_{\spa(\Z_p,\Z_p)} \spa(\Q_p,\Z_p),\] which is an open unit disc. 
By adding one point  $x_{\mathbb{F}_p\llbracket T\rrbracket}$ (the $T$-adic valuation on $\mathbb{F}_p\llbracket T\rrbracket$) to this space, we get a quasi-compact space which turns out to be the analytic locus $ \mathcal{W}:=\spa(\Z_p\llbracket T\rrbracket, \Z_p\llbracket T\rrbracket)^{an}$ of $\spa(\Z_p\llbracket T\rrbracket, \Z_p\llbracket T\rrbracket)$. Coleman conjectured that the eigencurve can be extended from its original construction over $\mc{W}^{rig}$ into characteristic $p$ and in \cite{aip}, Andreatta, Iovita and Pilloni show that this is indeed possible, i.e., that there is an \emph{adic Coleman-Mazur eigencurve} over $\mathcal{W}$ that extends the construction over $\mathcal{W}^{rig}$. The eigenvariety machine carries over to a more general adic setting and gives the freedom of working over Tate rings that do not live over a field. When presenting the eigenvariety machine in this generality, we closely follow \cite{buz} using the proofs in \cite{aip}, \cite{JN1} and \cite{JN2} for many of the statements in the adic world.
\smallskip
\medskip 

We view the construction of eigenvarieties as a 3-step process.
\medskip

\noindent
\begin{minipage}{0.65\textwidth}
\textbf{Step 1.} Construct \emph{input data} for the machine, i.e., \emph{a base} $\mc{W}$ which is usually referred to as weight space (which interpolates weights of automorphic forms) and \emph{spectral data} locally over the base. In particular this involves the construction of suitable spaces of $p$-adic automorphic forms. For the construction of concrete input data we refer to the notes of James Newton.
\smallskip

\textbf{Step 2.} From the input data, construct the so called spectral variety $\mc{Z}$ over the base $\mc{W}$ which parametrizes non-zero eigenvalues of one special Hecke operator. 
\smallskip

\textbf{Step 3.} Construct the eigenvariety $\mc{E}$, a space which is finite over $\mc{Z}$ and which parametrizes certain systems of eigenvalues appearing in the input datum. 
\end{minipage}
\begin{minipage}{0.03\textwidth}
\phantom{-}
\end{minipage}
\begin{minipage}{0.3\textwidth}
\tikzset{every picture/.style={line width=0.75pt}} 

\tikzset{every picture/.style={line width=0.75pt}} 

\begin{tikzpicture}[x=0.75pt,y=0.75pt,yscale=-1,xscale=1]

\draw  [line width=1.25]  (50.67,0) -- (141,0) -- (141,39.67) -- (50.67,39.67) -- cycle ;
\draw  [line width=1.25]  (89.67,57.07) -- (92.79,57.07) -- (92.79,39.67) -- (99.04,39.67) -- (99.04,57.07) -- (102.17,57.07) -- (95.92,68.67) -- cycle ;
\draw  [line width=1.25]  (39.33,74.33) -- (160.83,74.33) -- (160.83,129.33) -- (39.33,129.33) -- cycle ;
\draw  [line width=1.25]  (30,163.67) -- (174,163.67) -- (174,220) -- (30,220) -- cycle ;
\draw  [line width=1.25]  (90,147.07) -- (93.13,147.07) -- (93.13,129.67) -- (99.38,129.67) -- (99.38,147.07) -- (102.5,147.07) -- (96.25,158.67) -- cycle ;

\draw (60.67,10.33) node [anchor=north west][inner sep=0.75pt]   [align=left] {Input Data};
\draw (70,82.33) node [anchor=north west][inner sep=0.75pt]   [align=left] {$\mc{Z}\longrightarrow \mc{W}$};
\draw (52,198) node [anchor=north west][inner sep=0.75pt]   [align=left] {Eigenvariety };
\draw (50,105.33) node [anchor=north west][inner sep=0.75pt]   [align=left] {Spectral variety};
\draw (48,171) node [anchor=north west][inner sep=0.75pt]   [align=left] {$\mc{E}\longrightarrow \mc{Z}\longrightarrow \mc{W}$};
\end{tikzpicture}
\end{minipage}
\bigskip

This text is meant for non-experts as an introduction to the eigenvariety machine and exercises are included throughout to help you understand the material. 
There are some excellent references (such as \cite{buz},  \cite{aip}, \cite{JN1}, \cite{JN2} and \cite{bellaiche}) which we often refer to for further details, or when we omit some proofs. None of the following is original work and all mistakes are mine. 
\medskip 

\textbf{Structure of the notes}: In order to construct eigenvarieties, we first have to study some functional analysis. Section 2 takes care of this. Section 2 is also "geometry-free", meaning you can understand it without knowledge of adic spaces. The same is true for Section 4, where we treat Riesz theory, another important piece of functional analysis. Section 3 requires some knowledge on adic spaces. There we construct the so called spectral variety, an important space that sits between the eigenvariety and weight space. In Section 5 we then explain how to build eigenvarieties, first over an affinoid base, which is easier. In a second part we show how the construction glues. Finally we describe some features, when the base space is reasonably well behaved, e.g.\ a pseudorigid space. We also discuss some rigidity results. 
\medskip 

\textbf{Notation}: 
By a non-archimedean field $K$ we mean a topological field $K$ whose topology is induced by a  valuation of rank $1$ and which is complete with respect to this valuation.
All rings in these notes are assumed to be non-zero. We freely use the language of adic spaces and concepts like Tate rings, Huber rings, etc., from the first week of the school.
We follow the same conventions as \cite[Introduction]{johansson} and in particular use the terms strongly noetherian Tate rings and locally noetherian adic spaces as recalled in loc. cit. Unless stated otherwise we always assume the occurring Tate rings are complete. 
\bigskip

\textbf{Acknowledgments}: 
I would like to thank the participants of the spring school for their feedback. In particular I want to thank Jakob Burgi and Alireza Shavali for their helpful comments and suggestions on an earlier draft. I would also like to thank Christian Johansson for help with Proposition \ref{constslope}, him and James Newton for many helpful conversations and the referee for their careful reading and their comments. 

The author acknowledges support from the Deutsche Forschungsgemeinschaft (DFG, German Research Foundation) through TRR 326 \textit{Geometry and Arithmetic of Uniformized Structures},  project number 444845124.

\newpage
\section{Background from Functional Analysis}
This section offers the background from functional analysis that we need for the construction of eigenvarieties (in the language of adic spaces). The central concepts are Banach--Tate rings, (orthonormalizable) modules over them and the theory of compact operators on these modules. These concepts provide the technical framework to handle the input data for the eigenvariety machine. The main references are \cite{buz, JN1} and~\cite{bellaiche}.   
\subsection{Modules over Banach--Tate rings}
\begin{defn} 
Let $A$ be a ring. A \emph{seminorm} on $A$ is a map $|\cdot|\colon A \rightarrow \mathbb{R}_{\geq 0}$ that satisfies: For all $ a,b\in A$,
\begin{enumerate}
\item $|0|=0$, $|1|=1$,
\item $|a+b|\leq {\max}(|a|,|b|)$,  \hfill \emph{(the ultrametric triangle inequality)}
\item $ |ab|\leq |a||b|$. \hfill \emph{($|\cdot|$ is submultiplicative)}
\end{enumerate}
A \emph{norm} on $A$ is a seminorm that satisfies $|a|=0$ only if $a=0$. A ring $A$ together with a (semi)norm is called a (semi)normed ring. A \emph{Banach ring} is a normed ring $A$ which is complete with respect to this norm. 
\end{defn}

\begin{rem} \begin{itemize}
\item Note that in contrast to a valuation (\cite[Def.2.2]{bergdall}), a (semi)norm is only required to be submultiplicative. A multiplicative (semi)norm is an example of a valuation. 
\item By definition the norm is part of the data of a Banach ring (and it will be part of more definitions below). We remark however that ultimately many of the constructions (see e.g. Proposition \ref{ind}), and in particular the eigenvariety construction will only depend on the topological rings involved, and not on the choice of norm that defines the topology. 
\end{itemize}
\end{rem}

\begin{defn} 
Let $A$ be a normed ring. A non-zero element $a\in A$ is called \emph{multiplicative} if $|ab|=|a||b|$ for all $b\in A$.
\end{defn} 

\begin{defn} A \emph{Banach--Tate ring} is a complete normed ring $A$ which contains a unit $\varpi$ such that $|\varpi|<1$ and which is multiplicative. 
\end{defn}

\begin{ex} Let $K$ be a non-archimedean field and $n\in \mathbb{N}$. Then the classical Tate-algebra $\mathcal{T}_n:= K\langle X_1,\ldots,X_n \rangle $ in $n$ variables equipped with the Gauß norm (which is even multiplicative) is a Banach--Tate ring. (See \cite[Chapter 5]{BGR} for details and an extensive discussion of this algebra and its properties).
\end{ex}

Let us relate Banach--Tate rings to the rings underlying (analytic) adic spaces and thereby justify the naming. 
For that, let $A$ be a complete Tate ring with ring of definition $A_0$ and pseudo-uniformizer $\varpi \in A_0$. Let $p$ be a prime number\footnote{Note that more generally (\ref{standnorm}) defines a norm for any positive real number instead of $p$.}. Then we can define a norm on $A$ by setting, for any $a\in A$,
\begin{equation} \label{standnorm}
|a|:= \inf \{p^{-n} \ | \ a \in \varpi^nA_0, n\in \mathbb{Z}\}.
\end{equation}
\begin{exer} Check that this defines a norm on $A$, that $\varpi$ is multiplicative and that the topology induced by this norm agrees with the topology of $A$ as a Tate ring.  
\end{exer}
In the following we use the terminology normed Tate ring to mean a Tate ring, whose topology is also given by a norm (in general unspecified). For a complete Tate ring, we refer to the norm of equation (\ref{standnorm}) attached to a chosen pseudo-uni-formizer as a standard norm. 

\begin{rem} Let $A$ be a normed ring with a unit $\varpi$ such that $|\varpi|<1$ and which is multiplicative. Then the underlying topological ring is a Tate ring. More precisely, the unit ball $\{a \in A : |a|\leq 1\}$ is a ring of definition and $\varpi$ is a pseudo-uniformizer. Conversely, by the exercise above, we see that any complete Tate ring gives rise to a normed ring with a unit $\varpi$ such that $|\varpi|<1$ and which is multiplicative. So we may think of a Banach--Tate ring as a complete normed Tate ring with a chosen multiplicative pseudo-uniformizer. 
\end{rem} 

When treating functional analysis in the context of rigid analytic geometry, one works with Banach rings that contain a non-archimedean field, like the rings $\mathcal{T}_n$ from above. Our setup here is more general. Here is an example of a Banach--Tate ring which does not live over a field.  

\begin{ex} \label{tatenofield} 
Consider the Huber ring $\bZ_p\llbracket T\rrbracket$, with the $(p,T)$-adic topology.
On 
\[X:=\spa(\bZ_p\llbracket T\rrbracket,\bZ_p\llbracket T\rrbracket)\] 
consider the rational subset $U= X\left(\frac{p,T}{T} \right)=\{x\in X : |p(x)|\leq |T(x)|\neq 0 \}$\footnote{See \cite[Section 4.2]{berk} for a detailed description of $X$.}. Then $\cO_X(U)$ is given as follows. Take the ring $R:=\bZ_p\llbracket T\rrbracket [1/T] $. It has the structure of a Huber ring with ring of definition  $ \bZ_p\llbracket T\rrbracket [p/T]$ and the $(T)$-adic topology on the latter. Then $\cO_X(U)$ is equal to the completion $\widehat{R}$ of $R$. It is a Tate ring ($T$ is a topologically nilpotent unit), which does not live over a field ($p$ is neither invertible nor zero in  $\cO_X(U)$). Fixing $T$ turns $\cO_X(U)$ into a Banach--Tate ring with norm as in equation (\ref{standnorm}). 
\end{ex}
\bigskip

\begin{defn}
Let $A$ be a normed ring. A \emph{normed $A$-module} is an $A$-module $M$ equipped with a function $||\cdot|| \colon M \rightarrow \mathbb{R}_{\geq 0}$ such that for all $m,n\in M$ and all $a\in A$, 
\begin{enumerate}
\item $||m||=0 \iff m=0$,
\item $||m+n|| \leq \max(||m||,||n||),$
\item $||am|| \leq |a|\cdot ||m||.$
\end{enumerate}
If $A$ is a Banach ring and $M$ is complete, we say that $M$ is a \emph{Banach $A$-module}. Finally a \emph{Banach $A$-algebra} $B$ is defined to be a Banach ring $B$ which is also a Banach $A$-module. 
\end{defn}


\begin{exer} 
\begin{enumerate}
\item Let $A$ be a normed ring and let $\varpi \in A$ be a multiplicative unit. Show that if $M$ is a normed $A$-module, then $||\varpi m||=|\varpi| \cdot ||m||$ for all $m\in M$. 
\item If $M$ and $N$ are Banach $A$-modules then $M\oplus N$ becomes a Banach $A$-module via $|m +n|:= \max \{|m|, |n|\}.$ Iterating this gives a natural Banach $A$-module structure on $A^r$ for any $r\in \mathbb{N}$. 
\end{enumerate}
\end{exer}

\begin{defn} 
Let $A$ be a normed Tate ring. An $A$-linear map $\phi\colon M\rightarrow N$ of normed $A$-modules is called \emph{bounded} if there exists $C\in \mathbb{R}_{>0}$ such that $||\phi(m)|| \leq C ||m||$ for all $m\in M$.
\end{defn}

\begin{lem} \label{contbound} Let $A$ be a normed Tate ring with a multiplicative pseudo-uniformizer and let $M$ and $N$ be normed $A$-modules.
An $A$-linear map $\phi\colon M\rightarrow N$ is continuous if and only if it is bounded.
\end{lem}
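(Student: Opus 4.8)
The plan is to prove the two implications separately, noting that only the direction ``continuous $\Rightarrow$ bounded'' will use the hypothesis that $A$ has a multiplicative pseudo-uniformizer.

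For ``bounded $\Rightarrow$ continuous'': suppose $||\phi(m)||\leq C||m||$ for all $m\in M$, with $C\in\mathbb{R}_{>0}$. Since $\phi$ is additive it suffices to check continuity at $0$, and this is immediate: given $\varepsilon>0$, the open ball of radius $\varepsilon/C$ around $0$ in $M$ is carried by $\phi$ into the ball of radius $\varepsilon$ around $0$ in $N$. (In fact $\phi$ is uniformly continuous.) This half needs nothing beyond the module axioms.

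For the converse, fix a multiplicative pseudo-uniformizer $\varpi\in A$; since $A$ is Tate, $\varpi$ is in particular a unit, so it acts invertibly on every $A$-module. I would first record the consequence of the exercise above that for any normed $A$-module $P$ and any $p\in P$ one has $||\varpi p|| = |\varpi|\cdot||p||$, and hence, using invertibility, $||\varpi^k p|| = |\varpi|^k||p||$ for \emph{all} $k\in\mathbb{Z}$. Now assume $\phi$ is continuous, hence continuous at $0$, so there is $\delta>0$ with $||m||\leq\delta \implies ||\phi(m)||\leq 1$. Choose $n\in\mathbb{N}$ with $|\varpi|^n\leq\delta$. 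Given $m\in M$ with $m\neq 0$, pick the unique $k\in\mathbb{Z}$ with $|\varpi|^{n+1} < |\varpi|^k||m|| \leq |\varpi|^n$ (unique because, as $k$ increases by one, $|\varpi|^k||m||$ gets multiplied by $|\varpi|<1$, so exactly one such product lands in the half-open interval $(|\varpi|^{n+1},|\varpi|^n]$). Then $||\varpi^k m|| = |\varpi|^k||m||\leq|\varpi|^n\leq\delta$, so $||\phi(\varpi^k m)||\leq 1$; applying $A$-linearity and the norm identity in $N$ gives $|\varpi|^k||\phi(m)|| = ||\varpi^k\phi(m)|| = ||\phi(\varpi^k m)||\leq 1$, i.e. $||\phi(m)||\leq|\varpi|^{-k}$. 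Combining with $|\varpi|^k||m|| > |\varpi|^{n+1}$ yields $||\phi(m)|| < |\varpi|^{-(n+1)}||m||$. Since $\phi(0)=0$ as well, $\phi$ is bounded with constant $C=|\varpi|^{-(n+1)}$.

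The only genuinely delicate point is the rescaling step in the second implication: one must move an arbitrary nonzero $m$ into the ``good'' annulus $\{\,||m||\le\delta\,\}$ and transport the estimate back, and for this the \emph{exact} identity $||\varpi^k m|| = |\varpi|^k||m||$ is essential --- with only the submultiplicative bound $||\varpi m||\le|\varpi|\,||m||$ the argument would not close. Everything else (choice of $n$, existence of $k$, bookkeeping with the inequalities) is routine.
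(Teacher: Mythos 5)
Your proof is correct, and it is exactly the argument the paper has in mind: the lemma is left as an exercise with the hint to adapt \cite[Appendix B, Lemma 1]{bosch}, and that proof is precisely your rescaling argument, with the multiplicative pseudo-uniformizer $\varpi$ (and the exact identity $||\varpi^k m||=|\varpi|^k||m||$ for $k\in\mathbb{Z}$) playing the role of the nonzero scalar from the base field. No gaps; your remark that multiplicativity of $\varpi$ is what makes the transport of the estimate work is exactly the point of the hypothesis.
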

\begin{proof} Exercise. (Hint: Adapt the proof of \cite[Appendix B, Lemma 1]{bosch}).\footnote{For a discussion of the relationship of continuity and boundedness in a more general context see \cite[Section 2.1.8]{BGR}.}
\end{proof}

\begin{defn} \begin{enumerate}
\item If $A$ is a normed Tate ring and $M$ and $N$ are normed $A$-modules then a homomorphism $\phi\colon M\rightarrow N$ is defined to be a continuous $A$-linear map. 
\item The norm of a homomorphism $\phi\colon M\rightarrow N$ of normed $A$-modules is defined as
\[||\phi|| = \sup_{m\neq 0} \frac{||\phi(m)||}{||m||}.\]
\end{enumerate}
\end{defn}

If $A$ is a Banach--Tate ring and $M$ and $N$ are Banach $A$-modules, then this norm turns the $A$-module $\Hom_A(M,N)$ of continuous (equiv.\ bounded) $A$-linear homomorphisms from $M$ to $N$ into a Banach $A$-module. Completeness follows for if $\phi_n$ is a Cauchy sequence in $\Hom_A(M,N)$ then for all $m\in M$,  $\phi_n(m)$ is a Cauchy sequence in $N$ so has a limit which we call $\phi(m)$. Then $\phi$ is the limit of the $\phi_n$. 

We will frequently make use of the \emph{open mapping theorem}, which says that for a Banach--Tate ring $A$ and Banach $A$-modules $M$ and $N$, any surjective continuous $A$-linear map $\phi\colon M\rightarrow N$ is open (see \cite[Theorem II.4.1.1.]{mor}).
The following is a useful application. By a finite Banach $A$-module we mean a Banach $A$-module that is finitely generated as an abstract $A$-module.
\begin{lem}\label{autcont} If $M$ is a Banach $A$-module and $P$ is a finite Banach $A$-module, then any abstract $A$-module homomorphism $\phi\colon P\rightarrow M$ is continuous.
\end{lem}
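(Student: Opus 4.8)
The plan is to reduce to the case $P=A^r$, where boundedness of $A$-linear maps is automatic in the ultrametric setting, and then to transport this back to $P$ using the open mapping theorem. Throughout we use that $A$ is Banach--Tate (the standing assumption of this subsection), so that the open mapping theorem and Lemma \ref{contbound} are available.

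First I would pick generators $p_1,\dots,p_r\in P$ of $P$ as an $A$-module and form the surjective $A$-linear map $\pi\colon A^r\to P$ sending the $i$-th standard basis vector to $p_i$, where $A^r$ carries its natural Banach $A$-module structure (the max-norm from the exercise above). This $\pi$ is bounded, since for $a=(a_1,\dots,a_r)\in A^r$ one has $\|\pi(a)\|=\|\sum_i a_ip_i\|\le\max_i|a_i|\,\|p_i\|\le C\|a\|$ with $C:=\max_i\|p_i\|$, hence continuous by Lemma \ref{contbound}. Now $A^r$ is complete because $A$ is Banach, and $P$ is complete by hypothesis, so the open mapping theorem applies to the continuous surjection $\pi$ and shows it is open. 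A continuous open surjection is a topological quotient map, so the given norm topology on $P$ agrees with the quotient topology induced from $A^r$ via $\pi$.

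Next, given the abstract $A$-linear map $\phi\colon P\to M$, I would consider the composite $\psi:=\phi\circ\pi\colon A^r\to M$. This is again $A$-linear out of $A^r$, and is therefore bounded: $\|\psi(a)\|=\|\sum_i a_i\phi(p_i)\|\le\big(\max_i\|\phi(p_i)\|\big)\|a\|$, so $\psi$ is continuous by Lemma \ref{contbound}. Since $\pi$ is a quotient map and $\psi=\phi\circ\pi$ is continuous, the universal property of the quotient topology gives that $\phi$ is continuous, which is the claim.

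The only substantive input is the open mapping theorem, invoked precisely to know that a finite Banach $A$-module carries the quotient topology coming from a finite free Banach module; the rest is the elementary fact that any $A$-linear map out of $(A^r,\max\text{-norm})$ is automatically bounded. The one place to be slightly careful is checking the hypotheses of the open mapping theorem, but completeness of source and target holds by assumption, so there is no real obstacle.
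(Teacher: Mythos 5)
Your proposal is correct and follows essentially the same route as the paper's proof: present $P$ as a quotient of $A^r$ via a bounded (hence continuous) surjection $\pi$, apply the open mapping theorem to see $\pi$ is open, note $\phi\circ\pi$ is bounded hence continuous, and conclude that $\phi$ is continuous. You have merely spelled out the boundedness estimates and the quotient-topology step that the paper leaves implicit.
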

\begin{proof} Let $\pi\colon A^r\rightarrow P$ be a surjection of $A$-modules and equip $A^r$ with its usual Banach $A$-module norm. Note that $\pi$ is bounded hence continuous. By the open mapping theorem, $\pi$ is open. Furthermore $\phi\circ \pi$ is bounded and therefore also continuous. Hence $\phi$ is also continuous. 
\end{proof} 

Our goal now is to develop some basic non-archimedean functional analysis for Banach modules over Banach--Tate rings (eventually under a noetherian assumption). In particular, we study compact operators on them and then (in Section \ref{sec:riesz}) study the spectral theory of these operators. 
When the Banach--Tate ring is a Banach $K$-algebra over a non-archimedean field $K$, then the theory as we present it is explained in great detail in \cite{buz}, building on work of Coleman and Serre \cite{serre}. Further helpful references are \cite[Chapter 3]{bellaiche} and \cite[Section 2]{JN1}. 
Most results go through almost word by word in the more general setting of Banach--Tate rings. 
One word of caution: When adapting proofs from the setting of Banach $K$-algebras to Banach--Tate rings, one has to be a bit careful when it comes to "changing the norm". More precisely, we want to show that certain results only depend on the topology of the Banach--Tate ring and do not dependent on the chosen norm. 
In this context let us recall that two norms $|\cdot|$, $|\cdot|^{\prime}$ on a ring $A$ are called \emph{equivalent} 
if they induce the same topology and that they are called
\emph{bounded-equivalent} if there are constants $C_{1},C_{2}>0$ such that 
$C_{1}|a|\leq |a|^{\prime}\leq C_{2}|a|$ for all $a\in A$. 
Now for Banach-algebras over a non-archimedean field, the notions ``equivalent norms'' and ``bounded-equivalent norms'' coincide. For Banach--Tate rings however the situation is different. If two norms on a Banach--Tate ring $A$ are bounded equivalent then they induce the same topology, but the converse might not be true (cf. \cite[Lemma 2.1.6 and 2.1.7]{JN1} for what can be said in the converse direction).
In these notes this issue will only appear in Prop. \ref{ind}. 
Let us also observe, that when the norm on $A$ is fixed and we only change the norm on the Banach-module $M$, life is easier as the following remark shows. 

 \begin{rem} \label{eqnorms} Let $(A,|\cdot|)$ be a Banach--Tate ring and let $M$ be a Banach $A$-module, equipped with two equivalent norms $||\cdot||$ and $||\cdot||'$, i.e., two norms that induce the same topology on $M$ or equivalently such that the identity maps $\mathrm{id} \colon (M,||\cdot||) \rightarrow (M, ||\cdot||')$  and $\mathrm{id} \colon (M,||\cdot||') \rightarrow (M, ||\cdot||)$ are continuous. By Lemma  \ref{contbound} above, we see that two norms on $M$ are equivalent if and only if there are positive constants $C$ and $C'$, such that
 \[C||m||'\leq ||m|| \leq C'||m||', \ \text{ for all } m \in M.\]
 Let $\phi\colon M\rightarrow M$ be a continuous $A$-linear map, then for all $m\in M$ we have 
 \[C||\phi(m)||'\leq ||\phi(m)|| \leq ||\phi||\cdot ||m|| \leq C' ||\phi|| \cdot ||m||',\]
 which implies that $||\phi||' \leq (C'/C) || \phi||$. Similarly we get that $||\phi|| \leq (C'/C) ||\phi||'$, in particular the operator norms $||\cdot|| $ and $||\cdot||'$ on $\Hom_A(M,M)$ are also equivalent.
 \end{rem}

\subsection{Orthonormalizable Banach modules} \label{sec:onbs}
\textbf{Convention}: If $I$ is a set and we are given a map $I\rightarrow A, i\mapsto a_i$ into a Banach--Tate ring $A$, then the statement $\lim_{i\rightarrow \infty} a_i=0$ means that for any $\epsilon >0$ there are only finitely many $i\in I$ with $|a_i|>\epsilon$. 
\begin{exer} Show that $\lim_{i\rightarrow \infty} a_i=0$ implies that only countably many $a_i$ can be non-zero. 
\end{exer}
\begin{exer}
Let $A$ be a 
Banach--Tate ring and $M$ a Banach $A$-module. Consider a subset $\{e_i: i\in I\} $ of $M$ such that $||e_i||=1$ for all $i \in I$. Show that for any sequence $(a_i)_{i\in I}$ of elements of $A$ with $\lim_{i\rightarrow \infty} a_i=0$, the sum $\sum_i a_i e_i$ converges in $M$. 
\end{exer}
\begin{defn} Let $A$ be a Banach--Tate ring. A Banach $A$-module $M$ is called \emph{orthonormalizable}, or \emph{ONable} for short, if there exists a subset $\{e_i: i\in I \}$ of $M$ such that 
\begin{itemize}
\item Every element $m\in M$ can be written uniquely as a sum $\sum_{i\in I} a_i e_i$, with $a_i\in A$ and $\lim_{i\rightarrow \infty} a_i=0$. 
\item If $m= \sum_{i\in I} a_i e_i$, then $||m||= \max_{i\in I} |a_i|$. 
\end{itemize}
Such a subset $\{e_i:  i \in I\}$ is called an \emph{orthonormal basis} or \emph{ON basis} of $M$.\footnote{In $p$-adic functional analysis, the concept of a Hilbert space does not make sense, but ONable Banach modules are a good replacement for them.}
 
\end{defn}
Note that the second condition in the above definition implies that $||e_i||=1$ for all $i\in I$. 
Let $c_A(I)$ be the $A$-module of functions $f\colon I \rightarrow A$ such that $\lim_{i\in I} f(i)=0$, with addition and $A$-action defined point-wise. Then $||f|| = \max_{i\in I} |f(i)|$ defines a norm on $c_A(I)$ and turns it into a Banach $A$-module. It is ONable with canonical ON basis given by the functions $e_i\colon I\rightarrow A$, where $ e_i(j)= 0$ for $i\neq j$ and $e_i(i)=1$. 

\begin{exer} Show that a Banach $A$-module $M$ is ONable if and only if there exists a set $I$ and an $A$-linear isometric (i.e., norm-preserving) isomorphism $M\cong c_A(I)$. Show that the choice of an ON basis is equivalent to the choice of such an isometry.
\end{exer}

\begin{ex} The classical Tate algebra $K\langle T \rangle$ over a non-archimedean field $K$ is orthonormalizable. (Note $K\langle T \rangle$ is actually just another name for $c_K(\mathbb{N}_0)$). An orthonormal basis is given by $\{T^i: i \in \mathbb{N}_0\}$.
\end{ex}

We can use ON bases to associate a ``matrix'' to a homomorphism of Banach \linebreak$A$-modules.
For that let $\phi\colon M\rightarrow N$ be a homomorphism of ONable Banach \linebreak $A$-modules $M$ and $N$ with ON bases $\{e_i: i\in I\}$ and $\{f_j: j\in J\}$. We define the matrix coefficients $(a_{i,j})_{i\in I, j\in J}$ by 
\[\phi(e_i) = \sum_{j\in J} a_{i,j} f_j.\]
\begin{exer}\label{coeff}
\begin{enumerate}
\item Check that for all $i\in I$, we have $\lim_{j\rightarrow \infty} a_{i,j}=0$. 
\item Show that there exists a constant $C\in \mathbb{R}$ such that $|a_{i,j}| \leq C$ for all $i\in I,j\in J$. 
\item Show that in fact we have $||\phi||= \sup_{i,j} |a_{i,j}|$. 
\end{enumerate}
\end{exer}
Conversely, given a collection of $(a_{i,j})_{i\in I, j\in J}$ of elements of $A$ satisfying (1) and (2) of the exercise above, there is a unique continuous $\phi \colon M\rightarrow N$ with norm $||\phi||=\sup_{i,j} |a_{i,j}|$ and associated matrix $(a_{i,j})$. \\
In particular, we can measure the distance of two homomorphisms using their matrix coefficients as follows. Namely, let $\phi$ and $\psi$ be elements of $\Hom_{A}(M,N)$ with matrices $(a_{i,j})$ and $(b_{i,j})$. Then $||\phi-\psi|| \leq \epsilon$ if and only if $|a_{i,j}-b_{i,j}|\leq \epsilon$ for all $i,j$.

\subsection{Compact operators on Banach modules}

\begin{defn} Let $A$ be a Banach--Tate ring and let $M$ and $N$ be Banach \linebreak $A$-modules. A homomorphism $M\rightarrow N$ is said to be of \emph{finite rank} if its image is contained in a finitely generated $A$-submodule of $N$. \\
A homomorphism $\phi\colon M\rightarrow N$ is called \emph{compact} if it is a limit of finite rank homomorphisms in (the Banach $A$-module) $\Hom_A(M,N)$. 
\end{defn}
\begin{rem}\begin{itemize}
\item In the literature compact operators also go by the name \emph{completely continuous maps}.
\item When $A=K$ is a field, compact operators have been studied in greater generality, i.e., not just for $K$-Banach spaces, but for more general topological $K$-vector spaces. A reference for results in this direction is the book \cite{schneider}.
\end{itemize} 
\end{rem}
 \medskip 
\noindent \textbf{Notation:} Assume from now on that $A$ is a \emph{noetherian} Banach--Tate ring. 
 \medskip 

The noetherian assumption guarantees the following. 
\begin{lem} \label{om} Let $A$ be a noetherian Banach--Tate ring. 
Every finitely generated \linebreak $A$-module $M$ has (up to equivalence) a unique norm making it into a Banach $A$-module. Any $A$-linear map $f\colon M\rightarrow N$ of finite Banach $A$-modules is continuous, $f(M) \subseteq N$ is closed and the induced map $M \rightarrow f(M)$ is open. \cite[Proposition II.4.2.2.]{mor}
\end{lem}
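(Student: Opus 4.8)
The plan is to deduce all the assertions — existence of a Banach $A$-module norm on a finite $A$-module, its uniqueness up to equivalence, and continuity / closed image / strictness for maps between finite Banach modules — from the single geometric fact, call it $(\star)$, that \emph{every $A$-submodule $N\subseteq A^r$ is closed in $A^r$}. This is the one place where the noetherian hypothesis is genuinely used, and I expect it to be the main obstacle: not because the argument is long, but because it must be arranged so as not to presuppose the Banach norms we are trying to construct.

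First I would prove $(\star)$. The only input from general Banach ring theory is soft: in any Banach ring the units are open (if $|1-a|<1$ then $\sum_{n\ge 0}(1-a)^n$ converges and inverts $a$), so a proper ideal is never dense, i.e.\ a dense ideal equals $A$. From this I would prove, by induction on $r$, a \emph{dense-submodule lemma}: a dense $A$-submodule $V\subseteq A^r$ equals $A^r$. For the inductive step, let $p\colon A^r\to A$ be the last-coordinate projection; then $p(V)$ is a dense ideal, hence $p(V)=A$, so I may pick $u\in V$ with $p(u)=1$, giving $V=(V\cap A^{r-1})+Au$. One checks $V\cap A^{r-1}$ is dense in $A^{r-1}$: if $v_n\to x\in A^{r-1}$ with $v_n\in V$, then $v_n-p(v_n)u$ lies in $V\cap A^{r-1}$ and still converges to $x$. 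By induction $V\cap A^{r-1}=A^{r-1}$, hence $V=A^r$. Now for an arbitrary submodule $N\subseteq A^r$: the closure $\overline N$ is again a submodule, and since $A^r$ is a noetherian $A$-module it is finitely generated; choosing a surjection $\psi\colon A^m\twoheadrightarrow\overline N$ (continuous because bounded) and applying the open mapping theorem, $\psi$ is open onto the Banach module $\overline N$, so $\psi^{-1}(N)$ is a dense submodule of $A^m$, hence all of $A^m$, hence $N=\psi(\psi^{-1}(N))=\overline N$ is closed. This is essentially the content of \cite[Proposition II.4.2.2.]{mor}, and runs parallel to the affinoid case in \cite[Section 3.7]{BGR}.

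Granting $(\star)$, the rest is bookkeeping with the open mapping theorem and Lemma \ref{autcont}. For \emph{existence}: given a finite $A$-module $M$, choose a surjection $A^r\twoheadrightarrow M$; its kernel is closed by $(\star)$, so $M$ with the residue norm is a quotient of the Banach module $A^r$ by a closed submodule, hence a Banach $A$-module. \emph{Continuity} of an $A$-linear $f\colon M\to N$ between finite Banach modules is exactly Lemma \ref{autcont}. For \emph{uniqueness}: two Banach norms on $M$ make the identity continuous in both directions by Lemma \ref{autcont}, hence a homeomorphism by the open mapping theorem, so they are equivalent — in particular the residue norm is canonical. For the \emph{closed image}: write $N\cong A^s/K$ with $K$ closed; by uniqueness the quotient map $q\colon A^s\to N$ is a topological quotient (open, by the open mapping theorem), $q^{-1}(f(M))$ is a submodule of $A^s$ hence closed by $(\star)$, so its complement maps under the open surjection $q$ onto $N\setminus f(M)$, which is therefore open. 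Finally, for \emph{strictness}: $f(M)$ is then a closed submodule of the Banach module $N$, hence itself Banach, and $M\twoheadrightarrow f(M)$ is a continuous surjection of Banach modules, so it is open by the open mapping theorem — which is the final assertion.
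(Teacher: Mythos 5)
Your proof is correct. Note that the paper does not actually prove this lemma: it is stated with a bare citation to \cite[Proposition II.4.2.2]{mor}, so there is no in-text argument to compare against; what you have written is a sound, self-contained substitute that follows the standard line of reasoning (the same one used for affinoid algebras in \cite[3.7.3]{BGR}). Your reduction of everything to the closedness statement $(\star)$ is the right organizing principle, and each step checks out: the geometric series argument shows units are open, so a dense ideal is the unit ideal; the induction via the last-coordinate projection correctly propagates density to $V\cap A^{r-1}$ (since $p(v_n)\to 0$ forces $v_n-p(v_n)u\to x$); and the passage from dense submodules to arbitrary ones correctly isolates the single use of noetherianity (finite generation of $\overline N$) together with the open mapping theorem applied to $A^m\twoheadrightarrow\overline N$, which is legitimate because $\overline N$ is closed in $A^r$ and hence complete. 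The remaining deductions --- existence via the residue norm on $A^r/\ker\pi$ (a Banach module since the kernel is closed), continuity via Lemma \ref{autcont}, uniqueness from two-sided continuity of the identity, closedness of $f(M)$ by pulling back along an open quotient $q\colon A^s\to N$ and using saturation of $q^{-1}(f(M))$, and strictness from the open mapping theorem applied to $M\twoheadrightarrow f(M)$ --- are all correctly assembled and use only results available at this point in the paper.
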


Our aim now is to study compact operators on ONable Banach $A$-modules over a noetherian Banach--Tate ring $A$. In particular, we want to show that a compact endomorphism has a \emph{characteristic power series}. 
\begin{rem} \begin{itemize}
\item A natural proof strategy for statements about compact operators is to first prove the statement for finite rank operators and then pass to the limit. In these situations one often has to choose a finitely generated submodule $P\subset M$ of some Banach-module $M$ and one would like to have that such a $P$ is automatically complete. The noetherian assumption on the Banach--Tate ring guarantees this (cf.\ Lemma \ref{2.3} below).
\item  In \cite[Section 3.1]{bellaiche}, Bellaïche relaxes the noetherian assumption. He considers Banach $K$-algebras $A$ with the property that every finitely generated submodule of an ONable Banach $A$-module is closed (\cite[Hypothesis 3.1.8]{bellaiche}). This is enough to prove part (3) of Lemma \ref{2.3} below (cf.\ \cite[Lemma 3.1.12]{bellaiche}), which is all that is needed to construct the characteristic power series. We can pass to this more general setting here as well, (i.e., assume that our Banach--Tate ring $A$ satisfies Hypothesis 3.1.8.) and all results of this section still hold. We have decided to work with noetherian $A$ here, as we will anyways do so from Section \ref{sec:specvar} onwards. 
\end{itemize} 
\end{rem}

Let $M$ be an ONable Banach $A$-module with ON basis $\{e_i: i \in I\}$. If $S\subseteq I$ is a finite subset, then we define $A^S$ to be the submodule $\oplus_{i\in S} Ae_i$ of $M$. 
Consider the projection 
\[\pi_S\colon M\rightarrow A^S, \ \sum_{i\in I} a_ie_i \mapsto \sum_{i\in S} a_ie_i.\]
Note that this is norm-decreasing onto a closed subspace of $M$. We have the following lemma.
\begin{lem}\cite[Lemma 2.3]{buz} \label{2.3} Let $M$ be an ONable Banach $A$-module with basis \linebreak $\{e_i: i\in I\}$ and let $P$ be a finite submodule of $M$. 
\begin{enumerate}
\item There is a finite subset $S\subseteq I $ such that $\pi_S\colon M \rightarrow A^S$ is injective when restricted to $P$.
\item $P$ is a closed subset of $M$ and hence is complete.
\item For all $\epsilon >0$ there is a finite set $T\subseteq I$ such that for all $p\in P$ we have \linebreak $||\pi_T(p)-p|| \leq \epsilon ||p||$. 
\end{enumerate}
\end{lem}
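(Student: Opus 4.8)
The plan is to prove the three parts in order, using the noetherian hypothesis on $A$ at the key steps through Lemma \ref{om} (unique Banach norm on finite modules, closedness of images, openness onto the image).

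\emph{Part (1).} I would argue that the family of projections $\pi_S$ separates points of $M$: if $m=\sum_{i\in I}a_ie_i$ is non-zero, some $a_{i_0}\neq 0$, so $\pi_{\{i_0\}}(m)\neq 0$. Now pick any finite generating set $p_1,\dots,p_k$ of $P$. For each pair $p_a\neq p_b$ there is a finite $S_{a,b}$ with $\pi_{S_{a,b}}(p_a-p_b)\neq 0$; but this only handles finitely many differences, not all of $P$. The right move is instead the following: by Lemma \ref{om} the inclusion $P\hookrightarrow M$ is a homeomorphism onto its (closed) image, and $P$ is a finite $A$-module. Consider the composite $P\to M\xrightarrow{\pi_S}A^S$. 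As $S$ grows, $\pi_S\to \mathrm{id}_M$ pointwise on $M$, and on the finitely many generators $p_1,\dots,p_k$ we have $\pi_S(p_j)\to p_j$. I want to promote this to injectivity on all of $P$. Here is the cleaner approach: the map $\ker(\pi_S|_P)$ is a decreasing (as $S$ increases) family of finite submodules of $P$; since $A$ is noetherian, $P$ is noetherian, so this decreasing family stabilizes — but a decreasing family need not reach $0$. Better: take $S$ large enough that $\pi_S$ is injective on the finite-dimensional-like "skeleton" by combining with part (3). Honestly the slick proof is: choose $S$ so that, writing each generator $p_j=\sum_i a_{j,i}e_i$, the submodule of $A^S$ generated by $\{\pi_S(p_j)\}$ has the same "rank data" as $P$ — concretely, apply part (3) with $\epsilon<1$ to get a finite $T$ with $\|\pi_T(p)-p\|\le \epsilon\|p\|<\|p\|$ for all $p\neq 0$, which forces $\pi_T(p)\neq 0$, i.e. $\pi_T|_P$ is injective; take $S=T$. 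So I would actually prove (3) first, then deduce (1).

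\emph{Part (2).} Once $S$ is chosen as in (1), $\pi_S|_P\colon P\to A^S$ is an injective $A$-linear map of finite Banach $A$-modules (note $A^S\cong A^{|S|}$ carries its canonical Banach structure), so by Lemma \ref{om} its image is closed in $A^S$ and the map $P\to \pi_S(P)$ is a topological isomorphism; in particular $P$ with the subspace topology from $M$ is complete, hence closed in $M$ (a complete subspace of a Hausdorff — indeed metric — space is closed).

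\emph{Part (3) — the main obstacle.} This is the crux. Fix $\epsilon>0$. By part (1) choose a finite $S\subseteq I$ with $\pi_S|_P$ injective; by Lemma \ref{om} the induced map $P\to A^S$ is a homeomorphism onto a closed submodule, so $\pi_S|_P$ has a continuous inverse $\sigma\colon \pi_S(P)\to P$, i.e. there is a constant $C>0$ with $\|p\|\le C\,\|\pi_S(p)\|$ for all $p\in P$ (using that $A^S$ has, up to bounded equivalence — here is where one must be careful about "changing the norm" in the Banach--Tate setting, but $A^S$ is a finite free module so Lemma \ref{om} applies cleanly). Now write $p=\sum_{i\in I}a_ie_i$; then $p-\pi_T(p)=\sum_{i\notin T}a_ie_i$, so $\|p-\pi_T(p)\|=\max_{i\notin T}|a_i|$, and since $\|p\|=\max_{i\in I}|a_i|\ge \max_{i\notin S}|a_i|$, choosing $T\supseteq S$ gives $\|p-\pi_T(p)\|=\max_{i\notin T}|a_i|\le \max_{i\notin S}|a_i|$. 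The point is to bound $\max_{i\notin S}|a_i|$ by $\epsilon\|p\|$ uniformly over $p\in P$. The honest argument: I claim for any $\delta>0$ there is a finite $T$ with $\max_{i\notin T}|a_i|\le \delta\|\pi_S(p)\|$ for all $p\in P$. Reduce to the unit sphere — but $A$ need not be locally compact, so compactness arguments fail; instead use that $P$ is \emph{finite} over $A$: pick generators $p_1,\dots,p_k$, write a general $p=\sum_m c_m p_m$ with $c_m\in A$, and note $a_i=\sum_m c_m a_{m,i}$ where $p_m=\sum_i a_{m,i}e_i$. Since each $p_m\in M$, for each $m$ we have $\lim_{i\to\infty}a_{m,i}=0$, so there is a finite $T$ (depending only on $\epsilon$ and the finitely many $p_m$) with $|a_{m,i}|\le \epsilon'$ for all $i\notin T$ and all $m$, giving $|a_i|\le \epsilon'\max_m|c_m|$ for $i\notin T$. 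Finally bound $\max_m|c_m|$ in terms of $\|p\|$: the map $A^k\to P$, $(c_m)\mapsto\sum c_m p_m$, is a surjection of finite Banach $A$-modules, hence open by Lemma \ref{om}, so there is $C'>0$ with a choice of $(c_m)$ satisfying $\max_m|c_m|\le C'\|p\|$. Taking $\epsilon'=\epsilon/C'$ yields $\|p-\pi_T(p)\|\le\max_{i\notin T}|a_i|\le \epsilon\|p\|$, completing (3). The delicate points throughout are the uses of the open mapping theorem / Lemma \ref{om} to trade abstract finiteness for topological control, and making sure all norm comparisons on the finite modules $A^S$, $A^k$ are legitimate in the Banach--Tate (non-field) setting.
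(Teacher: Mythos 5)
There is a genuine circularity in your proposed logical ordering. You conclude your discussion of part (1) by saying you would "prove (3) first, then deduce (1)" (via $\|\pi_T(p)-p\|\le\epsilon\|p\|<\|p\|$ for $\epsilon<1$), but your proof of (3) opens with "By part (1) choose a finite $S$\dots" and, more importantly, its essential step --- bounding $\max_m|c_m|$ by $C'\|p\|$ via the open mapping theorem applied to $A^k\to P$ --- requires $P$ to be a Banach $A$-module for the norm induced from $M$, i.e.\ it requires part (2), which in turn requires part (1). (Giving $P$ its canonical Banach norm from Lemma \ref{om} does not help here: the inequality you need, $\max_m|c_m|\lesssim\|p\|_M$, is precisely the nontrivial direction of the equivalence of the canonical norm with the subspace norm, which is the content of (2).) None of your earlier attempts at (1) closes the gap: separating points only handles one element at a time, and the decreasing family of kernels $\ker(\pi_S|_P)$ need not stabilize, as you note, since noetherian gives ACC rather than DCC.

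The paper's proof of (1) is direct, purely algebraic, and is where the noetherian hypothesis actually enters: writing the generators as $m_k=\sum_i a_{k,i}e_i$, one forms for each $i\in I$ the vector $v_i=(a_{k,i})_{1\le k\le r}\in A^r$; the submodule of $A^r$ generated by all the $v_i$ is finitely generated, hence generated by $\{v_i:i\in S\}$ for some finite $S$. If $\pi_S(\sum_k b_km_k)=0$ then $\sum_k b_ka_{k,i}=0$ for all $i\in S$, and since every $v_j$ is an $A$-linear combination of the $v_i$ with $i\in S$, the same relation holds for all $i\in I$, so $\sum_kb_km_k=0$. With (1) in hand, your parts (2) and (3) are essentially the paper's arguments, with one caveat in (2): you cannot apply Lemma \ref{om} directly to $\pi_S|_P$ with $P$ carrying the subspace norm, since at that point $P$ is not yet known to be a Banach module; one must first equip $P$ with its canonical Banach norm $Q$, use Lemma \ref{autcont} to get continuity of $Q\to M$, identify $\pi_S(P)$ with $Q$ as a closed submodule of $A^S$, and then deduce equivalence of the two norms on $P$. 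Your part (3) computation (coefficients of the generators tend to zero, plus openness of $A^k\to P$, with the implicit $\varpi$-rescaling to pass from "image of the unit ball contains a ball" to a uniform constant $C'$) is correct once (1) and (2) are established.
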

\begin{exer} Prove Lemma \ref{2.3} by checking that the proof in \cite{buz} goes through using Lemma \ref{om} above in the relevant places and substituting $\varpi$ for $\rho$. 
\end{exer}

\begin{prop} \label{2.4} Let $A$ be as above, i.e., a noetherian Banach--Tate ring. Let $M$ and $N$ be ONable Banach $A$-modules with ON bases $\{e_i: i\in I\}$ and $\{f_j: j\in J\}$. Let $\phi \colon M\rightarrow N$ be a continuous $A$-module homomorphism with matrix $(a_{i,j})$. Then $\phi$ is compact if and only if $\lim_{j \rightarrow \infty} \sup_{i\in I} |a_{i,j}|=0$. 
\end{prop}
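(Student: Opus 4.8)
The plan is to prove both implications by approximating $\phi$ with the finite-rank operators obtained by truncating columns of its matrix. For a finite subset $S \subseteq J$, let $\phi_S \colon M \to N$ be the operator with matrix coefficients $a_{i,j}$ for $j \in S$ and $0$ for $j \notin S$; equivalently $\phi_S = \pi_S \circ \phi$ where $\pi_S \colon N \to N$ is the norm-decreasing projection onto $\bigoplus_{j \in S} A f_j$. Since the image of $\phi_S$ lies in the finitely generated submodule $N^S = \bigoplus_{j\in S} A f_j$, each $\phi_S$ has finite rank. The entire proof then reduces to controlling $\|\phi - \phi_S\|$ in terms of the matrix, using Exercise \ref{coeff}(3), which identifies the operator norm of a homomorphism with the supremum of the absolute values of its matrix coefficients. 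Indeed $\phi - \phi_S$ has matrix $(a_{i,j})$ for $j \notin S$ and zero otherwise, so $\|\phi - \phi_S\| = \sup_{i \in I,\, j \notin S} |a_{i,j}|$.

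For the ``if'' direction, suppose $\lim_{j\to\infty} \sup_{i\in I} |a_{i,j}| = 0$. Set $c_j := \sup_{i\in I} |a_{i,j}|$, which is a genuine real number for each $j$ by Exercise \ref{coeff}(2). Given $\epsilon > 0$, the hypothesis means there are only finitely many $j$ with $c_j > \epsilon$; collecting these into a finite set $S_\epsilon \subseteq J$, we get $\|\phi - \phi_{S_\epsilon}\| = \sup_{i, \, j \notin S_\epsilon} |a_{i,j}| \leq \epsilon$. Hence $\phi$ is a limit of the finite-rank operators $\phi_{S_\epsilon}$ as $\epsilon \to 0$, so $\phi$ is compact by definition.

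For the ``only if'' direction, suppose $\phi$ is compact, and fix $\epsilon > 0$. Choose a finite-rank homomorphism $\psi \colon M \to N$ with $\|\phi - \psi\| \leq \epsilon$. The image of $\psi$ is contained in a finite submodule $P \subseteq N$. Here is where the noetherian hypothesis enters: by Lemma \ref{2.3}(3) applied to $P \subseteq N$, there is a finite set $T \subseteq J$ such that $\|\pi_T(p) - p\| \leq \epsilon \|p\|$ for all $p \in P$ (where $\pi_T$ is the truncation projection on $N$); in particular $\|\pi_T \circ \psi - \psi\| \leq \epsilon \|\psi\| \leq \epsilon(\|\phi\| + \epsilon)$, using $\pi_T$ norm-decreasing to bound $\|\psi\| \le \|\phi\| + \epsilon$. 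Now for any $j \notin T$ and any $i \in I$, the $(i,j)$ matrix coefficient of $\pi_T \circ \psi$ vanishes, so $|a_{i,j}|$ equals the absolute value of the $(i,j)$ coefficient of $\phi - \pi_T \circ \psi$, which is at most $\|\phi - \pi_T\circ\psi\| \leq \|\phi - \psi\| + \|\psi - \pi_T\circ\psi\| \leq \epsilon + \epsilon(\|\phi\| + \epsilon)$. Thus $\sup_{i\in I} |a_{i,j}| \leq \epsilon(1 + \|\phi\| + \epsilon)$ for all $j \notin T$, and since $\epsilon$ was arbitrary this shows $\lim_{j\to\infty}\sup_{i\in I} |a_{i,j}| = 0$.

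The main obstacle is the ``only if'' direction, specifically the step of replacing the arbitrary finite-rank approximant $\psi$ — whose image sits in an uncontrolled finite submodule $P$ — by a column-truncation $\pi_T \circ \psi$ whose matrix is visibly supported on finitely many columns. This is exactly what Lemma \ref{2.3}(3) is designed for, and it is the only place the noetherian assumption on $A$ is used; without it one cannot guarantee that $P$ is approximated well by the coordinate truncations. Everything else is bookkeeping with the operator-norm/matrix-coefficient dictionary of Exercise \ref{coeff}.
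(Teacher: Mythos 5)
Your proof is correct and follows essentially the same route as the paper: the ``if'' direction via the column truncations $\pi_S\circ\phi$, and the ``only if'' direction via Lemma \ref{2.3}(3) applied to the finite submodule containing the image of a finite-rank approximant, combined with the operator-norm/matrix-coefficient dictionary. The only cosmetic difference is that the paper first reduces to the case where $\phi$ itself has finite rank, whereas you keep the general compact $\phi$ and carry the extra error term $\epsilon(\|\phi\|+\epsilon)$ through, which also lets you skip the separate $\phi=0$ case.
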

\bigskip 

\begin{proof} (\cite[Prop.2.4]{buz} verbatim.) If the matrix of $\phi$ satisfies $\lim_{j \rightarrow \infty} \sup_{i\in I} |a_{i,j}|=0$, then for any $\epsilon >0$ there is a finite subset $S\subseteq J$ such that $||\phi-\pi_S \circ \phi|| \leq \epsilon$. This implies that $\phi$ is compact.\\
For the converse, first note that it suffices to show the result for operators of finite rank. If $\phi=0$ then the result is trivial, so assume $\phi \neq 0$ and $\phi(M)\subseteq P$ where $P\subseteq N$ is finitely generated. Then by part (3) of the previous lemma, for any $\epsilon >0$  we may choose a finite subset $T \subseteq J$ such that for all $p \in P$, $||\pi_T (p)-p|| \leq \epsilon ||p||/||\phi||$ and hence $||\pi_T\circ \phi -\phi|| \leq \epsilon$. Therefore $|a_{i,j}| \leq \epsilon $ if $j\notin T$ and as $\epsilon$ was arbitrary this implies the claim. 
\end{proof}

The following exercise is very important in the context of eigenvarieties. Here is why: When applying the eigenvariety machine to a given situation, one requirement is that one has a compact operator at hand. In practice, this is a Hecke operator (often called $U_p$) acting on some space of $p$-adic automorphic forms. In order to apply the machine one first has to check that $U_p$ indeed defines a compact operator. In most instances this is done by relating $U_p$ to some version of the following restriction map. 
\begin{exer} Consider the Banach $\Q_p$-algebra (the global functions on the closed disk of radius $p$)
\[\Q_p\langle pT\rangle = \left\{\sum_n a_n (pT)^n : \lim_{n\rightarrow \infty} a_n =0\right\}\subset \Q_p\llbracket T\rrbracket, \]
 equipped with the norm $||\sum_n a_n (pT)^n||= \max_n |a_n|$. 
Show that the ``restriction map''\footnote{The name comes from the geometric context: The unit disk $ \Spa(\Q_p\langle T \rangle, \Z_p\langle T \rangle)$ is contained in the disk $\Spa(\Q_p\langle pT\rangle, \Z_p\langle pT \rangle)$ of radius $p$ and this natural inclusion corresponds to the map $\mathrm{res}$, which we can think of as taking a function on the larger disk and restricting it to the smaller disk.} 
$$\mathrm{res}\colon \Q_p\langle pT\rangle \hookrightarrow \Q_p \langle T \rangle, \ \sum_n a_n (pT)^n \mapsto \sum_n a_n(pT)^n=\sum_n (a_n p^n) T^n$$ is a compact operator. (Here, as above, $\Q_p\langle T\rangle$ is equipped with the Gauß norm.)
\end{exer}

Now let $M$ be a Banach $A$-module with ON basis $\{e_i: i\in I\}$. Let $\phi\colon M\rightarrow M$ be a compact endomorphism with matrix $(a_{i,j})_{i,j\in I}$. If $S$ is a finite subset of $I$, let $c_S:= \sum_{\sigma \colon S\rightarrow S} \mathrm{sgn}(\sigma)\prod_{i\in S} a_{i,\sigma(i)}$, where the sum ranges over all permutations of $S$. For $n\geq 0$ define 
\[c_n:= (-1)^n\sum_S c_S,\]
where the sum is over all subsets $S\subseteq I$ of size $n$.\footnote{Note that $c_0= c_\emptyset =1$.} This converges in $A$ by \linebreak Proposition \ref{2.4}. 
\begin{defn} Let $\phi \colon M\rightarrow M$ be a compact endomorphism as above. 
We define the \emph{characteristic power series} (also called \emph{Fredholm determinant}) of $\phi$ as 
\[\det(1-X\phi):= \sum_{n\geq 0} c_n X^n \ \in A\llbracket X\rrbracket.\]
\end{defn}
\begin{rem}
Note that when $M$ is a finite-dimensional vector space over a field $A=K$, then $\det(1-X\phi)$ agrees with the algebraically defined determinant. 
\end{rem}
\begin{prop} Let $\phi\colon M \rightarrow M$ be a compact operator on an ONable Banach \linebreak $A$-module $M$ with ON basis $\{e_i: i\in I\}$. The characteristic power series $$\det(1-X\phi):= \sum_n c_n X^n \ \in A\llbracket X\rrbracket $$ of $\phi$ defined above converges for all $a\in A$. 
\end{prop}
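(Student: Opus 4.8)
The statement asserts that the formal power series $\det(1-X\phi) = \sum_n c_n X^n$ has infinite radius of convergence, i.e.\ for every $a \in A$ the series $\sum_n c_n a^n$ converges in $A$. Since $A$ is complete, it suffices to show $\lim_{n\to\infty} \|c_n a^n\| = 0$, and because $\|c_n a^n\| \le \|c_n\|\,|a|^n$, it is enough to obtain a bound on $\|c_n\|$ that decays superexponentially. The plan is to estimate $\|c_n\|$ directly from the matrix coefficients $(a_{i,j})$ of $\phi$ using compactness via Proposition \ref{2.4}.

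First I would record the elementary bound coming from the ultrametric inequality: since $c_n = (-1)^n \sum_{|S|=n} c_S$ with $c_S = \sum_{\sigma\colon S\to S}\mathrm{sgn}(\sigma)\prod_{i\in S} a_{i,\sigma(i)}$, and $\|a_{i,\sigma(i)}\|$ is controlled by the column suprema, we have $\|c_S\| \le \prod_{i\in S}\big(\sup_{k} |a_{k,i}|\big)$ — wait, more precisely $\|c_S\| \le \prod_{j\in S}\big(\sup_{i\in I}|a_{i,j}|\big)$ after reorganizing the product over columns. Set $b_j := \sup_{i\in I}|a_{i,j}|$. By Proposition \ref{2.4}, compactness of $\phi$ gives $b_j \to 0$ as $j\to\infty$, so we may enumerate (the countably many nonzero, or all) columns so that $b_1 \ge b_2 \ge \cdots$ is a nonincreasing null sequence, with $b_j \le 1$ after scaling if desired; in any case $b_j$ is bounded, say by $C$. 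Then $\|c_n\| \le \max_{|S|=n}\prod_{j\in S} b_j \le b_1 b_2 \cdots b_n$, the product of the $n$ largest column suprema, since any size-$n$ subset has product of $b_j$'s at most that of the top $n$.

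The key point is then that $\beta_n := b_1 b_2 \cdots b_n$ satisfies $\lim_n \beta_n^{1/n} = 0$: given $\epsilon > 0$, choose $N$ with $b_j < \epsilon$ for all $j > N$; then for $n > N$, $\beta_n \le (b_1\cdots b_N)\,\epsilon^{\,n-N} = \epsilon^n \cdot \big((b_1\cdots b_N)\epsilon^{-N}\big)$, a constant times $\epsilon^n$, so $\limsup_n \beta_n^{1/n} \le \epsilon$, and since $\epsilon$ was arbitrary the limit is $0$. Consequently $\|c_n a^n\| \le \beta_n |a|^n$, and $\limsup_n (\beta_n |a|^n)^{1/n} = |a|\cdot\limsup_n \beta_n^{1/n} = 0 < 1$, so $\|c_n a^n\| \to 0$ and the series converges. (One should also remark that $\beta_n$ is well-defined even if only finitely many $b_j$ are nonzero, in which case $\beta_n = 0$ for large $n$ and convergence is trivial — so WLOG infinitely many columns are nonzero.)

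I do not anticipate a serious obstacle here: the whole argument is the standard Hadamard/Weierstrass majorization, with the ultrametric triangle inequality making the combinatorics cleaner than the archimedean case. The one spot requiring a little care is the bookkeeping that reduces $\|c_S\|$ for an arbitrary $n$-element $S \subseteq I$ to the product of column suprema $\prod_{j\in S} b_j$ and then to the top-$n$ product $\beta_n$ — one must check that reindexing the permutation product $\prod_{i\in S} a_{i,\sigma(i)}$ by columns rather than rows is legitimate (it is, since $\sigma$ is a bijection of $S$, so $\{\sigma(i) : i\in S\} = S$), and that taking suprema over $i\in I$ rather than $i\in S$ only weakens the bound. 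None of this is deep; it is the content of the convergence claim already asserted (with the same justification) right after the definition of the $c_n$.
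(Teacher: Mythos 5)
Your proposal is correct and follows essentially the same route as the paper's proof: bound $|c_n|$ by the product $r_1\cdots r_n$ of the $n$ largest column suprema $\sup_{i}|a_{i,j}|$ (which form a null sequence by Proposition \ref{2.4}), and then observe that this product decays superexponentially. Your explicit $\epsilon$-argument for $\limsup_n (b_1\cdots b_n)^{1/n}=0$ is just the written-out justification of the paper's final step that $(r_1\alpha)\cdots(r_n\alpha)\to 0$, so there is nothing further to add.
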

\begin{proof} Let $r_1,r_2,\dots$ be the sequence of real numbers obtained from ordering the set $\{r_j(\phi):= \sup_{i\in I}|a_{i,j}|: j\in I\}$ decreasingly, so by Proposition \ref{2.4}, we have \linebreak $\lim_{k\rightarrow \infty} r_k= 0$. If $S\subseteq I$ is a subset of size $n$, then each product $a_{S,\sigma}:= \prod_{i\in S} a_{i,\sigma(i)}$ satisfies $|a_{S,\sigma}|\leq r_1 \cdots r_n$ and hence $|c_n|\leq r_1 \cdots  r_n$. If $\alpha \in \mathbb{R}_{> 0}$ is any positive real number, then 
\[|c_n|\alpha^n \leq (r_1\alpha ) \cdots (r_n\alpha). \]
As the $r_i\alpha$ tend to zero, so does $|c_n|\alpha^n$, which shows the claim. 
\end{proof}

In particular we see that the characteristic power series belongs to the ring of entire power series which is defined as follows.
\begin{defn} The ring $A\{\{X\}\}$ of \emph{entire power series} is defined as
\[ A\{\{X\}\}:= \left\{\sum_n a_n X^n \in A\llbracket X\rrbracket : \ \ \forall r\in \mathbb{Z},  \lim_{n\rightarrow \infty} a_n \varpi^{rn} = 0 \right\}.
\]
\end{defn}
\begin{exer}
Check that this definition does not depend on the choice of $\varpi$. 
\end{exer}

From the definition of the characteristic power series, it is a priori not clear that it is independent of the choice of ON basis. Verifying this is slightly tedious but important. In a similar spirit, we would like to know that the definition of the characteristic power series does not depend on the choice of an equivalent norm on $M$ (and even on $A$). The rest of this subsection is concerned with checking these technical properties. For this we need a lemma. 

\begin{lem}\label{2.5} Let $M$ be an ONable Banach $A$-module with ON basis $\{e_i : i\in I\}$.
\begin{enumerate}
\item If $\phi_n\colon M \rightarrow M, n= 1, 2, \dots$ is a sequence of compact operators that converges to a compact operator $\phi$, then $\lim_n \det(1-X\phi_n)= \det (1-X\phi)$, uniformly in the coefficients (in the norm of $A$).
\item If $\phi\colon M\rightarrow M $ is compact, and if the image of $\phi$ is contained in $P:= \bigoplus_{i\in S} A e_i$ for $S$ a finite subset of $I$, then $\det( 1-X\phi)= \det(1-X\phi|_P)$, where the right side is the usual algebraically-defined determinant.
\item (Strengthening of (2).) If $\phi \colon M\rightarrow M $ is compact and if the image of $\phi$ is contained in an arbitrary submodule $Q$ of $M$ which is free of finite rank, then $\det(1-X\phi)= \det(1-X\phi|_Q)$, where again the right side is the usual algebraically-defined determinant. 
\end{enumerate}
\end{lem}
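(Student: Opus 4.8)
The plan is to establish the three parts in order: (1) by a direct estimate on matrix coefficients, (2) by combinatorial bookkeeping, and (3) by approximating $\phi$ by operators of the type covered by (2). Throughout I write matrices with respect to the fixed ON basis $\{e_i:i\in I\}$ and use the notation $r_j(\phi)=\sup_i|a_{ij}|$ from the proof that the characteristic power series converges.

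For (1), I would argue purely with matrices. Let $\phi,\psi\colon M\to M$ be compact with matrices $(a_{ij})$ and $(a_{ij}+b_{ij})$, so $\sup_{ij}|b_{ij}|=\|\psi-\phi\|=:\delta$. Expanding each product $\prod_{i\in S}(a_{i\sigma(i)}+b_{i\sigma(i)})$ multilinearly and subtracting the all-$a$ term leaves only terms containing at least one $b$-factor; each $b$-factor is bounded by $\delta$, while the column indices of the surviving $a$-factors are \emph{distinct} elements of $S$, so their product is at most a product of that many of the largest $r_j(\phi)$. The ultrametric inequality then gives $|c_S(\psi)-c_S(\phi)|\le\max_{1\le k\le|S|}\delta^k r_1(\phi)\cdots r_{|S|-k}(\phi)$, hence $|c_n(\psi)-c_n(\phi)|\le\max_{1\le k\le n}\delta^k r_1(\phi)\cdots r_{n-k}(\phi)$, where $r_1(\phi)\ge r_2(\phi)\ge\cdots\to 0$ by Proposition \ref{2.4}. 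The decisive point is that $R(\phi):=\sup_{m\ge 0}r_1(\phi)\cdots r_m(\phi)$ is \emph{finite} — only finitely many $r_j(\phi)$ exceed $1$ — so for $\delta\le 1$ one obtains $|c_n(\psi)-c_n(\phi)|\le\delta\,R(\phi)$ \emph{for all $n$ simultaneously}. Applying this with $\psi=\phi_n$ gives the claimed uniform convergence of the coefficients.

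For (2), the hypothesis $\phi(M)\subseteq P=\bigoplus_{i\in S}Ae_i$ forces $a_{ij}=0$ for $j\notin S$, and also $\phi(P)\subseteq\phi(M)\subseteq P$, so $\phi|_P$ is an honest endomorphism of the finite free module $P$ with matrix $(a_{ij})_{i,j\in S}$. In the definition of $c_n(\phi)$, any $n$-subset $T\subseteq I$ not contained in $S$ contributes $c_T(\phi)=0$, because every permutation $\sigma$ of $T$ produces a factor $a_{i,\sigma(i)}$ whose column index $\sigma(i)\in T\setminus S$, hence $0$. Therefore $c_n(\phi)=(-1)^n\sum_{T\subseteq S,\,|T|=n}c_T(\phi)$, which is exactly the coefficient of $X^n$ in $\det(I-X(a_{ij})_{i,j\in S})=\det(1-X\phi|_P)$. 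So (2) is immediate once the vanishing is noticed.

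For (3) — the part I expect to be the real work — I would approximate. Note first that $\phi(Q)\subseteq\phi(M)\subseteq Q$ and that $Q$ is closed, hence complete, by Lemma \ref{om}, so $\phi|_Q$ makes sense. By Lemma \ref{2.3}(1) fix a finite $S\subseteq I$ on which $\pi_S$ is injective; any finite $T\supseteq S$ is then also injective on $Q$, and if moreover $\|\pi_T(q)-q\|\le\epsilon\|q\|$ for all $q\in Q$ with $\epsilon<1$ (possible for $T$ large by Lemma \ref{2.3}(3), and not destroyed by enlarging $T$), the ultrametric inequality shows that $\pi_T|_Q\colon Q\xrightarrow{\sim}\pi_T(Q)=:Q_T\subseteq A^T$ is an isometry. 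Put $\psi_T:=\pi_T\circ\phi$; its image lies in the coordinate subspace $A^T=\bigoplus_{i\in T}Ae_i$, so part (2) gives $\det(1-X\psi_T)=\det(1-X\psi_T|_{A^T})$, and since $\psi_T(A^T)\subseteq Q_T$ with $Q_T$ being $\psi_T$-stable, finite-dimensional block-triangularity reduces this to $\det(1-X\psi_T|_{Q_T})$; transporting along the isomorphism $\pi_T|_Q$ (which does not change a finite-dimensional determinant) turns $\psi_T|_{Q_T}$ into $\Psi_T:=\phi\circ(\pi_T|_Q)\colon Q\to Q$, using that $(\pi_T|_Q)^{-1}\circ\pi_T$ is the identity on $Q\supseteq\phi(M)$. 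Now let $\epsilon\to 0$ along such $T$: one has $\|\psi_T-\phi\|\le\epsilon\|\phi\|\to 0$, so part (1) gives $\det(1-X\psi_T)\to\det(1-X\phi)$ coefficientwise; and $\|\Psi_T-\phi|_Q\|\le\epsilon\|\phi\|\to 0$, so — bounding matrix entries relative to a fixed basis of $Q$ by the operator norm (Lemma \ref{om}) and using continuity of the finite-dimensional determinant — $\det(1-X\Psi_T)\to\det(1-X\phi|_Q)$ coefficientwise. Since $\det(1-X\psi_T)=\det(1-X\Psi_T)$ for every such $T$, the two limits coincide, giving $\det(1-X\phi)=\det(1-X\phi|_Q)$. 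The main obstacle is exactly this last step: keeping straight which operator acts on which module and verifying the elementary but fiddly norm estimates (that $\pi_T|_Q$ is an isometry, that $\psi_T$ and $\Psi_T$ approximate $\phi$ and $\phi|_Q$, and that matrix entries are controlled by operator norms); parts (1) and (2) are short by comparison.
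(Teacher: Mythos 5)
Your argument is correct and follows essentially the same route as the paper, which for parts (2) and (3) simply defers to Buzzard's Lemma 2.5(b),(c): part (1) is Serre's coefficientwise estimate via the finiteness of $\sup_m r_1(\phi)\cdots r_m(\phi)$, part (2) is the vanishing of $c_T$ for $T\not\subseteq S$, and part (3) approximates $\phi$ by $\pi_T\circ\phi$, applies (2), transports to $Q$, and passes to the limit on both sides.

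One justification in part (3) should be repaired, though the claim it supports is true: you reduce $\det(1-X\psi_T|_{A^T})$ to $\det(1-X\psi_T|_{Q_T})$ by ``block-triangularity,'' but $Q_T=\pi_T(Q)$ is merely a finite free submodule of $A^T$ and need not be a direct summand (over a Banach--Tate ring that is not a field, e.g.\ the ring of Example \ref{tatenofield}, a free rank-one submodule such as $pA\subset A$ admits no complement), so one cannot choose a basis of $A^T$ adapted to $Q_T$. The correct justification is the standard identity $\det(1-Xuv)=\det(1-Xvu)$ for maps $u\colon F\to G$, $v\colon G\to F$ between finite free modules, applied to $\psi_T|_{A^T}=\iota\circ\psi_T'$ with $\psi_T'\colon A^T\to Q_T$ the corestriction and $\iota\colon Q_T\hookrightarrow A^T$ the inclusion; this is in fact the same trick Buzzard uses at this point. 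The remaining estimates (isometry of $\pi_T|_Q$ for $\epsilon<1$, $\|\psi_T-\phi\|\le\epsilon\|\phi\|$, $\|\Psi_T-\phi|_Q\|\le\epsilon\|\phi\|$, and control of matrix entries via Lemma \ref{om}) are all as in the source.
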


\begin{proof} Proof of (1) (cf. \cite[Prop.8]{serre}): Let $(a_{i,j})$ and $(a_{i,j}^{(n)})$ be the matrices of $\phi$ and $\phi_n$ and put 
\[\det(1-X\phi)= \sum_m c_mX^m, \ \ \det(1-X\phi_n)= \sum_m c_m^{(n)}X^m.\]
Let $\varepsilon \in \R$ be such that $0<\varepsilon <1$. As before let $\{r_j(\phi):= \sup_{i\in I}|a_{i,j}|: j\in I\}$ for $j\in I$ and let $r_1,\dots, r_h$ be the subset of  $\{r_j(\phi):j\in I\}$ of elements $>1$. Fix a real number $\eta \in \mathbb{R}_{>0}$ such that 
\[\eta r_1\cdots r_h \leq \varepsilon.\]
Let $n$ be such that $||\phi-\phi_n|| \leq \eta$. Then $|a_{i,j}-a^{(n)}_{i,j}| \leq \eta$ for all $i,j$ (see Exercise \ref{coeff} and the paragraph after). Hence $r_j(\phi_n)=r_j(\phi)$ if $r_j(\phi)>\eta$ and $r_j(\phi_n)\leq \eta$ otherwise. For a finite set $S\subseteq I$, let us now consider
\[a_{S,\sigma}-a_{S,\sigma}^{(n)}= \prod_{i\in S} a_{i,\sigma(i)} -\prod_{i\in S} a_{i,\sigma(i)}^{(n)}.  \]
Rewriting this expression, one can deduce that 
\[|a_{S,\sigma}-a_{S,\sigma}^{(n)}| \leq \eta \sup_{i\in S} \prod_{j\in S, j\neq i} \sup(r_j(\phi),r_j(\phi_n)).\]
With the inequalities above and the fact that $\eta <1$ one sees that 
$|a_{S,\sigma}-a_{S,\sigma}^{(n)}|\leq \eta r_1\cdots r_h \leq \varepsilon$. Taking the sum over all $S$ we conclude that 
\[|c_m-c_m^{(n)}|\leq \varepsilon, \]
for all m.\\
For the proof of (2) and (3), the reader can copy the proof of \cite[Lemma 2.5 (b),(c)]{buz} verbatim, replacing Lemma 2.3 in loc.cit by Lemma \ref{2.3} above.
\end{proof}

\begin{prop} Let $(M,||\cdot||)$ and  $(M,||\cdot||')$ be Banach $(A,|\cdot|)$-modules. 
Suppose that the norms $||\cdot||$ and $ ||\cdot||'$ are equivalent on $M$, i.e., they induce the same topology and that they both turn $M$ into an ONable Banach $A$-module.
Let $\phi\colon M\rightarrow M $ be an $A$-linear endomorphism. Then $\phi $ is compact with respect to $||\cdot||$ if and only if it is compact with respect to $||\cdot||'$ and furthermore if $\{e_i: i \in I\}$ and $\{ f_j : j \in J\}$ are ON bases for $(M, ||\cdot||)$ and $(M, ||\cdot||')$ respectively, then the characteristic power series $\det(1-X\phi)$ with respect to these bases coincide. 
\end{prop}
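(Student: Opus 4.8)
The plan is to handle the two assertions in turn, reducing everything to finite-rank operators and to the purely algebraic determinant, which sees neither the norm on $M$ nor the choice of ON basis. For the first assertion, observe that being a \emph{homomorphism} (continuous $A$-linear map) and being of \emph{finite rank} (image contained in a finitely generated $A$-submodule) are properties of the topology and the $A$-module structure of $M$ alone; since $||\cdot||$ and $||\cdot||'$ define the same topology, the finite-rank homomorphisms $M\to M$ are the same set for both. Applying Lemma \ref{contbound} to the two identity maps between $(M,||\cdot||)$ and $(M,||\cdot||')$ shows the two norms are bounded-equivalent, and then the computation in Remark \ref{eqnorms} shows the induced operator norms on $\Hom_A(M,M)$ are bounded-equivalent as well, hence define the same topology on $\Hom_A(M,M)$. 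Since compactness is membership in the closure of the finite-rank homomorphisms, it follows immediately that $\phi$ is compact for $||\cdot||$ if and only if it is compact for $||\cdot||'$.

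Assume now that $\phi$ is compact (hence for both norms). The next step is to produce one sequence of finite-rank approximations that works simultaneously for the two ON bases. By Proposition \ref{2.4}, the matrix $(a_{i,j})$ of $\phi$ with respect to $\{e_i:i\in I\}$ satisfies $\lim_{j\to\infty}\sup_{i}|a_{i,j}|=0$, so for each $n\geq 1$ one can choose a finite subset $S_n\subseteq I$ with $||\pi_{S_n}\circ\phi-\phi||\leq 1/n$. Put $\phi_n:=\pi_{S_n}\circ\phi$. Then $\phi_n$ is a finite-rank operator whose image is contained in $Q_n:=\bigoplus_{i\in S_n}Ae_i$, and $Q_n$ is a submodule of $M$ that is \emph{free of finite rank regardless of the norm} (the $e_i$ being $A$-linearly independent). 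Because the operator norms on $\Hom_A(M,M)$ are bounded-equivalent, we also get $||\phi_n-\phi||'\to 0$.

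The endgame is two applications each of Lemma \ref{2.5}. For a fixed $n$, Lemma \ref{2.5}(3) with $Q=Q_n$, applied once in $(M,||\cdot||)$ with basis $\{e_i\}$ and once in $(M,||\cdot||')$ with basis $\{f_j\}$, identifies both characteristic power series of $\phi_n$ with the ordinary determinant $\det(1-X\,\phi_n|_{Q_n})$ of an endomorphism of the finite free module $Q_n$ — an object involving no norm — so they agree. Then Lemma \ref{2.5}(1), applied in $(M,||\cdot||)$ using $||\phi_n-\phi||\to 0$ and in $(M,||\cdot||')$ using $||\phi_n-\phi||'\to 0$, gives $\det(1-X\phi)=\lim_n\det(1-X\phi_n)$ coefficientwise in the (fixed) norm of $A$, with respect to $\{e_i\}$, and likewise with respect to $\{f_j\}$. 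Since the $n$-th terms agree for every $n$, the limits agree, which is the assertion. In the special case $||\cdot||'=||\cdot||$ this also yields independence of the characteristic power series from the choice of ON basis, as promised above.

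The step I expect to be the crux is precisely the choice of approximating operators: one must approximate $\phi$ not by arbitrary finite-rank operators but by the particular $\phi_n=\pi_{S_n}\circ\phi$, so that the intermediate module $Q_n=A^{S_n}$ is at once a free finite-rank submodule visible from both ON bases. This is what allows the norm-free algebraic determinant to serve as the common value through which the two characteristic power series are compared; everything else is routine bookkeeping with Remark \ref{eqnorms} and Lemma \ref{2.5}.
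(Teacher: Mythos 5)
Your proposal is correct and follows essentially the same route as the paper: use Remark \ref{eqnorms} to see that the two operator norms on $\Hom_A(M,M)$ are equivalent (settling compactness), then approximate $\phi$ by $\phi_n=\pi_{T_n}\circ\phi$, whose images lie in finite \emph{free} submodules, and conclude via Lemma \ref{2.5}, parts (3) and (1). Your write-up merely spells out in more detail the two applications of each part of Lemma \ref{2.5} that the paper's proof invokes in one line.
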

\begin{proof} Note that by Remark \ref{eqnorms}, the induced operator norms on $\Hom_A(M,M)$ are equivalent. The first claim then follows as the set of compact operators only depends on the topology of $\Hom_A(M,M)$. We further claim that $\phi$ can be written as the limit of operators $\phi_n$, which have image contained in \emph{free} modules of finite rank. 
Let $\varepsilon > 0$ and consider the matrix $(a_{i,j})$ of $\phi$ in the ON basis $\{e_i: i \in I\}$. Then just as in the proof of Proposition \ref{2.4} we may find a finite set $T\subseteq I$ such that $||\pi_T\circ\phi - \phi || \leq \varepsilon. $ Hence, for a suitable choice of sequence $(T_n)_n$ of finite subsets $T_n$ of $I$ we see that $\phi_n:=\pi_{T_n}\circ \phi$ converges to $\phi$ in either norm, i.e., $||\phi_n - \phi || \rightarrow 0$ and $\||\phi_n - \phi ||'\rightarrow 0$ as $n\to\infty$. By definition, the image of any $\phi_n$ is contained in a finite free submodule of $M$. Hence we can apply the previous lemma (Parts (1) and (3)) to $\phi= \lim \phi_n$ to deduce equality of the characteristic power series. 
\end{proof}

We want as much flexibility as possible in changing norms, so let us show something even more general: 
\begin{prop}\label{ind} Let $(M,||\cdot||)$ be a Banach $(A,|\cdot|)$-module and let $(M,||\cdot||')$ be a Banach $(A,|\cdot|')$-module. Suppose the norms $|\cdot|$ and $|\cdot|'$ are equivalent on $A$, that the norms $||\cdot||$ and $ ||\cdot||'$ induce the same topology on $M$ and $M$ is ONable with respect to both norms. 
Let $\phi\colon M\rightarrow M $ be an $A$-linear endomorphism. Then $\phi $ is compact with respect to $||\cdot||$ if and only if it is compact with respect to $||\cdot||'$ and furthermore if $\{e_i: i \in I\}$ and $\{ f_j : j \in J\}$ are ON bases for $(M, ||\cdot||)$ and $(M, ||\cdot||')$ respectively, then the characteristic power series $\det(1-X\phi)$ with respect to these bases coincide. 
\end{prop}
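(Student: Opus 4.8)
The plan is to reduce this statement to the previous proposition, where only the norm on $M$ (and not on $A$) was allowed to vary. The issue is precisely the one flagged in the discussion before Remark \ref{eqnorms}: since $|\cdot|$ and $|\cdot|'$ are merely equivalent norms on the Banach--Tate ring $A$ (inducing the same topology), they need not be bounded-equivalent, so one cannot immediately compare the matrix coefficients of $\phi$ measured in the two norms. The way around this is to not compare the two norms on $A$ directly, but instead to produce, starting from $(M,||\cdot||')$ over $(A,|\cdot|')$, a genuine ONable Banach module structure on $M$ \emph{over $(A,|\cdot|)$} and then invoke the previous proposition.

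First I would fix an ON basis $\{f_j : j\in J\}$ for $(M,||\cdot||')$ and use it to define a new norm $||\cdot||''$ on $M$ by declaring $||\sum_j a_j f_j||'' := \max_j |a_j|$, where now $|\cdot|$ (not $|\cdot|'$) is used on the coefficients. By construction $(M,||\cdot||'')$ is an ONable Banach $(A,|\cdot|)$-module with ON basis $\{f_j\}$, and the characteristic power series of $\phi$ computed with respect to $\{f_j\}$ inside $(M,||\cdot||'')$ literally has the same matrix coefficients $(b_{i,j})$ as the one computed inside $(M,||\cdot||')$ — the defining formulas for the $c_n$ only involve the elements $b_{i,j}\in A$ and the norm on $A$ enters only through convergence, which holds in either norm on $A$ since they induce the same topology (and $A\{\{X\}\}$ is the same set for either norm). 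Hence $\det(1-X\phi)$ with respect to $\{f_j\}$ is \emph{the same element of $A$} whether we think of $M$ as a module over $(A,|\cdot|)$ or over $(A,|\cdot|')$; moreover compactness of $\phi$ on $(M,||\cdot||')$ and on $(M,||\cdot||'')$ are equivalent, as the criterion in Proposition \ref{2.4}, $\lim_{j}\sup_i |b_{i,j}| = 0$, is again insensitive to the choice among topologically equivalent norms on $A$.

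Next I would check that the two norms $||\cdot||$ and $||\cdot||''$ on $M$ are equivalent in the sense of inducing the same topology on $M$. This is where the hypothesis that $||\cdot||$ and $||\cdot||'$ induce the same topology on $M$ is used, together with the fact that $||\cdot||'$ and $||\cdot||''$ induce the same topology on $M$ (both make $\{f_j\}$ an ON basis, and changing the norm on $A$ within its equivalence class does not change the topology on $c_A(J)$ — this is the content, in the module setting, of the exercise following equation (\ref{standnorm}), applied levelwise, or can be seen directly since a base of neighborhoods of $0$ in $c_A(J)$ is given by the $\{f : |f(j)| \le \delta \ \forall j\}$ and the $\delta$-balls for $|\cdot|$ and $|\cdot|'$ are cofinal in each other). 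Composing, $||\cdot||$ and $||\cdot||''$ induce the same topology on $M$. Now both are ONable Banach $(A,|\cdot|)$-module norms on $M$, so the previous proposition applies: $\phi$ is $||\cdot||$-compact iff it is $||\cdot||''$-compact, and the characteristic power series of $\phi$ with respect to $\{e_i\}$ (in $(M,||\cdot||)$) equals that with respect to $\{f_j\}$ (in $(M,||\cdot||'')$). Chaining this with the identification from the previous paragraph ($\det(1-X\phi)$ in $(M,||\cdot||'')$ over $(A,|\cdot|)$ equals $\det(1-X\phi)$ in $(M,||\cdot||')$ over $(A,|\cdot|')$), and the equivalence of the three notions of compactness, finishes the proof.

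The main obstacle is the subtle point already highlighted in the text: one must resist the temptation to compare matrix coefficients across the two norms on $A$ directly (which would require bounded-equivalence, not available), and instead route everything through the auxiliary norm $||\cdot||''$, which by construction has literally the same matrix coefficients as $||\cdot||'$ while living over $(A,|\cdot|)$. Once that reduction is in place the rest is bookkeeping with the results already established, chiefly Proposition \ref{2.4}, Remark \ref{eqnorms}, and the preceding proposition. One should also double-check the harmless assertion that $A\{\{X\}\}$ — and hence the statement that $\det(1-X\phi)$ is a well-defined element of it — does not depend on which norm in the equivalence class of $|\cdot|$ is used, which follows from the exercise after the definition of entire power series together with the fact that $\varpi$ remains a (multiplicative) pseudo-uniformizer for either norm.
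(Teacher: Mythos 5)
Your proposal is correct. It does, however, take a genuinely different route from the paper's proof. The paper disposes of the proposition in three lines: it observes that the two operator norms the hypotheses induce on $\Hom_A(M,M)$ define the same topology, because both agree with the norm-independent \emph{strong topology} (topology of bounded convergence), citing Schneider's book for that identification; the argument of the preceding proposition then goes through unchanged. You instead sidestep any discussion of operator topologies by manufacturing the auxiliary norm $||\cdot||''$ on $M$ — same ON basis $\{f_j\}$, but with coefficients measured in $|\cdot|$ rather than $|\cdot|'$ — so that $(M,||\cdot||'')$ is an ONable Banach $(A,|\cdot|)$-module with \emph{literally} the same matrix coefficients as $(M,||\cdot||')$, and then apply the preceding proposition to the pair $||\cdot||$, $||\cdot||''$ over the single normed ring $(A,|\cdot|)$. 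The two key observations you need — that the compactness criterion $\lim_{j}\sup_{i}|b_{i,j}|=0$ of Proposition \ref{2.4} and the values of the coefficients $c_n\in A$ (limits of convergent sums) are unchanged when $|\cdot|$ is replaced by a topologically equivalent norm — are both correct, precisely because topological equivalence of norms gives, for every $\epsilon>0$, a $\delta>0$ with $|a|\leq\delta\Rightarrow|a|'\leq\epsilon$ uniformly in $a$, even in the absence of bounded-equivalence. What each approach buys: the paper's is shorter but outsources the crux to an external result on the strong topology; yours is longer but entirely self-contained within the tools already developed in the section, and it isolates exactly where topological (as opposed to bounded) equivalence suffices, which is the subtlety the text warns about.
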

\begin{proof} This follows just like the last proposition, once we remark that given equivalent norms on $A$ and $M$ as in this proposition, the two resulting operator norms on $\Hom_A(M,M)$ define the same topology. This topology can be described independent of any norm, as the so called \emph{strong topology} or the \emph{topology of bounded convergence}. We refer to \cite[Chapter 1.6]{schneider}, in particular to Remark 6.7, where it is shown that the strong topology agrees with the topology of any operator norm. 
\end{proof}

We record the following lemma, which is useful in practice when changing modules. 
 \begin{lem}\label{2.7} If $M$ and $N$ are ONable Banach $A$-modules and if $\phi \colon M\rightarrow N$ is compact and $v\colon N \rightarrow M$ is continuous, then $\phi \circ v$ and $v \circ \phi$ are compact and $\det(1-X (\phi \circ v))= \det (1-X (v \circ \phi))$. 
 \end{lem}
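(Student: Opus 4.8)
The plan is to reduce the statement to the finite-rank case and then to a purely algebraic identity about square matrices, passing to the limit via Lemma \ref{2.5}(1). Both compactness claims are essentially formal: one notes that for $\phi\colon M\to N$ compact and $v\colon N\to M$ continuous, the composites $\phi\circ v$ and $v\circ\phi$ are limits of composites of finite-rank operators with bounded operators, and a finite-rank operator composed (on either side) with a continuous operator is again of finite rank. So the content is the equality of characteristic power series.

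**Key steps.** First I would choose ON bases $\{e_i:i\in I\}$ of $M$ and $\{f_j:j\in J\}$ of $N$ and write $\phi=\lim_n \pi_{S_n}^N\circ\phi$ for a suitable increasing sequence of finite subsets $S_n\subseteq J$ (exactly as in the proof of Proposition \ref{2.4}), so that each $\phi_n:=\pi_{S_n}^N\circ\phi$ has image in the finite free module $\bigoplus_{j\in S_n} A f_j$. Then $\phi_n\circ v$ and $v\circ\phi_n$ converge to $\phi\circ v$ and $v\circ\phi$ respectively in the appropriate Hom-spaces (composition with the fixed continuous map $v$ is continuous for the operator norm, since $\|\phi_n\circ v-\phi\circ v\|\le\|\phi_n-\phi\|\cdot\|v\|$, and similarly on the other side), so by Lemma \ref{2.5}(1) it suffices to prove $\det(1-X(\phi_n\circ v))=\det(1-X(v\circ\phi_n))$ for each $n$. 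Now both $\phi_n\circ v$ and $v\circ\phi_n$ have image contained in a common finite free submodule: for $v\circ\phi_n$ this is clear, and for $\phi_n\circ v$ one further precomposes with a projection, or simply observes that after enlarging $S_n$ both operators factor through $\bigoplus_{j\in S_n}Af_j$ together with finitely many of the $e_i$; in any case Lemma \ref{2.5}(3) identifies each characteristic power series with the ordinary determinant $\det(1-X\,\psi)$ of the restriction of $\psi$ to such a free module.

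**The algebraic core.** What remains is the classical fact that for a finite free $A$-module $F$ and $A$-linear maps $a\colon F\to G$, $b\colon G\to F$ with $F,G$ finite free, one has $\det(1-Xba)=\det(1-Xab)$ as polynomials in $A[X]$. This is standard: write $a,b$ as matrices and use that $ba$ and $ab$ have, up to zero eigenvalues, the same characteristic polynomial — concretely, the identity $\det(1-Xba)=\det(1-Xab)$ over a commutative ring follows from the block-matrix computation $\left(\begin{smallmatrix}1 & 0\\ Xb & 1\end{smallmatrix}\right)\left(\begin{smallmatrix}1 & Xa\\ 0 & 1\end{smallmatrix}\right)=\left(\begin{smallmatrix}1 & Xa\\ Xb & 1+XbXa\end{smallmatrix}\right)$ and the reversed product, taking determinants. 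One has to be mildly careful to arrange $\phi_n\circ v$ and $v\circ\phi_n$ as such a pair $ab$, $ba$ of maps between finite free modules after restricting to a large enough finite-rank "window''; this bookkeeping — making the two finite-rank reductions compatible so that the same decomposition $\phi_n = a\circ(\text{something})$, $v=(\text{something})\circ b$ applies — is the one genuinely fiddly point.

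**Main obstacle.** I expect the real work to be precisely that last compatibility: ensuring that a single finite free submodule of $M$ (resp.\ of the relevant auxiliary module) simultaneously contains the images of $\phi_n\circ v$ and of $v\circ\phi_n$ and is respected by the factorizations, so that Lemma \ref{2.5}(3) applies to both and the algebraic $\det(1-Xab)=\det(1-Xba)$ identity can be invoked with a well-defined pair $(a,b)$. The analytic part (compactness, convergence, invoking Lemma \ref{2.5}(1)) is routine; the finite-rank algebra is classical; gluing them cleanly is the crux.
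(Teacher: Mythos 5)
Your overall strategy --- reduce to finite rank via $\phi_n=\pi_{S_n}\circ\phi$, pass to the limit with Lemma \ref{2.5}(1), and finish with the algebraic identity $\det(1-Xab)=\det(1-Xba)$ via Lemma \ref{2.5}(3) --- is the same as the paper's (which follows \cite[Lemma 2.7]{buz}), and you correctly flag where the difficulty sits, but you do not close it, and the fixes you sketch would fail. The problem is with $v\circ\phi_n\colon M\to M$ (not, as you write, with $\phi_n\circ v$, whose image lands in the finite free module $G_n=\bigoplus_{j\in S_n}Af_j$ and is therefore the unproblematic composite). The image of $v\circ\phi_n$ is contained in $v(G_n)$, which is a finitely generated submodule of $M$ but has no reason to be free, nor to be contained in any finite free submodule --- this is exactly the obstruction Buzzard points out, and Lemma \ref{2.5}(3) requires a free module. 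Your proposed remedies --- ``precompose with a projection'' or ``enlarge $S_n$ so both operators factor through $\bigoplus_{j\in S_n}Af_j$ together with finitely many of the $e_i$'' --- do not work: elements of $v(G_n)$ are in general infinite sums $\sum_i a_ie_i$, so they lie in no $\bigoplus_{i\in S}Ae_i$ with $S$ finite, and you cannot replace $v$ by $\pi_S\circ v$ via a norm-limit argument in $\Hom_A(N,M)$ because $v$ is not compact, so $\pi_S\circ v\not\to v$ in operator norm.

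The missing idea is a second, asymmetric approximation step. Having reduced to $\phi$ with image in a finite free $G\subseteq N$, set $P=v(G)$, a finite submodule of $M$, and use Lemma \ref{2.3}(3) to choose finite $S\subseteq I$ with $\|\pi_S p-p\|\le\|p\|/n$ for all $p\in P$. Then (i) $\pi_S v\phi\to v\phi$ in operator norm (because the image of $v\phi$ lies in $P$), so Lemma \ref{2.5}(1) applies on that side; and (ii) on the other side one does \emph{not} have $\phi\pi_S v\to\phi v$ in $\Hom_A(N,N)$, but both operators have image in $G$, so their characteristic power series are the algebraic determinants of their restrictions to $G$, and restricted to $G$ one does have $\phi\pi_S v|_G\to\phi v|_G$, whence those determinants converge coefficientwise. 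Only after replacing $v$ by $\pi_S v$ does one have $\phi(F)\subseteq G$ and $\pi_S v(G)\subseteq F$ for the finite free $F=\bigoplus_{i\in S}Ae_i$, and only then does your block-matrix identity apply to a well-defined pair of maps between finite free modules. Without this step the argument has a genuine gap.
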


 \begin{proof} We leave it as an exercise to check that the proof of Lemma 2.7. in \cite{buz} goes through.
\end{proof}

\pagebreak

\subsection{More flexibility in the setup}
In practice, the above setup is too rigid. In this section we discuss two degrees of flexibility. Firstly, we need to work with a more general class of $A$-modules. We can generalize the construction of characteristic power series from ONable Banach $A$-modules to Banach $A$-modules with property (Pr) (see Definition \ref{(pr)} below). This generalization turns out to be a  straightforward application of Proposition \ref{ind}. Secondly, we want to apply the result of this section in a \emph{geometric situation} involving the affinoid adic space $\Spa(A,A^+)$.  
In such geometric situations it is important that one can control base change, or at the very least understands how to pass from the situation over $\Spa(A,A^+)$ to a rational subdomain. Hence we discuss some results on changing the underlying Banach--Tate ring. 

\subsubsection*{Compact operators on Banach modules with property (Pr)}
For the next definition recall the Banach $A$-module $c_A(I)$ attached to a set $I$ from Section \ref{sec:onbs}. 
\begin{defn}\label{(pr)}
\begin{enumerate}
\item  A Banach $A$-module $M$ is called \emph{potentially ONable} if there exists a set $I$ such that $M$ is $A$-linearly homeomorphic to $c_A(I)$. 
A set in $M$ corresponding to $\{e_i=(\delta_{i,j})_j: i\in I\}$ under such a map is called a \emph{potential ON basis}. 
\item We say a Banach $A$-module has \emph{property (Pr)} if it is a direct summand of a potentially ONable Banach $A$-module.
\end{enumerate}
\end{defn}
\begin{rem}
Note that the notions of being potentially ONable and of having property (Pr) are independent of the chosen norms. They only depend on the underlying topological $A$-module structure. 
\end{rem}

\begin{ex} 
\begin{enumerate}
\item Let $A=\Q_p$ and $M= \Q_p(\sqrt{p})$ equipped with its usual norm, then $||M||\neq |A|$ and so $M$ is not ONable, but it is potentially ONable. 
\item Let $K$ be a discretely valued non-archimedean field. Then in fact \emph{any} Banach space over $K$ is potentially ONable (cf.\ Proposition 1 of \cite{serre} and the remarks before it, as well as \cite[Section 3.1.4]{bellaiche}). 
\end{enumerate}
\end{ex}
A Banach $A$-module $P$ has property (Pr) if and only if for every surjection $f\colon M\rightarrow N$ of Banach $A$-modules and for every continuous map $\alpha\colon P\rightarrow N$, $\alpha$ lifts to a map $\beta\colon P \rightarrow M$ such that $f\circ \beta = \alpha$. One can show this using the Open Mapping Theorem. Note that if $P=c_A(I)$ for some set $I$, then to give $\alpha \colon P\rightarrow N$ is to give a bounded map $I \rightarrow N$ and such a map lifts to a bounded map $I\rightarrow M$ by the open mapping theorem.
\pagebreak

\begin{exer} \label{proj}
\begin{enumerate}
\item Work out the details of the proof of the universal property.
\item Use the universal property to show that if $P$ is a finite Banach $A$-module with property (Pr), then $P$ is projective as an $A$-module.
\item Show that the converse is also true: If $P$ is a finite $A$-module which is projective as an $A$-module, then $P$ has property (Pr). 
\end{enumerate}
\end{exer}

If $M$ is potentially ONable then one still has the notion of characteristic power series of a compact operator on $M$. This can be defined by choosing a homeomorphism $M\cong c_A(I)$ and then transferring the norm of $c_A(I)$ to $M$ via this homeomorphism, which results in the choice of an equivalent ONable norm on $M$. Using this norm one then gets a characteristic power series as before and by Proposition \ref{ind} this is independent of all choices. \\
Even more generally: Say that $P$ satisfies property (Pr) and that $\phi\colon P\rightarrow P$ is a compact endomorphism. We can define $\det(1-X\phi)$ as follows: Choose $Q$ such that $P\oplus Q$ is potentially ONable and define $\det(1-X\phi)= \det (1-X(\phi\oplus 0))$. Note that $\phi \oplus 0 \colon P\oplus Q \rightarrow P \oplus Q$ is compact, as can be checked easily. This definition might a priori depend on the choice of $Q$, but let us check that it does not. If $R$ is another Banach $A$-module such that $P\oplus R$ is potentially ONable, then so is $P\oplus Q \oplus P \oplus R$ and then the maps $\phi \oplus 0 \oplus 0 \oplus 0 $ and $0 \oplus 0 \oplus \phi \oplus 0 $ are conjugate via an isometric $A$-module isomorphism, and therefore have the same characteristic power series. We can conclude that $\det(1-X\phi)$ is well-defined, i.e., independent of the choice of $Q$ by doing and applying the following exercise. 

\begin{exer} Let $M$ and $N$ be ONable $A$-modules and $\phi\colon M\rightarrow M$ a compact endomorphism, then the characteristic power series of $\phi$ and of $\phi\oplus 0 \colon M\oplus N\rightarrow M\oplus N$ coincide. 
\end{exer}

\subsubsection*{Controlling base change} 
We briefly give some recollection on tensor products. As before, let $A$ be a noetherian Banach--Tate ring. 
Let $M$ and $B$ be two normed $A$-modules. Define a function 
\[|\cdot| \colon M\otimes_A B\rightarrow \mathbb{R}_{\geq 0}, |g|:= \inf \left(\max_{1\leq i\leq r } |m_i||b_i|\right), \]
where the infimum ranges over all possible representations $g=\sum_{1=i}^r m_i\otimes b_i$, with $m_i\in M$ and $b_i \in B$ of $g$.
This turns $M\otimes_A B$ into a semi-normed $A$-module, and as such it has a completion (see \cite[1.1.7]{BGR}), which is a normed $A$-module and which is denoted by $M\widehat{\otimes}_A B$.
If $h \colon A\rightarrow B$ is a contractive (i.e., $ |h(a)|\leq |a|$ for all $a\in A$) morphism of Banach--Tate rings, then $B$ is a normed $A$-module and then $M\widehat{\otimes}_A B$ is a normed $B$-module.

Given two complete Tate rings $A$ and $B$ and a continuous map $h\colon A \rightarrow B$, the image of a pseudo-uniformizer is a pseudo-uniformizer. Choose a pseudo-uniformizer $\varpi\in A$ and compatible rings of definitions. Equip $A$ and $B$ with the standard norms as in (\ref{standnorm}). Then $h$ is contractive.\footnote{In particular, with these choices $h$ is bounded. Note that for a general map between Banach--Tate rings \emph{continuity} is a weaker condition than \emph{boundedness}, as we have already mentioned in the context of changing the norm on $A$ and the identity map.} 

\begin{exer} Let $A$, $B$ and $C$ be complete Tate rings, with continuous maps $A\rightarrow B$ and $A \rightarrow C$. Compare the above completed tensor product to the construction from Bergdall's lectures (\cite[Section 3.1.2]{bergdall}).
\end{exer}
Let us verify that the concepts introduced above behave well with respect to completed tensor products. 

\begin{lem} \label{2.8} Let $h\colon A\rightarrow B$ be a continuous morphism of noetherian Banach--Tate algebras and equip $A$ and $B$ with the standard norms (so that by the above, $h$ is contractive). Then the following holds.
\begin{enumerate}
\item If $M$ is a potentially ONable Banach $A$-module, then $M\widehat{\otimes}_AB$ is a potentially ONable Banach $B$-module. Furthermore, if $\{e_i: i\in I\}$ is a potential ON basis for $M$ then $\{e_i\otimes 1: i\in I\}$ is a potential ON basis for $M\widehat{\otimes}_AB$.
\item If $M$ has property $(Pr)$ then so does  $M\widehat{\otimes}_AB$.
 \end{enumerate} 
\end{lem}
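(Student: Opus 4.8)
The plan is to prove the two parts of Lemma \ref{2.8} in order, with part (1) being the essential content and part (2) following formally.

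\medskip

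\textbf{Part (1).} First I would reduce to the ONable case. Suppose $M$ is potentially ONable, so there is an $A$-linear homeomorphism $M \cong c_A(I)$ for some set $I$; transporting the norm of $c_A(I)$ along this homeomorphism replaces the norm on $M$ by an equivalent one for which $M$ is genuinely ONable with ON basis $\{e_i : i\in I\}$. Since completed tensor product and the property of being potentially ONable only depend on the topological module structure (as noted in the remark after Definition \ref{(pr)}), it suffices to treat this case. So I may assume $M = c_A(I)$ with its standard sup-norm and ON basis $\{e_i\}$. The goal is then to exhibit an isometric (or at least norm-equivalent) $B$-linear isomorphism $c_A(I)\,\widehat{\otimes}_A B \cong c_B(I)$ sending $e_i \otimes 1$ to the $i$-th standard basis vector of $c_B(I)$.

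\medskip

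Now I would carry this out concretely. There is an obvious $B$-bilinear-induced map $c_A(I)\otimes_A B \to c_B(I)$ sending $f\otimes b \mapsto (i\mapsto h(f(i))b)$; I would check it is well-defined, $B$-linear, and has dense image (finite $B$-linear combinations of the $e_i\otimes 1$ are dense in $c_B(I)$). The key point is to compare the tensor-product seminorm on $c_A(I)\otimes_A B$ with the sup-norm pulled back from $c_B(I)$. One inequality is easy: the map is contractive because $h$ is contractive and the tensor seminorm dominates the image norm. For the reverse, given $g = \sum_{k=1}^r f_k \otimes b_k$ in $c_A(I)\otimes_A B$, I would use that each $f_k$ lies in $c_A(I)$ to approximate it by a finitely-supported function, reducing to the case where all $f_k$ are supported on a common finite set $S\subseteq I$; then $c_A(S) = A^S$ is a finite free $A$-module, $A^S \otimes_A B = B^S$ with matching sup-norms (using the ultrametric inequality and multiplicativity of $\varpi$, cf.\ the exercise that $\|\varpi m\| = |\varpi|\|m\|$), and the tensor seminorm on the finite piece equals the sup-norm there. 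Passing to the limit and completing gives that the map $c_A(I)\,\widehat{\otimes}_A B \to c_B(I)$ is an isometric isomorphism, which is exactly the assertion, including the statement about potential ON bases. For the reduced (non-standard-norm) $M$ one then transports back: $M\,\widehat{\otimes}_A B$ is homeomorphic to $c_B(I)$, hence potentially ONable.

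\medskip

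\textbf{Part (2).} This is formal. If $M$ has property (Pr), write $M \oplus Q \cong c_A(I)$ for some Banach $A$-module $Q$ and some set $I$. Since $\widehat{\otimes}_A B$ is additive on finite direct sums (a finite direct sum is already complete, so the completed and ordinary tensor products agree on it, and tensor product commutes with finite direct sums), we get $(M\,\widehat{\otimes}_A B) \oplus (Q\,\widehat{\otimes}_A B) \cong (M\oplus Q)\,\widehat{\otimes}_A B \cong c_A(I)\,\widehat{\otimes}_A B$, which is potentially ONable by part (1). Hence $M\,\widehat{\otimes}_A B$ is a direct summand of a potentially ONable Banach $B$-module, i.e.\ has property (Pr).

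\medskip

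I expect the main obstacle to be the norm comparison in part (1): verifying that the completed tensor-product (quotient/infimum) seminorm on $c_A(I)\,\widehat{\otimes}_A B$ really agrees with the sup-norm on $c_B(I)$, rather than merely being equivalent to it. The subtlety is that an arbitrary element of $c_A(I)\otimes_A B$ is a finite sum $\sum f_k\otimes b_k$ whose individual $f_k$ need not be finitely supported, so one must argue that approximating the $f_k$ by truncations does not lose anything in the infimum — i.e., reduce to the finite-rank-free situation $A^S\otimes_A B = B^S$ where everything is transparent — and then take a limit carefully. Everything else (bilinearity, density of the $e_i\otimes 1$, behaviour under the equivalence of norms, additivity over direct sums) is routine.
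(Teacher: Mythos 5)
Your proposal is correct and follows essentially the same route as the paper: reduce to $M=c_A(I)$, map $M\widehat{\otimes}_A B$ to $c_B(I)$ via the universal property of the completed tensor product, and invert it using density of the finitely supported elements $\sum_{i\in S}e_i\otimes b_i$; part (2) is the standard direct-summand argument, which the paper leaves as an exercise. The one difference is that you aim for an exact isometry, whereas the statement (and the paper's proof) only requires a $B$-linear homeomorphism --- ``potentially ONable'' and ``potential ON basis'' are purely topological notions --- so the norm comparison you flag as the main obstacle can be relaxed to a two-sided bound, exactly as in the paper's estimate $|\sum_{i\in S}e_i\otimes b_i|\leq C\max_{i\in S}|b_i|$.
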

\begin{proof} We prove part (1). Set $N=c_B(I)$ and let $\{f_i: i\in I\}$ be its canonical ON basis. Then there is a natural $A$-bilinear bounded map $M\times B\rightarrow N$ that sends $(\sum_i a_ie_i,b)$ to $\sum_i bh(a_i)f_i$ which induces a continuous map $\varphi\colon M\widehat{\otimes}_A B\rightarrow N$.\footnote{This follows from the universal property of the completed tensor product, see \cite[2.1.7 Proposition 1]{BGR} for details.} On the other hand if $n\in N$, one can write $n$ as a limit of elements of the form $\sum_{i\in S} b_i f_i$, where $S$ is a finite subset of $I$. The element $\sum_{i\in S} e_i\otimes b_i$ of $M\otimes_A B$ satisfies
\[|\sum_{i\in S} e_i\otimes b_i|\leq C \max_{i\in S}|b_i|\]
and hence as $S$ gets bigger the resulting sequence is Cauchy and so its image in $M\widehat{\otimes}_A B$ tends to a limit. This gives a well-defined continuous $A$-module homomorphism $N\rightarrow M\widehat{\otimes}_AB $ which is inverse to $\phi$. \\
Part (2) is left as an exercise.
\end{proof}

\begin{corollary}\label{bccomp} Let $h\colon A\rightarrow B$ be a continuous morphism of noetherian Banach--Tate algebras. 
\begin{enumerate}
\item If $M$ and $N$ are potentially ONable Banach $A$-modules with potentially ON bases $\{e_i:i\in I\}$ and $\{f_j: j\in J\}$ and $\phi\colon M \rightarrow N$ is compact, with matrix~$(a_{i,j})$, then $\phi\otimes 1 \colon M\widehat{\otimes}_A B \rightarrow N\widehat{\otimes}_A B$ is also compact and if $(b_{i,j})$ is the matrix of $\phi\otimes 1$ with respect to the bases $(e_i\otimes 1) $ and $(f_j\otimes 1)$ then $b_{i,j}=h(a_{i,j})$ for all $i \in I$ and $j\in J$. 
\item If $M$ and $N$ have property (Pr) then again, $\phi \otimes 1$ is compact. 
\end{enumerate} 
\end{corollary}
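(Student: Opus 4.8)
\textbf{Proof plan for Corollary \ref{bccomp}.}

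The plan is to bootstrap both parts from Lemma \ref{2.8}, which already tells us that completed tensor products preserve potential ON-ability and property (Pr), together with the explicit description of matrix coefficients from Section \ref{sec:onbs}. For part (1), first I would reduce to the case where $M$ and $N$ are genuinely potentially ONable with the stated potential ON bases, and transfer the norms so that $\{e_i\}$ and $\{f_j\}$ are honest ON bases; by Proposition \ref{ind} the characteristic power series and the notion of compactness are unaffected by this choice. Then by Lemma \ref{2.8}(1), $\{e_i\otimes 1\}$ and $\{f_j\otimes 1\}$ are ON bases for $M\widehat{\otimes}_A B$ and $N\widehat{\otimes}_A B$. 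The key computation is simply that
\[(\phi\otimes 1)(e_i\otimes 1) = \phi(e_i)\otimes 1 = \Big(\sum_{j\in J} a_{i,j} f_j\Big)\otimes 1 = \sum_{j\in J} h(a_{i,j})\, (f_j\otimes 1),\]
where the interchange of the (convergent) infinite sum with $-\otimes 1$ is justified because $-\otimes 1\colon N\to N\widehat{\otimes}_A B$ is continuous and $h$ is contractive (hence $|h(a_{i,j})|\le |a_{i,j}|\to 0$ in $j$, so the right-hand sum converges). This identifies the matrix of $\phi\otimes 1$ as $(h(a_{i,j}))$. Compactness of $\phi\otimes 1$ is then immediate from Proposition \ref{2.4}: since $\phi$ is compact we have $\lim_{j\to\infty}\sup_{i\in I}|a_{i,j}| = 0$, and contractivity of $h$ gives $\sup_{i\in I}|h(a_{i,j})|\le \sup_{i\in I}|a_{i,j}|$, so the same limit condition holds for $(h(a_{i,j}))$.

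For part (2), write $P\oplus Q \cong c_A(I)$ and $N'\oplus Q'\cong c_A(J)$ for suitable complements, and extend $\phi\colon M\to N$ to $\phi\oplus 0$ between the ambient potentially ONable modules (using here that, as noted in the text, the direct-sum-with-zero of a compact operator is compact, and conversely that compactness of a map between summands is detected after passing to the ambient ONable module). Applying part (1) to $\phi\oplus 0$ shows $(\phi\oplus 0)\otimes 1$ is compact on $(M\oplus Q)\widehat{\otimes}_A B \cong (M\widehat{\otimes}_A B)\oplus (Q\widehat{\otimes}_A B)$; since $\widehat{\otimes}_A B$ commutes with finite direct sums, $(\phi\oplus 0)\otimes 1 = (\phi\otimes 1)\oplus 0$, and a direct summand of a compact operator restricted appropriately — more precisely, the restriction of a compact operator to an invariant summand, or equivalently the fact that composing with the (continuous) projection and inclusion preserves compactness by Lemma \ref{2.7} — gives that $\phi\otimes 1$ itself is compact. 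Lemma \ref{2.8}(2) ensures the targets still have property (Pr), so the statement is well-posed.

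The only mild subtlety — the part I would be most careful about — is the justification that $-\otimes 1$ commutes with the infinite sum defining $\phi(e_i)$; this is where one genuinely uses that $h$ is contractive (not merely continuous) under the standard norms, so that the image sum actually converges in $M\widehat{\otimes}_A B$ with the expected coefficients. Everything else is bookkeeping with Lemma \ref{2.8}, Proposition \ref{2.4}, Proposition \ref{ind}, and the finite-direct-sum compatibility of $\widehat{\otimes}$.
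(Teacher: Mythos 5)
Your proposal is correct and is essentially the paper's intended argument: the paper's proof consists of the single remark that compactness in both cases follows from Proposition \ref{2.4}, leaving the matrix identification $b_{i,j}=h(a_{i,j})$ and the reduction of case (2) to case (1) via direct summands as an exercise, which is precisely what you carry out. Your care about interchanging $-\otimes 1$ with the convergent sum and about contractivity of $h$ under the standard norms fills in the details the paper omits, and the restriction step in part (2) is correctly justified by composing with the continuous projection and inclusion (Lemma \ref{2.7}).
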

\begin{proof} Compactness of $\phi \otimes 1 $ in both cases follows from Proposition \ref{2.4} and the rest is left as an exercise. 
\end{proof}
\noindent One immediately checks:
\begin{corollary} \label{bcchar} For $h\colon A\rightarrow B$ a continuous morphism of noetherian Banach--Tate algebras, $M$ a Banach A-module satisfying propery $(Pr)$ and $\phi$ a compact endomorphism of $M$ the following holds.  If $\det(1-X\phi)= \sum_n c_n X^n$, then the characteristic power series of $\phi \otimes 1 $ on $M\widehat{\otimes}_A B$ is given by $\det(1-X (\phi\otimes 1))= \sum_n h(c_n) X^n$.
\end{corollary}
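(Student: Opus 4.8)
The plan is to reduce Corollary \ref{bcchar} to the matrix-coefficient statement of Corollary \ref{bccomp}, together with the norm-independence already established. First I would treat the case where $M$ is potentially ONable, since property (Pr) is handled afterwards by the usual direct-summand trick. Fix a potential ON basis $\{e_i : i\in I\}$ for $M$; by Lemma \ref{2.8}(1), $\{e_i\otimes 1 : i \in I\}$ is a potential ON basis for $M\widehat{\otimes}_A B$. Let $(a_{i,j})$ be the matrix of $\phi$ in the basis $\{e_i\}$. By Corollary \ref{bccomp}(1), $\phi\otimes 1$ is compact and its matrix in the basis $\{e_i\otimes 1\}$ is $(h(a_{i,j}))$.

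The heart of the argument is then purely formal: the coefficients $c_n$ of $\det(1-X\phi)$ are built from the $a_{i,j}$ by the explicit polynomial recipe $c_n = (-1)^n \sum_{|S|=n} \sum_{\sigma\colon S\to S}\mathrm{sgn}(\sigma)\prod_{i\in S} a_{i,\sigma(i)}$, and the analogous coefficients $c_n'$ for $\phi\otimes 1$ are given by the same recipe applied to the $h(a_{i,j})$. Since $h\colon A\to B$ is a ring homomorphism, it commutes with finite sums and finite products and fixes $\pm 1$, so $h(c_S) = \sum_{\sigma} \mathrm{sgn}(\sigma)\prod_{i\in S} h(a_{i,\sigma(i)})$ for each finite $S$, and hence $h$ carries each finite partial sum $\sum_{|S|=n,\, S\subseteq F} c_S$ (over finite $F\subseteq I$) to the corresponding partial sum for $\phi\otimes 1$. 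The only subtlety is the passage to the limit: $c_n$ is the limit in $(A,|\cdot|)$ of these partial sums (this convergence is what Proposition \ref{2.4} guarantees), and I must know that $h$ is continuous so that $h(c_n)$ is the limit of the images, i.e.\ equals $c_n'$. Since we have equipped $A$ and $B$ with standard norms making $h$ contractive (in particular continuous), this is immediate. Thus $c_n' = h(c_n)$ for all $n$, proving the claim in the potentially ONable case.

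For general $M$ with property (Pr), choose $Q$ with $P := M\oplus Q$ potentially ONable, so that by definition $\det(1-X\phi) = \det(1-X(\phi\oplus 0))$ as endomorphisms of $P$. By Lemma \ref{2.8}(1) again, $P\widehat{\otimes}_A B = (M\widehat{\otimes}_A B)\oplus(Q\widehat{\otimes}_A B)$ is potentially ONable, and $(\phi\otimes 1)\oplus 0 = (\phi\oplus 0)\otimes 1$ under this identification. Applying the potentially ONable case to $\phi\oplus 0$ on $P$ gives that the coefficients of $\det(1-X((\phi\oplus 0)\otimes 1))$ are the $h$-images of those of $\det(1-X(\phi\oplus 0)) = \det(1-X\phi)$, and by the definition of the characteristic power series for property (Pr) modules the left side equals $\det(1-X(\phi\otimes 1))$. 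This completes the proof.

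I do not expect any genuine obstacle here; the statement really is an immediate consequence of the compatibility $b_{i,j} = h(a_{i,j})$ from Corollary \ref{bccomp} plus continuity of $h$, and the words ``One immediately checks'' in the excerpt reflect exactly this. The one point worth stating carefully, rather than a difficulty, is that the interchange of $h$ with the infinite sum defining $c_n$ is legitimate precisely because of the norm conventions (standard norms, $h$ contractive) fixed just before Lemma \ref{2.8}.
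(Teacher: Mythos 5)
Your proof is correct and is exactly the argument the paper intends: the paper offers no written proof beyond ``One immediately checks,'' and the natural unpacking is precisely your route via Corollary \ref{bccomp} (the matrix of $\phi\otimes 1$ is $(h(a_{i,j}))$), the explicit polynomial formula for the coefficients $c_n$, continuity of $h$ to pass to the limit, and the direct-summand reduction for property (Pr). No gaps.
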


\section{Spectral Varieties} \label{sec:specvar}
In this section we construct and study so called spectral varieties over an affinoid base. As mentioned in the introduction, in our eigenvariety construction, the spectral variety sits between the eigenvariety and the weight space. There is a structural map from the spectral variety to weight space whose geometric properties we also study in this section. These properties will be used in the construction of the eigenvariety in Section \ref{sec:evs} below. The main references for this section are \cite[Appendix B]{aip} and \cite[Section 2.3]{JN1}. 

\subsection{Construction of spectral varieties}
Let $(A,A^+)$ be a Tate--Huber pair 
with pseudo-uniformizer $\varpi \in A$. Assume throughout this section that $A$ is strongly noetherian, so in particular $(A,A^+)$ is sheafy.
Let $W:=\spa(A,A^+)$ be the associated affinoid adic space and consider the relative affine line $\mathbb{A}^1_W$ over $W$. 
Recall this is the increasing union of affinoid disks
\[ \mathbb{A}^1_W = \bigcup_{n\geq1} \mathcal{D}_n, \] 
where $\mathcal{D}_n= \Spa(A\langle \varpi^nX\rangle,A^+\langle \varpi^nX\rangle)$ is the affinoid disk of ``radius'' $\varpi^{-n}$. 
\begin{exer} Show that the global functions $\mc{O}_{\mathbb{A}^1_W }(\mathbb{A}^1_W )$ are given by the ring of entire power series $A\{\{X\}\}$. 
\end{exer}
For later purposes we define the affinoid open subsets $\mathcal{D}_{\lambda}\subseteq \mathbb{A}^1_W$ for any \linebreak $\lambda= l/m \in \mathbb{Q}$, with $l\in \Z$ and $m\in \mathbb{N}$ by 
\[\mathcal{D}_{\lambda}= \{x \in \mathbb{A}^1_W: \ |\varpi^lX^m(x)|\leq 1\}\subseteq \mathbb{A}^1_W.\]
Note that $ \mathbb{A}^1_W = \bigcup_{\lambda \in \Q} \mc{D}_{\lambda}$.
\begin{defn} An entire power series $F=\sum_{n=0}^\infty a_n X^n \in A\{\{X\}\}$ is called \emph{Fredholm series} if $a_0=1$. 
\end{defn}
For example, the characteristic power series of a compact operator as in the last section is Fredholm series. An entire power series $F\in A\{\{X\}\}$ gives rise to a coherent $\cO_{\mathbb{A}^1_W}$-ideal $\mathcal{I}_F$ as follows. As $F$ is naturally an element of $A\langle \varpi^nX\rangle$ for all $n\geq 1$, we can put $\mathcal{I}_F |_{\mathcal{D}_n}:=\widetilde{(F)}$, where $(F) \subset A\langle \varpi^nX\rangle$ is the ideal generated by $F$ and $\widetilde{(F)}$ is the associated coherent sheaf on $\mathcal{D}_n$ as in \cite[Section 1.2]{johansson}. This glues to a coherent ideal sheaf $\mathcal{I}_F$ on $\mathbb{A}^1_W$.

\begin{defn} Let $F\in A\{\{X\}\}$ be a Fredholm series. The \emph{spectral variety} (also called Fredholm hypersurface) of $F$ is defined as the closed adic subspace 
\[V(F):=V(\mathcal{I}_F) \subset \mathbb{A}^1_W.\]
\end{defn}
Let us describe this  more explicitly. On the affinoid disk $\mathcal{D}_n$  we have by construction that  
\[ V(F)\cap \mathcal{D}_n= \spa(B_n, B_n^+),\]
where $B_n:= A\langle \varpi^nX\rangle/(F)$ and $B_n^+:=(A\langle \varpi^nX\rangle/(F))^+$ is the integral closure of the ring $A^+\langle \varpi^nX\rangle$ in $B_n$. 
 
The formation of the spectral variety behaves well with respect to base change in the following sense. Let \[h\colon (A,A^+) \rightarrow (B,B^+)\] be a continuous morphism of complete Tate--Huber pairs, with $A$ and $B$ strongly noetherian. We have corresponding natural maps 
$$Y:=\spa(B,B^+)\rightarrow W:=\Spa(A,A^+)$$ and $\bA^1_Y\rightarrow \bA^1_W$. The map $h\colon A\rightarrow B$ of the Tate rings induces a natural map $A\{\{X\}\} \rightarrow B\{\{X\}\}$. For a Fredholm series $F=\sum_n{a_n X^n} \in A\{\{X\}\} $ let 
$$h(F):= \sum_n h(a_n)X^n \in B\{\{X\}\}$$ be its image. 
Then the construction of the spectral variety satisfies the following property. 
\begin{lem}\label{specbc} Let $h\colon (A,A^+) \rightarrow (B,B^+)$ be as above. Let  $F=\sum_n{a_n X^n} \in A\{\{X\}\} $ be a Fredholm series and let $V(F)$ be its spectral variety. Then 
\[V(F)\times_W Y\cong V(h(F))\subset \bA^1_Y.\]
\end{lem}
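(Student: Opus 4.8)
The statement is a compatibility of the spectral variety construction with base change. My plan is to reduce the assertion about the full space $\mathbb{A}^1_W$ to an assertion on each affinoid disk $\mathcal{D}_n$, where the spectral variety is cut out by an explicit ideal. First I would recall that the exhaustion $\mathbb{A}^1_W = \bigcup_{n\geq 1}\mathcal{D}_n$ is compatible with base change, i.e., under $\mathbb{A}^1_Y\to\mathbb{A}^1_W$ we have $\mathcal{D}_n^Y := \Spa(B\langle\varpi^n X\rangle, B^+\langle\varpi^n X\rangle)$ pulling back correctly; this is a formal consequence of the definition of $\mathcal{D}_n$ via the topologically nilpotent coordinate $\varpi^n X$ and the fact that $h$ sends $\varpi$ to a pseudo-uniformizer. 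Since the spectral varieties $V(F)$ and $V(h(F))$ are defined by gluing the closed subspaces $V(F)\cap\mathcal{D}_n$, it suffices to check the isomorphism disk by disk and then observe that the local isomorphisms are compatible with the gluing data (this last point is automatic because everything is induced by the single map of rings $A\{\{X\}\}\to B\{\{X\}\}$, $X\mapsto X$).

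On a fixed disk $\mathcal{D}_n$, the key algebraic fact is that the completed tensor product commutes with quotients by finitely generated ideals: there is a natural isomorphism
\[
\bigl(A\langle\varpi^n X\rangle/(F)\bigr)\,\widehat{\otimes}_A B \;\cong\; B\langle\varpi^n X\rangle/(h(F)),
\]
using that $A\langle\varpi^n X\rangle\,\widehat{\otimes}_A B \cong B\langle\varpi^n X\rangle$ (compatibility of the Tate-algebra/disk construction with base change) and that under this identification the ideal $(F)\widehat{\otimes}_A B$ maps onto $(h(F))$, since $h(F)$ is by definition the image of $F$. Here the strong noetherian hypothesis on $A$ and $B$ is exactly what makes $(F)$ closed and the quotient $B_n$ a well-behaved (complete, strongly noetherian) Tate ring, and it guarantees $A\langle\varpi^n X\rangle/(F)$ is again strongly noetherian so that $B_n$, and its base change, are sheafy. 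Geometrically, this algebraic isomorphism says precisely that $(V(F)\cap\mathcal{D}_n)\times_W Y \cong V(h(F))\cap\mathcal{D}_n^Y$, where I also need to match up the rings of integral elements: $B_n^+$ is the integral closure of $A^+\langle\varpi^n X\rangle$ in $B_n$, and one checks that the fiber product in adic spaces produces exactly the integral closure of the image of $B^+\langle\varpi^n X\rangle$, which is how $V(h(F))\cap\mathcal{D}_n^Y$ is defined. This is a standard property of fiber products of affinoid adic spaces (the $+$-structure of a fiber product being built from integral closure), so I would cite the relevant foundational statement rather than reprove it.

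Finally I would assemble the pieces: the isomorphisms $(V(F)\cap\mathcal{D}_n)\times_W Y \cong V(h(F))\cap\mathcal{D}_n^Y$ for varying $n$ are compatible with the transition maps $\mathcal{D}_n\hookrightarrow\mathcal{D}_{n+1}$ (again because all maps are induced by $X\mapsto X$ and localization), so they glue to the desired isomorphism $V(F)\times_W Y\cong V(h(F))\subset\mathbb{A}^1_Y$. The main obstacle, and the only point requiring genuine care rather than bookkeeping, is the commutation of $\widehat{\otimes}_A B$ with the quotient $-/(F)$ together with the matching of the integral-closure $+$-structures under fiber product; once that local statement is in hand, the globalization is routine gluing. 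One should also double-check that $h(F)$ is still a Fredholm series (its constant term is $h(a_0)=h(1)=1$), so that $V(h(F))$ makes sense as stated.
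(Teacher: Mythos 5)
Your argument is correct and is precisely the intended one: the paper leaves this lemma as an exercise, and the expected solution is exactly your reduction to the disks $\mathcal{D}_n$, the identification $A\langle\varpi^n X\rangle\widehat{\otimes}_A B\cong B\langle h(\varpi)^n X\rangle$ followed by passage to the quotient by the (closed, since the rings are strongly noetherian) ideal $(F)$, the matching of the integral closures defining the $+$-rings, and the routine gluing over $n$. You also correctly flag the only two points needing genuine care (commutation of $\widehat{\otimes}$ with the quotient, and the $+$-structure of the fiber product), as well as the sanity check that $h(F)$ is again a Fredholm series.
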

\begin{proof} Exercise.
\end{proof}

We have a natural structure map $w\colon V(F)\rightarrow \Spa(A,A^+)$ whose properties we now study. We will show that $w$ is flat, locally quasi-finite and partially proper. (We refer to \cite[Lecture 2]{johansson} for definitions and background regarding these concepts.) 
These properties are very useful for the eigenvariety machine. We start by verifying flatness of $w$. 
Recall that a morphism $f\colon X\rightarrow Y$ of adic spaces is called flat, if all maps on stalks are flat, i.e., if for all $x\in X$, the natural morphism $\cO_{Y,f(x)} \rightarrow \cO_{X,x}$ is flat. If $X$ and $Y$ are affinoid and the map between the underlying Huber rings is flat, then $f$ is flat. 

\begin{lem} \label{4.1}  For any $n\geq 1$, the morphism $w|_{\mathcal{D}_n} \colon V(F) \cap \mathcal{D}_n \rightarrow \spa(A,A^+)$ is flat. Hence, the structure morphism $w$ is flat. 
\end{lem}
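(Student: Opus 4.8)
The plan is to reduce the statement to a flatness assertion about Huber rings and then to a concrete computation with the Fredholm series $F$, using the hypothesis $a_0=1$ in an essential way.

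Since $V(F)\cap \mathcal{D}_n = \spa(B_n,B_n^+)$ with $B_n = A\langle \varpi^n X\rangle/(F)$, and since both $V(F)\cap\mathcal{D}_n$ and $\spa(A,A^+)$ are affinoid, by the criterion recalled just before the lemma it suffices to show that the ring map $A\to B_n$ is flat. The structural input is the short exact sequence of $A$-modules
\[ 0 \longrightarrow A\langle \varpi^n X\rangle \xrightarrow{\ \cdot F\ } A\langle \varpi^n X\rangle \longrightarrow B_n \longrightarrow 0, \]
and injectivity of multiplication by $F$ here is elementary: view $A\langle \varpi^n X\rangle \hookrightarrow A\llbracket X\rrbracket$ and compare coefficients. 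If $F g = 0$ with $g=\sum_j m_j X^j$, the coefficient of $X^j$ in $Fg$ equals $m_j + a_1 m_{j-1} + \dots + a_j m_0$ because $a_0=1$, so induction on $j$ forces $g=0$.

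Next I would invoke two standard facts. First, for $A$ strongly noetherian the ring $A\langle \varpi^n X\rangle \cong A\langle X\rangle$ is noetherian and flat over $A$ (this is part of the standard package, cf.\ \cite{johansson}; concretely $A\langle X\rangle = A_0\langle X\rangle[1/\varpi]$ with $A_0\langle X\rangle$ the $\varpi$-adic completion of the polynomial ring, flat over $A_0$). Second, the local criterion for flatness over a noetherian ring: given a flat map $A\to S$ and $f\in S$ with $f$ a nonzerodivisor, $S/fS$ is flat over $A$ as soon as multiplication by $f$ is injective on $S\otimes_A A/\mathfrak{p}$ for every prime $\mathfrak{p}\subset A$. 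Indeed, tensoring the displayed sequence with $A/\mathfrak{p}$ and using flatness of $A\langle \varpi^n X\rangle$ gives
\[ \mathrm{Tor}_1^A(B_n, A/\mathfrak{p}) \;=\; \ker\!\big( A\langle \varpi^n X\rangle\otimes_A A/\mathfrak{p} \xrightarrow{\ \cdot F\ } A\langle \varpi^n X\rangle\otimes_A A/\mathfrak{p} \big). \]
Since $A\langle \varpi^n X\rangle$ is noetherian, the extended ideal $\mathfrak{p}\,A\langle \varpi^n X\rangle$ is closed, so $A\langle \varpi^n X\rangle\otimes_A A/\mathfrak{p} \cong (A/\mathfrak{p})\langle \varpi^n X\rangle \hookrightarrow (A/\mathfrak{p})\llbracket X\rrbracket$, and the image of $F$ still has constant term $1$. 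The same coefficient argument then kills this kernel, so $\mathrm{Tor}_1^A(B_n,A/\mathfrak{p})=0$ for all $\mathfrak{p}$ and hence $B_n$ is flat over $A$. This proves the first assertion of the lemma.

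Finally, flatness of $w$ follows formally: $\{V(F)\cap \mathcal{D}_n\}_{n\ge 1}$ is an open cover of $V(F)$, and for $x\in V(F)\cap \mathcal{D}_n$ one has $\cO_{V(F),x}=\cO_{V(F)\cap\mathcal{D}_n,x}$ and $\cO_{W,w(x)}=\cO_{\spa(A,A^+),w(x)}$, so the stalk map $\cO_{W,w(x)}\to\cO_{V(F),x}$ of $w$ is flat because the corresponding map for $w|_{\mathcal{D}_n}$ is. The only genuinely non-routine point I anticipate is the bookkeeping around base change of the Tate algebra — identifying $A\langle \varpi^n X\rangle\otimes_A A/\mathfrak{p}$ with $(A/\mathfrak{p})\langle \varpi^n X\rangle$, which is exactly where strong noetherianness enters (via closedness of extended ideals) — together with correctly citing the flatness of $A\langle X\rangle$ over $A$; the nonzerodivisor argument itself is completely elementary.
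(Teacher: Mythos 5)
Your proposal is correct and follows essentially the same route as the paper: flatness of $A\langle X\rangle$ over $A$, reduction of flatness of $B_n$ to injectivity of multiplication by $F$ after base change (a Tor-vanishing criterion), the constant-term-$1$ coefficient induction, and rescaling/gluing for general $n$. The only cosmetic difference is that you check $\mathrm{Tor}_1$-vanishing on the prime cyclic quotients $A/\mathfrak{p}$ via dévissage, whereas the paper checks injectivity of $F\cdot(-)$ on $M\langle X\rangle$ for all finitely generated $M$ directly; the identification $A\langle X\rangle\otimes_A M\cong M\langle X\rangle$ you flag is exactly the standard input the paper also invokes.
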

\begin{proof}
Assume first that $n=1$, so that we want to show that $A\langle X \rangle/(F)$ is flat. Note that as $A$ is a complete noetherian Tate ring, $A\langle X \rangle $ is flat over $A$ by \cite[8.30]{w}. 
As is explained in the same reference, it now suffices to show that for any finitely generated $A$-module $M$, the multiplication map 
\[w_F\colon M\langle X\rangle \rightarrow M\langle X \rangle, \ P\mapsto F\cdot P\] is injective. As the constant term of $F$ is equal to $1$, one can just check this directly. Hence $A\langle X\rangle/(F)$ is flat over $A$. The general case follows by rescaling. 
\end{proof}

Our next goal is to show that $w$ is locally quasi-finite. We follow the proof in \cite[Appendix B.1]{aip}. For that, we first observe that we can reduce to the case of $A=K$ being a non-archimedean field by applying the following lemma. 
\begin{lem}\label{1.5.2} Let $f\colon X \rightarrow Y$ be a morphism of analytic adic spaces which is of finite type.  Then $f$ is quasi-finite if and only if for every rank $1$ point $y$ of $Y$, the morphism $X\times_Y \Spa(k(y))\rightarrow \spa(k(y))$ is finite.
\end{lem}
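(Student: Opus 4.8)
The statement is a descent/base-change criterion for quasi-finiteness, so the plan is to reduce the general case to the base change along rank-$1$ points and then to fibers over residue fields. Recall that for a morphism $f\colon X\to Y$ of locally noetherian adic spaces, quasi-finiteness means that $f$ is of finite type and has discrete fibers — equivalently, for every $y\in Y$ and every $x\in f^{-1}(y)$ the stalk $\cO_{X,x}$ is finite over $\cO_{Y,y}$ (this is the formulation I would take as the working definition; see \cite[Lecture 2]{johansson}). The "only if" direction is essentially formal: if $f$ is quasi-finite (hence of finite type with discrete fibers), then for any rank-$1$ point $y$ the base change $X\times_Y\Spa(k(y))\to\Spa(k(y))$ is again of finite type and has discrete fibers; since $\Spa(k(y))$ is a point, a finite-type morphism to it with discrete (hence finite, by quasi-compactness of the finite-type source fiber) topological fiber and finite stalks is finite. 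So the content is the "if" direction.

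\textbf{Key steps for the hard direction.} Assume that for every rank-$1$ point $y$ of $Y$ the morphism $X_y := X\times_Y\Spa(k(y))\to\Spa(k(y))$ is finite; we must show $f$ is quasi-finite. Finite type is part of the hypothesis, so it remains to show the fibers of $f$ are discrete and the stalks are finite. First I would fix $x\in X$ with $y=f(x)$, and note that $y$ is a specialization or generization of a rank-$1$ point $y_0$ of $Y$ (every point of an analytic adic space has a unique rank-$1$ generization, lying in the same connected component of the local spectrum). Replacing $Y$ by a suitable affinoid neighborhood and passing to $\Spa(k(y_0))$, one reduces — using that finiteness of an affinoid morphism is equivalent to module-finiteness of the ring map, which is preserved under the relevant localizations/completions — to showing that finiteness of $X_{y_0}\to\Spa(k(y_0))$ forces $\cO_{X,x}$ to be finite over $\cO_{Y,y}$ for all $x$ over $y$. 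Concretely, over an affinoid $Y=\Spa(B,B^+)$ with $X=\Spa(C,C^+)$ and $C$ topologically of finite type over $B$, the hypothesis says $C\widehat\otimes_B k(y_0)$ is a finite $k(y_0)$-module for the rank-$1$ point $y_0$ corresponding to a maximal-type valuation; then a standard Weierstrass/boundedness argument (the fibral criterion for finiteness in rigid/adic geometry, as in \cite[Appendix B.1]{aip}) propagates module-finiteness of $C$ over $B$ near $y_0$, and hence finiteness of all stalks $\cO_{X,x}\to$ nothing bigger than a finite module over $\cO_{Y,y}$, for all $y$ in a neighborhood, in particular for $y$ itself. This gives discreteness of $f^{-1}(y)$ and finiteness of stalks, i.e. quasi-finiteness.

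\textbf{Main obstacle.} The crux is the passage from finiteness of the single fiber over the rank-$1$ point $y_0$ to module-finiteness of $C$ over $B$ in a neighborhood — i.e. the "fibral criterion for finiteness" in the adic setting. This requires an openness-of-finiteness statement: the locus of $Y$ over which the morphism is finite is open, and it contains all rank-$1$ points by hypothesis, hence is everything (since rank-$1$ points are dense and the complement would be a closed set missing them, but one must argue the complement is in fact empty using that every point specializes from a rank-$1$ point within the relevant local piece). Making this precise — handling non-reduced structure, the difference between topological finiteness of fibers and algebraic module-finiteness, and the behavior of $\widehat\otimes$ under the completions occurring in stalks — is exactly the technical heart, and I would lean on the noetherian hypotheses together with the arguments of \cite[Appendix B.1]{aip} and the finiteness results recalled in \cite[Lecture 2]{johansson} rather than reproving them from scratch.
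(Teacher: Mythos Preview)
The paper does not prove this lemma; it simply cites \cite[Lemma 1.5.2]{hu1} (the equivalence of conditions (a) and (f) there). So there is nothing to compare on the paper's side, and the question is whether your sketch stands on its own.

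Your ``only if'' direction is fine in outline. The gap is in the ``if'' direction. You try to pass from finiteness of the fiber over a rank-$1$ point $y_{0}$ to module-finiteness of $C$ over $B$ in a neighbourhood of $y_{0}$, i.e.\ to local finiteness of the \emph{morphism}. That intermediate statement is false. Take $Y=\Spa(K\langle T\rangle)$ and $X=\Spa(K\langle T/\varpi\rangle)$ the smaller closed disc, with the inclusion $X\hookrightarrow Y$. This is of finite type, every fiber (over any point, in particular every rank-$1$ point) is a single point or empty, yet $K\langle T/\varpi\rangle$ is not a finite $K\langle T\rangle$-module and the morphism is not finite over any nonempty open. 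So the ``fibral criterion for finiteness'' you invoke does not exist in the form you need. The reference to \cite[Appendix B.1]{aip} does not help either: that appendix \emph{uses} the present lemma (via Huber) to prove local quasi-finiteness of the spectral variety; it does not supply a proof of it. Relatedly, your ``openness of the finite locus'' step conflates two different things: the hypothesis gives finiteness of \emph{fibers} over rank-$1$ points, not finiteness of the morphism near those points, so even if such a locus were open it is not known to contain the rank-$1$ points.

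The route that actually works (and is what Huber does) never passes through finiteness of $f$. One stays with the definition of quasi-finite as finite type plus discrete fibers, and uses that in an analytic adic space every point $y$ has a unique rank-$1$ generization $y_{0}$, with the topological fiber $f^{-1}(y)$ controlled by $f^{-1}(y_{0})$ via vertical specialization. Finiteness of $X_{y_{0}}\to\Spa(k(y_{0}))$ forces $f^{-1}(y_{0})$ to be a finite discrete set, and from this one deduces discreteness of $f^{-1}(y)$. If you want to write out a proof rather than cite Huber, that is the mechanism to make precise.
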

\begin{proof} This is the equivalence of (a) and (f) in \cite[Lemma 1.5.2]{hu1}.
\end{proof}

In a nutshell, the field case will follow from the fact that an entire power series $F \in K\{\{X\}\}$, where $K$ is a non-archimedean field, has only finitely many zeroes on any disk $\mathcal{D}_n \subset \A^1_K$, which follows from Weierstrass theory. For the actual proof in the field case (Lemma \ref{L.B.1} below) we will make use of Newton polygons. For the convenience of the reader we have included a summary of the relevant background. 

\subsection{Interlude on Newton polygons}
Let $K$ be a non-archimedean field. We fix a pseudo-uniformizer $\varpi$ and the norm on $K$ to be $|a|=\inf \{ p^{-n} : a \in \varpi^n K^\circ, n \in \Z\}$. The corresponding (additive) valuation will be denoted by $v$. It satisfies $v(\varpi)=1$. 

Let $F=\sum a_n X^n \in K\llbracket X\rrbracket$ be a Fredholm series, so that $a_0=1\neq 0$.  
The Newton polygon of $F$ is constructed as follows. (For details on Newton polygons of power series we refer the reader to \cite[Chapter 7.4]{gouv}.) 
In $\mathbb{R}_{\geq 0}\times (\mathbb{R}\cup \{+\infty\})$ plot the points $(n, v(a_n))$ for all $n$. If $a_n =0 $ then $v(a_n)=+\infty$ and we think of $(n,+\infty)$ as a point infinitely high up the $y$-axis. 
Let $\mathcal{C}(F)$ be the set of these points.  
Define the \emph{Newton polygon $\mathcal{N}(F)$} of $F$ to be the lower convex hull of $\mathcal{C}(F)$.\footnote{That is, put a pin at $(0, v(a_0))=(0,0)$, attach a long thread to the pin and then rotate this counterclockwise until you hit the next element of $\mathcal{C}(F)$. Then repeat this.} 

If $F$ is a polynomial, i.e. if there is an $m\geq 0$, such that $a_n=0$ for all $n\geq m$, then put $\mathcal{N}(F)(x)=+\infty$ for all $x>m_0$ where $m_0$ is the degree of $F$. 

The Newton polygon $\mathcal{N}(F)\colon \mathbb{R}_{\geq 0} \rightarrow \mathbb{R}\cup \{+\infty\}$ is a piece-wise linear function. The linear pieces are called the \emph{segments} of $\mathcal{N}(F)$, the slopes of the segments are called the \emph{slopes} of $\mathcal{N}(F)$. 
Note that by construction the slopes of $\mathcal{N}(F)(x)$ increase as $x\in \mathbb{R}_{\geq 0}$ increases. 
Let $h\in \R$. We say that a Fredholm series $F\in K\{\{X\}\}$ has slope $\leq h$ (or $>h$) if the slopes of its Newton polygon $\mathcal{N}(F)$ are all $\leq h$ (or $>h$).

Recall that for a polynomial $P \in K[X]$, the slopes of its Newton polygon correspond to the valuations of its roots in $ \overline{K}$ (see \cite[Prop.II.6.3]{neukirch}). 
For an entire power series $F$ one combines Weierstrass theory with the theory of Newton polygons to get control over the zeros of $F$. Again, the Newton polygon encodes a lot of information, as is summarized in the following lemma. A reference for this lemma is \cite[Theorem 4.4.2]{as} (applied to the case where $A$ is a field). (A more elementary reference for the case when $K$ is a $p$-adic local field, see \cite[7.4.10 and 7.4.11]{gouv}.)

\begin{lem} Let $F=\sum_n a_n X^n \in K\{\{X\}\}$ be a Fredholm series. Let $s_1,\dots, s_k$ be the first $k$ slopes of the Newton polygon $\mathcal{N}(F)$. Let $n_0$ be the $x$-coordinate of the right endpoint of the $k$-th segment of the Newton polygon.  Then there exists a polynomial $P \in K[X]$ of degree $n_0$ and a power series $S \in K\llbracket X\rrbracket$ such that 
\begin{enumerate}
\item $F=PS$.
\item $S$ is entire.
\item The Newton polygon of $P$ is equal to the portion of the Newton polygon of $F$ contained in the region $0\leq x\leq n_0$.
\item $S$ has slope $>s_k$. 
\end{enumerate}
\end{lem}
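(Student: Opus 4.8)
The plan is to combine Weierstrass preparation/division for entire power series over a non-archimedean field with the combinatorics of Newton polygons. First I would invoke Weierstrass theory: since $F = \sum_n a_n X^n \in K\{\{X\}\}$ is entire and $a_0 = 1$, on each disk $\mathcal{D}_m = \Spa(K\langle \varpi^m X\rangle, \cdot)$ the element $F$ lies in the Tate algebra $K\langle \varpi^m X \rangle$, which is a PID, and $F$ has only finitely many zeros there; moreover Weierstrass preparation gives a factorization $F = P_m \cdot U_m$ with $P_m$ a polynomial (a unit times a distinguished polynomial) whose roots are exactly the zeros of $F$ in $\mathcal{D}_m$ and $U_m$ a unit in $K\langle \varpi^m X\rangle$. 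The key point is that the valuations of the roots of $F$ (counted with multiplicity, ordered increasingly) are exactly the slopes of $\mathcal{N}(F)$, with multiplicities given by the horizontal lengths of the segments — this is the power-series analogue of \cite[Prop.II.6.3]{neukirch}, and it is precisely what \cite[Theorem 4.4.2]{as} packages.

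Next I would identify the polynomial $P$ and power series $S$ explicitly. Let $s_1 \le \dots \le s_k$ be the first $k$ slopes and $n_0$ the $x$-coordinate of the right endpoint of the $k$-th segment. Choosing $m$ large enough that all roots of valuation $\geq -m \cdot 1$... more precisely, choosing $m$ so that $\mathcal{D}_m$ contains all roots of $F$ whose valuation is $\geq s_k$ — equivalently the first $n_0$ roots — I let $P$ be the monic-up-to-leading-unit polynomial of degree $n_0$ whose roots are exactly those first $n_0$ roots of $F$ (with multiplicity), and set $S := F/P$. One checks $P \in K[X]$ has degree $n_0$, giving (1). For (3): by the root-valuation/slope correspondence applied to the polynomial $P$, the Newton polygon of $P$ has exactly the slopes $s_1, \dots, s_k$ with the right multiplicities, hence coincides with the initial portion $0 \le x \le n_0$ of $\mathcal{N}(F)$; one must also check the two polygons agree as functions, which follows since $a_0 = 1$ forces both to start at $(0,0)$ and the constant term of $P$ is then $\pm$ a product of the root values, matching $v(a_{n_0})$ — the right endpoint — up to the unit. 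For (2) and (4): $S = F/P$ is entire because $F$ is entire and $P$ has no zeros outside the finitely many accounted for (so $1/P$ is analytic on the complement, and on each $\mathcal{D}_{m'}$ one has $S \in K\langle \varpi^{m'} X\rangle$ by Weierstrass division since $P$ is distinguished of the right degree there); and the roots of $S$ are exactly the remaining roots of $F$, all of valuation $> s_k$ by the ordering of slopes, so $\mathcal{N}(S)$ has all slopes $> s_k$, which is (4).

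Alternatively, and perhaps more cleanly, I would simply cite \cite[Theorem 4.4.2]{as} directly: that theorem, specialized to the field case, asserts exactly a factorization $F = PS$ with $P$ a polynomial whose Newton polygon is the truncation of $\mathcal{N}(F)$ and $S$ entire with larger slopes. So the proof reduces to a pointer plus, if desired, the brief verification above that the degree of $P$ is $n_0$ and that the polygons genuinely agree (not merely have the same slopes). The main obstacle — really the only subtle point — is keeping the bookkeeping of multiplicities and endpoints straight: one must ensure $\deg P = n_0$ exactly (not $n_0$ plus extra roots at the breakpoint valuation $s_k$, nor fewer), which requires being careful about whether roots of valuation precisely equal to $s_{k+1}$ or lying on the boundary are included, and ensuring the two piecewise-linear functions $\mathcal{N}(P)$ and $\mathcal{N}(F)|_{[0,n_0]}$ coincide rather than just being parallel. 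Everything else is a direct transcription of classical Weierstrass/Newton-polygon theory over non-archimedean fields.
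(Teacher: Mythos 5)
Your proposal is correct and in fact matches the paper, which gives no proof of this lemma at all but simply points to \cite[Theorem 4.4.2]{as} (specialized to a field) and to \cite[7.4.10, 7.4.11]{gouv} --- exactly your ``alternative'' route. Your accompanying Weierstrass-preparation sketch, including the care about choosing the cutoff radius strictly between $s_k$ and $s_{k+1}$ so that $\deg P=n_0$ exactly and both polygons start at $(0,0)$, is the standard argument behind that citation and is sound.
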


%
%

\subsection{Properties of the structure map}

We can now show that $w$ is locally quasi-finite.

\begin{lem} \label{L.B.1} Suppose that $A=K$ is a non-archimedean field. Then there exists a strictly increasing sequence of rational numbers $(\lambda_n) \in \Q^{\mathbb{N}}_{>0}$ with $\lim \lambda_n = +\infty$ such that \linebreak $V(F)\cap \mathcal{D}_{\lambda_n}$ is finite over $\Spa(K,K^+)$. 
\end{lem}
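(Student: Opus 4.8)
The plan is to use the Weierstrass preparation statement recalled in the previous lemma to reduce each disk $\mc{D}_{\lambda_n}$ to a polynomial situation, and then observe that a quotient of $K\langle \vp^m X \rangle$ by a polynomial that is a unit times a distinguished polynomial is finite over $K$. First I would pick, using the lemma on Newton polygons, the slopes $s_1, s_2, \dots$ of $\mc{N}(F)$ (with multiplicity) and note that, since $F$ is entire, $s_k \to +\infty$; if $F$ is a polynomial there is nothing to do, since then $V(F)$ is already finite over $\Spa(K,K^+)$ on every disk. Choosing for each $n$ a rational number $\lambda_n$ strictly between two consecutive distinct slopes, arranged to be strictly increasing with $\lambda_n \to +\infty$, I would factor $F = P_n S_n$ where $P_n \in K[X]$ collects exactly the slopes $\leq \lambda_n$ and $S_n \in K\llbracket X\rrbracket$ is entire with all slopes $> \lambda_n$.

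The key step is then to show that on $\mc{D}_{\lambda_n}$ the series $S_n$ is a \emph{unit} in $\cO(\mc{D}_{\lambda_n})$, so that the ideal $(F)$ and the ideal $(P_n)$ agree there. Writing $\lambda_n = l/m$, the ring is $A\langle \vp^l X^m\rangle$ (up to the usual identification), and the point is that an entire power series all of whose Newton slopes exceed $\lambda_n$ has, on the disk of radius $\vp^{-\lambda_n}$, the property that every non-constant term has strictly smaller norm than the constant term $1$; hence $S_n = 1 - g$ with $g$ topologically nilpotent, and $S_n$ is invertible. Consequently $V(F) \cap \mc{D}_{\lambda_n} = \Spa\big(K\langle \vp^l X^m\rangle/(P_n), \text{(integral closure)}\big)$, and $K\langle \vp^l X^m\rangle/(P_n)$ is a finitely generated $K$-module because $P_n$ is (a unit times) a monic polynomial of degree $n_0$ in $X$ after clearing the relation $X = $ (scaling), i.e.\ it is spanned by $1, X, \dots, X^{n_0-1}$. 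A finite $K$-algebra map corresponds to a finite morphism of affinoid adic spaces, so $w|_{\mc{D}_{\lambda_n}}$ is finite.

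I expect the main obstacle to be the bookkeeping around ``radii'': one must be careful that ``$F$ has all slopes $> \lambda_n$'' really does force $S_n$ to be a unit in $\cO(\mc{D}_{\lambda_n})$ rather than on a slightly smaller disk, which is why I choose $\lambda_n$ strictly between consecutive slopes rather than equal to a slope — at a slope there would be a term of the same norm as the constant term and invertibility could fail. A secondary, more routine point is checking that $K\langle \vp^l X^m \rangle / (P_n)$ is genuinely module-finite over $K$ and that passing to the integral closure $B_n^+$ in the definition of $V(F)\cap \mc{D}_n$ does not disturb finiteness — this follows because a finite algebra over a complete Tate ring is again a complete Tate ring and $\Spa$ of it is finite over $\Spa(K,K^+)$. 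Finally I would remark that the sequence $(\lambda_n)$ is cofinal in $\Q_{>0}$, so the disks $\mc{D}_{\lambda_n}$ exhaust $\A^1_K$, which is what is needed for the subsequent global statement that $w$ is locally quasi-finite.
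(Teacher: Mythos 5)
Your argument is correct in substance and follows the same strategy as the paper: choose $\lambda_n$ strictly between consecutive Newton slopes, use the Weierstrass-type lemma to write $F=P_nS_n$, show $S_n$ is a unit on $\mathcal{D}_{\lambda_n}$ so that $(F)=(P_n)$ there, and conclude from the polynomial case. The one place where your execution diverges from the paper's is how the disk of non-integral radius $\varpi^{-l/m}$ is handled, and this is exactly where your ``routine'' step is less routine than you suggest. The ring $\mathcal{O}(\mathcal{D}_{\lambda_n})$ is not $K\langle \varpi^lX^m\rangle$ (that ring would not contain $X$); it is the rational localization $K\langle \varpi^NX\rangle\langle \varpi^lX^m\rangle$ for $N\geq l/m$, a two-generator Tate algebra in which neither the invertibility of $S_n$ nor the Weierstrass division needed to identify $K\langle\cdots\rangle/(P_n)$ with $K[X]/(P_n)$ is quite quotable from the standard one-variable theory. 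The paper sidesteps this by adjoining to $K$ an element $\beta$ with $v(\beta)=\lambda_n$: by \cite[Proposition 1.4.9]{hu1} finiteness may be checked after the finite base change $\Spa(K',K'^{+})\to\Spa(K,K^{+})$, and over $K'$ one has $\mathcal{D}_{\lambda_n}=\Spa(K'\langle T\rangle,K'^{+}\langle T\rangle)$ with $T=\beta X$, where $S$ is visibly a unit and $Q'(T)=\prod(\alpha_i^{-1}\beta-T)$ is a distinguished polynomial, so the quotient is finite by classical Weierstrass theory. Your pointwise estimate ($v(b_kX^k)\geq k(s-\lambda_n)>0$ for the non-constant terms of $S_n$) is the right inequality and can be pushed through directly over $K$, but you should either carry out the division argument in the two-generator ring or, more economically, adopt the paper's base-change reduction; as written, the assertion that $K\langle\varpi^lX^m\rangle/(P_n)$ is spanned by $1,X,\dots,X^{n_0-1}$ is the one claim you have not actually justified. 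Your observation that the $\lambda_n$ must be chosen strictly between slopes, and your remark on cofinality, match the paper.
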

\begin{proof} Note that if $F$ is a polynomial, the lemma is easy. So we may assume that $F$ is not a polynomial and so the Newton polygon $\mathcal{N}(F)$ has infinitely many segments and hence infinitely many slopes. Let $\{s_n\}_{n\in \mathbb{N}}$ be the slopes of $\mathcal{N}(F)$ of $F$, ordered strictly increasingly. For any $n\in \mathbb{N}$ pick $\lambda_n = l/m \in \Q$ such that $s_n < \lambda_n < s_{n+1}$. Consider the affinoid subspace $\mathcal{D}_{\lambda_n}=\{x : |\varpi^l X^m(x)|\leq 1\} \subset \bA^1_{\spa(K,K^+)}$ that we defined at the beginning of this section. We claim that $V(F)\cap \mathcal{D}_{\lambda_n} \rightarrow \spa(K,K^+)$ is finite.
After possibly passing to a finite field extension $K'$ we may assume that $K'$ contains an element $\beta$ with $v(\beta)=\lambda_n$.  By \cite[Proposition 1.4.9]{hu1} it suffices to show that the base change morphism
\[(V(F) \cap \mathcal{D}_{\lambda_n})\times_{\spa(K,{K}^+)} \spa(K',{K'}^+) \rightarrow \spa(K',{K'}^+)\]
is finite. So without loss of generality we may assume that $K$ contains an element~$\beta $ with $v(\beta)=\lambda_n$. 
Then $\mathcal{D}_{\lambda_n}= \Spa(K\langle \beta X\rangle, K^+\langle \beta X \rangle)$. Let $P$ be the polynomial whose Newton polygon is given by the section of $\mathcal{N}(F)$ of the first slopes $s_1,\dots,s_n$ and let $F(X)=P(X)\cdot S(X)$ be the corresponding factorization. Then $S$ is a Fredholm series with slopes strictly bigger than $\lambda_n$. In the algebraic closure $\overline{K}$ of $K$ we have a decomposition $P(X) = \prod (1-\alpha_iX)$ with $v(\alpha_i)< \lambda_n$ for all $i$ and we may rewrite 
\[P(X) = \prod \beta^{-1} \alpha_i \prod (\alpha_i^{-1}\beta - \beta X) \in \overline{K}[X].\] 
Moreover the polyomial in $\beta X$, $Q(X):= \prod (\alpha_i^{-1}\beta - \beta X)$ is an element of $ K^{\circ\circ}\langle \beta X\rangle$. Note also that (as $S(X)$ is a unit in $K\langle\beta X\rangle$)
\[K\langle\beta X\rangle /(F(X))=K\langle\beta X\rangle /(Q(X))=K\langle T\rangle /(Q'(T)), \]
where $T$ is the variable $\beta X$ and $Q'(T)= \prod (\alpha_i^{-1}\beta - T)$ and this is a finite $K$-algebra. Finally $(K\langle\beta X\rangle /(F(X)))^+= K^+\langle T \rangle /(Q'(T))$ and this is finite over $K^+$, which completes the proof. \\
\end{proof}

The following theorem is the main structural result on the spectral variety. 

\begin{thm} \label{T.B.1.} The morphism $w\colon V(F)\rightarrow W=\Spa(A,A^+)$ is flat, locally quasi-finite and partially proper. For all $x\in V(F)$ there is an open neighborhood $U$ of $x$ in $V(F)$ such that $\overline{\{x\}}\subset U$ and an open neighborhood $V$ of $w(x)$ in $W$ such that $\overline{\{w(x)\}}\subseteq V$, $w(U)\subset V$ and the induced morphism $w|_U\colon U\rightarrow V$ is finite and flat. 
\end{thm}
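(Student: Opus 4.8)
The plan is to prove the three global properties (flatness, local quasi-finiteness, partial properness) and then bootstrap to the local finite-flat structure statement. Flatness is already established in Lemma \ref{4.1}, so that part requires nothing new. For local quasi-finiteness, the strategy is to reduce to the field case via Lemma \ref{1.5.2}: for every rank $1$ point $y = w(x)$ of $W$ with residue field $k(y)$, base change along $\Spa(k(y)) \to W$ and apply Lemma \ref{specbc} to identify $V(F)\times_W \Spa(k(y))$ with $V(h(F))$ over the non-archimedean field $k(y)$; then Lemma \ref{L.B.1} shows this is a (countable increasing) union of affinoids finite over $\Spa(k(y))$, hence the fibre is finite over $\Spa(k(y))$. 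Since $w$ is of finite type (each $\mathcal{D}_n \cap V(F)$ is affinoid and visibly of finite type over $W$), Lemma \ref{1.5.2} gives quasi-finiteness of $w$ restricted to each $\mathcal{D}_n$, and these glue to local quasi-finiteness of $w$.

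For partial properness, $w$ is already separated (it is a closed immersion into $\mathbb{A}^1_W$ composed with the structure map $\mathbb{A}^1_W \to W$, and $\mathbb{A}^1_W \to W$ is separated); so what remains is the valuative criterion, i.e.\ that $w$ is universally specializing, or equivalently that $V(F) \to W$ satisfies the lifting-of-specializations condition used to define partial properness in \cite[Lecture 2]{johansson}. The key point is that $\mathbb{A}^1_W \to W$ is partially proper (it is the analytification of an honest affine-line situation and is visibly taut / satisfies the valuative criterion), and $V(F) \hookrightarrow \mathbb{A}^1_W$ is a closed immersion; a closed immersion is partially proper, and partial properness is stable under composition, so $w = (\mathbb{A}^1_W \to W) \circ (V(F)\hookrightarrow \mathbb{A}^1_W)$ is partially proper. (One should double-check that the reference's definition of partial properness for a non-qcqs map like $\mathbb{A}^1_W \to W$ is the one that makes this composition argument valid — this is where I would be most careful.)

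For the final, local statement: fix $x \in V(F)$ and set $y = w(x) \in W$. Because $w$ is locally quasi-finite, $x$ lies in some $\mathcal{D}_{\lambda} \cap V(F)$; working over the field $k(y)$ and using Lemma \ref{L.B.1}, choose a rational $\lambda_n$ with $s_n < \lambda_n < s_{n+1}$ separating the slope of $F$ at the "level" of $x$ from the next slope, so that $V(k(y)(F)) \cap \mathcal{D}_{\lambda_n}$ is finite over $\Spa(k(y))$ and $x$ (viewed in the fibre) lies in it. The slopes of the Newton polygon vary, and one wants this slope-separation to persist over a neighbourhood $V$ of $y$ in $W$: this is the heart of the matter. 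Concretely, writing $F = \sum a_i X^i$, the conditions "$|a_i(y')| \lambda^i \le$ something" that pin down which segment of the Newton polygon is relevant are open conditions on $y' \in W$, so there is an open $V \ni y$ (which one can shrink so that $\overline{\{y\}} \subseteq V$, using that $W$ is an affinoid adic space, hence every point specializes within an appropriate open, or rather using partial properness of $\mathbb{A}^1_W$ to arrange $\overline{\{x\}} \subset U$) over which the factorization $F = P \cdot S$ of the Newton polygon lemma persists with $P$ of constant degree $n_0$ and $S$ a unit on the relevant disk. Then over $V$, $\mathcal{O}(V)\langle \varpi^{n}X\rangle/(F) = \mathcal{O}(V)\langle \varpi^n X\rangle/(P)$ is finite and free over $\mathcal{O}(V)$ (a monic-up-to-unit polynomial of fixed degree), giving $U := w^{-1}(V) \cap \mathcal{D}_{\lambda_n} \cap V(F) \to V$ finite; flatness of $U \to V$ is inherited from Lemma \ref{4.1} (or is immediate from freeness). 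The conditions $\overline{\{x\}} \subset U$ and $\overline{\{w(x)\}} \subseteq V$ are arranged by using partial properness of $w$ (just proved) and of $W$ over itself: shrink $V$ to an open containing $\overline{\{y\}}$ — possible since in an affinoid adic space generizations and the closure of a point are controlled — and take $U$ inside $w^{-1}(V)$ containing $\overline{\{x\}}$, which is legitimate precisely because $w$ is partially proper so $\overline{\{x\}} \subseteq w^{-1}(\overline{\{y\}})$.

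The main obstacle I anticipate is the second half of the last paragraph: making the Newton-polygon slope gap at $x$ into an \emph{open} condition on the base $W$ and producing a genuine factorization $F = P\cdot S$ over that open neighbourhood (not just fibrewise over $k(y)$), with $P$ of locally constant degree and $S$ invertible on the disk in question — essentially a relative Weierstrass preparation. This is exactly the content that \cite[Appendix B.1]{aip} handles, and the cleanest route is to cite the relative version of the Newton polygon factorization (the $A$-not-a-field case of \cite[Theorem 4.4.2]{as}) rather than re-derive it; once that relative factorization is in hand, finiteness and flatness of $w|_U$ are formal.
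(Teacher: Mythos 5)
Your treatment of flatness and of local quasi-finiteness matches the paper: flatness is Lemma \ref{4.1}, and quasi-finiteness is obtained by combining Lemma \ref{1.5.2} with Lemma \ref{L.B.1} fibre by fibre over rank $1$ points (one small caution: the full fibre $V(F)\times_W\Spa(k(y))$ is not finite over $\Spa(k(y))$ --- only its intersection with each disk is --- so the criterion must be applied to $V(F)\cap\mathcal{D}_n$ disk by disk, as you do in your second formulation). Your argument for partial properness (closed immersion into $\mathbb{A}^1_W$ followed by the partially proper map $\mathbb{A}^1_W\to W$) is a legitimate alternative to the paper's appeal to the valuative criterion together with Lemma \ref{L.B.1}.

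The genuine gap is in the final, local finite-flat statement, and it is exactly where you say you are least sure. The paper does \emph{not} construct $U$ and $V$ by hand: it invokes Proposition \ref{1.5.6} (Huber's structure theorem for locally quasi-finite, partially proper morphisms), which converts the two properties already established into precisely the assertion that every $x$ has opens $U\supseteq\overline{\{x\}}$ and $V\supseteq\overline{\{w(x)\}}$ with $w|_U\colon U\to V$ finite; only the flatness of $w|_U$ then needs the short extra argument via Lemma \ref{4.1} (after shrinking, $U$ is open in $(V(F)\cap\mathcal{D}_n)\times_W V$, which is flat over $V$). Your substitute --- a relative Newton-polygon/Weierstrass factorization $F=PS$ over an open $V\ni w(x)$ with $\deg P$ locally constant and $S$ invertible on the disk --- is not carried out, and it is precisely the hard content: the paper's remark after the Corollary to Theorem \ref{T.B.1.} points out that this is what forces Buzzard to use Raynaud's formal models in the rigid setting. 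Concretely: (i) Newton polygons are only defined at rank $1$ points, so your argument does not directly treat higher-rank $w(x)$, nor does it explain why a rational neighbourhood of $w(x)$ can be chosen to contain all of $\overline{\{w(x)\}}$ (shrinking $V$ only makes that containment harder, and ``partial properness of $W$ over itself'' gives nothing here); (ii) the locally-constant-degree claim for the slope-$\leq\lambda$ part is not a consequence of finitely many open inequalities on the $|a_i(y')|$ without further argument, since one needs simultaneous lower bounds on $|a_{n_0}|$ and a uniform gap, i.e.\ a genuine relative factorization theorem; and (iii) the natural in-paper source for such a local slope datum, Proposition \ref{constslope}, is itself deduced \emph{from} Theorem \ref{T.B.1.}, so leaning on that circle of ideas risks circularity. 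To repair the proof along the paper's lines, replace your last paragraph by an appeal to Proposition \ref{1.5.6} and then add the Lemma \ref{4.1} flatness argument.
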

\begin{proof}
The morphism $w$ is locally of finite type and quasi-separated by construction. It is locally quasi-finite by Lemma \ref{1.5.2} 
 and Lemma \ref{L.B.1}. It is partially proper again by Lemma \ref{L.B.1} and the valuative criterion of partial properness (\cite[Corollary 1.3.9]{hu1}). Now by Proposition \ref{1.5.6} below, $w$ being locally quasi-finite and partially proper implies that for every $x\in V(F)$ and $w(x) \in \spa(A,A^+)$ there exists an open neighbourhood $U$ of $x$ in $V(F)$ such that $\overline{\{x\}} \subseteq U$, and an open neighbourhood $V$ of $w(x)$ in $W=\spa(A,A^+)$ such that $\overline{\{w(x)\}}\subseteq V$, $w(U)\subseteq V$ and the induced morphism $w|_U\colon U\rightarrow V $ is finite. By Lemma \ref{4.1} above, the morphism $V(F) \cap \mathcal{D}_n \rightarrow W$ is flat for any $n$. Let $V(F)_n:= (V(F) \cap \mathcal{D}_n)\times_{W} V$. After possibly shrinking $V$, we may assume that $U\subseteq V(F)_n$ for $n$ sufficiently large. Clearly $U \subseteq V(F)_n$ is open. Hence the morphism $U\rightarrow V$ is flat.
\end{proof}
In the proof we have used the following important characterization of locally quasi-finite and partially proper maps, whose proof requires significant effort. 
\begin{prop} \label{1.5.6} Let $f\colon X\rightarrow Y$ be a morphism of analytic adic spaces. Assume $f$ is locally of finite type, $X$ is quasi-separated and $Y$ is quasi-compact and quasi-separated. Then the following are equivalent:
\begin{enumerate}
\item $f$ is locally quasi-finite and partially proper.
\item For every $x\in X$, there exist open subspaces $U$ and $V$ of $X$ and $Y$ such that the closure $\overline{\{x\}} $ of $\{x\}$ in $X$ is contained in $U$, the closure of $\overline{\{f(x)\}}$ of $\{f(x)\}$ in $Y$ is contained in $V$, $f(U)\subseteq V$ and $f\colon U\rightarrow V$ is finite. 
\end{enumerate}
\end{prop}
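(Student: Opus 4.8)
\textbf{Proof plan for Proposition \ref{1.5.6}.}

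The plan is to prove the non-trivial implication, namely that (1) implies (2), since (2) $\Rightarrow$ (1) is comparatively formal: finiteness of $f|_U\colon U\to V$ makes $f$ quasi-finite near $x$, and the existence of the neighbourhoods $U\supseteq\overline{\{x\}}$, $V\supseteq\overline{\{f(x)\}}$ with $f(U)\subseteq V$ encodes specialising-stability, which together with the finite-type and quasi-separatedness hypotheses gives partial properness via the valuative criterion \cite[Corollary 1.3.9]{hu1}. So the substance is in the forward direction. First I would fix $x\in X$, set $y=f(x)$, and reduce to a convenient local picture: since $Y$ is quasi-compact and quasi-separated and the statement is about a neighbourhood of $x$, I may shrink $Y$ to an affinoid $\Spa(A,A^+)$ containing $y$ (and $\overline{\{y\}}$, using that in an affinoid adic space the closure of a point is the set of its specialisations, all of which lie in the same affinoid). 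Then $X$ is locally of finite type over an affinoid, hence locally affinoid of finite type.

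The key step is to build the open $U$. Because $f$ is partially proper, it is in particular specialising and the fibre $X_y=X\times_Y\Spa(k(y),k(y)^+)$ — or rather the fibre over the rank-$1$ point underlying $y$ — is, by local quasi-finiteness together with Lemma \ref{1.5.2}, \emph{finite} over $\Spa(k(y))$; so $X_y$ consists of finitely many points lying over the rank-$1$ point of $\Spa(k(y))$, and our chosen $x$ is one specialisation-class among them. I would use finiteness of the fibre to separate $x$'s fibre-component from the others by a quasi-compact open $U_0\subseteq X$ with $\overline{\{x\}}\cap f^{-1}(y)\subseteq U_0$ and $U_0\cap f^{-1}(y)$ a single "connected component" of the fibre (here one uses that the fibre is a finite, hence discrete-plus-specialisations, space). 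Then partial properness — concretely, the closedness of the specialisation relation and properness-type constructibility arguments à la \cite[\S1.3--1.4]{hu1} — lets me shrink $Y$ to an open $V\ni y$ with $\overline{\{y\}}\subseteq V$ so that, setting $U:=U_0\cap f^{-1}(V)$ after a further shrink, one gets $f(U)\subseteq V$, $\overline{\{x\}}\subseteq U$, and crucially $f|_U$ has \emph{proper} fibres that are quasi-finite, hence finite fibres, and is itself proper; a proper quasi-finite morphism of this type is finite (this is the adic analogue of the classical "proper $+$ quasi-finite $\Rightarrow$ finite", and is exactly where Huber's machinery is invoked). The partial-properness hypothesis is what guarantees $\overline{\{x\}}\subseteq U$ can be achieved while keeping $f(U)\subseteq V$.

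The main obstacle is precisely this last point: upgrading "locally quasi-finite and partially proper" to "finite on a suitable specialising-closed neighbourhood". One cannot simply restrict to an affinoid neighbourhood of $x$, because that would in general throw away part of $\overline{\{x\}}$ (non-analytic or higher-rank specialisations), and it is the interaction between the separatedness/specialising behaviour coming from partial properness and the finiteness of fibres coming from quasi-finiteness that must be managed simultaneously. Concretely, one needs to know that the locus where $f$ is finite is open and specialising-stable, and that it contains a full neighbourhood of $\overline{\{x\}}$; this is essentially \cite[Lemma 1.5.2 and Proposition 1.4.9]{hu1} combined with the valuative criterion, and I would simply cite Huber for the hard technical core rather than reprove it — the proposition as stated is flagged in the text as one "whose proof requires significant effort", so I expect the write-up to proceed by assembling these references rather than by a self-contained argument.
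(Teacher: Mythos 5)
Your proposal lands exactly where the paper does: the text gives no self-contained argument and simply cites \cite[Proposition 2.16]{johansson} (equivalently \cite[Proposition 1.5.6]{hu1}), noting in a footnote that $Y$ is taut since it is quasi-compact and quasi-separated. Your sketch of what goes into Huber's proof is a reasonable outline, and your decision to defer the hard technical core to Huber matches the paper's treatment.
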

\begin{proof} See \cite[Proposition 2.16]{johansson}\footnote{By \cite[Lemma 5.1.3]{hu1} $Y$ is taut as it is quasi-compact and quasi-separated.} in this volume, or \cite[Proposition 1.5.6]{hu1}. 
\end{proof}

\begin{rem}\label{conncomp} Consider again the structure map $w\colon V(F)\rightarrow W=\Spa(A,A^+)$ and consider $U$ and $V$ as in Theorem \ref{T.B.1.} above. Then, as noted in the proof of that theorem, for $n$ sufficiently large $U \subseteq V(F)_n = (V(F) \cap \mathcal{D}_n)\times_{W} V$ is open. Now $U\rightarrow V$ is finite, hence also proper and so $U$ is also closed in $V(F)_n $. Therefore $U$ is a union of connected components of $V(F)_n$.
\end{rem}

For the next corollary, we remark the following. Let $Y$ and $Z$ be locally strongly noetherian analytic adic spaces (i.e., both $Y$ and $Z$ have an open cover by affinoids $\Spa(R,R^+)$ with $R$ strongly noetherian and Tate). If $f\colon Y\rightarrow Z$ is a finite flat map, then  
the sheaf $f_{*}\cO_Y$ is a finite locally free $\cO_{Z}$-module. We say that $f$ has constant degree if $f_{*}\cO_Y$ is of rank $d$ over $\cO_{Z}$. If $Z$ is connected, then any finite flat map is of constant degree. \\

Let $\mathit{Cov}(V(F))$ denote the set of all open affinoid $U\subseteq V(F)$ such that $w(U)\subseteq W$ is open affinoid and the map $w|_U \colon U\rightarrow w(U)$ is finite of constant degree. The above theorem immediately implies:
\begin{corollary} The set $\mathit{Cov}(V(F))$ is an open cover of $V(F)$. 
\end{corollary}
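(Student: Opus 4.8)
The plan is to deduce this directly from Theorem \ref{T.B.1.} together with Remark \ref{conncomp}. The corollary asserts two things: that $\mathit{Cov}(V(F))$ covers $V(F)$ as a set, and that its members are open (the latter being automatic since they are open affinoids by definition). So the real content is: every point $x\in V(F)$ lies in some $U \in \mathit{Cov}(V(F))$, i.e. in an open affinoid $U$ with $w(U)$ open affinoid and $w|_U\colon U \to w(U)$ finite of \emph{constant} degree.

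First I would invoke Theorem \ref{T.B.1.}: given $x \in V(F)$, it supplies an open neighbourhood $U_0$ of $x$ in $V(F)$ and an open neighbourhood $V_0$ of $w(x)$ in $W$ with $w(U_0) \subseteq V_0$ and $w|_{U_0}\colon U_0 \to V_0$ finite and flat. The issue is that $U_0$ and $V_0$ need not be affinoid, and the degree need not be constant. To fix affinoidness, I would shrink $V_0$ to an open affinoid $V \subseteq V_0$ containing $w(x)$ (possible since $W = \Spa(A,A^+)$ is affinoid, hence has a basis of open affinoids, e.g. rational subsets), and then shrink $U_0$: since $w|_{U_0}$ is finite, hence proper, hence in particular quasi-compact and separated, one can find an open affinoid $U \subseteq U_0 \cap w^{-1}(V)$ containing $x$ such that $w(U) = V$ — here one uses that a finite morphism of affinoids is affinoid and that finite morphisms are stable under the base change $V \hookrightarrow V_0$, so $w^{-1}(V) \cap U_0 \to V$ is finite; being finite over an affinoid it is itself affinoid, so one may take $U = w^{-1}(V)\cap U_0$ outright (after possibly further shrinking $V$ so that this preimage is affinoid, which holds as soon as $V$ is small enough that $U_0$ contains the whole fibre over points of $V$ — this is exactly the content of Remark \ref{conncomp}, that $U_0$ is a union of connected components of $V(F)_n = (V(F)\cap \mathcal D_n)\times_W V_0$ for suitable $n$). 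Finally, flatness of $w|_U$ is preserved under the shrinking by Lemma \ref{4.1} and the argument in the proof of Theorem \ref{T.B.1.}, so $w|_U$ is finite flat; then $w_*\cO_U$ is a finite locally free $\cO_V$-module, and by further shrinking $V$ to a connected (equivalently, by making $V$ small, a rational subdomain on which the rank is constant) open affinoid, $w|_U$ has constant degree. This exhibits $U \in \mathit{Cov}(V(F))$ with $x \in U$.

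The main obstacle I anticipate is the bookkeeping around producing an \emph{affinoid} $U$ with $w(U)$ \emph{affinoid} and $w(U) = w|_U$'s genuine image: one must be careful that after restricting to a rational subset $V$ of $W$, the preimage $U_0 \cap w^{-1}(V)$ is still affinoid and still maps \emph{onto} $V$ (not just into it) with finite fibres. The clean way around this is exactly Remark \ref{conncomp}: for $n$ large, $U_0$ is a union of connected components of the affinoid $V(F)_n$, and connected components of an affinoid (strongly) noetherian adic space are affinoid and finite in number, so $U_0$ itself (intersected with $w^{-1}(V)$) is affinoid over $V$; finiteness and flatness are then inherited, and constancy of degree is arranged by shrinking $V$ to a connected open affinoid. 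With these inputs the proof is essentially a one-line citation of Theorem \ref{T.B.1.} plus Remark \ref{conncomp}, together with the elementary fact that a finite flat morphism to a connected base has constant degree, which is recalled in the paragraph preceding the corollary.
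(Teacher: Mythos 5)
Your proposal is correct and follows the same route as the paper, which simply states that the corollary is an immediate consequence of Theorem \ref{T.B.1.} together with the preceding remarks on finite flat morphisms of constant degree; you have merely spelled out the shrinking-to-affinoids bookkeeping (via Remark \ref{conncomp}) that the paper leaves implicit.
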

\begin{rem} The analogue of this corollary in the rigid analytic world is \cite[Theorem 4.6]{buz} and its proof is hard and uses Raynaud's theory of admissible formal models of rigid spaces. The hard part here is also to understand the structure of quasi-finite maps. Note that the above proof uses Proposition \ref{1.5.6}, and as mentioned above, the proof of this proposition is also rather difficult.   
\end{rem}

Let us make the geometry of $w$ even more explicit. In the following, we say that two entire power series $S,Q \in  A\{\{X\}\}$ are \emph{relatively prime} if the ideal $(S,Q)$ satisfies $(S,Q) = A\{\{X\}\}$.
\begin{prop}\label{C.B.1} Let $x\in V(F)$ and let $U=\spa(C,C^+)\subset V(F)$ and $V=\spa(B,B^+)\subseteq W$ be affinoid opens as in Theorem \ref{T.B.1.} above. Assume furthermore that $U$ is of constant degree $d$ over $V$. Then the Fredholm series $F$ has a factorization in $B\{\{X\}\}$ of the form $F=QS$, where $Q=1 +b_1X + \dots + b_dX^d  \in B[X]$, $b_d \in B^{\times}$ and $S$ is a Fredholm series with coefficients in $B$ which is relatively prime to $Q$ and $C=B[X]/(Q)$.\\
Conversely, for any such factorization $F=QS$ in $B\{\{X\}\}$, the affinoid
 $$U=\Spa(B[X]/(Q),(B[X]/(Q))^{+}),$$
 is naturally an element of $\mathit{Cov}(V(F))$. (Here $(B[X]/(Q))^{+}$ is the integral closure of $B^+$ in $B[X]/(Q)$.)
\end{prop}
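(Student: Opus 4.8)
The plan is to prove the two directions separately, using the structural results already obtained. For the forward direction, start from $U = \Spa(C,C^+) \in \mathit{Cov}(V(F))$ of constant degree $d$ over $V = \Spa(B,B^+)$. By Remark \ref{conncomp}, after enlarging $n$ we may assume $U$ is a union of connected components of $V(F)_n = (V(F)\cap\mathcal{D}_n)\times_W V$, which over $V$ is $\Spa(B\langle\varpi^nX\rangle/(F),\,\cdot\,)$. Thus $C$ is a direct factor of $B\langle\varpi^nX\rangle/(F)$ as a $B$-algebra, and the complementary factor $C'$ corresponds to the remaining components; write $B\langle\varpi^nX\rangle/(F) \cong C\times C'$. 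Since $w|_U$ is finite flat of degree $d$, $C$ is a projective $B$-module of rank $d$. The key step is to extract from this decomposition a polynomial factorization $F = QS$ in $B\{\{X\}\}$: I would let $Q$ be the characteristic polynomial of multiplication by $X$ on the rank-$d$ projective module $C$ (this lies in $B[X]$, is monic of degree $d$ after normalizing, and I should instead take $Q(X) = $ the ``reversed'' polynomial $X^d \cdot \mathrm{charpoly}(X^{-1})$ so that the constant term is $1$; here I use that $X$ acts invertibly on $C$ — which follows because $V(F)$ omits the zero section since $F(0)=1$ — so the leading coefficient $b_d$ is a unit). One checks $Q$ kills $C$, hence $F \in (Q)$ inside $B\langle\varpi^nX\rangle$, giving $F = QS$ with $S \in B\langle\varpi^nX\rangle$; that $S$ is entire with coefficients in $B$ follows since $Q$ has a unit leading coefficient, so division of the entire series $F$ by $Q$ stays entire (this is the analytic-Euclidean-division argument, valid on every $\mathcal{D}_m$). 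Relative primality $(Q,S) = B\{\{X\}\}$ and $C = B[X]/(Q)$ then come from the fact that $V(Q)$ and $V(S)$ are the two disjoint pieces $U$ and the complement: since $B\langle\varpi^nX\rangle/(F) \cong C\times C'$ with $C = B\langle\varpi^nX\rangle/(Q)$ and $C' = B\langle\varpi^nX\rangle/(S)$, the Chinese Remainder / idempotent decomposition forces $(Q)+(S) = B\langle\varpi^nX\rangle$, and one upgrades this to $(Q,S) = B\{\{X\}\}$ using that $Q$ is a polynomial (Weierstrass division again lets one solve the Bézout identity with entire coefficients).

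For the converse, suppose $F = QS$ in $B\{\{X\}\}$ with $Q = 1 + b_1X + \cdots + b_dX^d$, $b_d \in B^\times$, $S$ a Fredholm series relatively prime to $Q$. Set $C := B[X]/(Q)$ and $C^+$ the integral closure of $B^+$ in $C$, and $U := \Spa(C, C^+)$. I would first observe $C$ is finite free of rank $d$ over $B$ (since $b_d$ is a unit, $Q$ is, up to the unit $b_d$, monic of degree $d$), so $U \to V$ is finite and flat of constant degree $d$; in particular $U$ is an affinoid analytic adic space and $(C,C^+)$ is sheafy. Next I must identify $U$ with an open subspace of $V(F)$. Pick $n$ large enough that $V(Q) \subseteq \mathcal{D}_n$ (the roots of $Q$ over geometric points have bounded valuation since $b_d$ is a unit, so $|\varpi^nX|\le 1$ on $V(Q)$ for $n \gg 0$); then inside $\Spa(B\langle\varpi^nX\rangle/(F),\cdot)$ the relative primality of $Q$ and $S$ gives $B\langle\varpi^nX\rangle/(F) \cong B\langle\varpi^nX\rangle/(Q) \times B\langle\varpi^nX\rangle/(S) = C \times (\text{rest})$ via orthogonal idempotents, exhibiting $\Spa(C,C^+) = V(F)\cap\mathcal{D}_n$ restricted over $V$ as a clopen piece, hence open in $V(F)$. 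Finally $w|_U\colon U \to V$ is the structure map, finite flat of constant degree $d$, so $U \in \mathit{Cov}(V(F))$.

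I expect the main obstacle to be the careful bookkeeping of \emph{entirety} when passing between the various rings $A\{\{X\}\}$, $B\{\{X\}\}$, $B\langle\varpi^nX\rangle$ and the polynomial ring $B[X]$ — specifically, making rigorous that (i) the quotient $S = F/Q$ of an entire series by a polynomial with unit leading coefficient is again entire (Weierstrass division on each disk $\mathcal{D}_m$, with uniform control), and (ii) the Bézout identity for relatively prime $Q$ and $S$ can be taken with entire coefficients rather than merely coefficients in some $B\langle\varpi^nX\rangle$. Both hinge on the same mechanism — division by the distinguished polynomial $Q$ preserves the radius-of-convergence conditions — but it needs to be stated once cleanly. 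A secondary technical point is checking that $C^+$, defined as an integral closure, really is the $+$-ring coming from the identification with a clopen subspace of $V(F)\cap\mathcal{D}_n$; this follows from compatibility of integral closure with the idempotent splitting $B\langle\varpi^nX\rangle/(F) \cong C \times (\text{rest})$, but should be spelled out. Everything else (flatness of $w$, the $\mathit{Cov}$ membership criterion, constancy of degree over connected $V$) is already in hand from Lemma \ref{4.1}, Theorem \ref{T.B.1.} and the discussion preceding the corollary.
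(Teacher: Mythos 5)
Your architecture is the same as the paper's (identify $U$ with a union of connected components of $(V(F)\cap\mathcal{D}_n)\times_W V$, extract $Q$ from the characteristic polynomial of multiplication by $X$ on $C$, and in the converse use coprimality to split $B\langle\varpi^nX\rangle/(F)$ together with a completion/division argument identifying $B[X]/(Q)$ with $B\langle\varpi^nX\rangle/(Q)$), and your converse direction and the "entirety bookkeeping" you flag at the end are exactly what is needed. But the forward direction has two genuine problems. First, your polynomial $Q$ is the wrong one as written. If $P(T)=\det(T-m_X)$ is the characteristic polynomial of multiplication by $X$ on $C$, the reversed polynomial $X^{d}P(X^{-1})$ has as roots the \emph{inverses} of the eigenvalues of $m_X$, i.e.\ the inverses of the $X$-coordinates of the points of $U$; it does not kill $C$, so $F\notin(Q)$ and the whole factorization collapses. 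No reversal is wanted: the correct normalization (the one in the paper) is $Q(X)=P(0)^{-1}P(X)$, dividing by the constant term. That constant term $P(0)=\pm\det(m_X)$ is a unit precisely because $X$ is invertible on $C$ (your observation that $F(0)=1$ keeps $0$ out of $V(F)$), and this $Q$ has constant term $1$, unit leading coefficient $P(0)^{-1}$, and kills $C$ by Cayley--Hamilton. (Your formula is only correct if "$\mathrm{charpoly}(X^{-1})$" means the characteristic polynomial of the \emph{operator} $m_{X^{-1}}$, in which case it coincides with $P(0)^{-1}P(X)$; as written it reads as $P$ evaluated at $X^{-1}$, which is a different polynomial.)

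Second, your proof of relative primality is circular. You deduce $(Q)+(S)=B\langle\varpi^nX\rangle$ from the decomposition $B\langle\varpi^nX\rangle/(F)\cong B\langle\varpi^nX\rangle/(Q)\times B\langle\varpi^nX\rangle/(S)$, but the Chinese Remainder Theorem produces that decomposition \emph{from} coprimality, not conversely; and the identification of the complementary connected-component factor $C'$ with $B\langle\varpi^nX\rangle/(S)$ is equivalent to the statement being proved. What you actually know a priori is $B\langle\varpi^nX\rangle/(F)\cong C\times C'$ with $C\cong B\langle\varpi^nX\rangle/(Q)$; since $(Q)$ maps onto $0\times C'$ and $QS=F=0$, this only gives that $S$ vanishes on $C'$. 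The substantive claim is that $S$ is a \emph{unit} in $C$ (in the paper's notation, that $H=F/L$ is a unit in $C$), and it needs an argument --- for instance a fibrewise multiplicity count: over each maximal point $y$ of $V$ the roots of $Q_y$ are exactly the points of $U_y$, and $\deg Q_y=\dim_{k(y)}C_y=\sum_{\alpha\in U_y}\mathrm{mult}_{F_y}(\alpha)$, which forces $\mathrm{mult}_{Q_y}(\alpha)=\mathrm{mult}_{F_y}(\alpha)$ and hence $S_y(\alpha)\neq 0$ for every $\alpha\in U_y$. Only once this is in place does one get a B\'ezout identity in $B\langle\varpi^nX\rangle$, which is then upgraded to $B\{\{X\}\}$ by the division-by-$Q$ mechanism you correctly identify.
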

\begin{proof}
For the first part, note that $U$ is a union of connected components of $V(F)_n$ for $n$ sufficiently large, by Remark \ref{conncomp}. Hence we have a surjection 
\[B\langle \varpi^n X\rangle \rightarrow C.\]
Let $L\in B[X]$ be the characteristic polynomial of the image of $X$ in $C$. The composition 
\[f\colon B[X]/(L)\rightarrow B \langle \varpi^nX\rangle/(L) \rightarrow C\]
is a morphism of finitely generated $B$-modules and the topology on $B$ induces a natural topology on them. The topology on $C$ agrees with the topology induced from the surjection $B\langle \varpi^n X \rangle \rightarrow C$. The morphism $f\colon B[X]/(L) \rightarrow C$ therefore has dense image. But by \cite[Lemma 2.3 and 2.4]{hu2}, every morphism of finitely generated $B$-modules is strict, hence $f$ is surjective. Comparing ranks shows that $f$ is an isomorphism. Hence for $n$ large enough, we have a factorization $F=LH$ in $B\langle \varpi^n X \rangle$ with  $H$ a unit in $B\langle \varpi^n X \rangle/(L)\cong C$. From this factorization we see that $L(0) \in B^{\times}$. Define $Q(X):=L(0)^{-1} L(X)$. Then we can rewrite $F=Q\cdot S$ with $S=L(0)H$. As $H$ is a unit in $C$, so is $S$ and hence $Q$ and $S$ are relatively prime. 

For the ``conversely'' part, we argue as follows: 
$U \rightarrow V$ is finite and surjective of constant degree $\deg Q$, so it remains to see that $U$ is naturally an open subset of $V(F)$. For this we may work locally over $V$. For each $n$ we have compatible morphisms 
\[ B[X]/(Q) \rightarrow  B \langle \vp^{n}X \rangle / (Q) \leftarrow B \langle \vp^{n}X \rangle /(F). \]
The second map is the projection onto the first factor in the decomposition 
\[B \langle \vp^{n}X \rangle /(F) \cong B \langle \vp^{n}X \rangle /(Q) \times B \langle \vp^{n}X \rangle /(S) \] which results from the fact that $Q$ and $S$ are relatively prime. \\
Thus $V(Q)=\{Q=0\}\sub V(F)$ is open and closed in $V(F)$. 
Moreover, when $n$ is sufficiently large, we claim that the first map is an isomorphism. To see this, consider the quotient map $p \colon B[X] \rightarrow B[X]/(Q)$ and equip the target with a submultiplicative norm that induces the canonical topology. 
For large enough $n$ we will have $|p(\varpi^{n}X)|\leq 1$ and hence $p(B_{0}[\varpi^{n}X])\subseteq (B[X]/(Q))_{0}$ (here we are using $-_{0}$ to denote unit balls), so $p$ is continuous for the topology on $B[T]$ coming from the inclusion $B[X]\subseteq B\langle \varpi^{n}X \rangle$, and we may complete to obtain a morphism $B\langle \varpi^{n}X \rangle \rightarrow B[X]/(Q)$ with kernel $QB\langle \varpi^{n}X \rangle$. This gives an inverse to the map $B[X]/(Q) \rightarrow B \langle \varpi^{n}X \rangle / (Q)$, proving the claim.
Thus we may identify $U$ with $\{Q=0\}\subseteq V(F)$, which shows that $U$ is naturally an open subset of $V(F)$.
\end{proof}

\subsection{Slope data}
We end this section with a brief discussion of the concept of a slope datum following \cite[Section 2]{JN1}, which is a helpful terminology in practice and is standard in the context of overconvergent cohomology. 

We keep the general notation of this section. Then for $U=\spa(B,B^+) \subseteq W$ an open affinoid subspace and $h=\frac{l}{m} \in \Q$, we let
\[ \mathcal{D}_{U,h}:= \{x \in \mathbb{A}^1_U: \ |\varpi^lX^m(x)|\leq 1\}\subset \A^1_U\]
similar to above, but keeping track of the base $U$ in the notation. 
\begin{defn}Let $h \in \Q_{\geq 0}$ and let $U\subset W=\spa(A,A^+)$ be an open affinoid. Consider the open affinoid subset  $V_{U,h}:=V(F)\cap \mathcal{D}_{U,h} \subseteq \mathbb{A}^1_W$ of $V(F)$. We say that $(U,h)$ is a \emph{slope datum} (for $(W,F)$) if  $V_{U,h} \rightarrow U$ is finite of constant degree. 
\end{defn}

Let $h\in \R$. Recall from above that for $A=K$, we say that an entire power series $F\in K\{\{X\}\}$ has slope $\leq h$ (or $>h$) if all slopes of its Newton polygon are $\leq h$ (or $>h$). 
Let us extend this notion Banach--Tate rings as follows. Fix a Banach--Tate ring $A$ and fix a multiplicative pseudo-uniformizer $\vp$. For any rank $1$ point \linebreak $x\in \spa(A,A^+)$ normalize the additive valuation on the complete residue field $k(x)$ so that the image of $\varpi$ under the natural map $A\rightarrow k(x)$ has valuation $1$. 
We say that $F\in A\{\{X\}\}$ has slope $\leq h$ (or $>h$) if, for any rank $1$ point $x\in \spa(A,A^+)$ with residue field $k(x)$, the specialization $F_{x}\in k(x)\{\{X\}\}$ has slope $\leq h$ (or $>h$). 

\begin{defn}
Let $A$ be a Banach--Tate ring with a fixed multiplicative pseudo-uniformizer $\vp$. Let $F\in A\{\{X\}\}$ be a Fredholm series and let $h\in \R$. A \emph{slope $\leq h$-factorization} of $F$ is a factorization $F=QS$ in $A\{\{X\}\}$ where $Q$ is a multiplicative polynomial of slope $\leq h$ and $S$ is a Fredholm series of slope $>h$.
\end{defn}
\begin{rem}\label{slopetop}
If $A$ is a complete Tate ring with a fixed pseudo-uniformizer $\vp$, then the notions of slope factorizations and slope $\leq h$ or $> h$ are independent of the choice of a norm on $A$ with $\vp$ multiplicative. 
\end{rem}

\begin{lem}\label{coprime}
Let $A$ be a Banach--Tate ring with a fixed multiplicative pseudo-unifor-mizer $\vp$ and let $h\in \Q_{\geq 0}$. Let $S$ be a Fredholm series of slope $>h$ and $Q$ a multiplicative polynomial of slope $\leq h$. Then $Q$ and $S$ are relatively prime.
\end{lem}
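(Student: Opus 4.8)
The plan is to test the coprimality of $Q$ and $S$ on the rank-one points of $W=\Spa(A,A^+)$, where the slope hypotheses turn, via Newton polygons, into a statement about the valuations of the zeros of one-variable power series. I would first reduce to a statement over a single affinoid disk: since $Q=1+b_1X+\dots+b_dX^d$ is a multiplicative polynomial we have $b_d\in A^\times$, so for $n$ large $Q$ is a distinguished polynomial of degree $d$ in the Tate algebra $A\langle\vp^nX\rangle$, and Weierstrass division identifies $A\langle\vp^nX\rangle/(Q)$ with the finite free $A$-module $B:=A[X]/(Q)$, and likewise identifies $A\{\{X\}\}/(Q)$ with $B$. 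Hence both ``$(Q,S)=A\{\{X\}\}$'' and ``$(Q,S)=A\langle\vp^nX\rangle$'' are equivalent to the image $\bar S$ of $S$ being a unit of $B$, and it suffices to prove that $R_n:=A\langle\vp^nX\rangle/(Q,S)$ is the zero ring for one such $n$.

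Suppose it is not. As a nonzero quotient of the Tate ring $A\langle\vp^nX\rangle$ by a proper ideal, $R_n$ is again Tate (the image of $\vp$ is a topologically nilpotent unit), so $\Spa(R_n,R_n^+)$, with $R_n^+$ the integral closure of the image of $A^+\langle\vp^nX\rangle$, is a nonempty analytic adic space and therefore carries a point $z$ of rank one. Identifying $\Spa(R_n,R_n^+)$ with the closed subspace of $V(F)\cap\cD_n$ on which $Q$ and $S$ vanish, we view $z$ as a point of $V(F)\cap\cD_n$. Its image $x:=w(z)$ in $W$ is again of rank one, since the restriction of $z$ to $A$ is continuous, of rank $\leq 1$, and nontrivial because $|\vp(x)|<1$. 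Base change along $A\to\widehat{k(x)}$ (Lemma~\ref{specbc}) then sends $z$ to a point $\bar z$ of $V(F_x)\cap\cD_n\subseteq\bA^1_{\widehat{k(x)}}$ at which the specializations $Q_x,S_x\in\widehat{k(x)}\{\{X\}\}$ of $Q$ and $S$ both vanish.

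Since $x$ is a rank-one point, the hypotheses give that $Q_x$ has slope $\leq h$ and $S_x$ has slope $>h$ over the non-archimedean field $\widehat{k(x)}$. Let $\xi:=X(\bar z)$, viewed in an algebraically closed complete valued extension of $k(\bar z)$ with valuation normalized by $v(\vp)=1$. From $Q_x(\xi)=0$ and the fact that the Newton-polygon slopes of a Fredholm polynomial are the valuations of its inverse roots (so, when those slopes are $\leq h$, the zeros of $Q_x$ have valuation $\geq -h$), we get $v(\xi)\geq -h$; on the other hand $S_x(\xi)=0$ — the series converges because $\bar z\in\cD_n$ and $S_x$ is entire — so by the Newton-polygon description of the zeros of an entire series (the Weierstrass factorization recalled in the interlude, cf.~\cite{gouv}) the number $-v(\xi)$ is a slope of $\mathcal N(S_x)$, hence $v(\xi)<-h$. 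This contradiction shows $R_n=0$, and therefore $(Q,S)=A\{\{X\}\}$.

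The step I expect to be the real obstacle is this double descent: knowing that $(Q,S)$ is already the unit ideal over a single disk $\cD_n$ relies on the finiteness of $B=A[X]/(Q)$ together with Weierstrass division by the polynomial $Q$, and passing to a rank-one point of $W$ is forced because the slope hypotheses are only formulated there; once the problem has been transported to a non-archimedean field it follows at once from the Newton polygon theory assembled in the interlude.
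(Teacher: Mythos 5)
Your argument is correct, and it is essentially the proof of the result the paper defers to (\cite[Lemma 2.2.7]{JN1}): reduce modulo $Q$ to the finite $A$-algebra $A[X]/(Q)$ using that $Q$ is multiplicative (the isomorphism $A[X]/(Q)\cong A\langle\varpi^nX\rangle/(Q)$ for $n\gg 0$ is exactly what is established in the converse direction of Proposition \ref{C.B.1}, so you need not invoke Weierstrass division per se), test invertibility of $\bar S$ at a rank-one point, and derive the contradiction from the Newton-polygon description of the zeros of $Q_x$ and $S_x$ over the residue field. The sign conventions and the valuation comparison $v(\xi)\geq -h$ versus $v(\xi)<-h$ are handled correctly, so I have no substantive objection.
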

\begin{proof}
See \cite{JN1}, Lemma 2.2.7.
\end{proof}

The following proposition is an adaption Lemma 4.4 of \cite{buz} and Theorem 4.5.1 of \cite{as} to our setting and will be applied in the course of James Newton in the context of spaces of overconvergent $p$-adic forms of bounded slope. 

\begin{prop}\label{constslope} Keep the setup from above, so $W=\spa(A,A^+)$ is as before and \linebreak $F \in A\{\{X\}\}$ is a Fredholm series. 
Let $x$ in $W=\spa(A,A^+)$ be a closed rank $1$ point. Then for any fixed $h \in \Q$, there is a slope datum $(U,h)$ for $(W,F)$ with $U$ an open affinoid neighbourhood of $x$.  
\end{prop}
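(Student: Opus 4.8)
The plan is to reduce the statement to the field case via specialization at $x$, extract a slope $\leq h$ factorization of $F_x$ over the residue field $k(x)$, and then spread this factorization out to a small affinoid neighbourhood of $x$ using the constant-term/unit argument already employed in Proposition \ref{C.B.1}. Concretely, first I would pass to $k(x)$ and consider $F_x \in k(x)\{\{X\}\}$. Since $x$ is a closed rank $1$ point, $k(x)$ is a non-archimedean field, so by the Newton polygon lemma applied with $s_k$ the last slope that is $\leq h$ (or, if $h$ lies strictly inside a segment, choosing $n_0$ to be the right endpoint of the last segment of slope $\leq h$), we obtain a factorization $F_x = P_x S_x$ with $P_x \in k(x)[X]$ a polynomial of degree $d$ whose Newton polygon consists exactly of the slopes $\leq h$, $S_x$ entire of slope $>h$, and (after normalizing the leading coefficient) $P_x$ a multiplicative polynomial of slope $\leq h$ with $P_x(0) = 1$. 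In particular $d$ is the $x$-coordinate of the right endpoint of the relevant portion of $\mathcal{N}(F_x)$.

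Next I would interpret $d$ geometrically: the slope $\leq h$ part of $V(F_x)$ is $V(F_x) \cap \mathcal{D}_{k(x),h}$, which by Lemma \ref{L.B.1} (and the discussion of slope factorizations) is finite over $\Spa(k(x),k(x)^+)$ of degree $d$. The fibre of $V(F) \cap \mathcal{D}_{U,h} \to U$ over $x$ therefore has total length $d$ (using flatness from Lemma \ref{4.1}, so fibre length is constant on connected fibres and equals the degree). The goal is then to find an open affinoid $U = \spa(B,B^+) \ni x$ over which $V(F) \cap \mathcal{D}_{U,h} \to U$ is finite \emph{of constant degree $d$} — i.e.\ a slope datum. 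By Theorem \ref{T.B.1.} we already know $w$ is flat, locally quasi-finite and partially proper, and there is an open $U_0 \ni x$ (with $\overline{\{x\}} \subseteq U_0$, hence $U_0$ affinoid since $x$ has rank $1$) and $V_0 \ni w(x)$ such that $w|_{U_0}\colon U_0 \to V_0$ is finite flat; after shrinking, $U_0$ is a union of connected components of some $V(F)_n$. The content is to arrange that the ``slope $\leq h$'' locus $V(F) \cap \mathcal{D}_{V_0,h}$, restricted over a small enough neighbourhood of $x$, is \emph{exactly} such a clopen piece $U_0$.

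The main obstacle, and the heart of the argument, is showing that the slope $\leq h$ factorization spreads out: that for $V \ni w(x)$ a sufficiently small open affinoid one can write $F = QS$ in $B\{\{X\}\}$ (with $B = \cO_W(V)$) with $Q = 1 + b_1 X + \dots + b_d X^d$, $b_d \in B^\times$, $S$ Fredholm of slope $>h$ on all of $V$, and $Q$ multiplicative of slope $\leq h$ on all of $V$; then Proposition \ref{C.B.1} (conversely part) and Lemma \ref{coprime} hand us the slope datum, with $C = B[X]/(Q)$ and $U = V_{V,h}$. To get this I would: (i) lift the factorization $F_x = P_x S_x$ to some $B_0\langle \varpi^m X\rangle$ for a small affinoid $V$ using that the two factors over $k(x)$ are coprime (Hensel-type lifting of a clopen decomposition $V(F) \cap \mathcal{D}_n$ near the fibre over $x$, exactly as in the proof of Proposition \ref{C.B.1}: the characteristic-polynomial-of-$X$ trick produces $Q \in B[X]$ and the complementary factor $S$); (ii) check, after further shrinking $V$, that $b_d \in B^\times$ (the leading coefficient is a unit at $x$, hence on a neighbourhood, since $\spa(B,B^+)$ is quasi-compact and the non-vanishing locus is open) and that $Q$ remains multiplicative of slope $\leq h$ and $S$ of slope $>h$ at every rank $1$ point of $V$ — here one uses that these are open conditions in the sense of Remark \ref{slopetop} and the fact that the degree of the slope $\leq h$ part is upper semicontinuous and equals $d$ at $x$, so is identically $d$ nearby. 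Once the factorization holds over $V = U$, Proposition \ref{C.B.1} identifies $V_{U,h} = \{Q = 0\}$ with $\Spa(B[X]/(Q), \cdot)$, finite over $U$ of constant degree $d$, which is precisely the assertion that $(U,h)$ is a slope datum.
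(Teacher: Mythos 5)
There is a genuine gap at the step you yourself identify as ``the main obstacle, and the heart of the argument'': spreading the fibrewise factorization out to a neighbourhood. You propose to do this ``exactly as in the proof of Proposition \ref{C.B.1}'', but that proof runs in the opposite direction: it \emph{presupposes} an affinoid open $U\subseteq V(F)$ that is already known to be finite of constant degree over an affinoid $V\subseteq W$ (equivalently, a union of connected components of $V(F)_n$ over $V$), and extracts the factorization $F=QS$ from it. Producing such a $U$ that captures the \emph{entire} slope-$\leq h$ locus over a neighbourhood of $x$ is precisely the content of the proposition, so invoking Proposition \ref{C.B.1} here is circular. Relatedly, you write ``there is an open $U_0\ni x$'' when applying Theorem \ref{T.B.1.}, but $x$ lives in $W$, not in $V(F)$: the fibre $w^{-1}(x)\cap V(F)\cap\mathcal{D}_{W,h}$ is a finite set $\{x_1,\dots,x_n\}$ (by local quasi-finiteness), and a single application of Theorem \ref{T.B.1.} at one of these points gives a clopen piece accounting for only part of the degree $d$. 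Finally, in step (ii) you assert that ``$Q$ has slope $\leq h$ and $S$ has slope $>h$ at every rank $1$ point of $V$'' is an open condition, citing Remark \ref{slopetop}; that remark only says the notion is norm-independent, and openness of these loci is neither obvious nor established anywhere in the text.

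The paper's proof is purely topological and supplies exactly the missing ingredient. It lists the finite fibre $\{x_1,\dots,x_n\}=w^{-1}(x)\cap V(F)\cap\mathcal{D}_{W,h}$, applies Theorem \ref{T.B.1.} at \emph{each} $x_i$ to get finite flat $U_i\to V_i$, and then shrinks so that all the $V_i$ coincide with a single $V$ and each $U_i\subset\mathcal{D}_{V,h}$. The hypothesis that $x$ is a \emph{closed rank $1$} point enters here in an essential way: it guarantees $\{x\}=\bigcap V'$ over the quasi-compact open neighbourhoods $V'$ of $x$, which (by a spectral-space compactness argument applied to $w_h^{-1}(V')\setminus\bigcup U_i$) lets one shrink $V$ once more so that $w^{-1}(V)\cap\mathcal{D}_{V,h}=U_1\cup\dots\cup U_n$; the right-hand side is then finite of constant degree over $V$, i.e.\ $(V,h)$ is a slope datum. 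Your specialize-and-lift strategy is the classical route of Buzzard and Coleman, but making it work would require a Hensel/approximation statement for factorizations over the local ring at a closed rank $1$ point of a general strongly noetherian Tate ring, which is not available from the results in these notes; if you want to salvage your outline, replace step (i) by the fibre-by-fibre application of Theorem \ref{T.B.1.} and the quasi-compactness shrinking just described, after which the factorization and Lemma \ref{coprime} follow as corollaries rather than serving as inputs.
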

\begin{proof}
As above 
consider $\mc{D}_{W,h}\subset \A^1_W$. As $w$ is locally quasi-finite, the set 
\[S:=w^{-1}(x) \cap V(F) \cap \mc{D}_{W,h}=\{x_1,\dots, x_n\} \]
is finite. For any $x_i \in S$ let us choose an open neighbourhood $U_i$ as in Theorem \ref{T.B.1.}, so $U_i$ contains $\overline{\{x_i\}}=\{x_i\}$ and we may assume that $w|_{U_i}\colon U_i \rightarrow V_i=w(U_i)$ is finite of constant degree, $U_i$ does not contain any of the $x_j$ for $j\neq i$ and that $V_i$ is open in $W$ and contains $\overline{\{x\}}=\{x\}$. Now (as $x$ is a closed rank $1$ point), for any $i\in \{1,\dots, n\}$
$$\{x\}= \bigcap_{\substack{V_i\supseteq V' \supseteq \overline{\{x\}}\\ V' \text{qc open}}}V'$$
and so also 
$$\{x_i\}= \bigcap_{\substack{V_i\supseteq V' \supseteq \overline{\{x\}}\\ V' \text{qc open}}}w|_{U_i}^{-1}\ (V').$$
Hence by shrinking the affinoids $U_i$, we may assume their images $w(U_i)=:V$ agree, $U_i \rightarrow V$ is finite and that $U_i \subset \mc{D}_{V,h}$ for all $i\in \{1,\dots, n\}$.
We claim that by possibly further shrinking $V$, we can arrange that 
$$w^{-1}(V)\cap \mc{D}_{V,h}= U_1\cup \dots \cup U_n.$$ 
To see this let $w_h:=w|_{w^{-1}(V)\cap \mc{D}_{V,h}}$.
We have $w_h^{-1}(\{x\})\subseteq U_1\cup \dots \cup U_n$. 
Hence there exists $\{x\}\subseteq V' \subset V$, such that $w_h^{-1}(V')\subseteq U_1\cup \dots \cup U_n.$ Redefining $U:=V'$ and letting $U'_i:= w_h^{-1}(U) \cap U_i$ we see that $(U,h)$ is a slope datum for $(W,F)$. 
\end{proof}

\begin{corollary} Let $W=\spa(A,A^+)$ be as above and let $F\in A\{\{X\}\}$ be a Fredholm series. Let $U=\spa(B,B^+) \subseteq W$ be an affinoid open, and $h \in \Q_{\geq0}$. Then the following holds.
\begin{enumerate}
\item If $(U,h)$ is a slope datum then $F$ has a slope $\leq h$-factorization in $B\{\{X\}\}$. 
\item The collection of all $V_{U,h}$ for all slope data $(U,h)$ form an open cover of  $V(F)$. 
\end{enumerate}
\end{corollary}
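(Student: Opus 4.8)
The plan is to deduce both parts from the results already established, principally Proposition \ref{C.B.1}, Proposition \ref{constslope}, Lemma \ref{coprime}, and the Newton polygon machinery that governs slopes.

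For part (1), suppose $(U,h)$ is a slope datum, so $V_{U,h} = V(F)\cap \mathcal{D}_{U,h} \to U = \spa(B,B^+)$ is finite of constant degree $d$. Since $\mathcal{D}_{U,h}$ is the subdomain $\{|\varpi^l X^m|\le 1\}$ of $\mathbb{A}^1_U$, and $V_{U,h}$ is an open affinoid of $V(F)$ that is finite of constant degree over its (affinoid) image $U$, it is an element of $\mathit{Cov}(V(F))$. Hence Proposition \ref{C.B.1} (with $V=U$) gives a factorization $F=QS$ in $B\{\{X\}\}$ with $Q=1+b_1X+\dots+b_dX^d$, $b_d\in B^\times$, $S$ a Fredholm series relatively prime to $Q$, and $V_{U,h} = \spa(B[X]/(Q),\dots)$. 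It remains to check that this is a slope $\le h$-factorization, i.e. that $Q$ is a multiplicative polynomial of slope $\le h$ and $S$ has slope $>h$. For this I would pass to rank $1$ points: for a rank $1$ point $x\in U$ with residue field $k(x)$, base change along $B\to k(x)$ (using Lemma \ref{specbc}) identifies the fibre $V_{U,h}\times_U \spa(k(x))$ with $V(F_x)\cap \mathcal{D}_{k(x),h}$, which by Weierstrass/Newton polygon theory (the lemma in the Newton polygon interlude) consists exactly of the zeros of $F_x$ of slope $\le h$; since the degree is the constant $d$, the polynomial $Q_x = h_x(Q)$ must be (up to the unit $b_{d,x}$) the Weierstrass polynomial cut out by the slope $\le h$ part of $\mathcal{N}(F_x)$, so $Q_x$ has slope $\le h$ and $S_x = F_x/Q_x$ has slope $>h$. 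Since this holds at every rank $1$ point, $Q$ has slope $\le h$ and $S$ has slope $>h$ by definition; that $Q$ is multiplicative (as a polynomial, i.e. its leading coefficient $b_d$ is a multiplicative unit, or in the sense of \cite{JN1}) follows from $b_d\in B^\times$ together with the slope condition — here I would cite the relevant bookkeeping in \cite[Section 2]{JN1} (this is essentially the content packaged by Lemma \ref{coprime} and its surrounding discussion). The main obstacle in part (1) is precisely this last identification: matching the algebraically-produced factor $Q$ from Proposition \ref{C.B.1} with the Newton-polygon-produced slope $\le h$ factor at every fibre, and confirming the leading coefficient is multiplicative rather than merely a unit; everything else is formal.

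For part (2), I would argue that every point of $V(F)$ lies in some $V_{U,h}$. Let $z\in V(F)$ and let $x=w(z)\in W$. First reduce to the case where $x$ is a closed rank $1$ point: $V(F)$ is locally quasi-finite and partially proper over the quasi-compact quasi-separated space $W$ by Theorem \ref{T.B.1.}, and by Proposition \ref{1.5.6} there is an open $U$ with $\overline{\{z\}}\subseteq U$ and finite $w|_U\colon U\to V$ with $\overline{\{x\}}\subseteq V$; replacing $z$ by a rank $1$ generization in $U$ (specializing to $z$) and correspondingly $x$ by its image, it suffices to cover all points lying over closed rank $1$ points, and every point of $V(F)$ specializes into the preimage of such after shrinking. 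Concretely: since $z\in V(F)\subseteq \mathbb{A}^1_W = \bigcup_{\lambda} \mathcal{D}_\lambda$, pick a rational $h=\lambda$ with $z\in \mathcal{D}_{W,h}$; then choose (by taking a suitable specialization, or directly) a closed rank $1$ point $x$ in a small affinoid neighbourhood so that Proposition \ref{constslope} applies to $(x,h)$, producing a slope datum $(U,h)$ with $U$ an affinoid neighbourhood of $x$; enlarging $h$ if necessary so that $z\in \mathcal{D}_{U,h}$ and $w(z)\in U$, we get $z\in V_{U,h}$. The cleanest route is: for $z\in V(F)$, the point $w(z)$ has a rank $1$ generization $x$ which is closed in $W$ (as $W$ is qcqs, hence taut, every point generizes to a closed rank $1$ point), apply Proposition \ref{constslope} at $x$ with any $h$ large enough that $z\in \mathcal{D}_{W,h}$ to obtain a slope datum $(U,h)$ with $x\in U$, and then observe $z\in V_{U,h}$ because $w(z)$ specializes into $U$ (shrinking nothing is needed since $U$ is open and contains $x$, which generizes $w(z)$... actually one needs $w(z)\in U$; since $U$ is open and $x\in U$ with $x$ generizing $w(z)$ is the wrong direction, instead choose $U$ among the affinoid neighbourhoods guaranteed by Proposition \ref{constslope} to contain $w(z)$ by starting the construction at a closed rank $1$ point in the intersection of all qc opens containing $\overline{\{w(z)\}}$). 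Thus the $V_{U,h}$ cover $V(F)$, using also that they are open in $V(F)$ (each $V_{U,h}$ is $V(F)\cap \mathcal{D}_{U,h}$ with $\mathcal{D}_{U,h}$ open in $\mathbb{A}^1_W$). The main subtlety in part (2) is the generization/specialization bookkeeping needed to reduce to the closed rank $1$ points required by Proposition \ref{constslope} while still controlling in which $\mathcal{D}_{U,h}$ the original point $z$ sits; the tautness of $W$ and the openness of the $\mathcal{D}_{U,h}$ make this go through.
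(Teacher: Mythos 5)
Your part (1) is essentially correct. A slope datum makes $V_{U,h}$ an element of $\mathit{Cov}(V(F))$: it is open affinoid, and finiteness over $U$ makes it proper, hence closed as well as open in $(V(F)\cap\mathcal{D}_n)\times_W U$ for $n\gg 0$, so the hypothesis actually used in the proof of Proposition \ref{C.B.1} (via Remark \ref{conncomp}) is satisfied. Specializing to rank $1$ points of $U$ and matching $Q_x$ with the slope-$\leq h$ Weierstrass factor of $F_x$ by comparing degrees is the right argument; and your worry about multiplicativity is moot, since the paper's definition of a multiplicative polynomial only requires $Q^{\ast}(0)\in B^{\times}$, which Proposition \ref{C.B.1} already supplies.

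Part (2) has a genuine gap, which you notice in your parenthetical but do not close. Proposition \ref{constslope} applies only at \emph{closed rank $1$} points $x$ and produces an open affinoid $U\ni x$. For a general $z\in V(F)$ the point $y=w(z)$ need be neither closed nor of rank $1$, and no choice of closed rank $1$ point $x$ makes $x\in U$ imply $y\in U$: open sets are stable under generization, so one would need $x$ to be a specialization of $y$ that is simultaneously of rank $1$, forcing $x=y$. Your fallback (``a closed rank $1$ point in the intersection of all qc opens containing $\overline{\{w(z)\}}$'') does not produce such a point either. A correct route is: first reduce to rank $1$ points of $V(F)$ by observing that $V_{U,h}\rightarrow U$ finite implies $V_{U,h}$ is closed in $V(F)\times_W U$, hence stable under specialization there; so it suffices to find, for the maximal generization $\tilde z$ of $z$, a slope datum $(U,h)$ with $\tilde z\in V_{U,h}$ and $\overline{\{w(\tilde z)\}}\subseteq U$ (then $w(z)\in U$ and $z\in V_{U,h}$). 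Producing such a $(U,h)$ at an arbitrary rank $1$ point is not an application of Proposition \ref{constslope} but a rerun of its proof with $\overline{\{w(\tilde z)\}}$ in place of the singleton $\{x\}$: take the finitely many points of $w^{-1}(w(\tilde z))\cap V(F)\cap\mathcal{D}_{W,h}$, their neighbourhoods $U_i\supseteq\overline{\{x_i\}}$ from Theorem \ref{T.B.1.}, and shrink the common base so that $w_h^{-1}(V)\subseteq\bigcup_i U_i$; it is exactly this quasi-compactness bookkeeping, done in \ref{constslope} only for closed rank $1$ points, that your argument is missing (compare \cite[Theorem 2.3.2]{JN1}).
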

\begin{proof} We leave it as an exercise for the reader to deduce this from the results above (cf. \cite[Theorem 2.3.2]{JN1} in case you are stuck).  
\end{proof}

\section{Riesz Theory}\label{sec:riesz}
Let $A$ be a noetherian Banach--Tate ring. Let $M$ be a Banach $A$-module satisfying property (Pr) and let $\phi\colon M\rightarrow M$ be a compact endomorphism with characteristic power series $F(X)=\det(1-X\phi)$. In this section we explain how a factorization of $F=Q\cdot S$, where $Q$ is a polynomial and $S$ is an entire power series (e.g. as in Corollary \ref{C.B.1}, or arising from a slope $\leq h$ factorization) corresponds to a (slope) decomposition of $M$. This decomposition is into two direct summands, and crucially (for the eigenvariety construction) one of them is finitely generated.
The main references for this section are \cite[Section 2.2]{JN1}, \cite[Section 3]{buz}, \cite[Appendix B.2]{aip} and \cite[Section 3.2]{bellaiche}.

\medskip

Let $Q\in A[X]$ be a polynomial. We define 
\[Q^{\ast}(X) := X^{\mathrm{deg}Q}Q(1/X) \in A[X].\] 

We need the following lemma. It is proved using the concept of the \emph{resultant} $R(S,Q)$ of an entire power series $S$ and a polynomial $Q$ as developed by Coleman \cite{col} which extends the classical resultant of two polynomials. The resultant $R(S,Q)$ is an element of $A$, which is a unit if and only if $S$ and $Q$ are relatively prime in $A\{\{X\}\}$ (see \cite[Lemma 3.7]{col}).

\begin{lem}\label{3.1}\cite[Lemma A4.1]{col}. Let $A$ be a noetherian Banach--Tate ring. Let $M$ be a Banach $A$-module satisfying property (Pr) and let $\phi\colon M\rightarrow M$ be a compact endomorphism with characteristic power series $S(X)=\det(1-X\phi)$. If $Q(X)\in A[X]$ is a monic polynomial, then $Q$ and $S$ are relatively prime in $A\{\{X\}\}$ if and only if $Q^{\ast}(\phi)$ is an invertible operator on $M$. 
\end{lem}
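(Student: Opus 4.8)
The plan is to proceed via the standard resultant machinery of Coleman, adapted to the Banach--Tate setting, exactly as in \cite[Lemma A4.1]{col}. The key observation is that $Q^{\ast}(X) = X^{\deg Q}Q(1/X)$ is, up to the leading sign, the polynomial whose ``reciprocal roots'' are the roots of $Q$; and that $\det(1 - X\phi)$ is built so that its ``roots'' are the reciprocal eigenvalues of $\phi$. So relative primality of $Q$ and $S$ — i.e.\ ``$Q$ and $\det(1-X\phi)$ share no common factor'' — should translate into ``$Q^{\ast}$ and the characteristic-polynomial data of $\phi$ share no common reciprocal root'', which is precisely invertibility of $Q^{\ast}(\phi)$.

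First I would recall the two facts about the resultant $R(S,Q)$ that do the real work: (i) $R(S,Q) \in A$ is a unit if and only if $(S,Q) = A\{\{X\}\}$, i.e.\ $S$ and $Q$ are relatively prime (\cite[Lemma 3.7]{col}); and (ii) there is a ``resultant identity'' expressing $R(S,Q)$ (up to a unit coming from leading coefficients) in terms of the operator $Q^{\ast}(\phi)$ acting on $M$. Concretely, one wants an equality, valid after the reduction steps below, of the form
\[
R(S,Q) = u \cdot \det\big(Q^{\ast}(\phi)\big)
\]
for some unit $u$, interpreted appropriately: when $M$ is finite free over $A$ the right-hand side is the honest determinant of the endomorphism $Q^{\ast}(\phi)$, and invertibility of $Q^{\ast}(\phi)$ is equivalent to $\det(Q^{\ast}(\phi)) \in A^{\times}$. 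Granting such an identity, the lemma is immediate: $Q^{\ast}(\phi)$ invertible $\iff \det(Q^{\ast}(\phi)) \in A^{\times} \iff R(S,Q) \in A^{\times} \iff S, Q$ relatively prime.

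The main obstacle is the reduction of the general case to a case where $\det(Q^{\ast}(\phi))$ literally makes sense, since $M$ is only a Banach module with property (Pr), not a finite free $A$-module. Here is how I would handle it. Write $\phi = \lim_n \phi_n$ with each $\phi_n$ of finite rank, and — using Lemma \ref{2.3} together with the property-(Pr) hypothesis (passing to a potentially ONable module containing $M$ as a direct summand, as in the constructions after Proposition \ref{ind}) — arrange that the image of $\phi_n$ lies in a finite \emph{free} submodule $F_n \subseteq M$ on which $\phi_n$ restricts to an endomorphism; this is exactly the device used in Lemma \ref{2.5}(3) and Lemma \ref{2.7}. For such a finite-rank $\phi_n$ one has $\det(1-X\phi_n) = \det(1-X\phi_n|_{F_n})$ by Lemma \ref{2.5}(3), so the resultant identity reduces to the classical statement for the finite free module $F_n$, where $Q^{\ast}(\phi_n|_{F_n})$ is a genuine matrix and the identity $R(\det(1-X\phi_n|_{F_n}), Q) = \pm \operatorname{lc}(Q)^{?}\det(Q^{\ast}(\phi_n|_{F_n}))$ is elementary linear algebra over a ring. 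Then one passes to the limit: by Lemma \ref{2.5}(1) the characteristic power series $\det(1-X\phi_n) \to S(X)$ uniformly in coefficients, hence $R(\det(1-X\phi_n),Q) \to R(S,Q)$ in $A$ (continuity of the resultant in its power-series argument, which is part of Coleman's theory); and on the operator side $Q^{\ast}(\phi_n) \to Q^{\ast}(\phi)$ in $\operatorname{End}_A(M)$, so that being a unit is an open condition and passes to the limit in the right direction. The care needed is to make sure that both ``$\iff$'' directions survive the limit — invertibility is open, so ``$Q^{\ast}(\phi)$ invertible $\Rightarrow Q^{\ast}(\phi_n)$ invertible for $n \gg 0$'' is clear, while the converse direction is where one genuinely needs the resultant identity rather than just a limiting argument. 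I would therefore organize the write-up around the exact identity over finite free modules plus continuity of both sides, rather than trying to argue invertibility directly by approximation. For the details of the resultant and its continuity and unit-criterion I would simply cite \cite{col}, as the statement of the lemma already does.
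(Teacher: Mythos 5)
Your overall strategy --- Coleman's resultant $R(S,Q)$, the unit criterion of \cite[Lemma 3.7]{col}, and an identity relating $R(S,Q)$ to $Q^{\ast}(\phi)$ --- is exactly the route the paper points to (the paper gives no proof beyond citing \cite[Lemma A4.1]{col} and naming the resultant as the tool), and your finite-free identity $\mathrm{Res}(S,Q)=\det(Q^{\ast}(\phi))$ for monic $Q$ is correct. The genuine gap is in the final limiting step, and it affects \emph{both} implications, not just the one you flag. From the finite-rank identity you get $\det\bigl(Q^{\ast}(\phi_n|_{F_n})\bigr)\to R(S,Q)$; but a limit of units in a Banach--Tate ring need not be a unit (e.g.\ $\varpi^{n}\to 0$ with each $\varpi^{n}$ a unit), so even the ``easy'' direction does not close, and a limit of invertible operators need not be invertible unless the inverses are uniformly bounded, which your Cramer-rule inverses on $F_n$ are not (the adjugate bound degrades as $\mathrm{rank}\,F_n\to\infty$). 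Saying ``one genuinely needs the resultant identity'' is not yet an argument, because for general $M$ with property (Pr) that identity exists in your setup only as a limit of finite-rank identities, which is precisely the form in which it cannot transfer invertibility.

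What actually closes the argument in \cite{col} is an \emph{exact} operator identity, with no approximation. Writing $R_F(X)=S(X)/(1-X\phi)$ for the Fredholm resolvent (as in Section \ref{sec:riesz}) and letting $\beta_1,\dots,\beta_n$ be the roots of the monic $Q$ in a (universal) splitting extension, one has $Q^{\ast}(\phi)=\prod_i(1-\beta_i\phi)$ and $S(\beta_i)\cdot\mathrm{id}=(1-\beta_i\phi)R_F(\beta_i)$, whence
\[
R(S,Q)\cdot \mathrm{id}_M \;=\; \prod_i S(\beta_i)\cdot\mathrm{id}_M \;=\; Q^{\ast}(\phi)\cdot E, \qquad E:=\prod_i R_F(\beta_i),
\]
where $E$ is symmetric in the $\beta_i$, hence an explicit element of the closure of $A[\phi]$ in $\End_A(M)$ (this is Coleman's ``resultant of $R_F$ with $Q$''; convergence uses $\lVert v_m\rVert r^m\to 0$). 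If $R(S,Q)\in A^{\times}$, then $R(S,Q)^{-1}E$ is a two-sided inverse of $Q^{\ast}(\phi)$, which is the hard direction. For the converse, the clean route is not approximation either: reduce modulo each maximal ideal $\mathfrak{m}\subseteq A$ (the resultant is a unit iff nonzero in every $A/\mathfrak{m}$, the formation of $S$ commutes with this base change by Corollary \ref{bcchar}, and invertibility of $Q^{\ast}(\phi)$ passes to $M\widehat{\otimes}_A A/\mathfrak{m}$), then over the resulting Banach field factor $Q^{\ast}(\phi)$ into commuting linear factors $1-\beta\phi$ and apply Serre's field-case criterion \cite{serre} that $1-\beta\phi$ is invertible iff $S(\beta)\neq 0$. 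I would reorganize the write-up around these two mechanisms rather than around the approximation by the $\phi_n$.
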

Let $M$ and $\phi$ be as above. The above mentioned decomposition of $M$ is constructed in two steps. First we use zeroes of the characteristic power series $F(X)=\det(1-X\phi)$ to split off a finite submodule of $M$ (see Proposition \ref{3.2}). In a second step we prove the sought after decomposition, in what is sometimes referred to as the Main Theorem of Riesz theory, Theorem \ref{riesz}.

For the first step, let us define the \emph{Fredholm resolvant} of $\phi$ to be 
$$R_F(X):=F(X)/(1-X\phi)\in A[\phi]\llbracket X\rrbracket.$$ One shows as in \cite[Proposition 10]{serre} that if one writes $R_F(X)=\sum_{m\geq 0} v_mX^m$, with coefficients $v_m \in \End_A(M)$, then for all $r\in \mathbb{R}_{>0}$ the sequence $||v_m||r^m$ tends to zero.

Let us also make the following remark regarding zeroes of power series. 
Let $R$ be a ring. If $f=\sum_{n\geq0} a_n X^n \in R\llbracket X\rrbracket$ and $s\in \mathbb{N}_{0}$, then we define
\[\Delta^s f = \sum_{n\geq 0} {n+s \choose s}a_{n+s} X^n \in R\llbracket X\rrbracket.\]
If $f,g \in R\llbracket X\rrbracket$, then one can check that $\Delta^s(fg)= \sum_{i=0}^s \Delta^{i}(f) \Delta^{s-i}(g)$. Let $A$ be as before. It is clear from the definition, that $\Delta^s$ maps $A\{\{X\}\}$ to itself. 
\begin{defn}
Let $k \in \mathbb{N}_{0}$. We say that $a \in A$ is a \emph{zero of order} $k$ of $H\in A \{\{X\}\}$ if $(\Delta^sH)(a)=0$ for $s<k$ and $(\Delta^kH)(a)$ is a unit. 
\end{defn}
If $k\geq 1$ and $H= 1+ a_1X + \dots$ then this implies that $-1=a(a_1+ a_2a + \dots)$ and hence that $a\in A$ is a unit. By induction on $k $ one 
checks that $H(X)=(1-a^{-1}X)^kG(X)$, where $G\in A\{\{X\}\}$ with $G(a)$ a unit. 

\begin{prop}\label{3.2}
Let $M$ be a Banach $A$-module with property $(Pr)$, let $\phi\colon M\rightarrow M$ be a compact endomorphism with characteristic power series $F(X)=\sum_{n\geq 0} c_nX^n$. Let $a\in A$ be a zero of $F(X)$ of order $k$. 
There is a unique decomposition $M= N\oplus M'$ into closed $\phi$-stable submodules such that $1-a\phi$ is invertible on $M'$ and $(1-a\phi)^k=0$ on $N$. The submodules $N$ and $M'$ are defined as the kernel and the image of a projector which is in the closure in $\End_A(M)$ of $A[\phi]$. Moreover $N$ is projective of rank $k$, and assuming $k\geq 1$ then $a $ is a unit and the characteristic power series of $\phi$ on $N$ is $(1-a^{-1}X)^k$. 
\end{prop}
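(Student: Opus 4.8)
The plan is to follow Serre's approach (as in \cite[Proposition 11]{serre} or \cite[Theorem 3.3]{buz}), building the projector out of the Fredholm resolvant evaluated at the zero $a$. First I would set $G(X) := F(X)/(1-a^{-1}X)^k \in A\{\{X\}\}$, using the observation recorded just before the proposition that $G(a)$ is a unit. The candidate projector is
\[
e := \frac{R_F(a)}{G(a)} = \frac{F(X)/(1-X\phi)\big|_{X=a}}{G(a)} \in \End_A(M),
\]
which lies in the closure of $A[\phi]$ since $R_F(X)\in A[\phi]\llbracket X\rrbracket$ has coefficients $v_m$ with $\|v_m\|r^m\to 0$ for all $r>0$, so $R_F(a)$ converges in $\End_A(M)$. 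One then computes, using the factorization $F(X) = (1-a^{-1}X)^k G(X)$ and the identity $F(X) = (1-X\phi)R_F(X)$, that $(1-a\phi)^k e = 0$ and that on $\Ker(e)$ the operator $1-a\phi$ is invertible (its inverse is again manufactured from $R_F$ and the relation $(1-X\phi)R_F(X) = F(X)$ divided by the unit $G(a)$, evaluated appropriately). Checking $e^2 = e$ is the standard manipulation: $e$ and $1-e$ are complementary because $(1-a^{-1}X)^k$ and $G(X)$ are relatively prime in $A\{\{X\}\}$ (both are visible pieces of the partial fraction decomposition of $1/F$), so a Bézout-type identity in $A\{\{X\}\}$, evaluated at $X=a$ against the resolvant, yields idempotency. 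Set $N := \Ker(1-a\phi)^k = \mathrm{im}(e)$ and $M' := \Ker(e)$; both are closed (kernels of continuous operators) and $\phi$-stable (since $e$ commutes with $\phi$, being a limit of polynomials in $\phi$), and $M = N \oplus M'$.

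For uniqueness: any decomposition $M = N_1\oplus M_1'$ with the stated properties has its projector onto $N_1$ commuting with $\phi$; since $1-a\phi$ is nilpotent on $N_1$ and invertible on $M_1'$, the two projectors must agree (standard: the generalized eigenspace decomposition for the value $a$ is canonical once it exists).

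The main work — and the step I expect to be the real obstacle — is showing $N$ is \emph{projective of rank $k$} with characteristic power series $(1-a^{-1}X)^k$. Here is the strategy. First, $\phi|_N$ is compact (it is $e\phi$, a composite of a compact operator with a continuous one, cf.\ Lemma \ref{2.7}), and since $N = M'{}^\perp$ in the $(Pr)$ sense, one shows $N$ itself satisfies property $(Pr)$ as a direct summand of $M$; thus $\phi|_N$ has a well-defined characteristic power series. By Lemma \ref{2.7} applied to the inclusion $N\hookrightarrow M$ and the projection $e\colon M\to N$, together with the analogous statement on $M'$, the characteristic power series factors: $F(X) = \det(1-X\phi|_N)\cdot\det(1-X\phi|_{M'})$. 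On $M'$, $1-a\phi$ is invertible, so $a$ is not a zero of $\det(1-X\phi|_{M'})$; hence $\det(1-X\phi|_N)$ absorbs the full $(1-a^{-1}X)^k$ factor of $F$, forcing $\det(1-X\phi|_N) = (1-a^{-1}X)^k$ up to a unit entire factor $G_1$ with $G_1(a)$ a unit — but since $1-a\phi$ is \emph{nilpotent} (not merely non-invertible) on $N$, by Lemma \ref{3.1} applied with the polynomial $Q(X) = (1-a^{-1}X)^k$ one gets that $Q^*(\phi|_N)$ is nilpotent, which is incompatible with $\det(1-X\phi|_N)$ having any root other than $a^{-1}$; so $G_1 = 1$ and $\det(1-X\phi|_N) = (1-a^{-1}X)^k$. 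Finally, to see $N$ is \emph{finite} projective of rank exactly $k$: since $\phi|_N - a^{-1}$ (rescaling: $1 - a\phi$) is nilpotent and compact on the $(Pr)$-module $N$, its image is contained in a finitely generated submodule, and iterating nilpotency together with Lemma \ref{2.3}(3) forces $N$ itself to be finitely generated; being a direct summand of the $(Pr)$-module $M$, it is projective (Exercise \ref{proj}(2)); and its rank is read off from $\det(1-X\phi|_N) = (1-a^{-1}X)^k$ having degree $k$ (the rank of a finite projective module equals the degree of the characteristic polynomial of any endomorphism whose "$1-X\cdot$" determinant is a degree-$k$ polynomial with unit leading coefficient, applied to $\phi|_N$ itself which is invertible as $a$ is a unit). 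This last finiteness argument is where I'd be most careful, as it is exactly the place the noetherian (or Hypothesis 3.1.8) assumption is doing its job.
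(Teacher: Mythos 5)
Your overall strategy (a projector built from the Fredholm resolvant, lying in the closure of $A[\phi]$) is the right one and matches the paper's, which follows Serre. But your explicit candidate $e=R_F(a)/G(a)$ fails for zeros of order $k\geq 2$, and this is not a detail one can patch by a "standard manipulation": from $(1-X\phi)R_F(X)=F(X)$ one gets $(1-a\phi)R_F(a)=F(a)=0$, so the image of $R_F(a)$ lies in $\Ker(1-a\phi)$, not in $\Ker((1-a\phi)^k)$; and on $\Ker(1-a\phi)$ the operator $R_F(a)$ acts by the scalar $-a(\Delta^1F)(a)$, which equals $G(a)$ when $k=1$ but is $0$ when $k\geq 2$. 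Hence for $k\geq 2$ your $e$ satisfies $e^2=0$, so it is nilpotent rather than idempotent, and its image is in any case too small to be the sought-after rank-$k$ summand $N$. (For $k=1$ your formula does agree, up to the unit $-a^{-1}$, with the correct one.)

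The paper's proof repairs exactly this by using the divided derivatives: applying $\Delta^s$ to the identity $(1-X\phi)R_F(X)=F(X)$ and evaluating at $X=a$ produces the chain of relations among $f_s:=\Delta^sR_F(a)$ for $0\leq s\leq k$, ending in $(1-a\phi)f_k-\phi f_{k-1}=c$ with $c\in A^\times$ (this is where the full definition of "zero of order $k$", i.e.\ $(\Delta^kF)(a)$ a unit, enters). Setting $e=c^{-1}(1-a\phi)f_k$ and $f=-c^{-1}\phi f_{k-1}$ gives $e+f=1$ and $fe^k=0$, and the actual projectors are $p=e^k$ and $q=1-p$; only after this binomial expansion does one get genuine idempotents whose kernel is $\Ker((1-a\phi)^k)$. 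Your treatment of the remaining points is essentially the paper's: uniqueness is the standard generalized-eigenspace argument; projectivity and finiteness of $N$ come from the identity on $N$ being compact (since $(1-a\phi)^k=0$ expresses $\mathrm{id}_N$ as a polynomial in $\phi|_N$ with zero constant term), which is where the noetherian/(Pr) hypotheses are used; and the identification $F_N=(1-a^{-1}X)^k$ via the factorization $F=F_NF_{M'}$ and Lemma \ref{3.1} is the route the paper indicates (partly as an exercise). So the only genuine defect is the projector itself, but it is the heart of the proof.
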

\begin{proof} The proof of Serre (\cite{serre}) and Buzzard (\cite{buz}) goes through: 
In $A[\phi]\llbracket X\rrbracket$, we consider the identity
\[(1-X\phi)\cdot R_F(X)= F(X).\]
Applying the operator $\Delta^s$ we find 
\[(1-X\phi)\Delta^sR_F(X)-\phi \Delta^{s-1}R_F(X)=\Delta^sF(X). \]
Define $f_s:=\Delta^sR_F(a)$. Putting $X=a$ in the identities above, we arrive at the following relations:
\begin{align*}
(1-a\phi)f_0 &= 0\\
(1-a\phi)f_1-\phi f_0 &= 0\\
\cdots \\
(1-a \phi) f_{k-1} - \phi f_{k-2}&=0\\
(1-a \phi) f_{k} - \phi f_{k-1}&=c,\ \text{with } c \in A^{\times}. 
\end{align*}
These relations imply that for $s<k$, we have $(1-a\phi)^{s+1}f_s=0$. The last equation and the fact that $(1-a\phi)^kf_{k-1}=0$ imply that with $e=c^{-1}(1-a\phi)f_k$ and $f=- c^{-1} \phi f_{k-1}$ we get 
\[e+f=1 \ \ \text{ and } fe^k=0.\]
By expanding $(e+f)^k=1$, we find that $e^k + (k e^{k-1}f + \dots + k ef^{k-1}+ f^k)=1$. 
We define 
\[p:= e^k, q:= k e^{k-1}f + \dots + k ef^{k-1}+ f^k.\]
Then $p+q=1$, $pq=0$ and furthermore $p^2=p$ and $q^2=q$. The endomorphisms $p$ and $q$ in $\End_A(M)$ are therefore projections. Note that they are both in the closure of $A[\phi]$.\\

Consider the decomposition $M=N \oplus M'$ corresponding to these projections, i.e. $N=\mathrm{Ker}(p)$, and $M'=\mathrm{Ker}(q)$. Then $(1-a\phi)^k=0$ on $N$ and $(1-a\phi)$ is invertible on $M'$. Furthermore if we set $\psi:= (1-a\phi)^k$, then $N=\mathrm{Ker}(\psi)$ and $F=\mathrm{Im}(\psi)$. \\
It is clear that $N$ satisfies property $(Pr)$, but furthermore we have $(1-a\phi)^k=0$ on $N$, which implies that the identity is a compact operator on $N$. Exactly as in the proof of \cite[Proposition 3.2]{buz} one shows from this that $N$ is projective. 

If $k=0$, then $N=0$ and $M'=M$ as can be seen from Lemma \ref{3.1} and we are done. If $k\geq 1$ then $F(a)=0$ and we already observed that then $a$ is a unit. Let $F_N$ and $F_{M'}$ denote the characteristic power series of $\phi$ on $N$ and $M'$ respectively, then $F=F_NF_{M'}$. By Lemma \ref{3.1} we see that $(X-a)^k$ and $F_{M'}$ generate the unit ideal in $A\{\{X\}\}$. Hence $(1-a^{-1}X)^k$ divides $F_N$ in $A\{\{X\}\}$. We leave it as an exercise to show that in fact we have equality $F_N=(1-a^{-1}X)^k$ (cf.\cite[Prop.\ 3.2]{buz}).

Finally, as $(1-a\phi)^k=0$ on $N$, we see that $\phi$ is invertible on $N$. This means that the rank of $N$ at any maximal ideal must be equal to the degree of $F_N$ modulo this maximal ideal and hence the rank is $k$ everywhere. 
\end{proof}
\begin{rem}\label{eigen1}
Note that (in the notation of the proposition) in particular the zeroes of $F$ are the inverses of the eigenvalues of $\phi$ and all eigenvectors for the eigenvalue $a^{-1}$ are contained in $N$. 
\end{rem}

\begin{defn} We call a polynomial $Q\in A[X]$ \emph{multiplicative}, if its leading coefficient is a unit, i.e., if $Q^*(0)\in A^{\times}$.  
\end{defn}

\begin{thm}\label{riesz}
Let $A$ be a noetherian Banach--Tate ring. Let $M$ be a Banach $A$-module with property (Pr) and let $\phi \colon M \rightarrow M$ be a compact operator with characteristic power series  $F(X)=\det(1-X\phi)$. Assume that we have a factorization $F=QS$, where $S$ is a Fredholm series, $Q=1+\dots + a_nX^n \in A[X]$ is a polynomial of degree $n$ which is multiplicative, and $Q$ and $S$ are relatively prime in $A\{\{X\}\}$. 
\begin{enumerate}
\item The submodule $\Ker Q^{\ast}(\phi)\subseteq M$ is finitely generated and projective and has a unique $\phi$-stable closed complement $M(Q)$ such that $Q^{\ast}(\phi)$ is invertible on $M(Q)$. 
\item The idempotent projectors $M \rightarrow \Ker Q^{\ast}(\phi)$ and $M \ra M(Q)$ lie in the closure of $A[\phi]\subseteq \End_{A}(M)$. 
\item The rank of $\Ker Q^{\ast}(\phi)$ is $\deg Q$, and $\det(1-X\phi \mid \Ker Q^{\ast}(\phi))=Q$. 
\item Moreover, $\phi$ is invertible on $\Ker Q^{\ast}(\phi)$ and $\det(1-X\phi\mid M(Q))=S$.
\end{enumerate}
\end{thm}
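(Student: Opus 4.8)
The plan is to adapt the classical ``main theorem of Riesz theory'' of Serre \cite{serre} and Buzzard \cite{buz} to the present noetherian Banach--Tate setting, using the functional analysis of Section \ref{sec:onbs}. Write $n=\deg Q$ and note that, $Q$ being multiplicative, $Q^{\ast}(X)=X^{n}+a_{1}X^{n-1}+\dots+a_{n-1}X+a_{n}$ is monic with unit constant term $a_{n}=Q^{\ast}(0)\in A^{\times}$. The entire statement follows once one produces an idempotent $e\in\overline{A[\phi]}\subseteq\End_{A}(M)$ with $eM=\Ker Q^{\ast}(\phi)$ and $Q^{\ast}(\phi)$ invertible on $(1-e)M$; constructing $e$ is the main step.

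\emph{Construction of $e$ and parts (1), (2).} I would start from the identity $(1-X\phi)R_{F}(X)=F(X)=Q(X)S(X)$ in $A[\phi]\llbracket X\rrbracket$ and use that $Q$ and $S$ are relatively prime: the resultant $R(S,Q)\in A^{\times}$ gives a Bezout relation in $A\{\{X\}\}$, and combining it with the coefficients $v_{m}\in A[\phi]$ of the Fredholm resolvent -- whose entireness $\|v_{m}\|r^{m}\to0$ makes the relevant series converge in $\End_{A}(M)$ -- produces $e$ exactly as in \cite[Prop.~12]{serre}; since the $v_{m}$ lie in $A[\phi]$, $e\in\overline{A[\phi]}$, which is (2). (Alternatively one may write $Q^{\ast}(\phi)=a_{n}(\mathrm{id}-\psi)$ with $\psi:=-a_{n}^{-1}(\phi^{n}+a_{1}\phi^{n-1}+\dots+a_{n-1}\phi)$ compact and apply the single-zero case, Proposition \ref{3.2}, to $\psi$ at the point $1$.) Since $e$ is a bounded idempotent, $eM$ and $(1-e)M=:M(Q)$ are closed, hence Banach. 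The complement is unique: any $\phi$-stable closed $M'$ with $Q^{\ast}(\phi)|_{M'}$ invertible satisfies $Q^{\ast}(\phi)(M')=M'$, so $M'=\mathrm{Im}\,Q^{\ast}(\phi)=M(Q)$.

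\emph{Finiteness and parts (3), (4).} Set $N:=\Ker Q^{\ast}(\phi)=eM$. From $Q^{\ast}(\phi)|_{N}=0$ and $a_{n}\in A^{\times}$ one gets $\phi|_{N}\cdot\bigl(-a_{n}^{-1}(\phi|_{N}^{\,n-1}+\dots+a_{n-1})\bigr)=\mathrm{id}_{N}$, so $\phi|_{N}$ is invertible (the first half of (4)). As $\phi|_{N}=e\circ\phi\circ\iota$ (with $\iota\colon N\hookrightarrow M$) is compact by composition (Lemma \ref{2.7} and its extension to modules with property (Pr)), $\mathrm{id}_{N}=(\phi|_{N})^{-1}\phi|_{N}$ is compact; taking a finite-rank $\psi_{0}$ with $\|\mathrm{id}_{N}-\psi_{0}\|<1$, the Neumann series makes $\psi_{0}$ invertible, so $N=\mathrm{Im}(\psi_{0})$ is finitely generated ($A$ noetherian); and since $N$, being a direct summand of $M$, again has property (Pr), the lifting property factors such a $\psi_{0}$ through a finite free $A^{r}$, exhibiting $N$ as a direct summand of $A^{r}$, hence projective (the argument of \cite[Prop.~3.2]{buz}). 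Now $M=N\oplus M(Q)$ with $\phi$ block-diagonal, so $F=\det(1-X\phi\mid N)\cdot\det(1-X\phi\mid M(Q))$ by multiplicativity of the characteristic power series on a direct sum (immediate from the matrix definition); since $Q$ is a unit in $A\llbracket X\rrbracket$, it suffices to prove $\det(1-X\phi\mid N)=Q$, whence $\det(1-X\phi\mid M(Q))=S$ by cancellation and $\mathrm{rank}(N)=\deg Q$ (using $\phi|_{N}$ invertible). To prove $\det(1-X\phi\mid N)=Q$ I would pass to the residue field $\kappa(\mathfrak p)$ of every prime of $A$ -- compactness, characteristic power series, the decomposition and the resultant all being stable under base change (Section \ref{sec:specvar}) -- and over a field use Proposition \ref{3.2}: writing $Q^{\ast}=\prod_{i}(X-\gamma_{i})$ with $\gamma_{i}$ units, coprimality forces the order of each $\gamma^{-1}$ as a zero of $F$ to equal its order $m_{\gamma}$ in $Q$, so $\Ker(\mathrm{id}-\gamma^{-1}\phi)^{m_{\gamma}}$ has dimension $m_{\gamma}$ and characteristic power series $(1-\gamma X)^{m_{\gamma}}$, and $N=\bigoplus_{\gamma}\Ker(\mathrm{id}-\gamma^{-1}\phi)^{m_{\gamma}}$ has dimension $n$ and characteristic power series $\prod_{i}(1-\gamma_{i}X)=Q$; the upgrade from ``over every $\kappa(\mathfrak p)$'' to ``over $A$'' is a routine descent/uniqueness argument for such factorizations (cf.\ \cite{JN1}).

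\emph{Main obstacle.} The delicate point is the construction of $e$: turning the formal resultant/Bezout recipe into an honest convergent operator and verifying $e^{2}=e$, $eM=\Ker Q^{\ast}(\phi)$ and invertibility of $Q^{\ast}(\phi)$ on the complement, where the entireness estimates on $R_{F}$ and Lemma \ref{3.1} (invertibility of $Q^{\ast}(\phi)\Leftrightarrow$ coprimality) carry the weight. A secondary issue is the base-change bookkeeping over a possibly non-reduced noetherian $A$ in the last step. Over a field all of this is classical, so the real work is verifying that the Section \ref{sec:onbs} machinery -- Lemma \ref{2.3}, compactness under composition (Lemma \ref{2.7}), behaviour under $\widehat{\otimes}$ -- substitutes faithfully for the Banach-$K$-algebra arguments of \cite{serre,buz}.
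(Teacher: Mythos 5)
Your parenthetical ``alternative'' is in fact exactly the paper's proof: one sets $v:=1-Q^{\ast}(\phi)/Q^{\ast}(0)$ (which equals your $\psi$), notes via Coleman that its characteristic power series has a zero of order $n$ at $X=1$, and applies Proposition \ref{3.2}. Your primary route through the resultant/Bezout construction of the projector \`a la Serre--Coleman is a legitimate variant, and your treatment of parts (1), (2), the finiteness/projectivity of $N$, the invertibility of $\phi|_{N}$, and the cancellation $S=S'$ all match the paper's reasoning. One small point of order if you take the Proposition \ref{3.2} route: that proposition only gives $(1-v)^{n}=0$ on $N$, i.e.\ $Q^{\ast}(\phi)^{n}=0$ on $N$ up to a unit, so the identification $N=\Ker Q^{\ast}(\phi)$ is a \emph{consequence} of $\det(1-X\phi\mid N)=Q$ (via Cayley--Hamilton) rather than an input to it.

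The genuine gap is in your final step, proving $\det(1-X\phi\mid N)=Q$. You propose to verify it over every residue field $\kappa(\mathfrak p)$ and then invoke a ``routine descent/uniqueness argument.'' This does not work as stated: two polynomials over $A$ whose images agree in every residue field agree only modulo the nilradical, so over a non-reduced noetherian Banach--Tate ring (which the hypotheses allow) the descent fails, and the uniqueness of coprime factorizations that would rescue it is precisely what needs proof. The paper avoids this entirely with a short algebraic argument: by Lemma \ref{3.1}, $H:=\det(1-X\phi\mid M(Q))$ is relatively prime to $Q$ (since $Q^{\ast}(\phi)$ is invertible on $M(Q)$); writing $uQ+wH=1$ and using $F=QS=GH$ with $G:=\det(1-X\phi\mid N)$ gives $G=Q(uG+wS)$, so $Q$ divides $G$; as $G$ and $Q$ both have degree $n$, constant term $1$, and $Q$ has unit leading coefficient, this forces $G=Q$. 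You should replace the residue-field passage by this divisibility argument.
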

\begin{proof}
Consider the operator $v:= 1-Q^{\ast}(\phi)/Q^{\ast}(0)$. This is compact and has a characteristic power series which has a zero at $X=1$ of order $n$ (see \cite[A4.3]{col}). Applying the previous proposition to $v$, we see that $M=N\oplus M(Q)$, where $N$ and $M(Q)$ are defined as the kernel and the image of a projector in the closure of $A[v]$ and hence in the closure of $A[\phi]$. Both submodules $N$ and $M(Q)$ are $\phi$-stable. 
Furthermore $N$ is projective of rank $n$. Moreover by Lemma \ref{3.1} the characteristic power series of $\phi$ on $M(Q)$ is coprime to $Q$. Hence if $G(X)$ is the characteristic power series of $\phi$ on $N$, we see that $Q$ divides $G$. But $Q$ and $G$ both have degree $n$, and the same constant term. Furthermore the leading coefficient of $Q$ is a unit. This is enough to deduce that $G=Q$. We conclude that $N=\mathrm{Ker}(Q^{\ast}(\phi))$ and we have thus shown the first three parts. \\
We prove the last part as follows. Note that $\det(\phi \mid \Ker Q^{\ast}(\phi))=Q^{\ast}(0)\in A^{\times}$ and so $\phi$ is invertible on $\Ker Q^{\ast}(\phi)$. For the last claim write $S^{\prime}=\det(1-X \phi \mid M(Q))$ and note that $F=\det(1-X\phi \mid \Ker Q^{\ast}(\phi))\det(1-X\phi \mid M(Q))=QS^{\prime}$. Hence $Q(S-S^{\prime})=0$. But  $Q$ is not a zero divisor since $Q(0)=1$ and  so $S=S^{\prime}$.
\end{proof}
In the rest of this section we include without proofs a discussion of \emph{slope $\leq h$-decompositions} from \cite{JN1} (cf.\cite{as}). Again this is helpful terminology in the context of overconvergent cohomology. 

\begin{defn}
Let $M$ be an (abstract) $A$-module, let $\phi\colon M \ra M$ be an $A$-linear map and let $h\in \Q$. An element $m\in M$ is said to have slope $\leq h$ with respect to $\phi$ if there is a multiplicative polynomial $Q\in A[X]$ such that
\begin{enumerate}
\item $Q^{\ast}(\phi)(m)=0$,
\item The slope of $Q$ is $\leq h$.
\end{enumerate}
We let $M_{\leq h}\sub M$ denote the subset of elements of slope $\leq h$.
\end{defn}

\begin{lem}
$M_{\leq h}$ is an $A$-submodule of $M$, which is stable under $\phi$.
\end{lem}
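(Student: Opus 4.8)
The plan is to verify the two claims — that $M_{\leq h}$ is an $A$-submodule and that it is $\phi$-stable — directly from the definition, the key input being that one can combine two witnessing polynomials into a single one.

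First I would check $\phi$-stability, which is the easy half. If $m\in M_{\leq h}$ with witness $Q$, i.e.\ $Q^{\ast}(\phi)(m)=0$ and $Q$ multiplicative of slope $\leq h$, then since $Q^{\ast}(\phi)$ commutes with $\phi$ we get $Q^{\ast}(\phi)(\phi(m))=\phi(Q^{\ast}(\phi)(m))=0$, so the same $Q$ witnesses $\phi(m)\in M_{\leq h}$. Closure under the $A$-action is similar: if $a\in A$ then $Q^{\ast}(\phi)(am)=aQ^{\ast}(\phi)(m)=0$, so $am\in M_{\leq h}$ with the same witness.

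The substantive point is closure under addition. Given $m_1,m_2\in M_{\leq h}$ with witnesses $Q_1,Q_2$, I would set $Q:=Q_1Q_2$. Then $Q$ is still multiplicative (the leading coefficient of $Q_1Q_2$ is the product of the leading coefficients, hence a unit), and its slope is $\leq h$: the Newton polygon of a product of polynomials has slopes which are the multiset union of the slopes of the factors — this is standard for Newton polygons of polynomials (cf.\ the discussion of $\mathcal{N}(F)$ and \cite[Prop.\ II.6.3]{neukirch}), so since all slopes of $Q_1$ and of $Q_2$ are $\leq h$, so are all slopes of $Q$. Finally $Q^{\ast}(X)=X^{\deg Q_1+\deg Q_2}Q_1(1/X)Q_2(1/X)=Q_1^{\ast}(X)Q_2^{\ast}(X)$, and since $Q_1^{\ast}(\phi)$ and $Q_2^{\ast}(\phi)$ are polynomials in $\phi$ they commute, so
\[
Q^{\ast}(\phi)(m_1+m_2)=Q_2^{\ast}(\phi)Q_1^{\ast}(\phi)(m_1)+Q_1^{\ast}(\phi)Q_2^{\ast}(\phi)(m_2)=0,
\]
exhibiting $m_1+m_2\in M_{\leq h}$.

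The only mild subtlety — the main obstacle, such as it is — is the claim that slopes of a product of multiplicative polynomials are the union of the slopes of the factors; for a multiplicative polynomial $Q$ with leading coefficient a unit, $Q^{\ast}(0)\in A^{\times}$, so $Q$ has no "slope $+\infty$" contribution at the top and the Newton polygon argument for products goes through exactly as in the classical case over a field, applied coefficientwise via the rank $1$ points $x\in\spa(A,A^+)$ used to define slopes for Banach–Tate rings. Everything else is a one-line manipulation, so this lemma requires no more than assembling these observations.
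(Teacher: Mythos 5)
Your argument is correct and is precisely the one in the reference the paper cites for this lemma (\cite[Lemma 2.2.9]{JN1}); the paper itself gives no proof beyond that citation, and the key step — taking $Q:=Q_1Q_2$ as a common witness, checking it is still multiplicative of slope $\leq h$, and using $(Q_1Q_2)^{\ast}=Q_1^{\ast}Q_2^{\ast}$ together with commutativity of polynomials in $\phi$ — is exactly what is done there. The one point needing care, that the slopes of a product are the union of the slopes of the factors, you handle correctly by reducing to Newton polygons over the residue fields of rank $1$ points, so there is nothing to add.
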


\begin{proof} See \cite[Lemma 2.2.9]{JN1}.
\end{proof}

\begin{defn}
Let $M$ be an $A$-module with an $A$-linear map $\phi \colon M \ra M$ and let $h\in \Q$. A slope $\leq h$-decomposition of $M$ is an $A[\phi]$-module decomposition \linebreak $M=M_{h}\oplus M^{h}$ such that
\begin{enumerate}
\item $M_{h}$ is a finitely generated $A$-submodule of $M_{\leq h}$,
 \item For every multiplicative polynomial $Q\in A[X]$ of slope $\leq h$, the map $$Q^{\ast}(\phi)\colon M^{h} \ra M^{h}$$ is an isomorphism of $A$-modules.
\end{enumerate}
\end{defn}

\begin{prop}\cite[Proposition 2.2.11]{JN1}. Let $M$ and $\phi$ be as above. If $M$ has a slope $\leq h$-decomposition $M_{h}\oplus M^{h}$, then it is unique, and $M_{h}=M_{\leq h}$ (in particular the latter is finitely generated over $A$). We write $M_{>h}$ for the unique complement. Moreover, slope decompositions satisfy the following functorial properties:
\begin{enumerate} 
\item Let $f\colon M \ra N$ be a morphism of $A[\phi]$-modules with slope $\leq h$-decompositions. Then $f(M_{\leq h})\sub N_{\leq h}$ and $f(M_{>h})\sub N_{>h}$. Moreover, both ${\rm Ker}(f)$ and ${\rm Im}(f)$ have slope $\leq h$-decompositions.

\item Let $C^{\bullet}$ be a complex of $A[\phi]$-modules and suppose that each $C^{i}$ has a slope $\leq h$-decomposition. Then every $H^{i}(C^{\bullet})$ has a slope $\leq h$-decomposition, explicitly given by $H^{i}(C^{\bullet})=H^{i}(C^{\bullet}_{\leq h}) \oplus H^{i}(C^{\bullet}_{>h})$.
\end{enumerate}
\end{prop}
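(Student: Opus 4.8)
The plan is to reduce everything to Proposition~\ref{3.2} of the excerpt, applied uniformly to each module in sight, and to use the fact that a direct sum decomposition of a module coming from idempotent projectors in the closure of $A[\phi]$ behaves well under $A[\phi]$-linear maps. First I would recall that by Lemma~\ref{coprime} (together with Lemma~\ref{3.1}) a multiplicative polynomial $Q$ of slope $\leq h$ and a Fredholm series $S$ of slope $>h$ are automatically relatively prime, so that once $M$ carries a compact operator whose characteristic power series factors as $F=QS$ in the manner required, Theorem~\ref{riesz} produces the slope $\leq h$-decomposition $M=\Ker Q^{\ast}(\phi)\oplus M(Q)$, and by the uniqueness statement $M_{\leq h}=\Ker Q^{\ast}(\phi)$ is finitely generated. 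In the present proposition, however, we are not assuming $\phi$ is compact on the $C^i$; we are only given that each $C^i$ already \emph{has} a slope $\leq h$-decomposition. So the proof should not reconstruct the decomposition but rather exploit its characterization: for each $i$, $C^i=C^i_{\leq h}\oplus C^i_{>h}$ with $C^i_{\leq h}$ finitely generated and $Q^{\ast}(\phi)$ invertible on $C^i_{>h}$ for every multiplicative $Q$ of slope $\leq h$.

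For part~(1), I would argue as follows. Let $f\colon M\to N$ be a morphism of $A[\phi]$-modules, both equipped with slope $\leq h$-decompositions. If $m\in M_{\leq h}$ then there is a multiplicative $Q$ of slope $\leq h$ with $Q^{\ast}(\phi)(m)=0$; applying $f$ and using $A[\phi]$-linearity gives $Q^{\ast}(\phi)(f(m))=f(Q^{\ast}(\phi)(m))=0$, so $f(m)\in N_{\leq h}$. Hence $f(M_{\leq h})\subseteq N_{\leq h}$. For the complement: pick a single multiplicative polynomial $Q$ of slope $\leq h$ that annihilates $M_{\leq h}=M_h$ under $Q^{\ast}(\phi)$ --- such a $Q$ exists because $M_h$ is finitely generated, so one can take a product (or lcm) of the finitely many multiplicative polynomials arising from a generating set, noting that a product of multiplicative polynomials of slope $\leq h$ is again multiplicative of slope $\leq h$. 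Then $Q^{\ast}(\phi)$ is $0$ on $M_h$ and invertible on $M^h$, hence its image is exactly $M^h=M_{>h}$; similarly $Q^{\ast}(\phi)(N)$ meets $N_{\leq h}$ trivially and surjects onto $N_{>h}$. Since $f$ commutes with $Q^{\ast}(\phi)$, we get $f(M_{>h})=f(Q^{\ast}(\phi)M)\subseteq Q^{\ast}(\phi)N\subseteq N_{>h}$. It follows that $f$ splits as $f_{\leq h}\oplus f_{>h}$, and then $\Ker f=\Ker f_{\leq h}\oplus\Ker f_{>h}$ and $\mathrm{Im}\,f=\mathrm{Im}\,f_{\leq h}\oplus\mathrm{Im}\,f_{>h}$; in each summand the first factor is finitely generated (submodule, resp.\ quotient, of a finitely generated module over the noetherian ring $A$) and lies in the slope $\leq h$ part, while on the second $Q^{\ast}(\phi)$ remains invertible, so these are genuine slope $\leq h$-decompositions.

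For part~(2), given a complex $C^{\bullet}$ with each $C^i$ admitting a slope $\leq h$-decomposition, part~(1) applied to the differentials $d^i\colon C^i\to C^{i+1}$ shows that $d^i$ respects the decompositions, i.e.\ $d^i=d^i_{\leq h}\oplus d^i_{>h}$, so $C^{\bullet}=C^{\bullet}_{\leq h}\oplus C^{\bullet}_{>h}$ as a direct sum of complexes of $A[\phi]$-modules. Cohomology commutes with finite direct sums, hence $H^i(C^{\bullet})=H^i(C^{\bullet}_{\leq h})\oplus H^i(C^{\bullet}_{>h})$. It remains to check this is a slope $\leq h$-decomposition: $H^i(C^{\bullet}_{\leq h})$ is a subquotient of the finitely generated module $C^i_{\leq h}$, hence finitely generated (again using that $A$ is noetherian), and each of its elements has slope $\leq h$ because the ambient complex does --- concretely, the fixed $Q$ annihilating $C^i_{\leq h}$ under $Q^{\ast}(\phi)$ also annihilates cocycles and hence cohomology classes. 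On $H^i(C^{\bullet}_{>h})$, the operator $Q^{\ast}(\phi)$ for any multiplicative $Q$ of slope $\leq h$ is induced by an invertible endomorphism of the complex $C^{\bullet}_{>h}$ (invertible in each degree and commuting with the differential), hence is an isomorphism on cohomology. This gives the asserted explicit slope $\leq h$-decomposition of $H^i$.

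The main obstacle, and the only genuinely delicate point, is the passage from ``each element of $M_{\leq h}$ is killed by \emph{some} multiplicative $Q$ of slope $\leq h$'' to ``the whole finitely generated module $M_h$ is killed by \emph{one} such $Q$'', together with the stability of the class of multiplicative polynomials of slope $\leq h$ under products. I would handle this by the lcm/product argument sketched above, being careful that slopes of a product of Newton polygons are the multiset union of the slopes (so a product of slope $\leq h$ polynomials is slope $\leq h$) and that the product of multiplicative polynomials is multiplicative since $(PQ)^{\ast}(0)=P^{\ast}(0)Q^{\ast}(0)\in A^{\times}$. Everything else is a formal manipulation with idempotents and $A[\phi]$-linearity, and the finiteness assertions are immediate from $A$ being noetherian. (For a fully detailed treatment one can consult \cite[Proposition 2.2.11 and its proof]{JN1}.)
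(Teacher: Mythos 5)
Your overall strategy --- reducing everything to the existence of a single multiplicative polynomial $Q$ of slope $\leq h$ with $Q^{\ast}(\phi)$ vanishing on the finite part and invertible on the complement, obtained as a product over a finite generating set --- is exactly the mechanism behind the result. The paper itself only cites \cite{JN1}, where the statement is deduced from the Ash--Stevens formalism of $\mathcal{S}$-decompositions applied to $\mathcal{S}=\{Q^{\ast}(\phi)\}$, so your direct unpacking is a legitimate and more self-contained route. However, two points need repair. First, you never prove the opening assertion of the proposition, namely that the decomposition is unique and that $M_h=M_{\leq h}$; you only recall how Theorem \ref{riesz} would give this when $\phi$ is compact, and then silently use $M_h=M_{\leq h}$ in part (1) (``a $Q$ that annihilates $M_{\leq h}=M_h$''). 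The missing argument is short but essential: given $m\in M_{\leq h}$, write $m=m_h+m^h$; choosing $Q$ with $Q^{\ast}(\phi)(m)=0$ and using that both summands are $\phi$-stable gives $Q^{\ast}(\phi)(m^h)=0$, whence $m^h=0$ by invertibility of $Q^{\ast}(\phi)$ on $M^h$. This yields $M_{\leq h}=M_h$, and then $M^h=Q^{\ast}(\phi)(M)$ for your product polynomial $Q$ pins down the complement, giving uniqueness.

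Second, in part (1) your chain $f(M_{>h})=f(Q^{\ast}(\phi)M)\subseteq Q^{\ast}(\phi)N\subseteq N_{>h}$ fails at the last inclusion as written: the $Q$ you constructed annihilates $M_h$, but there is no reason for $Q^{\ast}(\phi)$ to kill $N_h$, so $Q^{\ast}(\phi)N=Q^{\ast}(\phi)(N_h)\oplus N^h$ need not lie in $N_{>h}$. The fix is immediate --- take $Q$ to be the product of multiplicative slope-$\leq h$ polynomials killing generators of both $M_h$ and $N_h$ --- but it must be said. Relatedly, ``$Q^{\ast}(\phi)$ remains invertible'' on $\Ker f_{>h}$ and on $\mathrm{Im}\, f_{>h}$ is not automatic for the restriction of an invertible map to a submodule: you must check surjectivity within the submodule, e.g.\ if $x\in\Ker f\cap M_{>h}$ and $Q^{\ast}(\phi)y=x$ with $y\in M_{>h}$, then $Q^{\ast}(\phi)f(y)=f(x)=0$ and injectivity of $Q^{\ast}(\phi)$ on $N_{>h}$ forces $f(y)=0$, so $y\in\Ker f_{>h}$. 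With these points filled in, the rest (that a product of multiplicative slope-$\leq h$ polynomials is again multiplicative of slope $\leq h$, the splitting of complexes and passage to cohomology, and the finiteness assertions from noetherianity of $A$) is correct.
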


\begin{defn}
Let $A$ be a Banach--Tate ring with a fixed multiplicative pseudo-uniformizer $\vp$ and let $M$ be a Banach $A$-module. Assume that $M$ has a slope-$\leq h$ decomposition $M=M_{\leq h}\oplus M_{>h}$. If $f \colon A\ra B$ is a bounded morphism of Banach--Tate rings such that $f(\vp)$ is a multiplicative pseudo-uniformizer in $B$, we say that the slope-$\leq h$ decomposition is \emph{functorial for $f$} if the decomposition 
$$M\widehat{\otimes}_{A}B = (M_{\leq h} \otimes_{A}B) \oplus (M_{>h} \widehat{\otimes}_{A}B)$$
 is a slope-$\leq h$ decomposition of $M \widehat{\otimes}_{A}B$ (using $f(\vp)$ to define slopes for $B$). We say that the slope-$\leq h$ decomposition is \emph{functorial} if it is functorial for all such bounded homomorphisms of Banach--Tate rings out of $A$.
\end{defn}

\begin{thm}\cite[Theorem 4.12]{JN1}.
Let $(A,A^+)$ be a noetherian Tate-Huber pair with a fixed multiplicative pseudo-uniformizer $\vp$, and let $M$ be a Banach $A$-module with property (Pr). Let $\phi$ be a compact $A$-linear operator on $M$, with Fredholm determinant $F=\det(1-X\phi)$. If $M$ has a slope $\leq h$-decomposition which is functorial with respect to $A \ra k(x)$ for all rank $1$ points $x\in \spa(A,A^+)$, then $F$ has a slope $\leq h$-factorization. Conversely, if $F$ has a slope-$\leq h$ factorization, then $M$ has a functorial slope-$\leq h$ decomposition.
\end{thm}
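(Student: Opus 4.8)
The plan is to prove the two implications separately, in each case using the Riesz-theoretic results above to pass between $M$ and $F=\det(1-X\phi)$, and reducing the ``slope'' bookkeeping to the residue fields $k(x)$, where Weierstrass/Newton-polygon theory is available.

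\textbf{Functorial decomposition $\Rightarrow$ factorization.} Suppose $M=M_h\oplus M^h$ is a slope $\le h$-decomposition functorial along $A\to k(x)$ for every rank $1$ point $x$. First I would set $Q:=\det(1-X\phi\mid M_h)$ and $S:=\det(1-X\phi\mid M^h)$; since the decomposition is $\phi$-stable, multiplicativity of the Fredholm determinant over such a decomposition (as in the proof of Proposition \ref{3.2}) gives $F=QS$. Next I would check that $Q$ is a multiplicative polynomial: $M_h$ is finite and, as a direct summand of a property-(Pr) module, has property (Pr), hence is projective by Exercise \ref{proj}, so $Q\in A[X]$; and every $m\in M_h\subseteq M_{\le h}$ is killed by $Q_m^{\ast}(\phi)$ for some multiplicative $Q_m$, which (the leading coefficient of $Q_m$ being a unit and $M_h$ being $\phi$-stable) forces $m\in\phi(M_h)$, so $\phi|_{M_h}$ is a surjective --- hence bijective --- endomorphism of a finite module, whence $\det(\phi\mid M_h)\in A^{\times}$ and the leading coefficient of $Q$ is a unit. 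It then remains to check fibrewise that $Q$ has slope $\le h$ and $S$ has slope $>h$. At a rank $1$ point $x$, Corollary \ref{bcchar} identifies $Q_x,S_x$ with the Fredholm determinants of $\phi$ on $M_h\otimes_A k(x)$ and $M^h\widehat\otimes_A k(x)$; reducing the witness polynomials modulo $x$ shows $M_h\otimes_A k(x)\subseteq(M\widehat\otimes_A k(x))_{\le h}$, so a minimal-polynomial computation over the field $k(x)$ bounds the Newton slopes of $Q_x$ by $h$. For $S_x$ I would argue by contradiction: if $S_x$ had a Newton slope $\le h$, then Proposition \ref{3.2} over $k(x)$ and Remark \ref{eigen1} would produce an eigenvector of $\phi$ in $M^h\widehat\otimes_A k(x)$ with eigenvalue $\lambda$ of valuation $\le h$ --- here one uses functoriality to know that $M^h\widehat\otimes_A k(x)$ is the slope-$>h$ part over $k(x)$ --- and then $Q_0(X)=1-\lambda X$ would be multiplicative of slope $\le h$ with $Q_0^{\ast}(\phi)$ non-injective on it, contradicting axiom (2) of the slope decomposition over $k(x)$.

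\textbf{Factorization $\Rightarrow$ functorial decomposition.} Given $F=QS$ with $Q$ multiplicative of slope $\le h$ and $S$ Fredholm of slope $>h$, Lemma \ref{coprime} gives that $Q$ and $S$ are relatively prime, so Theorem \ref{riesz} yields $M=\Ker Q^{\ast}(\phi)\oplus M(Q)$ with $\Ker Q^{\ast}(\phi)$ finitely generated projective, $\det(1-X\phi\mid M(Q))=S$, and $Q^{\ast}(\phi)$ invertible on $M(Q)$. I would put $M_h:=\Ker Q^{\ast}(\phi)$, $M^h:=M(Q)$ and verify the two axioms: (1) is immediate because $Q^{\ast}(\phi)$ annihilates $M_h$ and $Q$ is multiplicative of slope $\le h$, so $M_h\subseteq M_{\le h}$; for (2), given a multiplicative $Q'$ of slope $\le h$, I would replace it by a monic unit multiple (changing neither invertibility of $Q'^{\ast}(\phi)$ nor relative primality to $S$), use Lemma \ref{coprime} to get $(Q',S)=A\{\{X\}\}$, and then Lemma \ref{3.1} applied to $\phi|_{M^h}$ (Fredholm determinant $S$) to conclude that $Q'^{\ast}(\phi)$ is invertible on $M^h$. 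For functoriality, given a bounded $f\colon A\to B$ with $f(\vp)$ a multiplicative pseudo-uniformizer, I would base change: by Corollaries \ref{bccomp}--\ref{bcchar}, $\phi\otimes1$ is compact with Fredholm determinant $f(F)=f(Q)f(S)$, $M\widehat\otimes_A B$ has property (Pr) by Lemma \ref{2.8}, and $f(Q),f(S)$ retain slope $\le h$ resp. $>h$ over $B$ because Newton polygons are unchanged under the residue-field extensions $k(x)\hookrightarrow k(y)$ (which respect the normalisations $v(\vp)=v(f(\vp))=1$). Thus $f(F)=f(Q)f(S)$ is a slope $\le h$-factorization over $B$, and re-running the construction over $B$ gives $M\widehat\otimes_A B=\Ker f(Q)^{\ast}(\phi\otimes1)\oplus(M\widehat\otimes_A B)(f(Q))$; since $f(Q)^{\ast}(\phi\otimes1)=Q^{\ast}(\phi)\otimes1$ is zero on $M_h\otimes_A B$ and invertible on $M^h\widehat\otimes_A B$ while $M\widehat\otimes_A B=(M_h\otimes_A B)\oplus(M^h\widehat\otimes_A B)$ is a $\phi$-stable sum of closed submodules, the uniqueness in Theorem \ref{riesz}(1) identifies these two decompositions, which is what functoriality requires. (For non-noetherian $B$ one reduces to this case as in \cite{JN1}.)

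\textbf{Main obstacle.} The real work is in the ``slope'' parts: slope $\le h$ and slope $>h$ are defined fibrewise, and only over a field does one have both Weierstrass preparation and the full Riesz/Fredholm theory (Proposition \ref{3.2}, Remark \ref{eigen1}) needed to translate between the module decomposition and the factorisation of $F$. Correctly threading the global data over $A$ through every residue field $k(x)$ --- and, in the functorial statement, controlling the behaviour under an arbitrary bounded base change --- is the delicate point; once that is done, the remainder is a bookkeeping application of Theorem \ref{riesz} and Lemmas \ref{coprime} and \ref{3.1}.
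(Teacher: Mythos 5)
Your proposal is correct and follows essentially the same route as the paper's own argument: one direction by taking the Fredholm determinants of the two summands and checking the slope conditions fibrewise at rank $1$ points via functoriality, the other by combining Lemma \ref{coprime}, Theorem \ref{riesz} and Lemma \ref{3.1}. You merely supply more detail at the points the paper leaves implicit (that $Q$ is a multiplicative polynomial, the Newton-polygon bookkeeping over $k(x)$, and the verification of functoriality under a general bounded base change), so the skeleton is identical.
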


\begin{rem}
From the theorem and the fact that slopes of Newton polygons of polynomials correspond to valuations of the roots, we see the following: If $M$ has a slope $\leq h$ decomposition and $m_x \in M_{\leq h}\otimes_{A}k(x)$ is an eigenvector for $\phi$ with eigenvalue $\lambda_x\in k(x)$, then $\lambda_x$ has slope $\leq h$, i.e., $v_{\varpi}(\lambda_x)\leq h$. Here $v_{\varpi}$ is the $\varpi$-adic valuation induced by $\varpi$ (i.e., $v(\varpi)=1$). 
\end{rem}
\begin{corollary}
With notation and assumptions as in the theorem, a slope-$\leq h$ decomposition of $M$ is functorial if and only if it is functorial for the natural map $A \ra k(x)$ for all rank $1$ points $x\in \spa(A, A^+)$. 
\end{corollary}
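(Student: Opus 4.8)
The plan is to deduce the corollary almost immediately from the Theorem stated just above, together with the uniqueness of slope-$\leq h$ decompositions; only the backward implication has content, the forward one being formal.

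First, for the forward direction, I would note that every rank $1$ point $x\in\spa(A,A^+)$ gives a bounded homomorphism of Banach--Tate rings $A\to k(x)$ carrying $\vp$ to a multiplicative pseudo-uniformizer. Indeed, since $A$ is Tate every point of $\spa(A,A^+)$ is analytic, so $k(x)$ is a non-archimedean field, its norm is multiplicative, and $\vp$ --- a topologically nilpotent unit of $A$ --- maps to a topologically nilpotent unit of $k(x)$, hence to an element of norm $<1$; with the standard norms the map is moreover contractive, so in particular bounded. Therefore a functorial slope-$\leq h$ decomposition of $M$ is, tautologically, functorial for the maps $A\to k(x)$ as $x$ ranges over the rank $1$ points.

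For the backward direction, suppose the slope-$\leq h$ decomposition $M=M_{\leq h}\oplus M_{>h}$ is functorial for $A\to k(x)$ for every rank $1$ point $x$. By the first assertion of the Theorem above, $F=\det(1-X\phi)$ then has a slope-$\leq h$ factorization. By the converse assertion of the Theorem, $M$ admits a \emph{functorial} slope-$\leq h$ decomposition. Finally, by the uniqueness of slope-$\leq h$ decompositions (\cite[Proposition 2.2.11]{JN1}, recalled above), this functorial decomposition must agree with $M=M_{\leq h}\oplus M_{>h}$; hence the latter is functorial, as desired.

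I do not expect a real obstacle: the two substantive inputs --- the Theorem and the uniqueness statement --- are already in hand, and the argument is essentially a short cycle through them. The only point needing a moment of care is the verification in the forward direction that ``functorial for $A\to k(x)$'' is literally an instance of the definition of ``functorial for $f$'', i.e.\ that a rank $1$ point really does produce a bounded map to a non-archimedean field sending $\vp$ to a multiplicative pseudo-uniformizer.
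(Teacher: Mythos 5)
Your argument is correct and is exactly the intended one: the paper states this corollary without proof as an immediate consequence of the preceding theorem together with the uniqueness of slope-$\leq h$ decompositions (Proposition 2.2.11 of \cite{JN1}), which is precisely the cycle you run. Your care in the forward direction (checking that $A\to k(x)$ is a bounded map to a non-archimedean field sending $\vp$ to a multiplicative pseudo-uniformizer, so that it is literally an instance of the definition) is appropriate and matches the paper's setup for base change.
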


\newpage
\section{The Eigenvariety Machine}\label{sec:evs}
We are ready to study the \emph{eigenvariety machine}. In a first step we will learn how to construct an \emph{eigenvariety} over an affinoid base. In a second step we will study the general machine that allows for more general bases and a flexible general setup. 
For this section we also fix a non-archimedean field $K$ and assume it is discretely valued. We let $\mc{O}_K$ be its ring of integers and $\pi_K$ a uniformizer. The main references are \cite{buz} and \cite{JN2}.

\subsection{Eigenvarieties over an affinoid base}
Let  $(A,A^+)$ be a noetherian Tate--Huber pair, with $A$  a Banach--Tate ring with pseudo-uniformizer $\varpi$, equipped with the standard norm. We also assume that $A$ is an \linebreak $\cO_K$-algebra. Let $W=\spa(A,A^+)$ and
let $M$ be a Banach $A$-module satisfying property (Pr). Let $\mathbf{T}$ be a commutative $\cO_K$-algebra equipped with an $\cO_K$-algebra homomorphism to $\mathrm{End}_A(M)$, the continuous $A$-module endomorphisms of $M$.  In practice  $\mathbf{T}$ will be a polynomial $\cO_K$-algebra generated by typically infinitely many Hecke operators. In the notation we  often do not distinguish an element $t\in \T$ from the endomorphism of $M$ associated to it. 

Fix once and for all an element $\phi \in \T$ and assume the induced endomorphism $\phi\colon M\rightarrow M $ is compact. Let $F(X)= 1+ \sum_{n\geq1} c_nX^n = \det(1-X\phi)$ be the characteristic power series of $\phi$. Let $\cZ:=V(F) \subset \mathbb{A}^1_W$ be the spectral variety of $F$.

Let us call a tuple $((A,A^+), M , \T, \phi)$ of such objects a \emph{spectral datum}. 
Our main goal now is to construct from such a spectral datum the so called \emph{eigenvariety}, which is a certain finite cover of $\cZ$.  
\medskip

Before we give the construction and as in \cite{buz} let us discuss an instructive finite-dimensional example, where $\phi$ is invertible.
For that assume $(A,A^+)$ is as above and let $M$ be a finitely generated projective $A$-module of rank $d$. Let $\T$ be a commutative $A$-algebra equipped with an $A$-algebra homomorphism to $\mathrm{End}_A(M)$ and let $\phi$ be an element of $\T$. Assume that the image of $\phi$ in  $\mathrm{End}_A(M)$ lies in $\mathrm{Aut}_A(M)$.\\ 
Define $P(X)=\det(1-X\phi)=1+\dots \in A[X]$. Then the leading term of $P$, i.e., the coefficient of $T^d$ is a unit and $A[X]/(P(X))$ is a finite $A$-algebra. 
Let \[\mathcal{Z}:=\spa(A[X]/(P(X)),A[X]/(P(X))^+) ,\] where $A[X]/(P(X))^+$ is the integral closure of $A^+$ in $A[X]/(P(X))$. \\
Let $\T(\cZ)$ be the image of $\T$ in $\mathrm{End}_A(M)$, then $\T(\cZ)$ is a finite $A$-algebra. The Cayley--Hamilton theorem implies that $\phi^{-1}\in \T(\cZ)$ and that there is a natural map $A[X]/(P(X))\rightarrow \T(\cZ)$, sending $X$ to $\phi^{-1}$.
Define $\cE:= \spa(\T(\cZ), \T(\cZ)^+)$, where $\T(\cZ)^+$ is the integral closure of $A^+$ in $\T(\cZ)$. The maps $A\rightarrow A[X]/(P(X))\rightarrow \T(\cZ)$ respect the rings of integral elements and give rise to maps
\[\cE \rightarrow \cZ \rightarrow \spa(A,A^+).\]
The space $\cE$ is called the eigenvariety associated to the datum $((A,A^+),M, \T, \phi)$. 
\begin{ex} Consider the case, where $A=K\langle T_1, T_2\rangle$, for some non-archimedean field $K$, where $M=A^2$ and $\T= A[\phi, t]$, where $\phi$ acts on $M$ as the matrix $
\left(^1_0 \ ^{T_1}_1\right)$ and $t$ acts as $\left(^0_0\ ^{T_2} _0\right)$. Then $\det(1-X\phi)=(1-X)^2$, so in particular $\cZ$ is non-reduced, and $\T(\cZ)$ is the ring $A\oplus I\varepsilon $ with $I=(T_1,T_2)$ and $\varepsilon^2=0$. Note that in this case the maps $\cE\rightarrow \cZ$ and $\cE\rightarrow \spa(A,A^+)$ are not flat and $\cE$ is not reduced. 
\end{ex}
\begin{rem} So in general one cannot hope for too many good geometric properties of an eigenvariety. However, those occurring in nature are sometimes better behaved. 
\end{rem}

Let us now go back to the more general setup of a spectral datum \linebreak $((A,A^+), M , \T, \phi)$ (as fixed at the beginning of this section). Functional analysis and our study of the spectral variety allow us to construct the eigenvariety in this more general setup as follows.
\medskip

\noindent \textbf{Construction}: Let $U\in \mathit{Cov}(\cZ)$, so $U\subset \cZ$ is open affinoid with image $V=w(U)=\spa(B,B^+)\subseteq W$ affinoid open, and $U\rightarrow V$ is finite and flat. Define $M_B:=M\widehat{\otimes}_A B$ and for $t\in \T$ let $t_U$ denote the $B$-linear endomorphism of $M_B$ induced by $t\colon M\rightarrow M$. Note that by Corollary \ref{bccomp}, $\phi_U\colon M_B\rightarrow M_B$ is still compact. Let $F_U(X)$ be the characteristic power series of $\phi_U$ on $M_B$. By Corollary \ref{bcchar}, $F_U$ is just the image of $F$ in $B\{\{X\}\}$. 

By Corollary \ref{C.B.1}, we know that $\cO_{\cZ}(U)=B[X]/(Q)$ where $Q=1+b_1X +\dots + b_dX^d \in B[X]$ is a polynomial with $b_d\in B^\times$ and we have a factorization $F_U=QS$, where $S$ is a Fredholm series in $B\{\{X\}\}$ which is relatively prime to $Q$. 
We can now invoke Theorem \ref{riesz} to get a decomposition 
$$M_B=N\oplus M(Q),$$ where $N=\mathrm{Ker}Q^*(\phi_U)$ is projective of rank $d$ over $B$. As the projector $M\rightarrow N$ is in the closure of $B[\phi_U]$ it commutes with all the endomorphisms of $M_B$ induced by the elements of $\T$, hence $N$ is $t$-invariant for all $t\in \T$. 

Define $\T_U$ to be the $B$-subalgebra of $\mathrm{End}_B(N)$ generated by the elements of $\T$. Now $\mathrm{End}_B(N)$ is a finite $B$-module, and hence $\T_U$ is a finite $B$-algebra and hence a Tate-Huber ring. Let $\T_U^+$ be the integral closure of $B^+$ in $\T_U$. Define 
$$\cE_U:=\spa(\T_U, \T_U^+).$$
We know that $Q^*(\phi_U)$ is zero on $N$ and hence $\T_U$ is naturally a finite $B[Y]/(Q^*(Y))$-algebra. Moreover, as the constant term of $Q^*$ is a unit, there is a natural isomorphism $B[Y]/(Q^*(Y))\cong B[X]/(Q(X))$ sending $Y$ to $X^{-1}$. Hence $\T_U$
is a finite $B[X]/(Q(X))$-algebra and the map $B[X]/(Q(X))\rightarrow \T_U$ sends $X$ to $\phi^{-1}$. As $\T_U^+$ is the integral closure of $B^+$ in $\T_U$, and similarly for $B[X]/(Q(X))^+$, we see that $\T_U^+$ also agrees with the integral closure of 
$B[X]/(Q(X))^+$ in $\T_U$. Hence we have a finite map
$\cE_U\rightarrow U$. The space $\cE_U$ is the local piece above $U$ of the eigenvariety that we seek to construct. 

We now proceed by showing that these local pieces glue together when $U$ ranges through the elements of $\mathit{Cov}(\cZ)$.
We do this by first checking that the projective modules $N$ that we get from Riesz theory, glue together to form a coherent sheaf on $\mc{Z}$. In the following let us write $F_U=Q_US_U$ for the  factorization associated to an element $U\in \mathit{Cov}(\cZ)$. 
\begin{prop}\label{5.3} The assignment $U\mapsto \mathrm{Ker}(Q_U^*(\phi_U))$ defines a coherent sheaf $\mathcal{N}$ of $\cO_{\cZ}$-modules on $\cZ$.
\end{prop}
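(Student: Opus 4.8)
The plan is to show that $U\mapsto \Ker(Q_U^*(\phi_U))$, which we call $\mathcal{N}(U)$, is a sheaf and is coherent, by checking compatibility on overlaps and then appealing to the standard fact that a compatible system of finite modules over the rings $\cO_\cZ(U)$, $U\in\mathit{Cov}(\cZ)$, glues to a coherent sheaf (since $\mathit{Cov}(\cZ)$ is an open cover of $\cZ$ by Corollary to Theorem \ref{T.B.1.}, and $\cZ$ is a locally strongly noetherian adic space so that Kiehl-type glueing / the equivalence between coherent sheaves and finite modules applies). The heart of the matter is therefore a base-change statement: if $U'\subseteq U$ are two elements of $\mathit{Cov}(\cZ)$ with images $V'=\spa(B',B'^+)\subseteq V=\spa(B,B^+)$, then the natural map $\mathcal{N}(U)\widehat\otimes_{B}B' \to \mathcal{N}(U')$ is an isomorphism of $\cO_\cZ(U')$-modules, and more generally that $\mathcal{N}(U)\otimes_{\cO_\cZ(U)}\cO_\cZ(U')\xrightarrow{\sim}\mathcal{N}(U')$.

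First I would set up the comparison. By Corollary \ref{bcchar}, $F_{U'}$ is the image of $F_U$ under $B\{\{X\}\}\to B'\{\{X\}\}$, so the factorization $F_U=Q_US_U$ maps to a factorization of $F_{U'}$; by uniqueness of the slope/degree factorization (the degree-$d$ multiplicative factor is unique, as follows from the non-zero-divisor argument used in Theorem \ref{riesz} and Proposition \ref{C.B.1}), the image of $Q_U$ is $Q_{U'}$ and the image of $S_U$ is $S_{U'}$, and $Q_{U'},S_{U'}$ are again relatively prime (their resultant is the image of a unit, hence a unit, cf. Lemma \ref{3.1} and \cite{col}). Next, by Corollary \ref{bccomp}, $\phi_{U'}$ on $M_{B'}=M_B\widehat\otimes_B B'$ is $\phi_U\otimes 1$, and hence $Q_{U'}^*(\phi_{U'})=Q_U^*(\phi_U)\otimes 1$ on $M_B\widehat\otimes_B B'$. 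Now I would use Theorem \ref{riesz}: applied over $B$ it gives $M_B=\Ker Q_U^*(\phi_U)\oplus M(Q_U)$ with the projector $e$ lying in the closure of $B[\phi_U]$, and applied over $B'$ it gives $M_{B'}=\Ker Q_{U'}^*(\phi_{U'})\oplus M(Q_{U'})$ with projector $e'$ in the closure of $B'[\phi_{U'}]$. The key point is that $e\otimes 1$ is an idempotent in the closure of $B'[\phi_{U'}]$ which kills nothing outside $\Ker Q_{U'}^*(\phi_{U'})$ and is the identity on $(\Ker Q_U^*(\phi_U))\widehat\otimes_B B'$; since $e'$ is characterized as the unique such idempotent (it is a polynomial-limit in $\phi_{U'}$, and on the two summands $Q_{U'}^*(\phi_{U'})$ is respectively $0$ and invertible), we get $e\otimes 1 = e'$, hence
\[
\Ker Q_{U'}^*(\phi_{U'}) = (e\otimes 1)M_{B'} = e M_B \widehat\otimes_B B' = \big(\Ker Q_U^*(\phi_U)\big)\widehat\otimes_B B'.
\]
Because $\Ker Q_U^*(\phi_U)$ is finite projective over $B$ (Theorem \ref{riesz}(1)), the completed tensor product agrees with the ordinary one, and $\cO_\cZ(U')=B'[X]/(Q_{U'})=\cO_\cZ(U)\otimes_B B'$, so this says exactly $\mathcal{N}(U')\cong \mathcal{N}(U)\otimes_{\cO_\cZ(U)}\cO_\cZ(U')$, compatibly with the $\T$-action.

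Finally I would record that the glueing datum is a cocycle: the isomorphisms just constructed are induced by the ring maps and by the idempotents $e$, and on triple overlaps they compose correctly because the idempotent over the smallest ring is the image of each of the others. Since $\mathit{Cov}(\cZ)$ covers $\cZ$, the local modules $\mathcal{N}(U)$ (each finite projective over $\cO_\cZ(U)$) together with these transition isomorphisms define a coherent $\cO_\cZ$-module $\mathcal{N}$; the $\T$-action glues as well since each transition map is $\T$-equivariant. I expect the main obstacle to be the identification $e\otimes 1 = e'$ of the two Riesz idempotents under base change — i.e. making precise that "the projector lies in the closure of $A[\phi]$" behaves well under $\widehat\otimes_B B'$ — which requires knowing that $Q_{U'}^*(\phi_{U'})$ is $0$ on one summand and invertible on the other (so that the idempotent is forced), together with the fact that a continuous $B$-algebra map $\End_B(M_B)\to \End_{B'}(M_{B'})$ carries the closure of $B[\phi_U]$ into the closure of $B'[\phi_{U'}]$. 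A minor additional point is to be slightly careful that all these $\widehat\otimes$'s behave well, which is covered by the noetherian hypothesis and Lemmas \ref{2.8}--\ref{bccomp}.
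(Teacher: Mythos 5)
There is a genuine gap. Your entire argument rests on the claim that for $U'\subseteq U$ in $\mathit{Cov}(\cZ)$ with images $V'\subseteq V$, the polynomial $Q_{U'}$ is the image of $Q_U$ under $B\{\{X\}\}\to B'\{\{X\}\}$. This is false in general: an inclusion $U'\subseteq U$ of elements of $\mathit{Cov}(\cZ)$ need not be of the form $U'=U\times_V V'$. For example, take $A=\Q_p$ and $F=(1-X)(1-pX)$, so $V(F)$ is two points over the one-point base; with $U=V(F)$ (degree $2$) and $U'=\{X=1\}$ (degree $1$) one has $V=V'$, the image of $Q_U=(1-X)(1-pX)$ is $Q_U$ itself, and $Q_{U'}=1-X$ properly divides it. In this situation the degrees differ, so there is no "uniqueness of the degree-$d$ factorization" to invoke, and your subsequent identification $e\otimes 1=e'$ of Riesz idempotents (which presupposes $Q_{U'}^*(\phi_{U'})=Q_U^*(\phi_U)\otimes 1$) breaks down, as does the identity $\cO_\cZ(U')=\cO_\cZ(U)\otimes_B B'$.

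The paper's proof handles exactly this by factoring a general inclusion $U_1\subseteq U_2$ through $U_3=w|_{U_2}^{-1}(w(U_1))=U_2\times_{w(U_2)}w(U_1)$. The "horizontal" inclusion $U_3\subseteq U_2$ is genuine base change on the weight space and is essentially what your argument establishes (and what the paper leaves as an exercise). The "vertical" inclusion $U_1\subseteq U_3$, where $w(U_1)=w(U_3)$ but $Q_{U_1}$ properly divides $Q_{U_3}$, requires a separate argument: this is Proposition \ref{vertical}, which writes $Q_{U_3}=PQ_{U_1}$ with $P$ and $Q_{U_1}$ relatively prime, uses $Pf+Q_{U_1}g=1$ to produce the idempotent of $A[X]/(Q_{U_3})\cong A[X]/(Q_{U_1})\times A[X]/(P)$, and shows $Pf\cdot\Ker(Q_{U_3}^*(\phi))=\Ker(Q_{U_1}^*(\phi))$. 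Your proposal contains no substitute for this step, so the compatibility on overlaps — and hence the glueing — is not established. To repair the argument you would need to add the vertical case, e.g.\ by reproducing the Chinese-remainder idempotent computation of Proposition \ref{vertical} or an equivalent comparison of the Riesz decompositions attached to $Q_{U_1}$ and $Q_{U_3}$ over the same base ring.
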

\begin{proof} We include the proof from \cite[4.1.8]{JN1}.
As $\mathit{Cov}(\cZ)$ is an open cover of $\cZ$ it suffices to prove that whenever $U_{1}\sub U_{2}$ are elements of $\mathit{Cov}(\cZ)$, we have a canonical isomorphism 
\[\mathrm{Ker}(Q^{*}_{U_2}(\phi_{U_2}))\otimes_{\cO_{U_2}} \cO_{U_1} \cong \mathrm{Ker}(Q^{*}_{U_1}(\phi_{U_1})). \]
Define $U_{3}=w|_{U_{2}}^{-1}(w(U_{1}))$. Then $U_{1}\sub U_{3} \sub U_{2}$, so it suffices to treat the inclusions $U_{1}\sub U_{3}$ and $U_{3}\sub U_{2}$. In the first case we have $w(U_{1})=w(U_{3})$ and the result follows from Proposition \ref{vertical} below, since $U_{1}\sub U_{3}$ forces $Q_{U_{1}}|Q_{U_{3}}$. For the second we have $U_{3}= U_{2} \times_{w(U_{2})}w(U_{1})$ and we leave it as an exercise to check that everything behaves nicely. (Note that we already know from Lemma \ref{specbc} that the spectral varieties behave naturally with respect to base change.)
\end{proof}

\begin{prop}
\label{vertical}
Assume that $F$ has two factorizations $F=Q_{1}S_{1}=Q_{2}S_{2}$ where the $Q_{i}$ are multiplicative polynomials, the $S_{i}$ are Fredholm series, and for each $i$ the $Q_{i}$ and $S_{i}$ are relatively prime. Assume furthermore that $Q_{1}|Q_{2}$. Then we have a canonical isomorphism 
$$\mathrm{Ker}(Q_{2}^{\ast}(\phi)) \otimes_{A[X]/(Q_{2})} A[X]/(Q_{1}) \cong \mathrm{Ker}(Q_{1}^{\ast}(\phi)).$$
\end{prop}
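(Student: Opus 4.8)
The plan is to apply Theorem~\ref{riesz} twice. First apply it to the compact operator $\phi$ on $M$ with the factorization $F=Q_{2}S_{2}$: this gives a $\phi$-stable decomposition $M=N_{2}\oplus M(Q_{2})$ with $N_{2}:=\Ker Q_{2}^{\ast}(\phi)$ finitely generated and projective of rank $\deg Q_{2}$, the restriction $\psi:=\phi|_{N_{2}}$ invertible on $N_{2}$, and $\det(1-X\psi\mid N_{2})=Q_{2}$. Since $Q_{1}\mid Q_{2}$, write $Q_{2}=Q_{1}R$ in $A[X]$ and record the routine reductions. Comparing leading coefficients, and using that the leading coefficient of $Q_{2}$ is a unit together with $Q_{1}(0)=Q_{2}(0)=1$, shows $\deg Q_{2}=\deg Q_{1}+\deg R$ and that $R$ is again a multiplicative polynomial with $R(0)=1$. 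Because $Q_{1}(0)=1$, $Q_{1}$ is a non-zero-divisor in $A\{\{X\}\}$, so $F=Q_{1}S_{1}=Q_{1}RS_{2}$ forces $S_{1}=RS_{2}$; then $(Q_{1},R)\supseteq(Q_{1},RS_{2})=(Q_{1},S_{1})=A\{\{X\}\}$, so $Q_{1}$ and $R$ are relatively prime in $A\{\{X\}\}$. Finally, $Q_{2}^{\ast}=Q_{1}^{\ast}R^{\ast}$, whence $Q_{2}^{\ast}(\phi)=Q_{1}^{\ast}(\phi)R^{\ast}(\phi)$ and $\Ker Q_{1}^{\ast}(\phi)\subseteq\Ker Q_{2}^{\ast}(\phi)=N_{2}$.

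Next apply Theorem~\ref{riesz} a second time, now to the compact operator $\psi$ on $N_{2}$ (which has property~(Pr), being finite projective, and whose characteristic power series is $Q_{2}$ by part~(3) of the first application) together with the factorization $Q_{2}=Q_{1}\cdot R$, where $Q_{1}$ is the multiplicative polynomial and $R$ the relatively prime Fredholm series. This produces a $\psi$-stable decomposition $N_{2}=N_{1}\oplus N_{2}(Q_{1})$ with $N_{1}:=\Ker Q_{1}^{\ast}(\psi)$ finitely generated and projective of rank $\deg Q_{1}$ and $Q_{1}^{\ast}(\psi)$ invertible on $N_{2}(Q_{1})$. Moreover $N_{1}=\Ker Q_{1}^{\ast}(\phi)$, since any $m$ with $Q_{1}^{\ast}(\phi)m=0$ automatically lies in $N_{2}$.

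It remains to match this decomposition with the tensor product. The $A[X]/(Q_{2})$-module structure on $N_{2}$ (the one in the statement) is the one in which $X$ acts as $\psi^{-1}$: this is legitimate because $\psi$ is invertible on $N_{2}$ and $Q_{2}(\psi^{-1})=\psi^{-\deg Q_{2}}Q_{2}^{\ast}(\psi)=0$ on $N_{2}$. The decomposition $N_{2}=N_{1}\oplus N_{2}(Q_{1})$ is $A[X]/(Q_{2})$-linear, and on $N_{1}$ the action factors through $A[X]/(Q_{1})$ because $Q_{1}(\psi^{-1})|_{N_{1}}=\psi^{-\deg Q_{1}}Q_{1}^{\ast}(\psi)|_{N_{1}}=0$. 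The surjection $A[X]/(Q_{2})\twoheadrightarrow A[X]/(Q_{1})$ induced by $Q_{1}\mid Q_{2}$ has kernel generated by the class of $Q_{1}$, so
\[N_{2}\otimes_{A[X]/(Q_{2})}A[X]/(Q_{1})\;=\;N_{2}/Q_{1}(\psi^{-1})N_{2}.\]
On $N_{1}$ the operator $Q_{1}(\psi^{-1})$ vanishes, while on $N_{2}(Q_{1})$ it equals $\psi^{-\deg Q_{1}}\circ Q_{1}^{\ast}(\psi)$, a composite of invertible maps; hence $Q_{1}(\psi^{-1})N_{2}=N_{2}(Q_{1})$ and
\[N_{2}/Q_{1}(\psi^{-1})N_{2}\;=\;N_{2}/N_{2}(Q_{1})\;\cong\;N_{1}\;=\;\Ker Q_{1}^{\ast}(\phi),\]
all canonically, which is the asserted isomorphism.

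The main obstacle is not a hard estimate but the bookkeeping: verifying that the second invocation of Theorem~\ref{riesz} is legitimate (its input, the Fredholm determinant of $\psi$ on the finite projective module $N_{2}$, is $Q_{2}$ by part~(3) of the first invocation), and carefully translating between the $A[\phi]$-module language in which Riesz theory is phrased and the $A[X]/(Q_{i})$-module-via-$\phi^{-1}$ language in which the proposition is stated --- in particular the identification $\Ker Q_{1}^{\ast}(\psi)=\Ker Q_{1}^{\ast}(\phi)$ and the invertibility of $Q_{1}(\psi^{-1})$ on the Riesz complement $N_{2}(Q_{1})$.
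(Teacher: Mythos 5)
Your proof is correct, and it takes a genuinely different route from the paper. The paper's argument is ring-theoretic: writing $Q_{2}=PQ_{1}$, it observes that $P$ and $Q_{1}$ are relatively prime, invokes the Chinese Remainder decomposition $A[X]/(Q_{2})\cong A[X]/(Q_{1})\times A[X]/(P)$, and reduces everything to the single identity $Pf\cdot\mathrm{Ker}(Q_{2}^{\ast}(\phi))=\mathrm{Ker}(Q_{1}^{\ast}(\phi))$ for the idempotent $Pf$ --- an identity it then leaves as an exercise. You instead apply Theorem \ref{riesz} a second time, to the compact operator $\psi=\phi|_{N_{2}}$ on the finite projective module $N_{2}$ with characteristic series $Q_{2}=Q_{1}R$, and identify the resulting Riesz splitting $N_{2}=\mathrm{Ker}\,Q_{1}^{\ast}(\psi)\oplus N_{2}(Q_{1})$ with the quotient $N_{2}/Q_{1}(\psi^{-1})N_{2}$ computing the tensor product. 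The two decompositions of $N_{2}$ are of course the same (both are characterized by the kernel/invertibility conditions), but your route has two advantages: it only needs coprimality of $Q_{1}$ and $R$ in $A\{\{X\}\}$ (which you correctly extract from $(Q_{1},S_{1})=A\{\{X\}\}$ via $S_{1}=RS_{2}$) rather than a Bezout identity in $A[X]$, and it actually supplies the verification that the paper omits. The preliminary bookkeeping ($\deg Q_{2}=\deg Q_{1}+\deg R$, $R$ multiplicative with $R(0)=1$, $Q_{2}^{\ast}=Q_{1}^{\ast}R^{\ast}$, hence $\mathrm{Ker}\,Q_{1}^{\ast}(\phi)\subseteq N_{2}$) is all sound; the only point you leave implicit is that the final isomorphism $N_{2}/N_{2}(Q_{1})\cong\mathrm{Ker}\,Q_{1}^{\ast}(\phi)$ is $A[X]/(Q_{1})$-linear, which is immediate since the Riesz projectors lie in the closure of $A[\psi]$ and so commute with $\psi^{-1}$.
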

\begin{proof}
Let $P\in A[X]$ be such that $Q_{2}=PQ_{1}$. Then $P$ is a multiplicative polynomial which is relatively prime to $Q_{1}$. Hence we may find polynomials $f,g\in A[X]$ such that $Pf+Q_{1}g=1$. We then have $A[X]/(Q_{2}) \cong A[X]/(Q_{1}) \times A[X]/(P)$ and $Pf\in A[X]/(Q_{2})$ corresponds to $(1,0)\in A[X]/(Q_{1}) \times A[X]/(P)$. 
The proposition follows from the equality
$$Pf\cdot \mathrm{Ker}(Q_{2}^{\ast}(\phi))=\mathrm{Ker}(Q_{1}^{\ast}(\phi)),$$ which we leave as an exercise (cf.\ \cite[Proposition 4.1.7]{JN1}). 
\end{proof}

We can now define the eigenvariety. 
Let $((A,A^+),M, \T, \phi)$ be a spectral datum. Let $\mathcal{N}$ be the coherent sheaf on the spectral variety $\cZ$ from  above. Consider the endomorphism sheaf $\mathrm{End}_{\cO_{\cZ}}(\mathcal{N})$. For every open $U\in \mathit{Cov}(\cZ)$ we have a map $\T\rightarrow \End_{\cO_{\cZ}}(\mathcal{N})(U)$ and these maps are compatible. Define $\T(\cZ)$ to be the sub-presheaf of $\End_{\cO_{\cZ}}(\mathcal{N})$ generated over $\cO_{\cZ}$ by the image of $\T$. By flatness of rational localization this is a sheaf\footnote{See the proof of Lemma \ref{flatbc} below for the reason why flatness is crucial.} and hence a coherent sheaf of $\cO_{\cZ}$-algebras on $\cZ$.

\begin{defn}
The \emph{eigenvariety} $\cE$ associated to the spectral datum $((A,A^+),M, \T, \phi)$ is defined as the relative adic spectrum $\underline{\spa}(\T(\cZ))$. 
\end{defn}
By construction the eigenvariety $\cE$ is finite over $\cZ$. It is quasi-separated and locally quasi-finite and partially proper over $W=\spa(A,A^+)$, as the structure morphism $w\colon \cZ\rightarrow W$ is. 
\bigskip

Let us check what happens when we change the base to a different affinoid algebra. First a lemma from commutative algebra.
\begin{lem}\label{nilpotence}\cite[Lemma 3.1.2]{JN2}
Let $A$, $A'$ and $T$ be rings with $A$ and $A'$ Noetherian and let $N$ be a finitely generated $A$-module. Assume that we have ring homomorphisms $f \colon A \ra A'$ and $\psi_{A} \colon T \ra \End_{A}(N)$ and let $\psi_{A'}$ be the composition of $\psi_{A}$ with the natural map $\End_{A}(N) \ra \End_{A'}(N\otimes_{A}A')$ coming from $f$. Let $T_{A}$ (resp. $T_{A'}$) be the $A$-subalgebra (resp. $A'$-subalgebra) generated by the image of $\psi_{A}$ (resp. $\psi_{A'}$). Then the natural \linebreak $A'$-linear map $T_{A}\otimes_{A}A' \ra T_{A'}$ is a surjection with nilpotent kernel in general, and if $f$ is flat then it is an isomorphism.
\end{lem}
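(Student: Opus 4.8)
The plan is to identify the kernel of $T_{A}\otimes_{A}A' \ra T_{A'}$ with an annihilator ideal and then reduce nilpotence to a support computation. Surjectivity is immediate: the composite $T_{A}\otimes_{A}A' \ra \End_{A'}(N\otimes_{A}A')$ sends the algebra generators $t\otimes 1$, for $t$ in the image of $\psi_{A}$, to the image of $\psi_{A'}$, so its image is exactly the $A'$-subalgebra $T_{A'}$ generated by $\psi_{A'}(T)$. Since moreover $T_{A'}\subseteq \End_{A'}(N\otimes_{A}A')$, the kernel $K$ of the map $T_{A}\otimes_{A}A' \ra T_{A'}$ equals the kernel of $T_{A}\otimes_{A}A'\ra \End_{A'}(N\otimes_{A}A')$, i.e.\ $K=\Ann_{T_{A}\otimes_{A}A'}(N\otimes_{A}A')$.

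Next I would observe that $N$ is a finitely generated and \emph{faithful} module over $T_{A}$: finitely generated because it already is over $A$, and faithful because $T_{A}\subseteq\End_{A}(N)$. Hence $\Supp_{T_{A}}(N)=V(\Ann_{T_{A}}N)=\Spec T_{A}$. Using that the support of a finitely generated module commutes with base change, $\Supp_{T_{A}\otimes_{A}A'}(N\otimes_{A}A')$ is the preimage of $\Spec T_{A}$ under $\Spec(T_{A}\otimes_{A}A')\ra\Spec T_{A}$, which is all of $\Spec(T_{A}\otimes_{A}A')$. Therefore $V(K)=\Spec(T_{A}\otimes_{A}A')$, so $K$ lies in the nilradical. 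Finally, since $A$ is Noetherian, $T_{A}$ is a finite $A$-module, so $T_{A}\otimes_{A}A'$ is a finite $A'$-module; as $A'$ is Noetherian, $T_{A}\otimes_{A}A'$ is a Noetherian ring, its nilradical is nilpotent, and hence so is $K$.

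For the last assertion, when $f$ is flat I would use that $N$ is finitely presented ($A$ being Noetherian), so the natural map $\End_{A}(N)\otimes_{A}A'\ra\End_{A'}(N\otimes_{A}A')$ is an isomorphism; flatness of $f$ also keeps $T_{A}\otimes_{A}A'\ra\End_{A}(N)\otimes_{A}A'$ injective. Composing, $T_{A}\otimes_{A}A'\ra\End_{A'}(N\otimes_{A}A')$ is injective, so $K=0$ and the surjection $T_{A}\otimes_{A}A'\ra T_{A'}$ is an isomorphism.

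The only point requiring care is the support base-change step, which is where I expect the (minor) main obstacle to lie. The cleanest way is to check primewise that $(N\otimes_{A}A')\otimes_{T_{A}\otimes_{A}A'}\kappa(\mathfrak{q})\ne 0$ for every prime $\mathfrak{q}$ of $T_{A}\otimes_{A}A'$ lying over $\mathfrak{p}\in\Spec T_{A}$; rewriting the left side as $(N\otimes_{T_{A}}\kappa(\mathfrak{p}))\otimes_{\kappa(\mathfrak{p})}\kappa(\mathfrak{q})$, this follows from $N\otimes_{T_{A}}\kappa(\mathfrak{p})\ne 0$ (faithfulness) and the faithful flatness of the field extension $\kappa(\mathfrak{p})\ra\kappa(\mathfrak{q})$. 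Everything else — surjectivity, the identification of $K$ with an annihilator, Noetherianity of $T_{A}\otimes_{A}A'$, and the flat case — is routine.
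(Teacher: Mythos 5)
Your proof is correct, and it follows essentially the same route as the cited source \cite[Lemma 3.1.2]{JN2} (the paper itself only quotes the lemma): identify the kernel with $\Ann_{T_A\otimes_A A'}(N\otimes_A A')$, use faithfulness and finite generation of $N$ over $T_A$ plus base change of supports to place the kernel in the nilradical, invoke Noetherianity of $T_A\otimes_A A'$ for nilpotence, and use finite presentation of $N$ together with flatness of $f$ for the isomorphism statement. All the individual steps (including the primewise support computation and the finiteness of $T_A$ over $A$ via $\End_A(N)\hookrightarrow N^n$) check out.
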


Now let us consider a flat map $(A,A^+) \rightarrow (A', {A'}^+)$ of noetherian Tate-Huber pairs (note this is adic). Let $M'$ and $\phi'$ be the obvious base extensions, so e.g. $M':= M\widehat{\otimes}_A A'$. Consider the spectral datum $((A', {A'}^+),M',\T, \phi')$. To it we can associate a spectral variety $\cZ'$ and an eigenvariety $\cE'$. We already know from Lemma \ref{specbc} that $\cZ'=\cZ\times_{\spa(A,A^+)} \spa(A', {A'}^+)$. 
\begin{lem}\label{flatbc}
If $(A,A^+)\rightarrow (A',{A'}^+)$ is a morphism of noetherian Tate--Huber pairs, then there is a natural map $\mc{E}' \ra \mc{E}$ over $\mc{Z}$ and the induced map $\cE' \rightarrow \cE\times_{\mc{Z}} \mc{Z'}$ induces an isomorphism $(\cE')^{red} \rightarrow (\cE\times_{\mc{Z}} \mc{Z'})^{red}$.
If the map $A\rightarrow A'$ is flat, then $\cE' \rightarrow \cE\times_{\mc{Z}} \mc{Z'}$ is an isomorphism.
\end{lem}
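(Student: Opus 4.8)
The plan is to reduce everything to the local description of $\cE$ over elements of $\mathit{Cov}(\cZ)$ and then invoke the commutative-algebra input of Lemma \ref{nilpotence}. Write $W'=\spa(A',A'^+)$ and let $w'\colon\cZ'\to\cZ$ be the base change of $w$, which makes sense because $\cZ'\cong\cZ\times_W W'$ by Lemma \ref{specbc}. First I would show that the coherent sheaf $\mc{N}'$ on $\cZ'$ attached to the spectral datum $((A',A'^+),M',\T,\phi')$ is canonically isomorphic to the pullback $w'^{\ast}\mc{N}$. Since this is local on $\cZ'$ it suffices to check it over a cofinal system of affinoids: take $U=\spa(C,C^+)\in\mathit{Cov}(\cZ)$ with image $V=\spa(B,B^+)\subseteq W$ (which we may take to be a rational subset), set $V'=V\times_W W'=\spa(B',B'^+)$ with $B'=B\widehat\otimes_A A'$, and $U'=U\times_\cZ\cZ'$. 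By Corollary \ref{C.B.1} we have $F_U=Q_US_U$ over $B$ with $C=B[X]/(Q_U)$; its image under $B\to B'$ is a factorization $F_{U'}=Q_{U'}S_{U'}$ whose factors are still relatively prime (the resultant $R(S_U,Q_U)\in B^{\times}$ maps to a unit), so $U'\in\mathit{Cov}(\cZ')$ with $\cO_{\cZ'}(U')=B'[X]/(Q_{U'})$. By Corollary \ref{bccomp} we have $\phi_{U'}=\phi_U\otimes 1$ on $M_B\widehat\otimes_B B'$, and the idempotent projector onto $N_U=\Ker Q_U^{\ast}(\phi_U)$, which by Theorem \ref{riesz} lies in the closure of $B[\phi_U]$, maps under $\widehat\otimes_B B'$ into the closure of $B'[\phi_{U'}]$; by the uniqueness clause of Theorem \ref{riesz}, together with the fact that $N_U\otimes_B B'$ is finite projective with $Q_{U'}^{\ast}(\phi_{U'})$ vanishing on it, this limit must coincide with the Riesz projector for $(\phi_{U'},Q_{U'})$. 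Hence $N_{U'}\cong N_U\otimes_B B'$ compatibly with the $\T$-action, giving $\mc{N}'\cong w'^{\ast}\mc{N}$.

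Next I would compare Hecke algebras. Over $U$, the algebra $\T_U\subseteq\End_B(N_U)$ is the $B$-subalgebra generated by the image of $\T$, while $\T_{U'}\subseteq\End_{B'}(N_{U'})=\End_{B'}(N_U\otimes_B B')$ is the $B'$-subalgebra generated by the image of $\T$. Lemma \ref{nilpotence}, applied with $(A,A',T,N)=(B,B',\T,N_U)$, shows that the natural map $\T_U\otimes_B B'\to\T_{U'}$ is a surjection with nilpotent kernel, and an isomorphism when $B\to B'$ is flat. Assembling these statements over $\mathit{Cov}(\cZ)$, and using flatness of the localizations involved in forming $\T(\cZ)$ as the subsheaf of $\End_{\cO_\cZ}(\mc{N})$ generated by $\T$ (and likewise on $\cZ'$), I obtain a natural morphism of coherent $\cO_{\cZ'}$-algebras
\[ \T(\cZ)\otimes_{\cO_\cZ}\cO_{\cZ'}\longrightarrow\T(\cZ') \]
which is surjective with nilpotent kernel in general, and an isomorphism when $A\to A'$ is flat; here one uses that flatness of $A\to A'$ propagates to $B\to B'=B\widehat\otimes_A A'$ for the (rational) affinoids $V\subseteq W$ that occur.

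Finally, taking relative adic spectra and using that $\underline{\spa}(-)$ commutes with base change, we have $\cE'=\underline{\spa}(\T(\cZ'))$ and $\cE\times_\cZ\cZ'=\underline{\spa}(\T(\cZ)\otimes_{\cO_\cZ}\cO_{\cZ'})$, so the displayed map of sheaves dualises to a finite morphism $\cE'\to\cE\times_\cZ\cZ'$; composing with the projection gives the natural map $\cE'\to\cE$, which lies over $\cZ$ by construction. A surjection of rings with nilpotent kernel induces a closed immersion that is a homeomorphism and an isomorphism after passing to reduced spaces, whence $(\cE')^{red}\to(\cE\times_\cZ\cZ')^{red}$ is an isomorphism; and when $A\to A'$ is flat the map of sheaves is an isomorphism, hence so is $\cE'\to\cE\times_\cZ\cZ'$. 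I expect the main obstacle to be the first step: one must genuinely verify that the Riesz-theoretic summand $\mc{N}$ base changes, i.e.\ that the idempotent produced by Theorem \ref{riesz} over $B$ specialises to the one over $B'$; once that is in place, the remainder is an application of Lemma \ref{nilpotence} together with routine bookkeeping about coherent sheaves and relative adic spectra.
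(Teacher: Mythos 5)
Your proof is correct and follows essentially the same route as the paper's: localize to $U\in\mathit{Cov}(\cZ)$, pull back to $U'=U\times_W W'$, and apply Lemma \ref{nilpotence} to compare $\T_U\otimes_B B'$ with $\T_{U'}$, then glue. The only difference is that you spell out the base-change compatibility of the Riesz summand $\mc{N}$ (via the idempotent and the uniqueness clause of Theorem \ref{riesz}), a step the paper's proof leaves implicit; your treatment of it is correct.
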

\begin{proof} Let $\mathit{Cov}(\cZ)$ and $\mathit{Cov}(\cZ')$ denote the usual sets of covers of $\cZ$ and $\cZ'$. Let $W=\spa(A,A^+)$ and $W'= \spa(A', {A'}^+)$. If $U$ is in $\mathit{Cov}(\cZ)$, then the pullback $U'=U\times_{W} W'$  is an element of $\mathit{Cov}(\cZ')$ and the $U'$ where $U$ runs through $\mathit{Cov}(\cZ)$ still form a cover of $\cZ'$. Hence $\cE'$ may be constructed by glueing the $\cE_{U'}$ for all such $U'$. So let $U \in \mathit{Cov}(\cZ)$ with image $V=\spa(B,B^+)$ in $W$. Applying Lemma \ref{nilpotence} the statements follow in this case. In particular if $A\rightarrow A'$ is flat, so is $B\rightarrow B':=B\widehat{\otimes}_A A'$ and we have $\T_{U'}= \T_U\otimes_B B'$. As the assertion is local on both $\mc{Z}$ and $\mc{Z}'$ the lemma follows. 
\end{proof}

\subsection{Eigenvariety data}
In practice the base of the eigenvariety is not affinoid. 
To fix notation let us take an abstract locally noetherian analytic adic space $\mc{W}$ over $\mc{O}_K$ as a base space. The letter $\mc{W}$ is used as in practice this space is a space of \emph{weights} of $p$-adic automorphic forms. The base $\mc{W}$ is then called the weight space, we adapt this terminology here (even though it is content-free). For example, in the case of the Coleman--Mazur eigencurve, the base $\mc{W}$ is a finite disjoint union of open discs. Then over an open affinoid $\spa(A,A^+)$ of $\mc{W}$ one typically has a situation as described above, i.e., a spectral datum $((A,A^+),M,\T, \phi)$, and one would like to glue the respective eigenvarieties to an eigenvariety over $\mathcal{W}$. The algebra $\T$ is usually some abstract algebra that is  independent of $A$, but unfortunately in practice, the Banach module $M$ does depend on $A$ and cannot necessarily be extended to a bigger affinoid. In the Coleman--Mazur eigencurve setting, these Banach-modules are spaces of functions with a certain ``convergence radius'' which depends on $A$ and which shrinks when we increase $\Spa(A,A^+)$. 

There are at least two ways of dealing with this issue. One way is explained in~\cite{buz}. There, it is observed that the Banach-modules over the different affinoids are ``linked'' in a certain way. These links are put in an axiomatic framework and one can then prove that the glueing works. 

A second way is to make the observation that in practice even though the Banach-modules depend on an extra parameter like a radius $r$, the resulting Fredholm series and hence the spectral variety are independent of this choice. As a consequence one can then glue the spectral varieties to a ``global'' spectral variety $\mc{Z}$ over $\mc{W}$. The resulting ``global'' structure map $w\colon \mc{Z}\ra \mc{W}$ still has all properties of Theorem \ref{T.B.1.}. Next, one verifies that the coherent sheaf $\mathcal{N}$ from Proposition \ref{5.3} above is independent of the additional parameter $r$ and glues to a coherent sheaf on the glued spectral variety $\mathcal{Z}$. Once this independence is shown, everything fits into the following elegant general setup. 

\begin{defn}
An \emph{eigenvariety datum} is a tuple $\mathcal{D}=(\mathcal{Z},\mathcal{N},\T,\psi)$ where $\mathcal{Z}$ is a locally noetherian analytic adic space over $\ok$, $\mathcal{N}$ is a coherent $\mathcal{O}_{\mathcal{Z}}$-module, $\T$ a commutative $\ok$-algebra, and $\psi\colon \T \ra 
\End_{\mathcal{O}_{\mathcal{Z}}}(\mathcal{N})$ an $\ok$-algebra homomorphism.
\end{defn}

We see that from a spectral datum as above, we can build an eigenvariety datum by taking $\cZ$ to be the spectral variety and $\mathcal{N}$ the coherent sheaf from Proposition \ref{5.3}. In practice, $\mc{Z}$ will arise as a ``glued'' spectral variety over a non-affinoid weight space $\mc{W}$ as described above, and $\mc{N}$ as the corresponding glued coherent sheaf. 
Note however that in the definition of an eigenvariety datum $\mathcal{Z}$ can be even more general.

\begin{prop}\label{eigenvariety}
Given an eigenvariety datum $\mathcal{D}=(\mathcal{Z},\mathcal{N},\T,\psi)$, there is a locally noetherian analytic adic space 
$\mc{E}=\mathcal{E}(\mathcal{D})$ over $\ok$, together with a finite morphism $\pi\colon \mathcal{E} \ra \mathcal{Z}$, an $\ok$-algebra homomorphism $\phi_{\mathcal{E}}\colon \T \ra 
\mathcal{O}(\mc{E})$, and a faithful coherent \linebreak $\mc{O}_{\mc{E}}$-module $\mc{H}$. 
There is 
a canonical isomorphism $\pi_{\ast}\mc{H}\cong \mc{N}$, which is 
compatible with the actions of $\T$ on both sides (via $\phi_{\mc{E}}$ and 
$\psi$, respectively).

The space $\mc{E}$ is characterized by the following local description: for $U 
\subset \mc{Z}$ an affinoid open we have $\mc{E}_U = \Spa(\T_U, \T_U^+)$ 
where $\T_{U}$ is the $\mc{O}_{\mc{Z}}(U)$-subalgebra of 
$\End_{\mc{O}_{\mc{Z}}(U)}(\mc{N}(U))$ generated by the image of $\, \T$ and $\T_U^+$ is the integral closure of $\mc{O}^+_{\mc{Z}}(U) $ in $\T_U$. As  
$\mc{N}(U)$ is canonically a $\T_U$-module this gives $\mc{H}$ over $\mc{E}_U$.
\end{prop}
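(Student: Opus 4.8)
The plan is to construct $\mc{E}$ by gluing the affinoid pieces $\mc{E}_U$ over a cover of $\mc{Z}$ by affinoid opens and verifying compatibility on overlaps. First I would fix an affinoid open $U=\spa(R,R^+)\subseteq \mc{Z}$ with $R$ strongly noetherian. Then $\mc{N}(U)$ is a finitely generated $R$-module and $\End_R(\mc{N}(U))$ is a finite $R$-algebra; the $\ok$-algebra map $\psi$ composed with the restriction to $U$ gives $\T\to \End_R(\mc{N}(U))$, and I let $\T_U$ be the $R$-subalgebra generated by the image. This $\T_U$ is a finite $R$-algebra, hence a Tate ring, and letting $\T_U^+$ be the integral closure of $\mc{O}^+_{\mc{Z}}(U)=R^+$ in $\T_U$ we get a Tate--Huber pair $(\T_U,\T_U^+)$ which is again strongly noetherian; set $\mc{E}_U:=\spa(\T_U,\T_U^+)$. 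The structure map $\mc{E}_U\to U$ is finite by construction. Also $\mc{N}(U)$ is tautologically a $\T_U$-module, and as such it is finitely generated and faithful over $\T_U$; this will be the local model for $\mc{H}$. The local map $\phi_{\mc{E}}\colon \T\to \mc{O}(\mc{E}_U)=\T_U$ is the canonical one.

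Next I would check that these local data glue. Given an inclusion $U'\subseteq U$ of affinoids in $\mc{Z}$, restriction gives a ring map $R=\mc{O}_{\mc{Z}}(U)\to \mc{O}_{\mc{Z}}(U')=R'$ which is flat (rational localizations of strongly noetherian Tate rings are flat), and $\mc{N}(U')\cong \mc{N}(U)\otimes_R R'$ since $\mc{N}$ is coherent. I then apply Lemma \ref{nilpotence}: because $R\to R'$ is flat, the natural map $\T_U\otimes_R R'\to \T_{U'}$ is an isomorphism, so $\T_{U'}$ is the analog of $\T_U$ base-changed, and taking integral closures is compatible with such flat base change among the relevant rings, so $\mc{E}_{U'}=\mc{E}_U\times_U U'$ as adic spaces. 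This shows the $\mc{E}_U$ are compatible on overlaps, so they glue to a locally noetherian analytic adic space $\mc{E}$ with a finite morphism $\pi\colon \mc{E}\to \mc{Z}$; the maps $\phi_{\mc{E}}$ glue to $\phi_{\mc{E}}\colon \T\to \mc{O}(\mc{E})$, and the modules $\mc{N}(U)$ (viewed over $\T_U$) glue to a coherent $\mc{O}_{\mc{E}}$-module $\mc{H}$ with $\pi_*\mc{H}\cong \mc{N}$ compatibly with the $\T$-actions; faithfulness of $\mc{H}$ over $\mc{O}_{\mc{E}}$ holds because locally $\T_U$ acts faithfully on $\mc{N}(U)$ by the very definition of $\T_U$ as a subalgebra of the endomorphism ring. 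The characterization by the local description is then immediate from the construction.

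The main obstacle I expect is verifying that the gluing is well-defined and canonical, i.e. that the isomorphisms $\mc{E}_{U'}\cong \mc{E}_U\times_U U'$ satisfy the cocycle condition on triple overlaps and that taking the "$+$"-subrings (integral closures of $\mc{O}^+_{\mc{Z}}$ in $\T_U$) is genuinely compatible with rational localization, so that the $\spa(\T_U,\T_U^+)$ patch together as adic spaces and not merely as topological spaces or schemes. This rests on the flat base-change isomorphism $\T_U\otimes_R R'\xrightarrow{\sim}\T_{U'}$ from Lemma \ref{nilpotence} together with the fact (which I would invoke from the theory of adic spaces) that integral closure commutes with the flat, adic localization maps $R^+\to R'^+$ in this setting; granting these, the cocycle condition is formal. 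A secondary point worth a remark is that $\T_U$ is strongly noetherian (being finite over the strongly noetherian $R$), so each $\mc{E}_U$, and hence $\mc{E}$, is indeed a locally noetherian analytic adic space, and that $\mc{H}$ is coherent because it is locally a finite module over $\mc{O}_{\mc{E}_U}=\widetilde{\T_U}$.
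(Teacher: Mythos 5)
Your proposal is correct and follows essentially the same route as the paper: define $\mc{E}_U=\spa(\T_U,\T_U^+)$ locally, note that $\T_U$ is finite over the strongly noetherian $\mc{O}_{\mc{Z}}(U)$, and glue using Lemma \ref{nilpotence} together with flatness of rational localization to get $\T_U\otimes_{\mc{O}_{\mc{Z}}(U)}\mc{O}_{\mc{Z}}(U')\xrightarrow{\sim}\T_{U'}$. The paper's proof is just a terser version of your argument, and the points you flag (cocycle condition, compatibility of the integral closures with localization) are exactly the routine verifications it leaves implicit.
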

\begin{proof} For $U\subset \mc{Z}$ affinoid open, $\T_U$ is commutative and finite over $\mc{O}_{\mc{Z}}(U)$, and hence $(\T_U, \T_U^+)$ is a noetherian Tate-Huber pair. The space $\mc{E}_U$ comes with a canonical finite map $\mc{E}_U\ra U$ and $\mc{N}(U)$ is a finitely generated $\T_U$-module. By Lemma \ref{nilpotence} and flatness of rational localization for affinoid noetherian analytic adic spaces, these constructions glue together and satisfy the statements in the theorem.
\end{proof}

\begin{rem}
One shouldn't worry too much about the rings of integral elements here. In practice, it is often the case that the weight space $\mathcal{W}$ is locally of the form $\spa(A,A^{\circ})$, i.e., the rings of integral elements are the subrings of $A$ of power-bounded elements. Then every element of $\mathit{Cov}(\cZ)$ is also of the form $\spa(C,C^{\circ})$ (as the relevant morphism is finite) and as $\cE$ is finite over $\cZ$, the same is true for the eigenvariety.  
\end{rem}

\subsection{Points of eigenvarieties with pseudorigid base}
Finally we come to the most important property of eigenvarieties, which justifies their name, which is that their points correspond to systems of eigenvalues.  We refer to \cite[Chapter 2 and Section 3.7]{bellaiche} for more details regarding some of the arguments below. 

To understand the meaning of points on an eigenvariety we need to pass to a more restricted set of base spaces. We choose to work with so called pseudorigid spaces. The property of these spaces, that is important for us, is that there is a well behaved set of maximal points corresponding to maximal ideals of the rings underlying the affinoids. (For pseudorigid spaces, these maximal points are also dense.) We could work in a more general setting, namely with so called Jacobson adic spaces (see \cite{lou}), but pseudorigid spaces are the naturally occurring spaces in the context of eigenvarieties and a good setup for studying the geometry of eigenvarieties further (e.g., in order to make sense of irreducible components). We refer to the appendix for the definitions and basic facts on pseudorigid spaces. 

So let $\mathcal{W}$ be a pseudorigid space over $\ok$ and assume we have a (if need be glued) spectral variety $\mc{Z}$ and an eigenvariety datum as above with associated eigenvariety $\cE\rightarrow \mc{Z}\ra  \mathcal{W}$. In particular let us assume that there is a cover of $\mc{W}$ by open pseudorigid affinoids $W=\spa(A)=\spa(A,A^{\circ})$ that come with a fixed spectral datum $((A,A^{\circ}),M_A,\T,\phi)$ such that $\mc{Z}_W=\mc{Z}\times_{\mc{W}} W$ is the corresponding spectral variety. In the following we denote the fibre of $M_A$ above a maximal  point $w\in \Max(A)$ with residue field $L$ by $M_w:=M_A\widehat{\otimes}_A L$. 
We let $\kappa\colon \mc{E}\rightarrow \mc{W}$ denote the structure map to weight space. Let $\Max(\cE)$ be the set of maximal points of $\cE$, i.e., the set of all points $x \in \mc{E}$ such that there is an open affinoid pseudorigid neighborhood $U=\spa(A)$ of $x$ in $\cE$ such that $x$ corresponds to a maximal ideal of $A$. We want to show that the maximal points $\Max(\cE)$ of $\cE$ correspond to certain systems of eigenvalues of $\T$.

\smallskip

For this let us first recall some notions from linear algebra. Assume that $A=L$ is a field. Recall that an element $m \in M_L$ is called a common eigenvector for $\T$ if for every $t\in \T$ there is a scalar $\lambda(t)\in L$ such that $t\cdot m = \lambda(t) m $. The map $\lambda \colon \T \rightarrow L$ is then a ring homomorphism (character). We let $M_L[\lambda]\subseteq M$ denote the set of all common eigenvectors for $\lambda$. \begin{defn}
We say that  a character $\lambda \colon \T \rightarrow L$ is a system of eigenvalues appearing in $M_L$ if $M_L[\lambda]\neq 0$. We say that $\lambda$ is \emph{$\phi$-finite} if $\lambda(\phi)\neq 0$. 
\end{defn}

Let $w\in \Max(\mc{W})$ be a maximal point with residue field $L=k(w)$. Consider the fiber $N_L:=\mathcal{N}\otimes_A L$ which is a finite-dimensional $L$ vector space. Let $T_L\subset \mathrm{End}_L(N_L)$ be the $L$-algebra generated by the image of $\T\otimes_{\mc{O}_K}L $. This is a finite algebra over $L$ and hence an artinian semi-local ring. Note that then for any of the finitely many maximal ideals $\mathfrak{m}$ of $T_L$ we have $N_L[\mathfrak{m}]\neq 0$ (cf. \cite[Section 2.5.1]{bellaiche}).

Assume $\lambda\colon \T \rightarrow L$ is a character. Then $\lambda$ is a system of eigenvalues appearing in $N_L$ if and only if $\lambda$ factors through a character $T_L\rightarrow L$. (See \cite[Theorem 2.5.9]{bellaiche}). Moreover (see \cite[Corollary 2.5.10]{bellaiche}), for any field $L'$ containing $L$, there is a natural bijection between $\Spec(T_L)(L')$ and the systems of eigenvalues of $\T$ that appear in $N_L\otimes_L L'$.

\begin{prop}
Let $w \in \Max(\mc{W})$ be a maximal point with residue field $L=k(w)$. Let $L'$ be a finite extension of $L$. Then the set of $L'$-points $x$ of $\Max(\mc{E})$ such that $\kappa(x)=w$ is in natural bijection with the $\phi$-finite systems of eigenvalues of $\T$ appearing in $M_w\otimes_{L}L'$. The bijection attaches to $x$ the system of eigenvalues $\lambda_x \colon \T \rightarrow L', t\mapsto \phi_{\mc{E}}(t)(x)$.  
\end{prop}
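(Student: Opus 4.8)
The plan is to unwind the local description of $\mc{E}$ over an affinoid neighbourhood of $w$ in $\mc{W}$ and then invoke the linear-algebra dictionary recalled just before the statement. First I would choose an open affinoid pseudorigid $W=\spa(A)=\spa(A,A^\circ)\subseteq\mc{W}$ containing $w$ together with its spectral datum $((A,A^\circ),M_A,\T,\phi)$, so that $\mc{Z}_W$ is the associated spectral variety. Since $\kappa$ is the composition $\mc{E}\to\mc{Z}\to\mc{W}$ and $\pi\colon\mc{E}\to\mc{Z}$ is finite while $w\colon\mc{Z}\to\mc{W}$ is locally quasi-finite, the fibre $\kappa^{-1}(w)$ is a finite set of points, and over $W$ the space $\mc{E}_W$ is glued from the affinoids $\mc{E}_U=\Spa(\T_U,\T_U^+)$ for $U\in\mathit{Cov}(\mc{Z}_W)$. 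A maximal $L'$-point $x$ of $\mc{E}$ with $\kappa(x)=w$ corresponds to an $L'$-algebra homomorphism $\lambda_x\colon\T_U\to L'$ for some such $U$ lying above $w$, which composes with $\T\to\T_U$ to give a character $\lambda_x\colon\T\to L'$; conversely, by the universal property of $\spa$, any such $L'$-algebra homomorphism out of $\T_U$ landing in the fibre over $w$ yields a maximal $L'$-point (here one uses that $\T_U^+$ is the integral closure, so the valuation condition at $L'$ is automatic, and that $L'/L$ finite makes the resulting point maximal since it is finite over the maximal point $w$).

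Next I would identify $\T_U$ after specializing at $w$ with the algebra $T_L$ appearing in the excerpt. Concretely, $\mc{N}(U)$ is the projective $\cO_{\mc{Z}}(U)$-module $\Ker Q_U^\ast(\phi_U)=N$ from Theorem~\ref{riesz}, and $\T_U$ is the $\cO_{\mc{Z}}(U)$-subalgebra of $\End(\mc{N}(U))$ generated by $\T$. Specializing at the point of $\mc{Z}_W$ over $w$ (using Lemma~\ref{nilpotence} and Corollary~\ref{bcchar} to control the base change $A\to L$), the fibre of $\mc{N}$ at $w$ is the finite-dimensional $L$-vector space $N_L=\mc{N}\otimes_A L$, and $\T_U\otimes L$ surjects onto $T_L\subseteq\End_L(N_L)$. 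Thus giving an $L'$-point of $\mc{E}$ over $w$ is, up to the nilpotents in the kernel of $\T_U\otimes L\to T_L$ (which do not affect maps to the reduced ring $L'$), the same as giving an $L'$-algebra map $T_L\to L'$, i.e. by \cite[Corollary 2.5.10]{bellaiche} the same as a system of eigenvalues of $\T$ appearing in $N_L\otimes_L L'$.

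It then remains to match systems of eigenvalues appearing in $N_L\otimes_L L'$ with $\phi$-finite systems appearing in $M_w\otimes_L L'$. By Remark~\ref{eigen1} (applied to the Riesz decomposition $M_B=N\oplus M(Q)$ over a suitable $U$ with $\phi$ invertible on $N=\Ker Q_U^\ast(\phi_U)$) and Theorem~\ref{riesz}(4), the summand $N$ captures exactly the generalized eigenspaces of $\phi$ for nonzero eigenvalues that are "caught" by the polynomial factor $Q_U$; since $\mathit{Cov}(\mc{Z})$ covers $\mc{Z}$, ranging over all $U$ above $w$ exhausts all nonzero eigenvalues of $\phi$ on $M_w\otimes_L L'$. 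Hence a character $\lambda\colon\T\to L'$ with $\lambda(\phi)\neq0$ appears in $M_w\otimes_L L'$ if and only if it appears in $N_L\otimes_L L'$ for the appropriate $U$, and this appearance is detected by $\lambda$ factoring through $T_L$, i.e. by an $L'$-point of $\mc{E}_U$. Finally I would check the bijection is given by $x\mapsto(t\mapsto\phi_{\mc{E}}(t)(x))$: this is exactly the tautological evaluation of the universal character $\phi_{\mc{E}}\colon\T\to\cO(\mc{E})$ at $x$, which under the local description $\mc{E}_U=\Spa(\T_U,\T_U^+)$ is the image of $t$ in $\T_U$ evaluated at the point, i.e. the eigenvalue of $t$ on the corresponding eigenline.

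The main obstacle I expect is bookkeeping around the fact that a single $U\in\mathit{Cov}(\mc{Z})$ need not see all the relevant eigenvalues at once, so one must argue that every $\phi$-finite system over $w$ is captured by \emph{some} $U$ lying above a neighbourhood of $w$ (this uses Theorem~\ref{T.B.1.} and the quasi-finiteness of $w$ to produce, for each eigenvalue, a slope-type factorization whose polynomial factor has that eigenvalue's inverse as a root), together with verifying that passing to the reduction kills precisely the ambiguity in Lemma~\ref{nilpotence} without losing or creating eigensystems — i.e. that $\Spec(T_L)(L')$ is genuinely unchanged by nilpotents. Both points are standard but need to be spelled out carefully.
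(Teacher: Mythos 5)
Your proposal is correct and follows essentially the same route as the paper: reduce to a local piece $\mc{E}_U=\Spa(\T_U,\T_U^+)$ over an affinoid neighbourhood of $w$, use Lemma \ref{nilpotence} to pass from $\T_U\otimes_A L$ to $T_L$ modulo nilpotents, and invoke the linear-algebra dictionary to identify $L'$-points of $T_L$ with eigensystems in $N_w\otimes_L L'$, with $\phi$-finiteness accounting for ranging over all $U'\in\mathit{Cov}(\cZ)$. The only difference is that you spell out (via Remark \ref{eigen1} and Theorem \ref{riesz}) the step the paper dismisses as ``by construction,'' namely that every $\phi$-finite eigensystem in $M_w\otimes_L L'$ is caught by the kernel of $Q_{U'}^\ast(\phi)$ for some $U'$.
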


\begin{proof}
Without loss of generality $\mc{W}$ is affinoid. Also $\cE$ is covered by the $\cE_U$, for $U \in \mathit{Cov}(\cZ)$ and we can work locally on $\cE$. So we fix $U\in \mathit{Cov}(\cZ)$ and let $V=\spa(A)\subseteq \mathcal{W}$ be its image in $\mathcal{W}$ and we may assume $w \in V$. 
This point corresponds to a surjection $A\rightarrow L$. Note that an $L'$-point $x\in \Max(\mc{E}_U)$ gives a map $x\colon \T_U \rightarrow L'$. Furthermore, by construction, a system of eigenvalues of $\T$ appearing in $M_w\otimes_{L}L'$  is $\phi$-finite if and only if it appears in $(\mc{N}(U')\otimes_A L)\otimes L'$ for some $U' \in \mathit{Cov}(\cZ)$. Let $N_w = \mathcal{N}(U)\otimes_A L$. 

To prove the proposition it now suffices to construct a bijection between the maximal $L'$-points in $\mc{E}_U$ above $w$ and the systems of eigenvalues appearing in  $N_w\otimes_{L}L'$. But by Lemma \ref{nilpotence} above, the kernel of the map $\T_U\otimes_A L \rightarrow T_L$ is nilpotent and so a maximal ideal in $\mc{E}_U$ corresponds to a maximal ideal in $T_L$. The result now follows from the linear algebra discussion prior to this proposition. 
\end{proof}

\subsection{Rigidity results}
We end our exposition of the eigenvariety machine by discussing some rigidity results,  following \cite[Section 3]{JN2} closely. For simplicity we keep the assumption that the base $\mc{W}$ (or $\mc{Z}$) is pseudorigid over $\ok$. 
Below we use the following notation: If $X$ is a pseudorigid space over $\ok$, $x\in \Max(X)$ is a maximal point and $\mc{N}$ is a coherent $\mc{O}_{X}$-module, then we write $\mc{N}(x)$ for the fiber of $\mc{N}$ at $x$, which is a vector space over the residue field $k(x)$.

Let us first describe the behavior of the eigenvariety construction under arbitrary base change.
\begin{prop}\label{basechange}
Let $\mc{D}=(\mc{Z},\mc{N},\T,\psi)$ be an eigenvariety datum with 
eigenvariety $\mc{E}$. Let $f \colon \mc{Z}^{\prime} \ra \mc{Z}$ be a map of 
pseudorigid spaces over $\ok$. Form the eigenvariety datum 
$$\mc{D}^{\prime}:=(\mc{Z}^{\prime}, \mc{N}^{\prime}:=f^{\ast}\mc{N}, \T, \psi^{\prime}), $$
where $\psi^{\prime}$ is the composition of $\psi$ with the natural map $\End_{\mc{O}_{\mc{Z}}}(\mc{N}) \ra \End_{\mc{O}_{\mc{Z}^{\prime}}}(\mc{N}^{\prime})$. Let $\mc{E}^{\prime}$ be the eigenvariety attached to $\mc{D}^{\prime}$. Then there is a natural map $\mc{E}^{\prime} \ra \mc{E}$ over $\mc{Z}$, and the induced map $\mc{E}^{\prime} \ra \mc{E}\times_{\mc{Z}}\mc{Z}^{\prime}$ induces an isomorphism $(\mc{E}^{\prime})^{red} \ra (\mc{E}\times_{\mc{Z}}\mc{Z}^{\prime})^{red}$. If $f$ is flat, then $\mc{E}^{\prime} \ra \mc{E}\times_{\mc{Z}}\mc{Z}^{\prime}$ is an isomorphism.
\end{prop}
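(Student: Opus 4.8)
The plan is to reduce everything to the affinoid situation and then apply Lemma \ref{nilpotence}, exactly as in the proof of Lemma \ref{flatbc}. The key point is that the construction of $\mc{E}$ from an eigenvariety datum is local on $\mc{Z}$: for an affinoid open $U = \spa(B,B^+) \subseteq \mc{Z}$ we have $\mc{E}_U = \spa(\T_U,\T_U^+)$ where $\T_U \subseteq \End_B(\mc{N}(U))$ is the $B$-subalgebra generated by the image of $\T$. So first I would choose an affinoid open $U' = \spa(B',{B'}^+) \subseteq \mc{Z}'$ small enough that $f(U')$ is contained in an affinoid open $U = \spa(B,B^+) \subseteq \mc{Z}$; since both spaces are covered by such affinoids and the constructions glue, it suffices to produce a natural map $\mc{E}'_{U'} \to \mc{E}_U$ over $U$ with the stated properties, compatibly with localization.

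Over these affinoids the setup is precisely that of Lemma \ref{nilpotence}: we have the Noetherian rings $B$, $B'$, the ring $\T$ (abstract, over $\ok$), the finitely generated $B$-module $N := \mc{N}(U)$, the map $B \to B'$, and the homomorphism $\psi_B\colon \T \to \End_B(N)$. Since $\mc{N}' = f^*\mc{N}$, we have $\mc{N}'(U') = N \otimes_B B'$, and $\psi'$ over $U'$ is exactly the composite of $\psi_B$ with $\End_B(N) \to \End_{B'}(N\otimes_B B')$. Lemma \ref{nilpotence} then gives a natural surjection $\T_U \otimes_B B' \twoheadrightarrow \T_{U'}$ with nilpotent kernel, which is an isomorphism when $B \to B'$ is flat. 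Taking integral closures of $({B'}^+)$ and passing to $\underline{\spa}$, this surjection yields the natural map $\mc{E}'_{U'} \to \mc{E}_U \times_U U'$; a surjection of rings with nilpotent kernel induces a closed immersion that is a homeomorphism on points and an isomorphism on reduced spaces, hence $(\mc{E}'_{U'})^{red} \xrightarrow{\sim} (\mc{E}_U \times_U U')^{red}$, and an isomorphism outright in the flat case. One should also note $\mc{E}_U\times_U U' = \mc{E}_U \times_{\mc{Z}} \mc{Z}'$ locally, since fibre products are computed affinoid-locally.

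Finally I would check that these local identifications are compatible with restriction to smaller affinoids — this uses flatness of rational localization for affinoid Noetherian analytic adic spaces, just as in Proposition \ref{eigenvariety} — so that the maps $\mc{E}'_{U'} \to \mc{E}_U \times_{\mc{Z}} \mc{Z}'$ glue to a global map $\mc{E}' \to \mc{E} \times_{\mc{Z}} \mc{Z}'$ with the asserted properties. The main obstacle is bookkeeping rather than anything deep: one must verify that the $U'$ of the above form cover $\mc{Z}'$ and that on overlaps the two candidate gluing data agree, which comes down to the naturality statement in Lemma \ref{nilpotence} together with the fact that forming $\T_U$ commutes with the flat localizations $B \to \mc{O}_{\mc{Z}}(U_1)$ for $U_1 \subseteq U$. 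Since $(-)^{red}$ and the formation of fibre products are themselves local, no further globalization argument is needed beyond this.
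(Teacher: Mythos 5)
Your proposal is correct and follows essentially the same route as the paper: the paper's proof simply observes that by the local nature of the construction in Proposition \ref{eigenvariety} the assertion is local on both $\mc{Z}'$ and $\mc{Z}$, reduces to the affinoid case, and invokes Lemma \ref{nilpotence}. Your write-up just makes the affinoid reduction, the identification $\mc{N}'(U') = \mc{N}(U)\otimes_B B'$, and the gluing explicit.
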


\begin{proof}
By the local nature of the construction of eigenvarieties in Proposition \ref{eigenvariety}, the assertion is local both on $\mc{Z}^{\prime}$ and $\mc{Z}$, so we may assume that they are both affinoid pseudorigid spaces over $\ok$. Then the proposition follows directly from Lemma \ref{nilpotence}.
\end{proof}

Applying this to the case, where $f$ is the closed immersion $\mc{Z}^{\prime}=z \hookrightarrow \mc{Z}$ of a maximal point $z\in \Max(\mc{Z})$, we can also slightly rephrase the results from the previous section as follows. 
\begin{corollary}\label{points}
Let $\mc{D}=(\mc{Z},\mc{N},\T,\psi)$ be an eigenvariety datum with eigenvariety \linebreak $\pi \colon \mc{E} \ra \mc{Z}$. Fix $z\in \Max(\mc{Z})$. Then the set $\pi^{-1}(z)\sub \Max(\mc{E})$ is in natural bijection with the systems of Hecke eigenvalues appearing in the fibre $\mc{N}(z)$, i.e., the maximal ideals lying above the kernel of the natural map $\T\otimes_{\ok}k(z) \ra \End_{k(z)}(\mc{N}(z))$.
\end{corollary}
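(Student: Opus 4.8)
The plan is to reduce everything to the fibre over $z$ by base change and then read off the answer from the explicit local description of the eigenvariety.

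First I would apply Proposition \ref{basechange} to the closed immersion $f\colon \mc{Z}'=z\hookrightarrow\mc{Z}$, where $z$ is the pseudorigid point with underlying residue field $k(z)$; then $f^{\ast}\mc{N}=\mc{N}(z)$ is a finite-dimensional $k(z)$-vector space, and we obtain an eigenvariety datum $\mc{D}'=(z,\mc{N}(z),\T,\psi')$ with eigenvariety $\mc{E}'$, a map $\mc{E}'\to\mc{E}$ over $\mc{Z}$, and an isomorphism $(\mc{E}')^{red}\xrightarrow{\sim}(\mc{E}\times_{\mc{Z}}z)^{red}$. Since $z=\Spa(k(z),k(z)^{+})$ is already affinoid, the local description in Proposition \ref{eigenvariety} gives $\mc{E}'=\Spa(T_{k(z)},T_{k(z)}^{+})$, where $T_{k(z)}\subseteq\End_{k(z)}(\mc{N}(z))$ is the $k(z)$-subalgebra generated by the image of $\T$, equivalently the image of $\T\otimes_{\ok}k(z)\to\End_{k(z)}(\mc{N}(z))$. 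As $T_{k(z)}$ is finite over $k(z)$, hence artinian, its points are its finitely many maximal ideals; under the surjection $\T\otimes_{\ok}k(z)\twoheadrightarrow T_{k(z)}$ these are precisely the maximal ideals of $\T\otimes_{\ok}k(z)$ lying above the kernel of $\T\otimes_{\ok}k(z)\to\End_{k(z)}(\mc{N}(z))$, i.e.\ the systems of Hecke eigenvalues appearing in $\mc{N}(z)$ (this is also the content of the linear-algebra recollections in the previous subsection). So $\Max(\mc{E}')$ is in natural bijection with the set in the statement.

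It remains to identify $\Max(\mc{E}')$ with $\pi^{-1}(z)$. Because $\pi$ is finite, $\pi^{-1}(z)$ consists of maximal points of $\mc{E}$, and it is the underlying set of the fibre $\mc{E}\times_{\mc{Z}}z$, which is $\Spa$ of a finite $k(z)$-algebra and hence has all of its points maximal; thus $\pi^{-1}(z)=\Max(\mc{E}\times_{\mc{Z}}z)$. Since $\Max$ of an affinoid pseudorigid space is unchanged under passing to its reduction, the isomorphism $(\mc{E}')^{red}\xrightarrow{\sim}(\mc{E}\times_{\mc{Z}}z)^{red}$ of Proposition \ref{basechange} yields $\Max(\mc{E}')\cong\pi^{-1}(z)$. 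Composing the two bijections gives the corollary, and tracing $\phi_{\mc{E}}\colon\T\to\mc{O}(\mc{E})$ through the identifications shows that the bijection sends $x$ to the character $t\mapsto\phi_{\mc{E}}(t)(x)$, in agreement with the preceding proposition.

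The only real point to be careful with is this last bookkeeping: that finiteness of $\pi$ forces $\pi^{-1}(z)\subseteq\Max(\mc{E})$ and equals $\Max(\mc{E}\times_{\mc{Z}}z)$, and that the ``$red$'' in Proposition \ref{basechange} does not disturb the bijection on maximal points. Both are formal consequences of the basic theory of pseudorigid spaces (maximal points pull back along finite maps, and $\Max$ factors through the reduction), recalled in the appendix, so I do not expect a serious obstacle; once these are granted the corollary is a repackaging of Proposition \ref{basechange}, Proposition \ref{eigenvariety}, and the linear-algebra discussion.
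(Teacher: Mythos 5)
Your proposal is correct and follows essentially the same route as the paper, which obtains the corollary precisely by applying Proposition \ref{basechange} to the closed immersion of the maximal point $z\hookrightarrow\mc{Z}$ and then invoking the local description of the eigenvariety together with the linear-algebra discussion of the previous subsection. The bookkeeping you flag at the end (finiteness of $\pi$ placing $\pi^{-1}(z)$ inside $\Max(\mc{E})$, and $\Max$ being insensitive to passing to the reduction) is handled correctly and is exactly what the paper leaves implicit.
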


Let us also mention the following reconstruction result for eigenvariety data.

\begin{prop}\label{reconstruction}
Let $\pi \colon \mc{E} \ra \mc{Z}$ be a finite morphism of pseudorigid spaces and let $\T$ be an $\ok$-algebra. Assume that there is a ring homomorphism $\phi \colon \T \ra \mc{O}(\mc{E})$ such that $\mc{O}_{\mc{E}}$ is generated by the image of $\phi$ over $\mc{O}_{\mc{Z}}$, and assume that $\mc{H}$ is a faithful coherent $\mc{O}_{\mc{E}}$-module. We may form an eigenvariety datum
$$ \mc{D}=(\mc{Z},\mc{N},\T,\psi), $$
where $\mc{N}=\pi_{\ast}\mc{H}$ and $\psi$ is the composition $\T \overset{\phi}{\ra} \mc{O}(\mc{E}) \ra \End_{\mc{O}_{\mc{Z}}}(\mc{N})$. Then $\pi \colon \mc{E} \ra \mc{Z}$ is the eigenvariety attached to $\mc{D}$, and $\phi=\phi_{\mc{E}}$.
\end{prop}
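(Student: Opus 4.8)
The plan is to verify that the given data $(\pi\colon\mc{E}\ra\mc{Z},\ \T,\ \phi,\ \mc{H})$ satisfies the local characterization of the eigenvariety from Proposition \ref{eigenvariety}, and then invoke the uniqueness implicit in that local description. Since the assertion is local on $\mc{Z}$, I would fix an affinoid open $U=\spa(B,B^+)\subseteq\mc{Z}$ and set $\mc{E}_U=\spa(C,C^+)$ (this makes sense because $\pi$ is finite, so $\mc{E}_U$ is affinoid with $C$ finite over $B=\mc{O}_{\mc{Z}}(U)$; here $C^+$ is the integral closure of $B^+$ in $C$). Writing $N:=\mc{N}(U)=\mc{H}(U)$ viewed as a $B$-module via $\pi$, the hypotheses say: (i) $C$ is generated as a $B$-algebra by the image of $\phi\colon\T\ra C$; (ii) $\mc{H}$ is a faithful $\mc{O}_{\mc{E}}$-module, so $N$ is a faithful $C$-module.

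Next I would identify $\T_U$, the $B$-subalgebra of $\End_B(N)$ generated by the image of $\T$, with $C$ acting on $N$. The $C$-module structure on $N$ gives a ring map $C\ra\End_B(N)$, which is injective by faithfulness (ii). Composing $\T\overset{\phi}{\ra}C\hookrightarrow\End_B(N)$ recovers $\psi$ on $U$, and by (i) the image of $\T$ generates $C$ as a $B$-algebra; hence the image of $\psi$ generates exactly the subalgebra $C\subseteq\End_B(N)$. Therefore $\T_U=C$ as $B$-subalgebras of $\End_B(N)$, compatibly with the map from $\T$. One also needs $\T_U^+=C^+$: both are the integral closure of $\mc{O}^+_{\mc{Z}}(U)=B^+$ in the same ring $\T_U=C$, so they agree. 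Thus $\mc{E}_U=\spa(\T_U,\T_U^+)$ matches the local description in Proposition \ref{eigenvariety}, and the faithful coherent module $\mc{H}|_{\mc{E}_U}$ corresponds to $N$ regarded as a $\T_U$-module, which is exactly $\mc{N}(U)=\mc{H}(U)$ with its canonical $\T_U$-structure. Since these local identifications are canonical, they glue (using flatness of rational localization as in the proof of Proposition \ref{eigenvariety}) to an isomorphism over $\mc{Z}$ identifying $\pi\colon\mc{E}\ra\mc{Z}$ with $\mc{E}(\mc{D})\ra\mc{Z}$, under which $\phi$ corresponds to $\phi_{\mc{E}}$ and $\mc{H}$ to the canonical faithful module.

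The main obstacle, such as it is, is bookkeeping rather than depth: one must check that the subalgebra of $\End_B(N)$ generated by $\T$ is genuinely all of $C$ and not a proper subalgebra — this is where faithfulness of $\mc{H}$ is essential, since without it the map $C\ra\End_B(N)$ could fail to be injective and $C$ could be strictly larger than $\T_U$. I would make sure to spell out that $\mc{O}_{\mc{E}}$ being generated by $\mathrm{im}(\phi)$ over $\mc{O}_{\mc{Z}}$ is precisely the hypothesis that upgrades the inclusion $\T_U\subseteq C$ to an equality. Everything else — that $(\T_U,\T_U^+)$ is a noetherian Tate--Huber pair, that the constructions glue — is already available from Proposition \ref{eigenvariety} and Lemma \ref{nilpotence}, so the proof reduces to this one identification carried out affinoid-locally.
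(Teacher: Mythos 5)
Your argument is correct and is essentially the proof given in the cited reference (the paper itself only writes ``See \cite[3.1.6]{JN2}''): localize to an affinoid $U\subseteq\mc{Z}$, use faithfulness of $\mc{H}$ to see that $C=\mc{O}_{\mc{E}}(\pi^{-1}(U))\to\End_B(\mc{N}(U))$ is injective, and use the generation hypothesis to identify its image with $\T_U$, then glue. You correctly isolate the one place where each hypothesis is used, so there is nothing to add.
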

\begin{proof} See \cite[3.1.6]{JN2}.
\end{proof}

\begin{exer} Show that any Zariski closed subset of an eigenvariety is naturally the eigenvariety of an eigenvariety datum.
\end{exer}

We end with a very general result that makes it possible to construct morphisms between eigenvarieties by interpolating maps that are merely defined on a dense set of points. 

\begin{thm}\label{optimalint}
Let $\mc{D}_{i}=(\mc{Z}_{i},\mc{N}_{i},\T_{i},\psi_{i})$, for $i=1,2$, be eigenvariety data with corresponding eigenvarieties $\pi_i\colon \mc{E}_i \rightarrow \mc{Z}_i$. Assume that we have the following additional data:
\begin{itemize}
\item A morphism $j \colon \mc{Z}_{1} \ra \mc{Z}_{2}$ of adic spaces;

\item An $\ok$-algebra homomorphism $\sigma \colon \T_{2} \ra \T_{1}$;

\item A subset $\mc{E}^{cl}\sub \Max(\mc{E}_{1})$ such that the 
$\T_{2}$-eigensystem of $x$ (i.e., the $\T_{1}$-eigensystem of $x$ 
composed with $\sigma$) appears in $\mc{N}_{2}(j(\pi_{1}(x)))$ for all $x\in 
\mc{E}^{cl}$.
\end{itemize}
Let $\ol{\mc{E}}$ denote the Zariski closure of $\mc{E}^{cl}$ in $\mc{E}_{1}$, with 
its induced reduced structure. Then there is a canonical morphism 
$$ i \colon \ol{\mc{E}} \ra \mc{E}_{2} $$
lying over $j \colon \mc{Z}_{1} \ra \mc{Z}_{2}$ such that $\phi_{\ol{\mc{E}}}\circ \sigma = i^{\ast}\circ \phi_{\mc{E}_{2}}$. The morphism $i$ inherits the following properties from $j$:

\begin{itemize}
\item If $j$ is (partially) proper (resp. finite), then $i$ is (partially) proper (resp. finite);
 
\item If $j$ is a closed immersion and $\sigma$ is a surjection, then $i$ is a closed immersion.
\end{itemize}
\end{thm}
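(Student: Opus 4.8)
The plan is to build the morphism $i$ affinoid-locally and then glue, using the reconstruction/universal property already available from Proposition \ref{eigenvariety} and Proposition \ref{reconstruction}. First I would reduce to the case where $\mc{Z}_{1}$ and $\mc{Z}_{2}$ are affinoid pseudorigid, say $\mc{Z}_{i}=\spa(R_{i})$, with $j$ corresponding to $R_{2}\to R_{1}$; then $\mc{N}_{i}$ is a finite $R_{i}$-module, and $\T_{i,\mc{Z}_i}$ is the finite $R_{i}$-subalgebra of $\End_{R_{i}}(\mc{N}_{i})$ generated by the image of $\T_{i}$, so that $\mc{E}_{i}=\spa(\T_{i,\mc{Z}_{i}},\T_{i,\mc{Z}_{i}}^{+})$. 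The given data $\sigma\colon \T_{2}\to\T_{1}$ and $j$ produce a ring map $\T_{2}\to \mc{O}(\mc{E}_{1})$; let $B$ be the $R_{1}$-subalgebra of $\mc{O}(\mc{E}_{1})=\T_{1,\mc{Z}_{1}}$ generated by the image of $\T_{2}$. I would take $\mc{E}_{2}':=\spa(B,B^{+})$, a finite $\mc{Z}_{1}$-space, with its tautological map to $\mc{Z}_{1}$, and observe that $B$ is exactly the algebra that the eigenvariety construction for $\mc{D}_{2}$ pulled back along $j$ would produce (by Lemma \ref{nilpotence}, up to nilpotents; compare the construction in Proposition \ref{basechange}). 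Hence there is a canonical finite map $\mc{E}_{2}'\to \mc{E}_{2}$ over $j$, and a canonical closed immersion-up-to-nilpotents $\mc{E}_{2}'\hookrightarrow \mc{E}_{1}$ induced by the surjection-up-to-nilpotents $B\twoheadrightarrow \cdots$? — no: $B\hookrightarrow \mc{O}(\mc{E}_{1})$ is an inclusion, giving a finite surjection $\mc{E}_{1}\to\mc{E}_{2}'$, \emph{not} a closed immersion. So the correct statement to chase is: $\mc{E}_{1}\to\mc{E}_{2}'\to\mc{E}_{2}$ over $j$, and I want to restrict to $\ol{\mc{E}}$.

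The role of $\mc{E}^{cl}$ is to control the scheme-theoretic image. The composite $\mc{E}_{1}\to\mc{E}_{2}$ over $j$ exists on the nose once the affinoid-local maps glue, so the only real content is the last two bullets about properties of $i$, plus checking that restricting to the Zariski closure $\ol{\mc{E}}$ (with reduced structure) is harmless. For \emph{(partial) properness} and \emph{finiteness}: $\pi_{1}\colon\mc{E}_{1}\to\mc{Z}_{1}$ and $\pi_{2}\colon\mc{E}_{2}\to\mc{Z}_{2}$ are both finite (Proposition \ref{eigenvariety}), so $i$ sits in a commutative square with finite verticals over $j$; then $i$ factors as $\ol{\mc{E}}\to \mc{E}_{2}\times_{\mc{Z}_{2}}\mc{Z}_{1}\to\mc{E}_{2}$, the first map is finite (closed immersion composed with finite, since $\mc{E}_{2}\times_{\mc{Z}_{2}}\mc{Z}_{1}\to\mc{Z}_{1}$ is finite and $\ol{\mc{E}}\hookrightarrow\mc{E}_1\to\mc{E}_{2}\times_{\mc{Z}_{2}}\mc{Z}_{1}$ is a morphism of schemes finite over $\mc{Z}_{1}$, hence finite), and the second is the base change of $j$, hence inherits (partial) properness / finiteness from $j$. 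Composition of (partially) proper (resp. finite) maps is (partially) proper (resp. finite), which closes this case. For the last bullet, if $j$ is a closed immersion and $\sigma$ is surjective, I would check directly on affinoids that the composite $\T_{2,\mc{Z}_{2}}\otimes_{R_{2}}R_{1}\to\T_{1,\mc{Z}_{1}}$ is surjective: surjectivity of $R_{2}\to R_{1}$ (closed immersion) together with surjectivity of $\sigma$ forces the image of $\T_{2}$ in $\mc{O}(\mc{E}_{1})$ to generate everything the image of $\T_{1}$ does, hence $B=\mc{O}(\mc{E}_{1})$, so $\mc{E}_{1}=\mc{E}_{2}'\hookrightarrow\mc{E}_{2}\times_{\mc{Z}_{2}}\mc{Z}_{1}$ is a closed immersion and composing with the closed immersion $\mc{E}_{2}\times_{\mc{Z}_{2}}\mc{Z}_{1}\hookrightarrow\mc{E}_{2}$ (base change of a closed immersion) gives that $i$ restricted to $\ol{\mc{E}}\subseteq\mc{E}_{1}$ is a closed immersion as well.

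The compatibility $\phi_{\ol{\mc{E}}}\circ\sigma=i^{\ast}\circ\phi_{\mc{E}_{2}}$ is essentially a definition-chase: $\phi_{\mc{E}_{2}}\colon\T_{2}\to\mc{O}(\mc{E}_{2})$ pulls back along $i$ to a map $\T_{2}\to\mc{O}(\ol{\mc{E}})$, and by construction of $B$ this agrees with $\T_{2}\xrightarrow{\sigma}\T_{1}\xrightarrow{\phi_{\mc{E}_{1}}}\mc{O}(\mc{E}_{1})\to\mc{O}(\ol{\mc{E}})$; one then invokes Proposition \ref{reconstruction} applied to $\ol{\mc{E}}\to j(\mc{Z}_{1})\hookrightarrow\mc{Z}_{2}$ to identify $\phi_{\ol{\mc{E}}}$ with the restriction of $\phi_{\mc{E}_{2}}$. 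The one genuinely delicate point — and the step I expect to be the main obstacle — is verifying that the construction is well-defined on $\ol{\mc{E}}$ with its \emph{reduced} structure, i.e.\ that the condition on $\mc{E}^{cl}$ (each eigensystem appears in $\mc{N}_{2}(j(\pi_{1}(x)))$) guarantees the map $\T_{2}\to\mc{O}(\mc{E}_{1})$ descends to $\mc{O}(\ol{\mc{E}})$ compatibly with the $\mc{E}_{2}$-structure. Here one argues that the closed points of $\ol{\mc{E}}$ in the dense set $\mc{E}^{cl}$ map into $\mc{E}_{2}$ by the hypothesis (via Corollary \ref{points}, the fibre of $\mc{E}_{2}$ over $j(\pi_{1}(x))$ is exactly the set of eigensystems appearing in $\mc{N}_{2}(j(\pi_{1}(x)))$), so the two $\ok$-algebra maps $\mc{O}(\mc{E}_{2})\rightrightarrows \mc{O}(\ol{\mc{E}})$ one wants to compare agree on a Zariski-dense set of maximal points of the \emph{reduced} pseudorigid space $\ol{\mc{E}}$; since $\ol{\mc{E}}$ is reduced and maximal points are dense in a pseudorigid space, two morphisms to a separated (here: affinoid/finite over a pseudorigid space) target agreeing on a dense set of maximal points coincide. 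That density-plus-reducedness rigidity is exactly why the theorem is stated over pseudorigid bases, and getting it cleanly is the crux.
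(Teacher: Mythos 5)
The paper itself only cites \cite[Theorem 3.2.1]{JN2} for this proof, so I am comparing your proposal against that argument. There is a genuine gap in your main construction, and it sits exactly where the content of the theorem lies. You define $B\subseteq\mc{O}(\mc{E}_1)=\T_{1,\mc{Z}_1}$ as the $R_1$-subalgebra generated by the image of $\T_2$ under $\phi_{\mc{E}_1}\circ\sigma$, and then claim that $B$ ``is exactly the algebra that the eigenvariety construction for $\mc{D}_2$ pulled back along $j$ would produce (by Lemma \ref{nilpotence}, up to nilpotents)'', whence a canonical map $\mc{E}_2'\to\mc{E}_2$ over $j$ and even a map $\mc{E}_1\to\mc{E}_2$ ``on the nose''. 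This is false: $B$ is generated by $\T_2$ acting through $\sigma$ on $\mc{N}_1$, while the pullback of $\mc{E}_2$ along $j$ is built from $\T_2$ acting through $\psi_2$ on $j^{\ast}\mc{N}_2$. These are subalgebras of endomorphism rings of two a priori unrelated modules; a relation satisfied by $\T_2$ on $\mc{N}_2$ need not hold for its action on $\mc{N}_1$, so there is no ring map $\T_{2,U_2}\otimes_{R_2}R_1\to B$ in general and no morphism $\mc{E}_1\to\mc{E}_2$. Indeed, if such a map existed unconditionally, the hypothesis on $\mc{E}^{cl}$ would be vacuous and one would not need to pass to the Zariski closure at all. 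Your assertion that ``the only real content is the last two bullets'' therefore mislocates the difficulty.

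The correct construction --- and this is what \cite[Theorem 3.2.1]{JN2} does --- is the mechanism you only reach in your final paragraph, except that it \emph{is} the construction rather than a well-definedness afterthought: locally on affinoids $U_1\subseteq j^{-1}(U_2)$ one must show that the assignment $t\mapsto\phi_{\ol{\mc{E}}}(\sigma(t))$ kills the kernel of the surjection $\mc{O}_{\mc{Z}_2}(U_2)\otimes_{\ok}\T_2\twoheadrightarrow\T_{2,U_2}$, so that it descends to a ring map $\T_{2,U_2}\to\mc{O}(\ol{\mc{E}}_{U_1})$. An element of that kernel vanishes at every point of $\mc{E}_2$ above $U_2$; hence, by Corollary \ref{points} together with the hypothesis that the $\T_2$-eigensystem of each $x\in\mc{E}^{cl}$ appears in $\mc{N}_2(j(\pi_1(x)))$, its image in $\mc{O}(\ol{\mc{E}}_{U_1})$ vanishes at every point of $\mc{E}^{cl}\cap\ol{\mc{E}}_{U_1}$; since these points are Zariski dense and $\ol{\mc{E}}$ is reduced, the image is zero. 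Once the map exists, your remaining arguments --- gluing via agreement on the dense set $\mc{E}^{cl}$, finiteness and (partial) properness via the factorization through $\mc{E}_2\times_{\mc{Z}_2}\mc{Z}_1$, and the closed-immersion case (surjectivity of $\sigma$ plus surjectivity of $\mc{O}(U_2)\to\mc{O}(U_1)$ forcing $\T_{2,U_2}\to\mc{O}(\ol{\mc{E}}_{U_1})$ to be surjective) --- are essentially correct, but they must be run on $\ol{\mc{E}}$ throughout, not on $\mc{E}_1$ or on your auxiliary space $\mc{E}_2'$.
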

\begin{proof} See \cite[Theorem 3.2.1]{JN2}.
\end{proof}
In practice this interpolation result (or similar versions of it, cf. \cite[Prop.3.5]{chenevier}, \cite[Prop. 7.2.8]{BC}, \cite[Thm. 5.1.6]{hansen}) is a very helpful tool  to compare eigenvarieites for different eigenvariety data.  
For example one can use it to show ``independence of the choice of compact operator''. Slightly more precisely, let us assume a situation where $\mc{Z}_1$ is the spectral variety of a compact operator acting on spaces of $p$-adic automorphic forms. In such situations there often is a flexibility in the choice of the compact operator (leading to a second spectral variety $\mc{Z}_2$ and a second eigenvariety datum.). The interpolation result above can be used to show that the resulting reduced eigenvarieties are isomorphic, as long as the compact operators share the same finite-slope parts (cf. e.g. \cite[Lemma 3.4.2]{JN2}).

The set $\mathcal{E}^{cl}$ in the theorem above typically arises from a set of \emph{classical} automorphic forms (hence the notation). Now there might be different eigenvariety data that capture the same set of systems of Hecke eigenvalues, e.g. there might be more than one way of constructing $p$-adic automorphic forms that ``see'' these classical forms. The interpolation result then produces maps between the resulting eigenvarieties. 
A further particularly interesting application is in the context of Langlands functoriality. There the interpolation theorem can be applied to construct maps between eigenvarieties that interpolate classical Langlands functoriality $p$-adically thereby comparing $p$-adic automorphic forms for different reductive groups (see e.g., \cite{chenevier, hansen, JN2, ludwig}).

\appendix
\section{Pseudorigid spaces}
Let $K$ be a complete discretely valued field with ring of integers $\cO_K$, uniformizer $\pi$ and residue field $k$. In this appendix we discuss \emph{pseudorigid spaces} over $\mc{O}_K$. They are a generalization of rigid analytic spaces that include the analytic loci of formal schemes, formally of finite type over $\mc{O}_K$. In the corresponding class of Huber rings one allows an additional formal power series variables. Base spaces of eigenvarieties occurring in practice are pseudorigid spaces, which is our motivation for discussing them. Besides a brief introduction to the concept we explain that pseudorigid spaces have a well-behaved set of \emph{maximal points}. The main references for the theory of pseudorigid spaces are \cite{JN1}, \cite{JN2}, \cite{lou} and \cite{abbes}.  
Further topics that are important in the context of eigenvarieties are for example the theory of normalization of pseudorigid spaces and their dimension theory. We will not discuss these topics here but instead refer the interested reader to the references mentioned above.

In the following we will frequently deal with affinoid adic spaces where the ring of integral elements $A^+$ is given by the subring of power-bounded elements $A^\circ$. In this case we shorten the notation and write $\spa(A)$ for $\spa(A,A^\circ)$. 

\begin{defn}
Let $A$ be a complete Tate $\cO_K$-algebra. We say that $A$ is a \emph{pseudoaffinoid} $\cO_K$-algebra, if $A$ has a ring of definition $A_0$ which is formally of finite type over $\cO_K$, i.e., $A_0$ has a radical ideal of definition $I$, such that $A_0/I$ is a finitely generated $k$-algebra. A homomorphism $A\rightarrow B$ of pseudoaffinoid $\cO_K$-algebras is a continuous homomorphism $A\rightarrow B$. 
\end{defn}

\begin{rem} 
\begin{itemize}
\item Note that the definition makes sense, as  $\cO_K\rightarrow A_0$ is continuous, so then $\pi^n\subset I$ for some $n$, and as $I$ is radical, $\pi$ itself is contained in $I$. 
\item The topology on a ring of definition $A_0$ for a Tate $\cO_K$-algebra $A$ is the $(\varpi)$-adic topology for some pseudo-uniformizer $\varpi \in A$. We can define a map $\cO_K\llbracket X\rrbracket\rightarrow A_0$ by sending $X$ to $\varpi$ and then $A_0$ is formally of finite type over $\cO_K$ if and only if it is of strictly topologically finite type over $\cO_K\llbracket X\rrbracket$, i.e., if and only if it is topologically isomorphic to a quotient of $\cO_K\llbracket X\rrbracket\langle Y_1,\dots, Y_n\rangle$ for some $n$. In particular we see that $A_0$ is noetherian.
\item Pseudoaffinoid $\cO_K$-algebras are Jacobson (\cite[A.1]{JN1}) and excellent (\cite[2.2.3]{JN2}) which is the reason for some of their good properties. 
\end{itemize}
\end{rem}
 
\begin{prop}\label{structurepr}
Let $A$ be a pseudoaffinoid $\cO_K$-algebra.
\begin{enumerate}
\item Let $B$ be a complete Huber ring with a morphism $A\rightarrow B$ which is topologically of finite type. Then $B$ is also pseudoaffinoid. 
\item Every homomorphism of pseudoaffinoid $\cO_K$-algebras $A \rightarrow B$ is topologically of finite type.
\item If $U\subset X=\spa(A,A^\circ)$ is a rational subset, then $\cO_X(U)^+=\cO_X(U)^{\circ}$.
\item More generally if $U\subset X$ is affinoid open, then  $\cO_X(U)^+=\cO_X(U)^{\circ}$.
\item Suppose $f\colon A\rightarrow B$ is a morphism of pseudoaffinoid $\cO_K$-algebras such that the induced morphism $\spa(B)\rightarrow \spa(A)$ is an open immersion. Let $\mathfrak{m}$ be a maximal ideal of $B$, then $f^{-1}(\mathfrak{m})$ is a maximal ideal of $A$ and the natural map $A_{f^{-1}(\mf{m})}\rightarrow B_{\mf{m}}$ induces an isomorphism on completions. 
\end{enumerate}
\end{prop}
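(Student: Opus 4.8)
These facts are established, in somewhat greater generality, in \cite{lou} and in the appendices of \cite{JN1} and \cite{JN2}; here is the strategy I would follow. For (1), fix a ring of definition $A_0\sub A$ that is formally of finite type over $\ok$, with radical ideal of definition $I$, and use the hypothesis that $A\to B$ is topologically of finite type to write $B=A\langle T_1,\dots,T_n\rangle/J$ for a closed ideal $J$. Then $A_0\langle T_1,\dots,T_n\rangle$ is a ring of definition of $A\langle T_1,\dots,T_n\rangle$, and it is visibly formally of finite type over $\ok$ (adjoin the variables $T_i$ to a presentation of $A_0$), hence noetherian; since $J$ is closed the quotient map is open, so the image $B_0$ of $A_0\langle T_\bullet\rangle$ in $B$ is a ring of definition carrying the subspace topology. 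It is noetherian, $B_0/\sqrt{IB_0}$ is a quotient of the finitely generated $k$-algebra $A_0\langle T_\bullet\rangle/I$ and hence itself finitely generated over $k$, and $\sqrt{IB_0}$ is a radical ideal of definition, so $B$ is pseudoaffinoid. The only point requiring care is that $B_0$ really is open and carries the subspace topology, which is exactly where closedness of $J$ is used.

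For (2), I would first record the general fact that a continuous homomorphism $f\colon A\to B$ of complete Tate rings is automatically bounded: $f^{-1}(B_0)$ is an open subring, hence contains $\varpi^mA_0$ for some $m$ and a pseudo-uniformizer $\varpi$ of $A$, so the subring $f(A_0)$ is contained in the bounded set $f(\varpi)^{-m}B_0$ and therefore in some ring of definition of $B$; after enlarging $B_0$ we may assume $f(A_0)\sub B_0$. It then suffices to show that $B_0$ is topologically of finite type over $A_0$. Modulo a radical ideal of definition $I_B$ one reduces to the statement that the finitely generated $k$-algebra $B_0/I_B$ is generated by finitely many elements over the image of $A_0$ (using that $\ok\twoheadrightarrow k\sub B_0/I_B$), which is clear; lifting generators and invoking $I_B$-adic completeness together with noetherianity upgrades this to a surjection $A_0\langle T_\bullet\rangle\twoheadrightarrow B_0$, and inverting $\varpi$ gives $A\langle T_\bullet\rangle\twoheadrightarrow B$. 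This is the pseudoaffinoid analogue of Tate's theorem, and the completeness/surjectivity step is where the actual work lies; see \cite[\S 2.2]{JN2} or \cite{lou}.

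For (3) and (4), one inclusion $\cO_X(U)^+\sub\cO_X(U)^\circ$ holds by the definition of a ring of integral elements. For the reverse inclusion, note that by (1) the ring $\cO_X(U)$ is again pseudoaffinoid: for $U$ rational this is immediate since $\cO_X(U)=A\langle f_1/g,\dots,f_n/g\rangle$ is topologically of finite type over $A$, and the general affinoid case follows by a finite rational covering together with the locality of power-boundedness and integrality. The content is then that every power-bounded element of $\cO_X(U)$ is integral over the subring generated by $A^+$ and the $f_i/g$, which is the pseudoaffinoid counterpart of the fact for affinoid $K$-algebras that power-bounded elements are exactly those of spectral norm $\leq 1$. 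This rests on the excellence — hence Nagata property — of the noetherian rings of definition of pseudoaffinoid algebras, which controls the finiteness of the relevant normalizations; this is the one genuinely nontrivial input and the main obstacle in the whole proposition, and I would cite it from \cite[\S 2.2]{JN2} / \cite{lou} rather than reprove it.

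Finally, for (5), the open immersion $\Spa(B)\hookrightarrow\Spa(A)$ identifies $\Spa(B)$ with an open subspace of $\Spa(A)$ compatibly with the structure sheaves, and any affinoid neighbourhood witnessing that a point of $\Spa(B)$ is maximal is also an open affinoid of $\Spa(A)$; hence the point of $\Spa(A)$ underlying $\mf{m}$ is maximal and its support is exactly $f^{-1}(\mf{m})$, which is therefore a maximal ideal of $A$. For the assertion on completed local rings, pick a rational subset $U\sub\Spa(A)$ with $\mf{m}\in U\sub\Spa(B)$; then $U$ is also rational in $\Spa(B)$, and $C:=\cO(U)$ is a rational localization both of $A$ at a point over $f^{-1}(\mf{m})$ and of $B$ at a point over $\mf{m}$. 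Since a rational localization induces an isomorphism on completed local rings at a common point — a general fact for locally noetherian adic spaces, also recoverable from the rational case of (3) — the completions of $A_{f^{-1}(\mf{m})}$, of $C$ at the corresponding maximal ideal, and of $B_{\mf{m}}$ all coincide, which is the claim; the residual difficulty here is again the rational case, i.e.\ producing $U$ and checking agreement of completed local rings.
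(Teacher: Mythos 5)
Your overall outline matches the paper's, which itself defers parts (2) and (5) to \cite{JN1} and \cite{JN2}; parts (1), (2) and the covering argument for (4) are fine as sketched. But two points deserve correction. First, for (3) you misidentify the key input: no excellence or Nagata property is needed. The paper's mechanism is the elementary Lemma \ref{ic1}: if $A_0$ is a \emph{noetherian} ring of definition of a complete Tate ring and $B\supseteq A_0$ is any bounded subring, then $B\sub\varpi^{-N}A_0$ for some $N$, so $B$ is a finite $A_0$-module and hence integral over $A_0$; since $A^{\circ}$ is the filtered union of such $B$, it equals the integral closure of $A_0$. Applying this to the noetherian ring of definition $A_0\langle f_1/g,\dots,f_n/g\rangle\sub\cO_X(U)$, whose integral closure is contained in $\cO_X(U)^+$ by the very construction of the rational localization, gives $\cO_X(U)^{\circ}\sub\cO_X(U)^+$ directly. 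Excellence of pseudoaffinoid algebras is used elsewhere (e.g.\ for normalizations), not here, and calling it ``the main obstacle in the whole proposition'' points the reader at the wrong tool.

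Second, your argument for the first half of (5) is circular. Knowing that the point $x\in\Spa(A)$ corresponding to $\mf{m}$ is a \emph{maximal point} in the sense of Definition \ref{ps6} only says that $x$ corresponds to a maximal ideal in \emph{some} affinoid neighbourhood (namely $\Spa(B)$); it does not by itself imply that $\supp(x)=f^{-1}(\mf{m})$ is maximal in $A$ — that implication is exactly Proposition \ref{ps5}, whose proof in turn rests on part (2) of the present proposition. The non-circular route is to invoke (2) directly: $A\to B$ is topologically of finite type, and for such maps between Jacobson Huber rings preimages of maximal ideals are maximal (\cite[A.13, A.14]{JN1}). Your reduction of the completed-local-ring statement to the rational case via a rational $U$ with $\mf{m}\in U\sub\Spa(B)$ is the right idea, but the rational case is where the content lies and should be cited or proved, as the paper does by referring to \cite[Proposition 2.2.3]{JN2}.
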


\begin{proof}
The first part is left as an exercise. For the second part see  \cite[Corollary A.14]{JN1}. The third  part follows from Lemma \ref{ic1} below upon noting that by construction $\cO_X(U)^+$ contains a ring of definition of $\cO_X(U)$. For part (4) note that $U$ has a finite cover by Tate rational subdomains $(U_i)_{i \in I}$ of $X$. Since the maps $\cO(U)\rightarrow \cO(U_i)$ are bounded (the $U_i$ are also rational subdomains of $U$), the strict embedding \[\cO(U)\hookrightarrow \prod_{i\in I}\cO(U_i)\] induces an embedding  \[\cO(U)^\circ \hookrightarrow \cO(U)\cap\prod_{i\in I}\cO(U_i)^\circ\] but the right hand side equals \[\cO(U)\cap\prod_{i\in I}\cO^+(U_i)=\cO^+(U)\] by part (3), so we are done because by definition $\cO^+(U) \subseteq \cO(U)^\circ$ which implies that we have equality. 
Finally for part (5) we refer to \cite[Proposition 2.2.3]{JN2}
\end{proof}

 \begin{lem}\label{ic1} Let $A$ be a complete Tate ring with a Noetherian ring of definition $A_0$. Let $B\subseteq A$ be a ring of definition containing $A_0$. Then $B$ is a finitely generated $A_0$-module, hence Noetherian and integral over $A_0$. Moreover $A^{\circ}$ is the integral closure of $A_0$ in $A$. 
 \end{lem}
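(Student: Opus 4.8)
The plan is to fix a pseudo-uniformizer $\varpi \in A$ lying in $A_0$ (replacing it by a power if necessary), so that $\{\varpi^k A_0\}_{k\geq 0}$ is a basis of open neighbourhoods of $0$ and $\varpi$ is a unit in $A$. The technical input I would isolate and reuse is the following: if $x \in A$ and there is an integer $N$ with $\varpi^N x^n \in A_0$ for all $n \geq 0$, then $A_0[x] = \sum_{n\geq 0} A_0 x^n$ is contained in $\varpi^{-N}A_0$; since $a \mapsto \varpi^{-N}a$ is an $A_0$-module isomorphism $A_0 \xrightarrow{\sim} \varpi^{-N}A_0$ and $A_0$ is Noetherian, $\varpi^{-N}A_0$ is a Noetherian $A_0$-module, so its submodule $A_0[x]$ is finitely generated over $A_0$, and hence $x$ is integral over $A_0$.

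For the first assertion I would proceed as follows. Since $B$ is a ring of definition it is bounded, and $A_0$ is open, so there is $n$ with $\varpi^n B \subseteq A_0$, i.e. $B \subseteq \varpi^{-n}A_0$. As just noted, $\varpi^{-n}A_0$ is a Noetherian $A_0$-module, so the $A_0$-submodule $B$ is finitely generated; a module-finite algebra over a Noetherian ring is Noetherian and integral, giving ``$B$ is Noetherian and integral over $A_0$''.

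For the last assertion I would prove the two inclusions between $A^{\circ}$ and the integral closure $\widetilde{A_0}$ of $A_0$ in $A$ (both contain $A_0$, so these are inclusions of subrings of $A$). If $x \in \widetilde{A_0}$, then $A_0[x]$ is generated as an $A_0$-module by finitely many elements $m_1, \dots, m_r$, so $A_0[x] \subseteq A_0 m_1 + \cdots + A_0 m_r$, a finite sum of bounded subsets of $A$ (each $A_0 m_i$ is bounded since $A_0$ is bounded and multiplication by $m_i$ is continuous), hence bounded; as $\{x^n : n\geq 0\} \subseteq A_0[x]$, this shows $x \in A^{\circ}$. Conversely, if $x \in A^{\circ}$ then $\{x^n : n\geq 0\}$ is bounded, so testing boundedness against the open set $A_0$ yields an $N$ with $\varpi^N x^n \in A_0$ for all $n \geq 0$; by the technical input above, $x$ is integral over $A_0$, i.e. $x \in \widetilde{A_0}$.

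I do not expect a serious obstacle here; the only point that deserves care — and the only place the Noetherian hypothesis on $A_0$ is genuinely used — is the inclusion $A^{\circ} \subseteq \widetilde{A_0}$. One cannot conclude directly from ``$x$ is power-bounded'' that the algebra $A_0[x]$ is bounded (an increasing union of bounded sets need not be bounded), so one must instead extract a single integer $N$ bounding all powers of $x$ simultaneously and then exploit the containment $A_0[x] \subseteq \varpi^{-N}A_0$ together with noetherianity of $A_0$. Everything else is routine bookkeeping with rings of definition and bounded sets.
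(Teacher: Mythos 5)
Your proof is correct and follows essentially the same route as the paper's: bounded subsets sit inside $\varpi^{-N}A_0$, which is a Noetherian $A_0$-module, so the relevant subalgebras are module-finite and hence integral. The only cosmetic difference is in the inclusion $A^{\circ}\subseteq\widetilde{A_0}$, where the paper cites the fact that $A^{\circ}$ is the filtered union of the open bounded subrings and reduces to the first part, whereas you unpack that fact into the direct elementwise argument with a single $N$ bounding all powers of $x$ — same mechanism, just made self-contained.
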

 \begin{proof} Pick a pseudo-uniformizer $\varpi \in A$ contained in $A_0$. Since $B$ is bounded, we have that $B\subseteq \varpi^{-N}A_0$ for some $N$. Hence $B$ is an $A_0$-submodule of the cyclic $A_0$-module $\varpi^{-N}A_0$. The first claim now follows. For the last assertion, first note that the integral closure of $A_0$ is contained in $A^\circ$. Since $\Acirc$ is the union of all open and bounded subrings, and any two open bounded subrings are contained in a third, the assertion follows from the first part.  
 \end{proof}

In particular, we see that every rational subset of $\spa(A, A^{\circ})$, where $A$ is a pseudo-affinoid $\cO_K$-algebra, is of the form $\spa(B, B^{\circ})$ for a pseudoaffinoid \linebreak $\cO_K$-algebra $B$.
 
 \begin{defn} Let $X$ be an adic space over $\spa(\cO_K)$. Then $X$ is called a \emph{pseudorigid space over} $\cO_K$ if $X$ is locally of the form $\Spa(A,A^{\circ})$ for a pseudoaffinoid $\cO_K$-algebra~$A$. 
 \end{defn}

\begin{ex}
Let $\ms{X}$ be a formal scheme, formally of finite type over $\cO_K$, which we regard here as an adic space in the sense of \cite{hu2,hu1}. Then the set of analytic points $X=\ms{X}_a$ is a pseudorigid space over $\cO_K$. For example, if $\ms{X}=\mathrm{Spf} (\Z_p\llbracket T\rrbracket)$, then $X$ is a quasicompact analytic adic space, covered by two affinoids, namely $\spa(\Q_p \langle T/p \rangle)$ and $\spa (\Z_p\llbracket T\rrbracket, \Z_p\llbracket T\rrbracket) \left( \frac{p,T}{T} \right)$ (compare Example \ref{tatenofield}). 
\end{ex}

Recall that $\spa(\cO_K, \cO_K)$ consists of two points, the closed point $s=\spa(k,k)$ and the generic point $\eta=\spa (K, \cO_K)$. For a pseudorigid space $X$, the fibre products $X_s=X\times_{\spa(\cO_K)} \spa (k)$ and $X_{\eta}=X\times_{\spa (\cO_K)} \spa (K)$ exist and we call these the \emph{special}, resp. \emph{generic} fibre of $X$. One can show that $X_s$ is the Zariski closed adic subspace of $X$ defined by the ideal sheaf $\pi \cO_X$ and $X_{\eta}$ is its open complement.
\begin{prop}
The generic fibre $X_{\eta}$ of a pseudorigid space $X$ over $\cO_K$ is a $K$-rigid space. If $X=\spa(A)$ with $A$ pseudoaffinoid, then $X_s$ admits the structure of a rigid space over the field ~$k((T))$.
\end{prop}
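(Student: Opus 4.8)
The statement to prove has two parts. First, that the generic fibre $X_\eta$ of a pseudorigid space $X$ over $\cO_K$ is a rigid analytic space over $K$; second, that when $X = \spa(A)$ is affinoid pseudorigid, the special fibre $X_s$ carries the structure of a rigid space over the Laurent series field $k((T))$. Since both assertions are local on $X$, I would reduce immediately to the affinoid case $X = \spa(A, A^\circ)$ with $A$ a pseudoaffinoid $\cO_K$-algebra having ring of definition $A_0$ formally of finite type over $\cO_K$.

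For the generic fibre, the plan is to pick a pseudo-uniformizer $\varpi \in A_0$ (which we may take to be $\pi$ itself, or an element dividing some power of $\pi$) and use the description of $X_\eta$ as the open complement of $X_s = V(\pi \cO_X)$. Concretely, $X_\eta$ is covered by the rational subsets $\spa(A, A^\circ)\left(\tfrac{\varpi^n}{\pi}\right)$ as $n$ grows, where on these loci $\pi$ becomes invertible. The global sections over such a locus are a quotient of a Tate algebra $K\langle Y_1, \dots, Y_m\rangle$ over $K$: indeed $A_0$ is a quotient of $\cO_K\llbracket X\rrbracket\langle Y_1,\dots,Y_n\rangle$, and after inverting $\pi$ and completing with respect to the relevant norm, the formal variable $X$ (which maps to $\varpi$, hence to a topologically nilpotent \emph{unit} bounded in absolute value) gets absorbed, yielding a $K$-affinoid algebra. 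So each such rational subdomain is $K$-affinoid, and since these cover $X_\eta$ and $X_\eta$ is an adic space locally of finite type over $\spa(K,\cO_K)$, it is a rigid analytic space over $K$ (using the equivalence between quasi-separated rigid $K$-spaces and a suitable category of adic spaces locally of finite type over $K$).

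For the special fibre, the idea is dual: $X_s = V(\pi\cO_X)$ is the Zariski closed subspace cut out by $\pi$. Working with the ring of definition, $A_0/\pi A_0$ is a quotient of $k\llbracket X\rrbracket\langle Y_1,\dots,Y_n\rangle$, and $X$ maps to the image $\bar\varpi$ of the pseudo-uniformizer, which is topologically nilpotent but nonzero (it is a pseudo-uniformizer for a Tate ring structure). The completion of $A_0/\pi$ along $\bar\varpi$, after inverting $\bar\varpi$, is then topologically of finite type over $k((T))$ where $T$ corresponds to $\bar\varpi$; that is, it is a $k((T))$-affinoid algebra. One has to check that this Tate $k((T))$-algebra structure is what underlies $\spa$ of the quotient $A/\pi A$ (completed appropriately), and that the construction is independent of the auxiliary choices and glues. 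Then $X_s = \spa(A/\pi A, (A/\pi A)^\circ)$ is $k((T))$-affinoid, hence a rigid space over $k((T))$.

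The main obstacle I anticipate is the bookkeeping around the two topologies and the role of the formal variable: one must carefully track that in passing to the generic fibre the extra power-series variable $X$ (mapping to $\varpi$) becomes harmless — a unit of bounded norm — so that one genuinely lands in honest $K$-affinoid algebras rather than something with a residual formal parameter; and symmetrically, that on the special fibre this same variable survives as the uniformizer of $k((T))$ rather than being killed. Making the identification of the completed localizations with Tate algebras over $K$ (resp. $k((T))$) precise, and checking compatibility on overlaps so that the local pictures glue to give the rigid-space structure, is where the real work lies; the references \cite{JN1}, \cite{JN2} and \cite{lou} handle exactly this, and I would cite \cite[Section 2]{JN2} and \cite{lou} for the detailed verification while indicating the structure of the argument as above.
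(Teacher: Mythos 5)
Your argument is correct in outline, but it takes a more computational route than the paper, which disposes of both claims in two lines by invoking Proposition \ref{structurepr}(2): every homomorphism of pseudoaffinoid $\cO_K$-algebras is topologically of finite type. The key observation you do not exploit is that $K$ and $k((T))$ are \emph{themselves} pseudoaffinoid $\cO_K$-algebras (with rings of definition $\cO_K$ and $k\llbracket T\rrbracket$ respectively). The paper then works locally on $X_\eta$, where one has $\spa(B)$ with $B$ pseudoaffinoid and $\pi\in B^\times$, so the canonical map $K\to B$ is a morphism of pseudoaffinoid algebras and hence automatically topologically of finite type; similarly, for $B$ pseudoaffinoid with $\pi B=0$ one sends $T$ to a topologically nilpotent unit $\varpi\in B$ to get a continuous map $k((T))\to B$, which is again automatically topologically of finite type. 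Your explicit unwinding of the presentation $A_0 = \cO_K\llbracket X\rrbracket\langle Y_1,\dots,Y_n\rangle/J$ over the rational subsets $X\bigl(\tfrac{\varpi^n}{\pi}\bigr)$, and the corresponding mod-$\pi$ computation with $k\llbracket X\rrbracket\langle Y_1,\dots,Y_n\rangle$, amounts to reproving the relevant special case of that structural result (\cite[Cor.~A.14]{JN1}) by hand. What each approach buys: yours is self-contained modulo the bookkeeping you explicitly defer (absorbing the formal variable into a $K$-affinoid presentation on the generic fibre, and the identification of the image of $A_0$ in $A/\pi A$ as a ring of definition on the special fibre — both of which do go through); the paper's is shorter and cleaner but leans entirely on the earlier proposition. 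Neither step of yours would fail, so there is no gap, only an opportunity to shortcut.
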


\begin{proof}
For the first part we can work locally on $X_{\eta}$ and assume that $X_{\eta}=\spa(B)$ where $B$ is a pseudoaffinoid $\cO_K$-algebra and $\pi \in B^{\times}$. Then the canonical map $K \rightarrow B$ is topologically of finite type by Proposition \ref{structurepr}. For the claim about the special fiber observe that if $B$ is pseudoaffinoid with $\pi B=0$, we may choose a topologically nilpotent unit $\varpi \in B$ and look at the continuous $\cO_K$-homomorphism $k((T)) \rightarrow B$ that sends $T$ to $\varpi$. Once again, this must be topologically of finite type, which yields the claim.
\end{proof}

So now that we have a bit of an idea of how pseudorigid spaces look like, let us discuss the fact that they have a well-behaved set of points defined by maximal ideals in pseudoaffinoid $\cO_K$-algebras. 


\begin{lem}\label{ps4}
Let $A$ be a pseudoaffinoid $\cO_K$-algebra, and let $\mf{m}\subset A$ be a maximal ideal. Then there exists a unique $x\in \Spa(A)$ with $\mathrm{supp}(x)=\mf{m}$. This gives a canonical embedding of the spectrum $\mathrm{Max}(A)$ of maximal ideals into $\Spa(A)$, compatible with the morphism $\mathrm{supp}\colon \Spa(A) \rightarrow \Spec(A)$. 
\end{lem}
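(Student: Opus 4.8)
The plan is to exhibit the point $x$ directly as a valuation on $A$ and then to check uniqueness. Given a maximal ideal $\mf{m}\subset A$, the quotient $A/\mf{m}$ is a field. First I would argue that $A/\mf{m}$ carries a natural complete rank $1$ valuation: since $A$ is pseudoaffinoid with Noetherian ring of definition $A_0$, the image of $A_0$ in $A/\mf{m}$ is a subring over which $A/\mf{m}$ is... here one must distinguish two cases according to whether the composite $\cO_K \to A/\mf{m}$ has $\pi$ invertible or zero. If $\pi$ is invertible in $A/\mf m$, then $A/\mf m$ is a finite extension of $K$ (by Proposition \ref{structurepr}(2) applied to the generic fibre, $K \to A/\mf m$ is topologically of finite type, and being a field it is finite over $K$), so it inherits a unique complete rank $1$ valuation extending that of $K$. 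If $\pi=0$ in $A/\mf m$, then $A/\mf m$ is a finite extension of $k((T))$ for a suitable topologically nilpotent unit (as in the proof of the previous proposition), and again carries a unique complete rank $1$ valuation. Composing $A \to A/\mf m$ with this valuation gives a point $x\in\Spa(A)$ — one checks it lands in $\Spa(A,A^\circ)$ because $A^\circ$ is the integral closure of $A_0$ (Lemma \ref{ic1}) and integral elements have valuation $\le 1$ — and by construction $\supp(x)=\mf m$.

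For uniqueness, suppose $x'\in\Spa(A)$ also has $\supp(x')=\mf m$. Then $x'$ factors through a valuation on the field $A/\mf m$. Since $A/\mf m$ is a finite extension of either $K$ or $k((T))$, and a complete discretely valued field (resp. $k((T))$) admits a \emph{unique} rank $1$ valuation up to equivalence extending to any given finite extension, the valuation underlying $x'$ must be equivalent to the one constructed above; hence $x'=x$ in $\Spa(A)$. The map $\Max(A)\hookrightarrow\Spa(A)$ is then $\mf m\mapsto x$, and it is a section of $\supp$ over the subset of closed points of $\Spec(A)$ by construction, so compatibility with $\supp$ is immediate.

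The main obstacle is the case analysis and, within it, pinning down that $A/\mf{m}$ is genuinely finite over $K$ or over $k((T))$ rather than merely of topologically finite type: this uses that $A$ is Jacobson together with Proposition \ref{structurepr}(2), and the observation that a field which is topologically of finite type over $K$ (resp.\ $k((T))$) and in which $\pi$ (resp.\ $T$) is a unit (resp.\ zero) must be a finite extension. Once one has reduced to a complete rank $1$ valued base field, the uniqueness of rank $1$ valuations on finite extensions is standard, and the remaining verifications (that the constructed $x$ is continuous, bounded by $A^\circ$, and lies in the adic spectrum) are routine.
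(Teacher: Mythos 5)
Your construction of the point $x$ is viable and genuinely different from the paper's route. The paper (deferring to Johansson--Newton, Lemma 2.2.5) avoids the case split: one sets $\mf{p}=A_0\cap\mf{m}$ and invokes a structure result of Abbes (Prop.\ 1.11.8) to the effect that $A_0/\mf{p}$ is a valuative order, so that $A/\mf{m}=(A_0/\mf{p})[1/\bar\varpi]$ is in one stroke a complete discretely valued field whose valuation topology equals the quotient topology and whose valuation ring is integral over $A_0/\mf{p}$; the fibre over $\mf{m}$ is then identified with $\Spa(A/\mf{m})$ and seen to be a singleton. Your substitute --- $\mf{m}$ is closed since $A$ is a Noetherian complete Tate ring, so $A/\mf{m}$ is again pseudoaffinoid, and an affinoid field over $K$ (resp.\ $k((T))$) is finite over the base by Noether normalization --- is legitimate and arguably more self-contained, at the cost of the case analysis.

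The genuine gap is in uniqueness. A point $x'\in\Spa(A)$ with $\mathrm{supp}(x')=\mf{m}$ is a priori a continuous valuation of \emph{arbitrary rank}, and your argument only cites uniqueness of the rank $1$ extension of the valuation of $K$ (resp.\ $k((T))$) to the finite extension $L=A/\mf{m}$. That does not exclude higher-rank points over $\mf{m}$ --- and excluding them is precisely the content of the lemma, since over non-maximal primes $\Spa(A)$ typically does carry many higher-rank points. What is needed is that $\Spa(L,C)$ is a singleton, where $C$ is the integral closure in $L$ of the image of $A^{+}=A^{\circ}$. This can be supplied from what you already have: (i) $C=\cO_L$, because the image of $A^{\circ}$ is bounded (hence lands in $\cO_L$) and contains $\cO_K$ (resp.\ a $\bar\varpi$-adically complete ring containing $k[\bar\varpi]$, hence $k\llbracket\bar\varpi\rrbracket$), over which $\cO_L$ is integral by your finiteness claim; and (ii) $\Spa(L,\cO_L)$ is a single point for any complete rank $1$ valued field $L$: the valuation ring of any continuous point bounded by $\cO_L$ contains $\cO_L$, and the only valuation subrings of $L$ containing the rank $1$ valuation ring $\cO_L$ are $\cO_L$ itself and $L$, the latter giving the trivial, non-continuous valuation. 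With this supplement your proof is complete.
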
 

\begin{proof} 
The locus of $x\in \Spa(A)$ with $\mathrm{supp}(x)=\mf{m}$ is equal to 
$\Spa(A/\mf{m})$ and one shows that this is a singleton, see \cite[Lemma 2.2.5]{JN2} for the argument.
\end{proof}

\begin{prop}\label{ps5}
Let $A$ and $B$ be pseudoaffinoid $\ok$-algebras, and let $f \colon A \ra B$ be a continuous $\ok$-homomorphism. Then the preimages of maximal ideals are maximal ideals. Hence the induced map $f\colon  \Spa(B) \ra \Spa(A)$ maps $\Max(B)$ into $\Max(A)$. Moreover, $\Max(A)$ is dense in $\Spa(A)$.
\end{prop}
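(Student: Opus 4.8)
The plan is to establish the three assertions in turn, the first being the substantive one. By Proposition \ref{structurepr}(2) the homomorphism $f$ is topologically of finite type, so $B$ is a topological quotient of $A\langle X_1,\dots,X_n\rangle$ for some $n$. A maximal ideal of $B$ then pulls back to a maximal ideal of $A\langle X_1,\dots,X_n\rangle$ lying over the same ideal of $A$, and since $A\langle X_1,\dots,X_{n-1}\rangle$ is again pseudoaffinoid by Proposition \ref{structurepr}(1), an induction on $n$ reduces matters to the one-variable case: if $\mf m\subset A\langle X\rangle$ is maximal, then $\mf n:=\mf m\cap A$ is maximal in $A$. To handle this I would set $L:=A\langle X\rangle/\mf m$, which is a complete discretely valued field by Lemma \ref{ps4}, and let $\theta\in L$ be the image of $X$; since $X$ is power-bounded in $A\langle X\rangle$ and $\cO_L$ is exactly the set of power-bounded elements of $L$, we get $\theta\in\cO_L$. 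The image $A[\theta]$ of $A[X]\to L$ is then a finitely generated $A$-algebra, a subdomain of $L$, and it is dense in $L$ because $A[X]$ is dense in $A\langle X\rangle$. The crucial point is that $L$ is in fact \emph{module-finite} over $A[\theta]$; granting this, $L$ is integral over the domain $A[\theta]$, so $A[\theta]$ is itself a field, and then — since $A$ is Jacobson and $A[\theta]$ is a finitely generated $A$-algebra which is a field — the kernel $\mf n$ of $A\to A[\theta]$ is maximal, as desired.

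The module-finiteness of $L$ over $A[\theta]$ is the heart of the matter, and the step I expect to be the main obstacle: it amounts to a relative Nullstellensatz for pseudoaffinoid algebras, proved not by manipulating the adic topology directly but using that pseudoaffinoid $\cO_K$-algebras are excellent (and Jacobson); I would ultimately refer to \cite{JN2} for the details of this input. Granting the first assertion, the statement that $f$ maps $\Max(B)$ into $\Max(A)$ is immediate: if $y\in\Max(B)$ with $\supp(y)=\mf m$ a maximal ideal of $B$, then $\supp(f(y))=f^{-1}(\mf m)$ is maximal in $A$, so $f(y)\in\Max(A)$.

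For the density of $\Max(A)$ in $\Spa(A)$ I would argue directly, using that rational subsets form a basis for the topology of $\Spa(A)=\Spa(A,A^\circ)$. Let $U\subseteq\Spa(A)$ be any nonempty open; it contains a nonempty rational subset, which by Lemma \ref{ic1} and the remark following it is of the form $\Spa(B,B^\circ)$ for a pseudoaffinoid $\cO_K$-algebra $B$, with $\Spa(B)\hookrightarrow\Spa(A)$ an open immersion induced by a continuous $\cO_K$-homomorphism $g\colon A\to B$. Since $U$ is nonempty, $B$ is a nonzero ring and hence has a maximal ideal $\mf m_B$; applying Lemma \ref{ps4} to $B$ produces a point $y\in\Spa(B)\subseteq\Spa(A)$ with $\supp_B(y)=\mf m_B$. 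Then $\supp_A(y)=g^{-1}(\mf m_B)$, which is a maximal ideal of $A$ by Proposition \ref{structurepr}(5) in the open-immersion case (or, alternatively, by the first assertion just proved). Thus $y\in U\cap\Max(A)$, so $\Max(A)$ meets every nonempty open subset of $\Spa(A)$ and is therefore dense. In this last part everything is formal once Lemma \ref{ps4} and Proposition \ref{structurepr} are in hand; all the real work sits in the relative Nullstellensatz underlying the first assertion.
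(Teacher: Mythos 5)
Your proposal is correct and follows essentially the same route as the paper: the substantive first assertion (preimages of maximal ideals are maximal) is in both cases ultimately deferred to the relative Nullstellensatz for pseudoaffinoid algebras from Johansson--Newton (the paper cites \cite[A.13, A.14]{JN1} rather than \cite{JN2}, so you should adjust that reference), and your density argument via nonempty rational subsets, Lemma \ref{ps4}, and the first assertion is exactly the paper's. Your extra sketch of the one-variable reduction and the integrality argument for $A[\theta]$ is a reasonable outline of how that cited input goes, but since its crucial step (module-finiteness of $L$ over $A[\theta]$) is still taken on faith from the reference, it does not constitute a genuinely independent proof.
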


\begin{proof}
The first part follows from the fact that $f$ is of topologically finite type (cf.\ \cite[A.13,A.14]{JN1}). For density claim, note that if $U\sub X=\Spa(A)$ is a rational subset, then the first part implies that $\Max(\cO_{X}(U))=U\cap \Max(A)$. It follows that $U\cap \Max(A)$ is non-empty if $U$ is, and hence that $\Max(A)$ is dense.
\end{proof}

\begin{defn}\label{ps6}
Let $X$ be a pseudorigid space. We say that a point $x\in X$ is \emph{maximal} if there exists an open affinoid pseudorigid neighbourhood $U=\Spa(A)$ of $x$ such that $x\in \Max(A)\sub U$. We denote the set of all maximal points of $X$ by $\Max(X)$.
\end{defn}

Note that this is a local property, for if $x\in \Max(X)$ and $V=\Spa(B)\sub X$ is an open affinoid pseudorigid neighbourhood, then by Proposition \ref{ps5}, $x\in \Max(B)$. Moreover, it also follows that $\Max(X)$ is dense in $X$, that $\Max(X)=\Max(A)$ if \linebreak  $X=\Spa(A)$ is affinoid pseudorigid, and that any morphism $f \colon X \ra Y$ of pseudorigid spaces maps $\Max(X)$ into $\Max(Y)$.

\newpage

\bibliography{ASII}
\bibliographystyle{plain}
\end{document}